\newlength{\realsidemargin}
\newlength{\sidemargin}
\theoremstyle{definition}
\newtheorem{Def}{Definition}[section]
\newtheorem{rem}[Def]{Remark}
\newtheorem{ex}[Def]{Example}
\newtheorem{Not}[Def]{Notations}
\theoremstyle{plain}
\newtheorem{Prop}[Def]{Proposition}
\newtheorem{Lem}[Def]{Lemma}
\newtheorem{Thm}[Def]{Theorem}
\newtheorem{Cor}[Def]{Corollary}
\newcommand{\SwapSymbols}[1]{
	\expandafter\let\expandafter\temporarysymbol\csname #1\endcsname
	\expandafter\let\csname #1\expandafter\endcsname\csname var#1\endcsname
	\expandafter\let\csname var#1\endcsname\temporarysymbol
}
\def\mbZ{\mathbb{Z}}
\def\mbC{\mathbb{C}}
\def\mcA{\mathcal{A}}
\def\mcF{\mathcal{F}}
\def\mcG{\mathcal{G}}
\def\mcL{\mathcal{L}}
\def\mcS{\mathcal{S}}
\def\mcX{\mathcal{X}}
\def\mcY{\mathcal{Y}}
\def\mfa{\mathfrak{a}}
\def\mfp{\mathfrak{p}}
\def\mfq{\mathfrak{q}}
\let\originalleft\left
\let\originalright\right
\renewcommand{\left}{\mathopen{}\mathclose\bgroup\originalleft}
\renewcommand{\right}{\aftergroup\egroup\originalright}
\def\Mod{\mathop{\operator@font Mod}\nolimits}
\def\mod{\mathop{\operator@font mod}\nolimits}
\def\GrMod{\mathop{\operator@font GrMod}\nolimits}
\def\grmod{\mathop{\operator@font grmod}\nolimits}
\def\Noeth{\mathop{\operator@font Noeth}\nolimits}
\def\noeth{\mathop{\operator@font noeth}\nolimits}
\def\QCoh{\mathop{\operator@font QCoh}\nolimits}
\def\coh{\mathop{\operator@font coh}\nolimits}
\def\Proj{\mathop{\operator@font Proj}\nolimits}
\def\Hom{\mathop{\operator@font Hom}\nolimits}
\def\End{\mathop{\operator@font End}\nolimits}
\def\Ext{\mathop{\operator@font Ext}\nolimits}
\def\Ker{\mathop{\operator@font Ker}\nolimits}
\def\Im{\mathop{\operator@font Im}\nolimits}
\def\Cok{\mathop{\operator@font Cok}\nolimits}
\def\RHom{\mathop{\operator@font \mathbf{R}Hom}\nolimits}
\def\Spec{\mathop{\operator@font Spec}\nolimits}
\def\Max{\mathop{\operator@font Max}\nolimits}
\def\Supp{\mathop{\operator@font Supp}\nolimits}
\def\Ass{\mathop{\operator@font Ass}\nolimits}
\def\ASpec{\mathop{\operator@font ASpec}\nolimits}
\def\ASupp{\mathop{\operator@font ASupp}\nolimits}
\def\AAss{\mathop{\operator@font AAss}\nolimits}
\def\asupp{\mathop{\operator@font asupp}\nolimits}
\def\projdim{\mathop{\operator@font proj.dim}\nolimits}
\def\injdim{\mathop{\operator@font inj.dim}\nolimits}
\def\gldim{\mathop{\operator@font gl.dim}\nolimits}
\def\Zg{\mathop{\operator@font Zg}\nolimits}
\def\Ann{\mathop{\operator@font Ann}\nolimits}
\renewcommand{\p@enumii}{}
\title{Specialization orders on atom spectra of Grothendieck categories}
\subjclass[2010]{18E15 (Primary), 16D90, 16G30, 13C05 (Secondary)}
\keywords{Atom spectrum; Grothendieck category; partially ordered set; colored quiver}
\author{Ryo Kanda}
\thanks{The author is a Research Fellow of Japan Society for the Promotion of Science. This work is supported by Grant-in-Aid for JSPS Fellows 25$\cdot$249.}
\address{Graduate School of Mathematics, Nagoya University, Furo-cho, Chikusa-ku, Nagoya-shi, Aichi-ken, 464-8602, Japan}
\email{kanda.ryo@a.mbox.nagoya-u.ac.jp}
\begin{document}

\begin{abstract}
	We introduce systematic methods to construct Grothendieck categories from colored quivers and develop a theory of the specialization orders on the atom spectra of Grothendieck categories. We show that any partially ordered set is realized as the atom spectrum of some Grothendieck category, which is an analog of Hochster's result in commutative ring theory. We also show that there exists a Grothendieck category which has empty atom spectrum but has nonempty injective spectrum.
\end{abstract}

\maketitle
\tableofcontents

\section{Introduction}
\label{sec:Introduction}

The aim of this paper is to provide systematic methods to construct Grothendieck categories with certain structures and to establish a theory of the specialization orders on the spectra of Grothendieck categories. There are important Grothendieck categories appearing in representation theory of rings and algebraic geometry: the category $\Mod R$ of (right) modules over a ring $R$, the category $\QCoh X$ of quasi-coherent sheaves on a scheme $X$ (\cite[Lemma 2.1.7]{Conrad}), and the category of quasi-coherent sheaves on a noncommutative projective scheme introduced by Verevkin \cite{Verevkin} and Artin and Zhang \cite{ArtinZhang}. Furthermore, by using the Gabriel--Popescu embedding (\cite[Proposition]{PopescoGabriel}; Theorem \ref{Thm:GabrielPopescuEmbedding}), it is shown that any Grothendieck category can be obtained as the quotient category of the category of modules over some ring by some localizing subcategory. In this sense, the notion of Grothendieck category is ubiquitous.

In commutative ring theory, Hochster characterized the topological spaces appearing as the prime spectra of commutative rings with Zariski topologies (\cite[Theorem 6 and Proposition 10]{Hochster}; Theorem \ref{Thm:HochsterTheorem}). Speed \cite{Speed} pointed out that Hochster's result gives the following characterization of the partially ordered sets appearing as the prime spectra of commutative rings.

\begin{Thm}[{Hochster \cite[Proposition 10]{Hochster}} and {Speed \cite[Corollary 1]{Speed}}; Corollary \ref{Cor:SpectralPosetIsInverseLimitOfFinitePoset}]\label{Thm:IntroSpectralPosetIsInverseLimitOfFinitePoset}
	Let $P$ be a partially ordered set. Then $P$ is isomorphic to the prime spectrum of some commutative ring with the inclusion relation if and only if $P$ is an inverse limit of finite partially ordered sets in the category of partially ordered sets.
\end{Thm}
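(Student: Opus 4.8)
The plan is to derive both directions from Hochster's theorem (Theorem \ref{Thm:HochsterTheorem}), which states not only that the prime spectra of commutative rings are exactly the \emph{spectral spaces}, but also that the spectral spaces are exactly the inverse limits of finite $T_0$-spaces in the category $\mathrm{Top}$ of topological spaces. The translation between posets and spaces is via the specialization order: to a space $X$ one attaches the poset on the same underlying set with $x\le y$ if and only if $x\in\overline{\{y\}}$; and a finite poset $Q$ becomes a finite $T_0$-space once one declares its down-sets to be the closed sets, a correspondence under which monotone maps and continuous maps of the resulting finite spaces coincide (and, the spaces being finite, every continuous map is automatically spectral). On $\Spec R$ this specialization order is the reverse of the inclusion order; since the class of inverse limits of finite posets is closed under passing to opposite posets, it is harmless to ignore this discrepancy.

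For the forward implication, I would assume $P\cong\Spec R$. By Hochster's theorem $\Spec R$ is homeomorphic to some inverse limit $\varprojlim_i X_i$ of finite $T_0$-spaces in $\mathrm{Top}$. I would then observe that applying the specialization-order construction converts this into an inverse limit in the category $\mathbf{Poset}$. Indeed, the underlying set of $\varprojlim_i X_i$ is the set of compatible tuples in $\prod_i X_i$, and in any product of spaces one has $(x_i)\in\overline{\{(y_i)\}}$ if and only if $x_i\in\overline{\{y_i\}}$ for every $i$; hence the specialization order of the topological inverse limit is precisely the componentwise order on the set of compatible tuples, which is the inverse limit in $\mathbf{Poset}$ of the finite posets attached to the $X_i$. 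Therefore $P$, being the specialization poset of $\Spec R$ up to opposite, is an inverse limit of finite posets.

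For the converse, I would write $P\cong\varprojlim_i Q_i$ with each $Q_i$ a finite poset; after replacing the given diagram by the cofiltered system of finite poset quotients of $P$ one may assume the index set is directed (this reduction is Speed's contribution). Give each $Q_i$ its down-set topology, obtaining a finite $T_0$-space $X_i$. Such a space is spectral: it is quasi-compact and all of its open subsets are quasi-compact, the quasi-compact opens trivially form a basis closed under finite intersection, and it is sober because in a finite poset the irreducible closed sets are exactly the principal down-sets, each with a unique generic point. The transition monotone maps become spectral maps between finite spaces, so by Hochster's theorem the inverse limit $X:=\varprojlim_i X_i$ in $\mathrm{Top}$ is spectral, whence $X\cong\Spec R$ for some commutative ring $R$. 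By the computation above the specialization poset of $X$ is $\varprojlim_i Q_i\cong P$, and adjusting for the opposite-order convention yields $P\cong\Spec R$ with the inclusion relation.

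The real input here, to be quoted rather than reproved, is Hochster's theorem in both of its forms; the manipulations with the specialization order and with limits are routine. The one step needing genuine care is the reduction, in the converse direction, from an arbitrary inverse limit of finite posets to a directed one, so that Hochster's statement about inverse limits of finite $T_0$-spaces applies; handling this cleanly — by passing to the inverse system of finite poset quotients of $P$ — is exactly what Speed's argument supplies.
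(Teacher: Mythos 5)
Your proposal fills in, correctly and in the natural way, the argument that the paper leaves to the citations to Hochster and Speed: both rest on Theorem~\ref{Thm:HochsterTheorem} together with the specialization-order dictionary between finite Kolmogorov spaces and finite posets from Proposition~\ref{Prop:BijectionBetweenKolmogorovAlexandroffSpacesAndPartiallyOrderedSets}. You also correctly flag, and handle, the two bookkeeping points the paper leaves implicit, namely that the specialization order on $\Spec R$ is the opposite of inclusion, and that one must reduce an arbitrary inverse limit of finite posets to a cofiltered one before invoking Hochster's inverse-limit characterization.
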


We show a theorem of the same type for Grothendieck categories. In \cite{Kanda1} and \cite{Kanda2}, we investigated Grothendieck categories by using the \emph{atom spectrum} $\ASpec\mcA$ of a Grothendieck category $\mcA$ (Definition \ref{Def:AtomSpectrum}). It is the set of equivalence classes of monoform objects, which generalizes the prime spectrum of a commutative ring: for a commutative ring $R$, there exists a canonical bijection between $\ASpec(\Mod R)$ and the prime spectrum $\Spec R$ (Proposition \ref{Prop:AtomSpectrumForCommutativeRing}). In this paper, we discuss two structures of the atom spectrum of a Grothendieck category: the topology and the partial order. The topology on the atom spectrum is not a direct analog of the Zariski topology in commutative ring theory. In the case of a commutative ring $R$, the open subsets of $\ASpec(\Mod R)$ correspond to the specialization-closed subsets of $\Spec R$ (Proposition \ref{Prop:TopologyOnAtomSpectrumOfCommutativeRing}). On the other hand, the partial order on the atom spectrum is a generalization of the inclusion relation between prime ideals of a commutative ring (Proposition \ref{Prop:PartialOrderBetweenAtomOfCommutativeRing}). Further correspondences between notions of $\ASpec\mcA$ and those of $\Spec R$ are collected in Table \ref{tb:CorrespondingNotions}.

\begin{table}\label{tb:CorrespondingNotions}
	\caption{Corresponding notions on $\ASpec\mcA$ and $\Spec R$}
	\begin{tabular}{cc}
		\toprule
		Grothendieck category $\mcA$ & Commutative ring $R$ \\
		\midrule
		Atom spectrum $\ASpec\mcA$ & Prime spectrum $\Spec R$ \\
		Atom $\alpha$ in $\mcA$ & Prime ideal $\mfp$ of $R$ \\
		Maximal atoms in $\mcA$ & Maximal ideals of $R$ \\
		Open points in $\ASpec\mcA$ & Maximal ideals of $R$ \\
		Minimal atoms in $\mcA$ & Minimal prime ideals of $R$ \\
		($=$Closed points in $\ASpec\mcA$) & \\
		Associated atoms $\AAss M$ & Associated primes $\Ass M$ \\
		Atom support $\ASupp M$ & Support $\Supp M$ \\
		Open subsets of $\ASpec\mcA$ & Specialization-closed subsets of $\Spec R$ \\
		Closure $\overline{\{\alpha\}}$ of an atom $\alpha$ & $\{\mfq\in\Spec R\mid\mfq\subset\mfp\}$ for a prime ideal $\mfp$ \\
		Generic point in $\ASpec\mcA$ & Unique maximal ideal of $R$ \\
		\bottomrule
	\end{tabular}
\end{table}

Recall that a topological space $X$ is called \emph{Kolmogorov} (also called a $T_{0}$\emph{-space}) if for any distinct points $x_{1}$ and $x_{2}$ in $X$, there exists an open subset of $X$ containing exactly one of them. A topological space $X$ is called \emph{Alexandroff} if the intersection of any family of open subsets of $X$ is also open. For a Kolmogorov space $X$, we can regard $X$ as a partially ordered set by the \emph{specialization order} on $X$. It is well known that this correspondence gives a bijection between homeomorphism classes of Kolmogorov Alexandroff spaces and isomorphism classes of partially ordered sets (Proposition \ref{Prop:BijectionBetweenKolmogorovAlexandroffSpacesAndPartiallyOrderedSets} (\ref{item:PropBijectionBetweenKolmogorovAlexandroffSpacesAndPartiallyOrderedSets})).

For a Grothendieck category $\mcA$, we show that $\ASpec\mcA$ is a Kolmogorov space (Proposition \ref{Prop:AtomSpectrumIsKolmogorovSpace}). We regard $\ASpec\mcA$ as a partially ordered set together with the specialization order. For a commutative ring $R$, the topological space $\ASpec(\Mod R)$ is Alexandroff. Hence the topology and the partial order can be recovered from each other. In general, since the topological space $\ASpec\mcA$ is not necessarily Alexandroff (Example \ref{ex:AtomSpectrumOfCategoryOfGradedModules}), the topology may have more information than the partial order. In this paper, we mainly focus on the partial order on the atom spectrum since the topology has a more complicated structure. Our main result is the following theorem, which contains a complete characterization of the partially ordered sets appearing as the atom spectra of Grothendieck categories.

\begin{Thm}[Theorem \ref{Thm:PosetOfGrothendieckCategory}]\label{Thm:IntroPosetOfGrothendieckCategory}\leavevmode
	\begin{enumerate}
		\item\label{item:ThmIntroTopologicalSpaceOfGrothendieckCategory} Any Kolmogorov Alexandroff space is homeomorphic to the atom spectrum of some Grothendieck category.
		\item\label{item:ThmIntroPosetOfGrothendieckCategory} Any partially ordered set is isomorphic to the atom spectrum of some Grothendieck category as a partially ordered set.
	\end{enumerate}
\end{Thm}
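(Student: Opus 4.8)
The plan is to isolate one construction that converts combinatorial data into a Grothendieck category whose atom spectrum --- as a set, as a topological space, and as a partially ordered set --- can be read off from that data, and then to feed it the data of a poset. Since Proposition \ref{Prop:BijectionBetweenKolmogorovAlexandroffSpacesAndPartiallyOrderedSets} identifies Kolmogorov Alexandroff spaces with partially ordered sets via the specialization order, it suffices to prove part (2) together with the extra fact that the category produced by the construction has an \emph{Alexandroff} atom spectrum. Granting that, the topology on $\ASpec\mcA$ is determined by its specialization order, so starting from a Kolmogorov Alexandroff space $X$, passing to its specialization poset $P$, and applying the construction to $P$ yields a category $\mcA$ with $\ASpec\mcA$ homeomorphic to $X$, which is part (1). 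Example \ref{ex:AtomSpectrumOfCategoryOfGradedModules} shows that the Alexandroff check is genuinely needed and must be arranged by the construction, not skipped.

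First I would build the assignment $Q\mapsto\mcA(Q)$ from a colored quiver $Q$ to a Grothendieck category, the colors prescribing which relations the corresponding paths are required to satisfy in $\mcA(Q)$ (a path forced to vanish, to be invertible, to be monic, and so on); it is convenient to realize $\mcA(Q)$ as a localizing quotient of $\Mod R$ for a suitable ring $R$ assembled from $Q$, so that the Gabriel--Popescu picture is in the background and the Grothendieck-category axioms come for free. Alongside this I would establish a dictionary expressing the underlying set of $\ASpec\mcA(Q)$, the function $\ASupp$, the open subsets, and the specialization order purely in terms of $Q$. Then, for a given poset $P$, I would construct a colored quiver $Q_{P}$ with one distinguished vertex per element of $P$ and suitably colored arrows encoding the covering relations, rigged so that the monoform objects of $\mcA(Q_{P})$ modulo the atom equivalence biject with $P$ and so that $\beta\in\overline{\{\alpha\}}$ holds exactly when the corresponding elements satisfy $q\le p$, matching the row of Table \ref{tb:CorrespondingNotions} on closures of atoms.

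The computation of $\ASpec\mcA(Q_{P})$ is the heart of the matter and the step I expect to be the main obstacle. One must show that $\mcA(Q_{P})$ has \emph{precisely} the atoms coming from the vertices of $Q_{P}$ --- no extra atoms sneaking in, for instance from infinite chains of arrows, from objects possessing no monoform subquotient of the intended shape, or from the localization --- and that the specialization order is \emph{exactly} the order of $P$, neither coarsened nor refined. Maintaining this while $P$ ranges over posets of arbitrary cardinality, and while $\mcA(Q_{P})$ remains a genuine Grothendieck category, is exactly where Grothendieck categories surpass commutative rings: by Theorem \ref{Thm:IntroSpectralPosetIsInverseLimitOfFinitePoset} the prime spectrum of a commutative ring must be an inverse limit of finite posets, whereas no such constraint should survive here, and it is the colored-quiver bookkeeping that has to deliver this.

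Finally, with $\ASpec\mcA(Q_{P})\cong P$ proven as partially ordered sets, I would deduce Alexandroff-ness from the combinatorial description of the open subsets (they will be the up-sets, or the down-sets, of $P$, a family closed under arbitrary intersections), and then assemble parts (1) and (2) as described in the first paragraph.
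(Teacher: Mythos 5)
Your high-level plan matches the paper's: reduce part (1) to part (2) via Proposition \ref{Prop:BijectionBetweenKolmogorovAlexandroffSpacesAndPartiallyOrderedSets}, build a Grothendieck category from a colored quiver, and compute the atom spectrum combinatorially while arranging that the topology be Alexandroff. But the sketch ``one distinguished vertex per element of $P$ and suitably colored arrows encoding the covering relations'' contains a genuine gap. Covering relations do not determine the order on an arbitrary poset --- $(\mbQ,\le)$ has no covers at all --- so such a quiver would simply lose the order. The paper never encodes covers; instead the vertex set of the quiver is indexed by a totally ordered set $E$ of finite increasing chains $p_{\theta_0}<\cdots<p_{\theta_l}$ in $P$ interleaved with integers, which encodes arbitrary comparabilities directly, at the cost of the vertex set being vastly larger than $P$.

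The deeper issue your plan does not confront is what to do when $P$ lacks the ascending chain condition. Under the ACC one can build the quiver by downward induction from the maximal elements --- this is exactly Theorem \ref{Thm:PosetOfLocallyNoetherianGrothendieckCategory}, where each $\Gamma^{p}$ is assembled from a countable chain of previously constructed quivers $\Gamma^{p'}$ with $p<p'$, and Proposition \ref{Prop:AddingMinimalAtom} supplies the dictionary. Without the ACC this induction has no base, and the paper is forced into the non-inductive construction of Theorem \ref{Thm:PosetOfGrothendieckCategory}, where the ``skipping'' arrows of Lemma \ref{Lem:GrothendieckCategoryOfCountableFamilyOfColoredQuiversWithSkippingArrows} (the colors ${^{2}}C$) are precisely what force unwanted subquotients of $M_{\Gamma^{\theta}}$ to fail monoformness and thereby prevent stray atoms from arising out of infinitely descending chains of submodules. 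You rightly flag ``no extra atoms sneaking in'' as the central difficulty, but offer no mechanism for the non-ACC case, which is exactly where the theorem outstrips both the inductive argument and Hochster's inverse-limit constraint for commutative rings. A smaller discrepancy: in the paper $\mcA_{\Gamma}$ is the smallest prelocalizing \emph{subcategory} of $\Mod S_{C}$ containing $M_{\Gamma}$, and a single localizing quotient is applied only at the very end to kill the auxiliary simple atoms $\overline{M_{\Delta^{\theta}}}$, rather than realizing the whole category as a localizing quotient of a module category from the outset.
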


The characterization of the topological spaces appearing as the atom spectra of Grothendieck categories remains open.

Kaplansky's question, asking which partially ordered sets appear as the prime spectra of commutative \emph{noetherian} rings, is still unsolved (see \cite{WiegandWiegand} for the details). For \emph{finite} partially ordered sets, we have the following result.

\begin{Thm}[{de Souza Doering and Lequain \cite[Theorem B]{deSouzaDoeringLequain}}; Theorem \ref{Thm:FinitePosetOfCommutativeNoetherianRing}]\label{Thm:IntroFinitePosetOfCommutativeNoetherianRing}
	Let $P$ be a \emph{finite} partially ordered set. Then $P$ is isomorphic to the prime spectrum of some commutative \emph{noetherian} ring with the inclusion relation if and only if there does not exist a chain of the form $x<y<z$ in $P$.
\end{Thm}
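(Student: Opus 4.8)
The statement has two implications, which I treat in turn; the forward one is a short application of dimension theory, while the reverse is the substance.

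\emph{Necessity.} Suppose $R$ is commutative noetherian with $\Spec R$ finite and, for contradiction, that $P\cong\Spec R$ contains a chain $\mfp_0\subsetneq\mfp_1\subsetneq\mfp_2$. Replacing $R$ by the noetherian local domain $(R/\mfp_0)_{\mfp_2}$, whose spectrum is again finite, I reduce to the case where $R$ is a noetherian local domain $(T,\mfm)$ with $\dim T\geq 2$. The plan is to contradict the finiteness of $\Spec T$ by producing infinitely many height-one primes: if $\mfq_1,\dots,\mfq_r$ were all of them, then none would contain $\mfm$ (their height is $1<\dim T$), so by prime avoidance there is $x\in\mfm\setminus(\mfq_1\cup\dots\cup\mfq_r)$ with $x\neq 0$; but by Krull's principal ideal theorem every minimal prime over $(x)$ has height one (height zero is impossible in a domain with $x\neq 0$), hence lies among the $\mfq_i$, contradicting $x\notin\bigcup_i\mfq_i$.

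\emph{Sufficiency.} Assume $P$ is finite with no chain $x<y<z$, so every element of $P$ is minimal or maximal. Since $\Spec(R_1\times R_2)\cong\Spec R_1\sqcup\Spec R_2$, finite products of rings reduce the problem to connected $P$; for $|P|=1$ take $R$ a field, and otherwise $P$ is a connected ``bipartite'' poset --- its set $M$ of minimal elements and its set $N$ of remaining (necessarily maximal) elements are both nonempty, every element of $M$ lies below some element of $N$, and the order is an incidence relation between $M$ and $N$. Fix an infinite field $k$. For each $p\in M$ choose a semilocal principal ideal domain $D_p$ with residue field $k$ at every maximal ideal, having exactly $\#\{q\in N\mid q>p\}$ maximal ideals and $(0)$ as its only other prime (one may take a localization of $k[x]$), and fix a bijection between its maximal ideals and $\{q\in N\mid q>p\}$. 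Set $A=\prod_{p\in M}D_p$ and let $R\subseteq A$ be the subring of tuples whose residues at the maximal ideals of $A$ attached to a common $q\in N$ all agree, for every $q\in N$.

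Then the Jacobson radical $\mfa$ of $A$ satisfies $\mfa\subseteq R$; the quotient $A/\mfa$ is a finite-dimensional $k$-algebra, finitely generated as a module over $R/\mfa\cong k^{|N|}$, and lifting module generators (together with $\mfa\subseteq R$) shows $A$ is a finitely generated $R$-module, so $R$ is noetherian by the Eakin--Nagata theorem. To identify $\Spec R$ with $P$: localizing $R$ at nonzero elements of $\mfa$, where $R$ and $A$ agree, matches the non-maximal primes of $R$ with the minimal primes of $A$, hence with $M$; since $R/\mfa$ is Artinian, every remaining prime of $R$ is maximal --- each lies under a maximal ideal of $A$, hence contains $\mfa$, and these primes are precisely the contractions $\mfa_q\cap R$ with $q\in N$, where $\mfa_q$ is the intersection of the maximal ideals of $A$ attached to $q$; finally, a computation with the congruences defining $R$, using the Chinese remainder theorem inside each $D_p$, shows that the prime of $R$ coming from $(0)\subseteq D_p$ is contained in $\mfa_q\cap R$ precisely when $q>p$. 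This gives $\Spec R\cong P$ as partially ordered sets.

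The main obstacle is the sufficiency argument: arranging $R$ so that the prescribed identification of maximal ideals introduces no spurious primes or order relations, handling the residue-field matching that makes the defining congruences well-posed, and proving noetherianity --- for which the Eakin--Nagata theorem (equivalently, a direct argument through the conductor $\mfa$) is the essential input. The necessity direction is, by comparison, routine.
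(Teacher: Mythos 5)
Your proof is correct. Note that the paper does not argue either direction from scratch: it attributes necessity to Kaplansky's Theorem~144 (a noetherian ring with finite spectrum has Krull dimension at most one) and sufficiency directly to de~Souza~Doering--Lequain's Theorem~B, so you are effectively supplying the arguments that the paper defers to its references. Your necessity step is the standard proof behind Kaplansky's result: after passing to the noetherian local domain $(R/\mfp_0)_{\mfp_2}$, prime avoidance together with Krull's principal ideal theorem yields infinitely many height-one primes, contradicting finiteness of the spectrum. Your sufficiency step is a conductor-square (``gluing of maximal ideals'') pullback inside a finite product of semilocal PIDs, with noetherianity obtained from Eakin--Nagata via $\mfa\subseteq R$ and finite generation of $A$ over $R$, and the poset identification via the usual correspondence between primes of $R$ and of $A$ avoiding the conductor $\mfa$, together with the Chinese remainder theorem computation giving $P_p\cap R\subseteq\mfa_q\cap R$ precisely when $p<q$. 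This is in the same spirit as the construction behind Theorem~B of de~Souza~Doering--Lequain (whose very title is about gluing maximal ideals), so your route is not really different from the cited one; rather, it is a self-contained version of it. The supporting claims --- $\mfa$ is the conductor, $R/\mfa\cong k^{|N|}$, primes of $R$ not containing $\mfa$ correspond to the minimal primes of $A$, and the remaining primes are the $|N|$ maximal ideals of $R/\mfa$ --- all check out as you state them, and the reduction to connected $P$ via finite products and the case $|P|=1$ is handled correctly.
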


Since Theorem \ref{Thm:IntroSpectralPosetIsInverseLimitOfFinitePoset} implies that any finite partially ordered set appears as the prime spectrum of some commutative ring not necessarily noetherian, the answer to Kaplansky's question is expected to be quite different from Theorem \ref{Thm:IntroSpectralPosetIsInverseLimitOfFinitePoset}.

The partially ordered sets appearing as the atom spectra of locally noetherian Grothendieck categories satisfy the ascending chain condition similarly to the case of commutative rings (Proposition \ref{Prop:ExistenceOfMaximalAtom}). On the other hand, we show that the locally noetherianness does not restrict the structure of partial orders in the case of finite partially ordered sets.

\begin{Thm}[Corollary \ref{Cor:FinitePosetOfLocallyNoetherianGrothendieckCategory}]\label{Thm:IntroPosetOfLocallyNoetherianGrothendieckCategory}\leavevmode
	\begin{enumerate}
		\item\label{item:ThmIntroTopologicalSpaceOfLocallyNoetherianGrothendieckCategory} Any finite Kolmogorov space is homeomorphic to the atom spectrum of some locally noetherian Grothendieck category.
		\item\label{item:ThmIntroPosetOfLocallyNoetherianGrothendieckCategory} Any finite partially ordered set is isomorphic to the atom spectrum of some locally noetherian Grothendieck category as a partially ordered set.
	\end{enumerate}
\end{Thm}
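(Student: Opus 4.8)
**Proof proposal for Theorem \ref{Thm:IntroPosetOfLocallyNoetherianGrothendieckCategory} (= Corollary \ref{Cor:FinitePosetOfLocallyNoetherianGrothendieckCategory}).**

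The plan is to deduce this from the main construction behind Theorem \ref{Thm:IntroPosetOfGrothendieckCategory}, tracking the extra finiteness of the input and verifying that the output Grothendieck category is locally noetherian. Since a finite Kolmogorov space is automatically Alexandroff (any intersection of open sets is a finite intersection), part (\ref{item:ThmIntroTopologicalSpaceOfLocallyNoetherianGrothendieckCategory}) is equivalent to part (\ref{item:ThmIntroPosetOfLocallyNoetherianGrothendieckCategory}) via the bijection between Kolmogorov Alexandroff spaces and partially ordered sets (Proposition \ref{Prop:BijectionBetweenKolmogorovAlexandroffSpacesAndPartiallyOrderedSets}); so it suffices to treat a finite poset $P$. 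First I would recall that the colored-quiver construction used to prove Theorem \ref{Thm:IntroPosetOfGrothendieckCategory} associates to $P$ a quiver $Q_P$, and a Grothendieck category $\mcA_P$ built from $Q_P$ (as a category of representations, or a localization thereof), with a specified isomorphism $\ASpec\mcA_P\cong P$ of partially ordered sets. The point is that when $P$ is finite, the quiver $Q_P$ is finite, and the resulting category is generated by a finite family of objects each of which one can check directly is noetherian.

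The key steps, in order, are: (1) specialize the general construction to a finite poset $P$ and observe that the colored quiver $Q_P$ has finitely many vertices and finitely many arrows; (2) exhibit an explicit generating set of $\mcA_P$ — typically the representable/projective objects $P_v$ at the vertices $v$, or their images under the relevant localization functor — and show each such generator is a noetherian object, using that the quiver is finite so that each $P_v$ has a finite composition-like filtration (or at least satisfies ACC on subobjects by a direct combinatorial argument on supports of representations); (3) conclude that $\mcA_P$ is locally noetherian, i.e.\ has a generating set of noetherian objects; (4) finally, invoke from the proof of Theorem \ref{Thm:IntroPosetOfGrothendieckCategory} that $\ASpec\mcA_P\cong P$ as posets, and (for part (\ref{item:ThmIntroTopologicalSpaceOfLocallyNoetherianGrothendieckCategory})) that for a locally noetherian Grothendieck category the atom spectrum is Alexandroff, so that the poset isomorphism is in fact a homeomorphism onto the finite Kolmogorov space corresponding to $P$.

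The main obstacle I expect is step (2): verifying that the generators of $\mcA_P$ are genuinely noetherian objects. In the general (possibly infinite $P$) construction the category need not be locally noetherian, and the passage through a localizing subcategory can destroy noetherianness of objects; so I would need to check that, for finite $Q_P$, the localizing subcategory being quotiented out is itself generated by noetherian objects (equivalently, that the torsion theory is of finite type), which guarantees the quotient of a locally noetherian category stays locally noetherian. Concretely this reduces to a finite combinatorial verification on representations of the finite quiver $Q_P$ — bounding the length of ascending chains of subrepresentations in terms of the number of vertices and the colors/relations imposed — and then citing the standard fact that a quotient of a locally noetherian Grothendieck category by a localizing subcategory of finite type is again locally noetherian. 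Once that is in place, steps (1), (3), (4) are routine given the earlier results in the paper.
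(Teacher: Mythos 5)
There is a genuine gap, and it starts with a factual error: it is not true that finiteness of $P$ makes the colored quiver finite. In the construction actually used for this result (Theorem \ref{Thm:PosetOfLocallyNoetherianGrothendieckCategory}), the colored quiver $\Gamma^{p}$ attached to a non-maximal $p\in P$ is an infinite chain $\Gamma^{p_{0}}\Rrightarrow\Gamma^{p_{0}}\Rrightarrow\Gamma^{p_{1}}\Rrightarrow\Gamma^{p_{0}}\Rrightarrow\cdots$ in which each lower quiver is repeated infinitely often; this is forced, since the mechanism for producing a monoform module $M_{\Gamma^p}$ whose atom lies strictly below the others is exactly the one in Example \ref{ex:AtomSpectrumOfGrothendieckCategoryAssociatedWithInfiniteColoredQuiver} and Proposition \ref{Prop:AddingMinimalAtom}, and it inherently needs an infinite chain. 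Consequently the generators $M_{\Gamma^p}$ are infinite-dimensional; they do not have ``finite composition-like filtrations.'' Their noetherianness is a nontrivial fact established by Lemma \ref{Lem:AddingMinimalAtom}~(\ref{item:LemSerialityOfModuleOfCountableColoredQuiver}), which shows that every proper submodule is eventually constant along the chain. Your step (2) as written --- a finite combinatorial verification on a finite quiver --- would not get off the ground.

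The deeper problem is that you are proposing to specialize the wrong construction. The general-poset construction behind Theorem \ref{Thm:IntroPosetOfGrothendieckCategory} (i.e.\ Theorem \ref{Thm:PosetOfGrothendieckCategory}) indexes the vertices over a set $E$ whose coordinates run over all of $\mbZ$, precisely so that it can handle posets without the ascending chain condition; as a result the modules $M_{\Gamma^{\theta}}$ there are not noetherian (there are infinite ascending chains of submodules $x_{v_{e}}S_{C}$ as $e$ decreases in $E$), and the subsequent quotient by $\ASupp^{-1}\Phi$ cannot be expected to be locally noetherian even for finite $P$. Your worry that the localizing subcategory might destroy noetherianness is therefore well founded, but the paper does not try to repair it; instead it proves a separate, quotient-free statement, Theorem \ref{Thm:PosetOfLocallyNoetherianGrothendieckCategory}, which handles all posets satisfying ACC and a countability condition by an inductive colored-quiver construction, and then derives the finite case as Corollary \ref{Cor:FinitePosetOfLocallyNoetherianGrothendieckCategory}. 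Your reduction of part (\ref{item:ThmIntroTopologicalSpaceOfLocallyNoetherianGrothendieckCategory}) to part (\ref{item:ThmIntroPosetOfLocallyNoetherianGrothendieckCategory}) via Proposition \ref{Prop:BijectionBetweenKolmogorovAlexandroffSpacesAndPartiallyOrderedSets} is correct and matches the paper, but the core of the argument needs to be rebuilt on the ACC-based construction rather than the general one.
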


In order to show Theorem \ref{Thm:IntroPosetOfGrothendieckCategory} and Theorem \ref{Thm:IntroPosetOfLocallyNoetherianGrothendieckCategory}, we introduce a construction of Grothendieck categories from colored quivers (Definition \ref{Def:GrothendieckCategoryAssociatedWithColoredQuiver}). By using this, we also construct the following Grothendieck category.

\begin{Thm}[Theorem \ref{Thm:GrothendieckCategoryWithNoAtom}]\label{Thm:IntroGrothendieckCategoryWithNoAtom}
	There exists a Grothendieck category $\mcA$ which has empty atom spectrum but has nonempty injective spectrum, that is, $\mcA$ has no atom but has at least one indecomposable injective object.
\end{Thm}

In section \ref{sec:AtomSpectrum}, we recall fundamental results on the atom spectrum of an abelian category. An element of the atom spectrum is called an \emph{atom}. In section \ref{sec:TopologicalPropertiesOfAtomSpectra}, we show some topological properties of the atom spectrum. In section \ref{sec:PartialOrdersOnAtomSpectra}, we introduce a partial order on the atom spectrum and investigate the maximality and the minimality of atoms. We show the existence of a maximal atom and that of a minimal atom under some conditions of noetherianness. In section \ref{sec:AtomSpectraOfQuotientCategories}, we investigate the atom spectrum of the quotient category of a Grothendieck category by a localizing subcategory. In section \ref{sec:Localization}, we introduce the localization of a Grothendieck category at an atom. This is a reformulation of the localization of a Grothendieck category written in \cite{Popescu}. We see that this operation is closely related to the partial order on the atom spectrum. In section \ref{sec:ConstructionOfGrothendieckCategories}, we introduce a construction of Grothendieck categories from colored quivers and show Theorem \ref{Thm:IntroPosetOfGrothendieckCategory} and Theorem \ref{Thm:IntroPosetOfLocallyNoetherianGrothendieckCategory}. In section \ref{sec:ExamplesOfGrothendieckCategories}, we construct several Grothendieck categories which have remarkable structures. We show Theorem \ref{Thm:IntroGrothendieckCategoryWithNoAtom} as one of these constructions.

\begin{Not}\label{Not:Notation}
	Any ring is supposed to be an associative ring with an identity element. Any module over a ring is supposed to be a right module. For a ring $R$, denote by $\Mod R$ the category of right $R$-modules and by $\mod R$ the category of finitely generated right $R$-modules. Any field is supposed to be commutative.
	
	Any graded ring is supposed to be positively graded, and any graded module is $\mbZ$-graded. For a graded ring $T$, denote by $\GrMod T$ the category of graded right $T$-modules. For a graded right $T$-module $M=\bigoplus_{i\in\mbZ}M_{i}$ and $j\in\mbZ$, denote by $M(j)$ the graded right $T$-module which is isomorphic to $M$ as a nongraded right $T$-module and has the grading defined by $M(j)_{i}=M_{i+j}$.
\end{Not}

\section{Atom spectrum}
\label{sec:AtomSpectrum}

In this section, we recall some fundamental results on the atom spectrum $\ASpec\mcA$ of an abelian category $\mcA$, especially in the case where $\mcA$ is a Grothendieck category.

We recall the definitions of main subjects in this paper.

\begin{Def}\label{Def:GrothendieckCategory}\leavevmode
	\begin{enumerate}
		\item\label{item:DefGrothendieckCategory} An abelian category $\mcA$ is called a \emph{Grothendieck category} if $\mcA$ has exact direct limits and a generator.
		\item\label{item:DefLocallyNoetherianGrothendieckCategory} A Grothendieck category $\mcA$ is called \emph{locally noetherian} if there exists a generating set of $\mcA$ consisting of noetherian objects.
	\end{enumerate}
\end{Def}

\begin{rem}\label{rem:ExactDirectLimits}
	It is well known that the existence of exact direct limits in an abelian category $\mcA$ is equivalent to that $\mcA$ satisfies both of the following conditions.
	\begin{enumerate}
		\item $\mcA$ has arbitrary direct sums.
		\item Let $M$ be an object in $\mcA$, $N$ a subobject of $M$, and $\mcL=\{L_{\lambda}\}_{\lambda\in\Lambda}$ a family of subobjects of $M$ such that any finite subfamily of $\mcL$ has an upper bound in $\mcL$. Then we have
		\begin{equation*}
			\left(\sum_{\lambda\in\Lambda}L_{\lambda}\right)\cap N=\sum_{\lambda\in\Lambda}(L_{\lambda}\cap N).
		\end{equation*}
	\end{enumerate}
\end{rem}

We say that a Grothendieck category is \emph{nonzero} if it has a nonzero object.

In \cite[Theorem 2.9]{Mitchell}, it is shown that any object $M$ in a Grothendieck category $\mcA$ has its injective envelope $E(M)$ in $\mcA$.

For a Grothendieck category $\mcA$, denote by $\Noeth\mcA$ the full subcategory of $\mcA$ consisting of objects which are the sum of their noetherian subobjects.

\begin{Prop}\label{Prop:LocallyNoetherianGrothendieckCategoryFromGrothendieckCategory}
	For any Grothendieck category $\mcA$, the full subcategory $\Noeth\mcA$ of $\mcA$ is closed under subobjects, quotient objects, and arbitrary direct sums. In particular, $\Noeth\mcA$ is a locally noetherian Grothendieck category.
\end{Prop}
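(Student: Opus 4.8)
The plan is to verify the three closure properties of $\Noeth\mcA$ directly from the definition, and then deduce that it is a locally noetherian Grothendieck category. Write $\mcB = \Noeth\mcA$ for brevity during the sketch. Recall that $M \in \mcB$ means $M = \sum_{\lambda} M_\lambda$ where each $M_\lambda$ is a noetherian subobject of $M$.

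First I would treat quotient objects. If $M \in \mcB$ and $\pi\colon M \twoheadrightarrow M/N$ is the canonical epimorphism, then $M/N = \sum_\lambda \pi(M_\lambda)$, and each $\pi(M_\lambda)$ is a quotient of the noetherian object $M_\lambda$, hence noetherian; so $M/N \in \mcB$. Next, arbitrary direct sums: if $M_i \in \mcB$ for $i \in I$, then each $M_i$ is the sum of its noetherian subobjects, and $\bigoplus_i M_i$ is the sum (over $i$ and over the chosen noetherian subobjects of each $M_i$) of these, each of which is still a noetherian subobject of the direct sum; hence $\bigoplus_i M_i \in \mcB$. The only genuinely delicate point is closure under subobjects. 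Let $M \in \mcB$ and let $N \le M$. I want to show $N$ is the sum of its noetherian subobjects, i.e.\ that every element (more precisely, every "point", realized via a morphism from the generator, or via the internal language of the Grothendieck category) of $N$ lies in a noetherian subobject of $N$. Given a noetherian subobject $M_\lambda \le M$, the intersection $N \cap M_\lambda$ is a subobject of the noetherian object $M_\lambda$, hence noetherian, and it is a subobject of $N$. Since $M = \sum_\lambda M_\lambda$ and direct limits are exact in $\mcA$, taking the directed union of the $M_\lambda$ (after closing the index set under finite sums, which is harmless since a finite sum of noetherian subobjects is noetherian) and intersecting with $N$ gives $N = N \cap M = N \cap \sum_\lambda M_\lambda = \sum_\lambda (N \cap M_\lambda)$, using exactly the distributivity law recorded in Remark~\ref{rem:ExactDirectLimits}(2). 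Thus $N$ is the sum of the noetherian subobjects $N \cap M_\lambda$, so $N \in \mcB$.

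For the final sentence, I would argue that $\mcB = \Noeth\mcA$, equipped with the obvious inclusion and the abelian structure inherited from $\mcA$, is a Grothendieck category in its own right: since $\mcB$ is closed under subobjects, quotients, and arbitrary direct sums in $\mcA$, it is an abelian subcategory closed under these operations, and hence exactness of direct limits in $\mcA$ restricts to exactness of direct limits in $\mcB$ (every object appearing in a direct system in $\mcB$ and every relevant subobject and quotient already lies in $\mcB$). It remains to produce a generating set of noetherian objects. Take any generator $G$ of $\mcA$; then $G$ need not lie in $\mcB$, but the family of all noetherian subobjects of $G$ (or, to be safe, the family of all noetherian quotients of subobjects of $G$, i.e.\ all noetherian subquotients of $G$) is a set up to isomorphism, consists of objects of $\mcB$, and generates $\mcB$: indeed for any nonzero $X \in \mcB$ there is a nonzero morphism $G \to X$ whose image is a nonzero subobject of $X$ that is a quotient of a subobject of $G$, and since $X \in \mcB$ this image contains a nonzero noetherian subobject, which then receives a nonzero map from a noetherian subquotient of $G$. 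Hence $\mcB$ has a generating set of noetherian objects, so $\mcB$ is locally noetherian by Definition~\ref{Def:GrothendieckCategory}(\ref{item:DefLocallyNoetherianGrothendieckCategory}).

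The main obstacle is the subobject case, and within it the careful invocation of the distributive law of Remark~\ref{rem:ExactDirectLimits}(2): one must first replace the indexing family $\{M_\lambda\}$ of noetherian subobjects of $M$ by the family of their finite sums so that it becomes directed (every finite subfamily has an upper bound in the family), which is exactly the hypothesis needed to apply that law, and one must note that finite sums of noetherian objects are noetherian so this enlargement stays inside the class we care about. Everything else is a routine unwinding of definitions. A secondary, more bookkeeping-type point is the set-theoretic one in the last paragraph — ensuring the generating family of noetherian subquotients of a fixed generator $G$ is a genuine set — which is standard for Grothendieck categories.
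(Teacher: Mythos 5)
Your proposal is correct and follows essentially the same route as the paper: closure under quotient objects and direct sums is dismissed as immediate, and closure under subobjects is proved via the distributive law $L=\bigl(\sum_{\lambda}M_{\lambda}\bigr)\cap L=\sum_{\lambda}(M_{\lambda}\cap L)$ applied to the (directed) family of all noetherian subobjects of $M$, exactly as in the paper. Your added paragraph explaining why $\Noeth\mcA$ is then a locally noetherian Grothendieck category — exact direct limits are inherited, and the noetherian subquotients of a generator of $\mcA$ form a generating set — is a correct and welcome elaboration of the "in particular" clause, which the paper leaves unjustified.
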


\begin{proof}
	It is immediate that $\Noeth\mcA$ is closed under quotient objects and direct sums. Let $M$ be an object in $\mcA$ belonging to $\Noeth\mcA$, $L$ a subobject of $M$, and $\{M_{\lambda}\}_{\lambda\in\Lambda}$ the family of all the noetherian subobjects of $M$. Then by the definition of the Grothendieck categories, we have
	\begin{equation*}
		L=\left(\sum_{\lambda\in\Lambda}M_{\lambda}\right)\cap L=\sum_{\lambda\in\Lambda}(M_{\lambda}\cap L).
	\end{equation*}
	Since $M_{\lambda}\cap L$ is a noetherian subobject of $L$ for any $\lambda\in\Lambda$, we deduce that $L$ belongs to $\Noeth\mcA$.
\end{proof}

Monoform objects are used in order to define the atom spectrum of $\mcA$.

\begin{Def}\label{Def:MonoformObject}
	Let $\mcA$ be an abelian category.
	\begin{enumerate}
		\item\label{item:DefMonoformObject} An object $H$ in $\mcA$ is called \emph{monoform} if for any nonzero subobject $L$ of $H$, there exists no common nonzero subobject of $H$ and $H/L$, that is, there does not exist a nonzero subobject of $H$ which is isomorphic to a subobject of $H/L$.
		\item\label{item:DefAtomEquivalence} We say that monoform objects $H_{1}$ and $H_{2}$ in $\mcA$ are \emph{atom-equivalent} if there exists a common nonzero subobject of $H_{1}$ and $H_{2}$.
	\end{enumerate}
\end{Def}

Recall that a nonzero object $U$ in an abelian category $\mcA$ is called \emph{uniform} if for any nonzero subobjects $L_{1}$ and $L_{2}$ of $U$, we have $L_{1}\cap L_{2}\neq 0$. It is easy to see that any nonzero subobject of a uniform object in $\mcA$ is also uniform. A similar result holds for monoform objects.

\begin{Prop}\label{Prop:PropertyOfMonoformObject}
	Let $\mcA$ be an abelian category.
	\begin{enumerate}
		\item\label{item:PropSubobjectOfMonoformObjectIsMonoform} Any nonzero subobject of a monoform object in $\mcA$ is also monoform.
		\item\label{item:PropMonoformObjectIsUniform} Any monoform object in $\mcA$ is uniform.
	\end{enumerate}
\end{Prop}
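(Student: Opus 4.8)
The plan is to prove both statements directly from the definitions, handling (\ref{item:PropSubobjectOfMonoformObjectIsMonoform}) first and then deducing (\ref{item:PropMonoformObjectIsUniform}) from it (together with the elementary fact that a nonzero subobject of a uniform object is uniform, which is essentially what part (\ref{item:PropMonoformObjectIsUniform}) will be leveraged against once we know monoform implies uniform for the whole object — so actually (\ref{item:PropMonoformObjectIsUniform}) must be proven on its own footing).

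For part (\ref{item:PropSubobjectOfMonoformObjectIsMonoform}), let $H$ be monoform and let $N$ be a nonzero subobject of $H$; I want to show $N$ is monoform. So take a nonzero subobject $L$ of $N$ and suppose, for contradiction, that there is a nonzero subobject $L'$ of $N$ which is isomorphic to a subobject of $N/L$. The key point is to push this up to $H$: since $L \subseteq N \subseteq H$, the inclusion $N \hookrightarrow H$ induces a monomorphism $N/L \hookrightarrow H/L$ (because $N \cap L = L$), so the subobject of $N/L$ isomorphic to $L'$ is also a subobject of $H/L$. Thus $L'$ is a nonzero subobject of $H$ that is isomorphic to a subobject of $H/L$, contradicting the monoformness of $H$ (applied with the nonzero subobject $L$ of $H$). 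Hence $N$ is monoform. The only mild care needed is checking that the map $N/L \to H/L$ is genuinely monic, which is a routine diagram chase using exactness.

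For part (\ref{item:PropMonoformObjectIsUniform}), let $H$ be monoform and suppose it is not uniform, so there exist nonzero subobjects $L_1, L_2$ of $H$ with $L_1 \cap L_2 = 0$. Then the composite $L_1 \hookrightarrow H \twoheadrightarrow H/L_2$ has kernel $L_1 \cap L_2 = 0$, hence is a monomorphism, exhibiting the nonzero object $L_1$ as a common nonzero subobject of $H$ and $H/L_2$. This contradicts monoformness of $H$ with respect to the nonzero subobject $L_2$. Therefore $H$ is uniform.

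The arguments are both short and the main (very minor) obstacle is just the bookkeeping of showing the relevant induced maps on quotients are monomorphisms; there is no deep content, so no significant difficulty is expected.
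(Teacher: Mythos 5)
Your proof is correct and uses the same standard, essentially forced arguments (the paper itself just cites \cite{Kanda1} for these). For (\ref{item:PropSubobjectOfMonoformObjectIsMonoform}), the observation that $L\subseteq N\subseteq H$ gives a monomorphism $N/L\hookrightarrow H/L$ (the composite $N\hookrightarrow H\twoheadrightarrow H/L$ has kernel $N\cap L=L$) lets you transport a common nonzero subobject of $N$ and $N/L$ to one of $H$ and $H/L$, contradicting monoformness of $H$; for (\ref{item:PropMonoformObjectIsUniform}), if $L_1\cap L_2=0$ then $L_1$ embeds into $H/L_2$, giving a common nonzero subobject of $H$ and $H/L_2$ — both arguments are exactly the right ones, with no gaps.
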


\begin{proof}
	(\ref{item:PropSubobjectOfMonoformObjectIsMonoform}) \cite[Proposition 2.2]{Kanda1}.
	
	(\ref{item:PropMonoformObjectIsUniform}) \cite[Proposition 2.6]{Kanda1}.
\end{proof}

As in \cite[Proposition 2.8]{Kanda1}, the atom equivalence is an equivalence relation between monoform objects. Hence we can consider the quotient class by the atom equivalence.

\begin{Def}\label{Def:AtomSpectrum}
	Let $\mcA$ be an abelian category. Denote by $\ASpec\mcA$ the quotient class of the class of monoform objects by the atom equivalence, and call it the \emph{atom spectrum} of $\mcA$. We call an element of $\ASpec\mcA$ an \emph{atom} in $\mcA$. The equivalence class of a monoform object $H$ in $\mcA$ is denoted by $\overline{H}$.
\end{Def}

The notion of atoms was introduced by Storrer \cite{Storrer} in the case of the categories of modules over rings. Atoms in abelian categories play a central role throughout this paper.

Note that $\ASpec\mcA$ is not necessarily a set in general. However, in the case where $\mcA$ is a Grothendieck category, we can show that it is a set.

\begin{Prop}\label{Prop:AtomSpectrumAndGeneratingSet}\leavevmode
	\begin{enumerate}
		\item\label{item:PropAtomSpectrumAndGeneratingSet} Let $\mcA$ be an abelian category with a generating set $\mcG$. Then for any atom $\alpha$ in $\mcA$, there exist an object $G$ in $\mcA$ belonging to $\mcG$ and a subobject $L$ of $G$ such that $G/L$ is a monoform object in $\mcA$ satisfying $\overline{G/L}=\alpha$.
		\item\label{item:PropAtomSpectrumIsNotNecessarilySet} If $\mcA$ is a Grothendieck category, then $\ASpec\mcA$ is a set.
	\end{enumerate}
\end{Prop}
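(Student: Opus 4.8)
The plan is to treat the two parts in turn, obtaining (\ref{item:PropAtomSpectrumIsNotNecessarilySet}) from (\ref{item:PropAtomSpectrumAndGeneratingSet}) together with the well-poweredness of Grothendieck categories.

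For (\ref{item:PropAtomSpectrumAndGeneratingSet}), I would start by writing $\alpha=\overline{H}$ for some monoform object $H$ in $\mcA$. Since $H\neq 0$ and $\mcG$ is a generating set, applying the defining property of a generating set to the monomorphism $0\to H$ yields an object $G\in\mcG$ and a nonzero morphism $f\colon G\to H$. Put $L=\Ker f$, so that $G/L\cong\Im f$ is a nonzero subobject of $H$. By Proposition \ref{Prop:PropertyOfMonoformObject}(\ref{item:PropSubobjectOfMonoformObjectIsMonoform}), $\Im f$ is monoform, hence so is $G/L$; and since $\Im f$ is a common nonzero subobject of $G/L$ and $H$, these are atom-equivalent, giving $\overline{G/L}=\overline{H}=\alpha$. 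The only point needing care is the passage from ``$H\neq 0$'' to ``some $G\in\mcG$ maps nontrivially to $H$'', which is exactly the definition of a generating set.

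For (\ref{item:PropAtomSpectrumIsNotNecessarilySet}), I would first record that any Grothendieck category is well-powered, i.e.\ every object has only a set of subobjects. Indeed, fixing a generator $U$ of $\mcA$, a subobject $N$ of an object $M$ is determined by the subset $\{f\in\Hom_{\mcA}(U,M)\mid f\text{ factors through }N\}$ of $\Hom_{\mcA}(U,M)$: if $N_{1}$ and $N_{2}$ give the same subset, then every morphism $U\to N_{1}$ factors through $N_{1}\cap N_{2}$, so $N_{1}=N_{1}\cap N_{2}$ because $U$ generates, and symmetrically $N_{2}=N_{1}\cap N_{2}$. Thus the subobjects of $M$ inject into the power set of $\Hom_{\mcA}(U,M)$, which is a set. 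Now I would apply part (\ref{item:PropAtomSpectrumAndGeneratingSet}) with the generating set $\mcG=\{U\}$: every atom in $\mcA$ has the form $\overline{U/L}$ for some subobject $L$ of $U$, so $L\mapsto\overline{U/L}$ defines a surjection from a subclass of the set of subobjects of $U$ onto $\ASpec\mcA$; hence $\ASpec\mcA$ is a set. The main (and essentially only) obstacle is the well-poweredness argument; the rest is an unwinding of definitions combined with Proposition \ref{Prop:PropertyOfMonoformObject}.
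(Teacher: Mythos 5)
Your argument is correct and takes essentially the same route as the paper for both parts: pick a monoform representative $H$ of $\alpha$, use the generating set to produce a nonzero $f\colon G\to H$ with $G\in\mcG$, and observe that $G/\Ker f\cong\Im f$ is a nonzero monoform subobject of $H$, hence atom-equivalent to $H$. The only difference is cosmetic: for part (\ref{item:PropAtomSpectrumIsNotNecessarilySet}) the paper simply cites Stenstr\"om for the fact that a Grothendieck category is well-powered, while you give a short self-contained proof of that fact via the generator; both proofs then reduce $\ASpec\mcA$ to (the image of) a subset of the set of subobjects of a generator.
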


\begin{proof}
	(\ref{item:PropAtomSpectrumAndGeneratingSet}) Let $H$ be a monoform object in $\mcA$ such that $\overline{H}=\alpha$. There exist an object $G$ in $\mcA$ belonging to $\mcG$ and a nonzero morphism $f\colon G\to H$. Then $G/\Ker f$ is isomorphic to the nonzero subobject $\Im f$ of $H$. By Proposition \ref{Prop:PropertyOfMonoformObject} (\ref{item:PropSubobjectOfMonoformObjectIsMonoform}), $G/\Ker f$ is a monoform object in $\mcA$ which is atom-equivalent to $H$.
	
	(\ref{item:PropAtomSpectrumIsNotNecessarilySet}) This follows from (\ref{item:PropAtomSpectrumAndGeneratingSet}) since the collection of quotient objects of an object in $\mcA$ is a set as in \cite[Proposition IV.6.6]{Stenstrom}.
\end{proof}

By using these facts, we obtain a description of the atom spectra of the categories of modules over rings and the categories of graded modules over graded rings.

\begin{Cor}\label{Cor:AtomIsRepresentedByOneGeneratedModule}\leavevmode
	\begin{enumerate}
		\item\label{item:CorAtomIsRepresentedByOneGeneratedModule} Let $R$ be a ring and $\alpha$ an atom in $\Mod R$. Then there exists a right ideal $J$ of $R$ such that $R/J$ is a monoform object in $\Mod R$ satisfying $\overline{R/J}=\alpha$. In particular, $\ASpec(\Mod R)$ is a set.
		\item\label{item:CorAtomIsRepresentedByOneGeneratedGradedModule} Let $T$ be a graded ring and $\alpha$ an atom in $\GrMod T$. Then there exist a homogeneous right ideal $J$ of $T$ and $i\in\mbZ$ such that $(T/J)(i)$ is a monoform object in $\GrMod T$ satisfying $\overline{(T/J)(i)}=\alpha$. In particular, $\ASpec(\GrMod T)$ is a set.
	\end{enumerate}
\end{Cor}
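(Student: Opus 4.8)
The plan is to deduce both statements directly from Proposition \ref{Prop:AtomSpectrumAndGeneratingSet} by choosing the obvious generating sets. The work is essentially bookkeeping about subobjects of the chosen generators; I do not expect a genuine obstacle, the only point requiring a moment's care being the translation between graded submodules of a shift $T(i)$ and homogeneous right ideals of $T$.

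For part (\ref{item:CorAtomIsRepresentedByOneGeneratedModule}), I would first observe that $\{R\}$ is a generating set of $\Mod R$. Applying Proposition \ref{Prop:AtomSpectrumAndGeneratingSet} (\ref{item:PropAtomSpectrumAndGeneratingSet}) with $\mcG=\{R\}$ yields a subobject $L$ of $R$ such that $R/L$ is a monoform object in $\Mod R$ with $\overline{R/L}=\alpha$; since subobjects of $R$ in $\Mod R$ are precisely the right ideals, setting $J=L$ gives the claim. The final sentence then follows either from Proposition \ref{Prop:AtomSpectrumAndGeneratingSet} (\ref{item:PropAtomSpectrumIsNotNecessarilySet}), since $\Mod R$ is a Grothendieck category with generator $R$, or directly because the right ideals of $R$ form a set.

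For part (\ref{item:CorAtomIsRepresentedByOneGeneratedGradedModule}), I would use that $\{T(i)\}_{i\in\mbZ}$ is a generating set of $\GrMod T$. Proposition \ref{Prop:AtomSpectrumAndGeneratingSet} (\ref{item:PropAtomSpectrumAndGeneratingSet}) then produces some $i\in\mbZ$ and a subobject $L$ of $T(i)$ in $\GrMod T$ with $T(i)/L$ monoform and $\overline{T(i)/L}=\alpha$. Now $T(i)$ has the same underlying (nongraded) module as $T$, so a graded submodule $L$ of $T(i)$ is exactly a homogeneous right ideal $J$ of $T$, regarded with the shifted grading; that is, $L=J(i)$, and consequently $T(i)/L=T(i)/J(i)=(T/J)(i)$. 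Hence $(T/J)(i)$ is a monoform object in $\GrMod T$ satisfying $\overline{(T/J)(i)}=\alpha$, as required. Again the last assertion is immediate from Proposition \ref{Prop:AtomSpectrumAndGeneratingSet} (\ref{item:PropAtomSpectrumIsNotNecessarilySet}), since $\GrMod T$ is a Grothendieck category with generator $\bigoplus_{i\in\mbZ}T(i)$.
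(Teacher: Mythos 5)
Your proposal is correct and takes essentially the same route as the paper: both deduce the corollary directly from Proposition \ref{Prop:AtomSpectrumAndGeneratingSet} applied to the generating sets $\{R\}$ and $\{T(i)\mid i\in\mbZ\}$. You simply spell out the bookkeeping that the paper leaves implicit, namely that subobjects of $R$ in $\Mod R$ are right ideals, and that a graded submodule $L$ of $T(i)$ has the form $J(i)$ for a homogeneous right ideal $J$ of $T$ so that $T(i)/L\cong(T/J)(i)$.
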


\begin{proof}
	These follow from Proposition \ref{Prop:AtomSpectrumAndGeneratingSet} since $\Mod R$ is a Grothendieck category with a generator $R$, and $\GrMod T$ is a Grothendieck category with a generating set $\{T(i)\mid i\in\mbZ\}$.
\end{proof}

The following result shows that the atom spectrum of an abelian category is a generalization of the prime spectrum of a commutative ring.

\begin{Prop}[{\cite[p.\ 631]{Storrer}}]\label{Prop:AtomSpectrumForCommutativeRing}
	Let $R$ be a commutative ring. Then the map $\mfp\mapsto\overline{R/\mfp}$ is a bijection between $\Spec R$ and $\ASpec(\Mod R)$.
\end{Prop}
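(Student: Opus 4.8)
The plan is to show the map $\mfp\mapsto\overline{R/\mfp}$ is well-defined, injective, and surjective, with only surjectivity requiring more than bookkeeping. For well-definedness I would first check that $R/\mfp$ is a monoform object in $\Mod R$ for every $\mfp\in\Spec R$, so that the assignment really takes values in $\ASpec(\Mod R)$. This is immediate from the fact that $R/\mfp$ is an integral domain: any nonzero submodule of $R/\mfp$ is torsion-free over $R/\mfp$, whereas for any nonzero submodule $L$ the quotient $(R/\mfp)/L$ is a torsion $R/\mfp$-module, so $R/\mfp$ and $(R/\mfp)/L$ have no common nonzero subobject, which is precisely the condition of Definition \ref{Def:MonoformObject} (\ref{item:DefMonoformObject}).

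For injectivity, suppose $\overline{R/\mfp}=\overline{R/\mfq}$. By the definition of the atom equivalence there is a nonzero $R$-module $L$ isomorphic to a submodule of $R/\mfp$ and to a submodule of $R/\mfq$. I would use that any nonzero submodule of $R/\mfp$ has annihilator exactly $\mfp$: picking $0\neq x$ in such a submodule, $\Ann_R(x)=\mfp$ since $R/\mfp$ is a domain, and $\mfp\subseteq\Ann_R(L)\subseteq\Ann_R(x)=\mfp$. Applying this to the two embeddings of $L$ gives $\mfp=\Ann_R(L)=\mfq$.

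Surjectivity is the main point. Given an atom $\alpha$ in $\Mod R$, Corollary \ref{Cor:AtomIsRepresentedByOneGeneratedModule} (\ref{item:CorAtomIsRepresentedByOneGeneratedModule}) furnishes an ideal $J$ of $R$ with $R/J$ monoform and $\overline{R/J}=\alpha$, so it suffices to prove that $J$ is prime. For this I would establish the general fact that $\Ann_R(M)$ is a prime ideal for any monoform object $M$ in $\Mod R$: $M\neq 0$ forces $\Ann_R(M)\neq R$, and if $ab\in\Ann_R(M)$ with $b\notin\Ann_R(M)$, then multiplication by $b$ on $M$ has nonzero image $Mb$ and some kernel $K$, and $M/K\cong Mb$ exhibits $Mb$ as a common nonzero subobject of $M$ and $M/K$; monoformness then forces $K=0$, i.e.\ multiplication by $b$ is injective on $M$, and since $mab=0$ for all $m\in M$ this gives $ma=0$, so $a\in\Ann_R(M)$. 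Combining this with $\Ann_R(R/J)=J$ shows $J\in\Spec R$, so $\alpha=\overline{R/J}$ lies in the image. I do not expect a real obstacle; the only slightly delicate step is the primeness of $\Ann_R(M)$ for monoform $M$, which is exactly where the defining property of monoform objects is used, while everything else reduces to $R/\mfp$ being an integral domain.
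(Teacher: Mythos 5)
Your proof is correct. Note, however, that the paper itself does not prove this proposition --- it is stated with a citation to Storrer (p.\ 631) and no argument is given, so there is nothing in the paper to compare your approach against. Your verification is sound at every step: the torsion/torsion-free dichotomy over the domain $R/\mfp$ gives monoformness of $R/\mfp$; the annihilator computation $\Ann_R(L)=\mfp$ for any nonzero $R$-submodule $L$ of $R/\mfp$ gives injectivity; and the key point for surjectivity, that $\Ann_R(M)$ is prime whenever $M$ is a monoform $R$-module, is proved correctly by observing that for $b\notin\Ann_R(M)$ the image $Mb\cong M/K$ of the multiplication map would be a common nonzero subobject of $M$ and $M/K$ if $K\neq 0$, so monoformness forces multiplication by $b$ to be injective. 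Combined with Corollary \ref{Cor:AtomIsRepresentedByOneGeneratedModule} (\ref{item:CorAtomIsRepresentedByOneGeneratedModule}) and the identity $\Ann_R(R/J)=J$, this establishes surjectivity.
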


We can consider the following collections of atoms associated to an object in an abelian categories.

\begin{Def}\label{Def:AtomSupportAndAssociatedAtoms}
	Let $\mcA$ be an abelian category and $M$ an object in $\mcA$.
	\begin{enumerate}
		\item\label{item:DefAtomSupport} Define a subclass $\ASupp M$ of $\ASpec\mcA$ by
		\begin{equation*}
			\ASupp M=\{\alpha\in\ASpec\mcA\mid\alpha=\overline{H}\text{ for a monoform subquotient }H\text{ of }M\},
		\end{equation*}
		and call it the \emph{atom support} of $M$.
		\item\label{item:DefAssociatedAtoms} Define a subclass $\AAss M$ of $\ASpec\mcA$ by
		\begin{equation*}
			\AAss M=\{\alpha\in\ASpec\mcA\mid\alpha=\overline{H}\text{ for a monoform subobject }H\text{ of }M\},
		\end{equation*}
		and call an element of it an \emph{associated atom} of $M$.
	\end{enumerate}
\end{Def}

We show that these notions are generalizations of supports and associated primes in commutative ring theory in Proposition \ref{Prop:AtomSupportAndAssociatedAtomsForCommutativeRing}. These notions satisfy the following fundamental properties.

\begin{Prop}\label{Prop:AtomSupportAndAssociatedAtomsAndShortExactSequence}
	Let $\mcA$ be an abelian category and $0\to L\to M\to N\to 0$ an exact sequence in $\mcA$.
	\begin{enumerate}
		\item\label{item:PropAtomSupportAndShortExactSequence} $\ASupp M=\ASupp L\cup\ASupp N$.
		\item\label{item:PropAssociatedAtomsAndShortExactSequence} \textnormal{(\cite[Proposition 3.1]{Storrer})} $\AAss L\subset\AAss M\subset\AAss L\cup\AAss N$.
	\end{enumerate}
\end{Prop}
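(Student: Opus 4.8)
The plan is to derive (1) directly from the definition of the atom support by proving the two inclusions separately, and to observe that (2) is \cite[Proposition 3.1]{Storrer} — I will also recall its argument since it runs parallel to the harder inclusion in (1).

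\emph{The easy inclusion of (1).} Every subquotient of $L$ is a subquotient of $M$ because $L$ is a subobject of $M$, and every subquotient of $N$ is a subquotient of $M$ because $N$ is a quotient of $M$. Hence any monoform subquotient realizing an atom of $\ASupp L$ or of $\ASupp N$ already realizes the same atom inside $M$, which gives $\ASupp L\cup\ASupp N\subseteq\ASupp M$.

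\emph{The main inclusion of (1).} Given $\alpha\in\ASupp M$, I would fix a monoform subquotient $H=M'/M''$ of $M$ with $\overline{H}=\alpha$, where $M''\subseteq M'\subseteq M$, and split into two cases according to the image of $M'\cap L$ in $H$, namely $(M'\cap L+M'')/M''$. If this image is nonzero, it is a nonzero subobject of the monoform object $H$, hence itself monoform and atom-equivalent to $H$ by Proposition~\ref{Prop:PropertyOfMonoformObject}~(\ref{item:PropSubobjectOfMonoformObjectIsMonoform}); since it is isomorphic to $(M'\cap L)/(M''\cap L)$, a subquotient of $L$, this yields $\alpha\in\ASupp L$. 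If instead $M'\cap L\subseteq M''$, then the canonical epimorphism $M'/M''\twoheadrightarrow(M'+L)/(M''+L)$ (surjective because $M''\subseteq M'$) has kernel $(M''+(M'\cap L))/M''$ by the modular law, and this kernel vanishes under the case hypothesis; thus $H$ is isomorphic to $(M'+L)/(M''+L)$, a subquotient of $N=M/L$, so $\alpha\in\ASupp N$. This completes the equality.

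\emph{On (2).} The inclusion $\AAss L\subseteq\AAss M$ is immediate, since a monoform subobject of $L$ is one of $M$. For $\AAss M\subseteq\AAss L\cup\AAss N$, given a monoform subobject $H\subseteq M$ one runs the same dichotomy on $H\cap L$: if $H\cap L\neq 0$ it is a monoform subobject of $L$ atom-equivalent to $H$; if $H\cap L=0$ then $H$ embeds into $N$; in either case $\overline{H}\in\AAss L\cup\AAss N$. I expect no genuine obstacle in this proposition; the only step needing care is the modular-law identification of the kernel in the second case of (1), together with the routine bookkeeping that the subobjects produced are nonzero and lie where claimed.
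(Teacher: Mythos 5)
Your proof is correct. The paper itself offers no argument here—it simply cites \cite[Propositions 3.3 and 3.5]{Kanda1} (and \cite[Proposition 3.1]{Storrer} for part (2))—so there is no in-text proof to compare against, but your direct argument is the standard one and is exactly what one expects the cited proofs to do: for the hard inclusion of (1), fix a monoform subquotient $H=M'/M''$ of $M$, split on whether the image of $M'\cap L$ in $H$ is nonzero (giving an atom-equivalent monoform subquotient of $L$) or zero (in which case the modular law shows the kernel of $M'/M''\twoheadrightarrow(M'+L)/(M''+L)$ vanishes, exhibiting $H$ as a subquotient of $N$); and for (2), run the same dichotomy on $H\cap L$ for a monoform subobject $H$ of $M$. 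All the steps check: the modular-law identification $M'\cap(M''+L)=M''+(M'\cap L)$ is valid since subobject lattices in an abelian category are modular, the Noether isomorphism $(M'\cap L+M'')/M''\cong(M'\cap L)/(M''\cap L)$ is correct, and nonzero subobjects of a monoform object are indeed monoform and atom-equivalent to it by Proposition~\ref{Prop:PropertyOfMonoformObject}~(\ref{item:PropSubobjectOfMonoformObjectIsMonoform}).
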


\begin{proof}
	(\ref{item:PropAtomSupportAndShortExactSequence}) \cite[Proposition 3.3]{Kanda1}.
	
	(\ref{item:PropAssociatedAtomsAndShortExactSequence}) \cite[Proposition 3.5]{Kanda1}.
\end{proof}

\begin{Prop}\label{Prop:AtomSupportAndAssociatedAtomsAndDirectSum}
	Let $\mcA$ be a Grothendieck category and $\{M_{\lambda}\}_{\lambda\in\Lambda}$ a family of objects in $\mcA$.
	\begin{enumerate}
		\item\label{item:PropAtomSupportAndDirectSum} $\ASupp\bigoplus_{\lambda\in\Lambda}M_{\lambda}=\bigcup_{\lambda\in\Lambda}\ASupp M_{\lambda}$.
		\item\label{item:PropAssociatedAtomsAndDirectSum} \textnormal{(\cite[Proposition 3.1]{Storrer})} $\AAss\bigoplus_{\lambda\in\Lambda}M_{\lambda}=\bigcup_{\lambda\in\Lambda}\AAss M_{\lambda}$.
	\end{enumerate}
\end{Prop}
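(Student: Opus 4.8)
The inclusions $\bigcup_{\lambda}\ASupp M_{\lambda}\subseteq\ASupp\bigoplus_{\lambda}M_{\lambda}$ and $\bigcup_{\lambda}\AAss M_{\lambda}\subseteq\AAss\bigoplus_{\lambda}M_{\lambda}$ are immediate: each $M_{\mu}$ is a subobject of $\bigoplus_{\lambda}M_{\lambda}$, so applying Proposition \ref{Prop:AtomSupportAndAssociatedAtomsAndShortExactSequence} to $0\to M_{\mu}\to\bigoplus_{\lambda}M_{\lambda}\to(\bigoplus_{\lambda}M_{\lambda})/M_{\mu}\to 0$ gives both. Moreover, for a \emph{finite} index set, induction on the number of summands using the same proposition yields $\ASupp\bigoplus_{\lambda\in F}M_{\lambda}=\bigcup_{\lambda\in F}\ASupp M_{\lambda}$, which I will use as the base case.

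For the reverse inclusion of (1): given $\alpha\in\ASupp\bigoplus_{\lambda}M_{\lambda}$, write $\alpha=\overline{H}$ with $H=N/N'$ for subobjects $N'\subseteq N\subseteq\bigoplus_{\lambda}M_{\lambda}$. The finite partial sums $\bigoplus_{\lambda\in F}M_{\lambda}$, indexed by finite $F\subseteq\Lambda$, form a family in which any finite subfamily has an upper bound (the union of the $F$'s), and their sum is $\bigoplus_{\lambda}M_{\lambda}$; hence the distributivity in Remark \ref{rem:ExactDirectLimits} gives $N=\sum_{F}(N\cap\bigoplus_{\lambda\in F}M_{\lambda})$. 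Since $H=N/N'\neq 0$, there is a finite $F$ with $P:=N\cap\bigoplus_{\lambda\in F}M_{\lambda}$ not contained in $N'$, so the image of $P$ in $H$ is a nonzero subobject $H'\cong P/(P\cap N')$. By Proposition \ref{Prop:PropertyOfMonoformObject}(\ref{item:PropSubobjectOfMonoformObjectIsMonoform}), $H'$ is monoform and atom-equivalent to $H$, and it is a subquotient of $\bigoplus_{\lambda\in F}M_{\lambda}$; so the finite case gives $\alpha=\overline{H'}\in\bigcup_{\lambda\in F}\ASupp M_{\lambda}\subseteq\bigcup_{\lambda\in\Lambda}\ASupp M_{\lambda}$.

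For the reverse inclusion of (2) (which also appears in \cite{Storrer}): given $\alpha\in\AAss\bigoplus_{\lambda}M_{\lambda}$, write $\alpha=\overline{H}$ with $H$ a monoform, hence uniform, subobject of $\bigoplus_{\lambda}M_{\lambda}$. As above $H=\sum_{F}(H\cap\bigoplus_{\lambda\in F}M_{\lambda})$, so some finite $F=\{\lambda_{1},\dots,\lambda_{n}\}$ has $H':=H\cap\bigoplus_{\lambda\in F}M_{\lambda}\neq 0$; this $H'$ is again monoform and atom-equivalent to $H$. Writing $\pi_{i}\colon\bigoplus_{\lambda\in F}M_{\lambda}\to M_{\lambda_{i}}$ for the projections, we have $\bigcap_{i}\Ker(\pi_{i}|_{H'})=H'\cap\bigcap_{i}\Ker\pi_{i}=0$, and since $H'$ is uniform a finite intersection of nonzero subobjects cannot vanish, so $\Ker(\pi_{i}|_{H'})=0$ for some $i$; then $\pi_{i}|_{H'}$ embeds $H'$ into $M_{\lambda_{i}}$, whence $\alpha=\overline{H'}\in\AAss M_{\lambda_{i}}$.

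The only real content is the reduction to finite partial sums, which rests on the Grothendieck axiom through Remark \ref{rem:ExactDirectLimits}; everything else is bookkeeping with monoform and uniform objects, so I expect no genuine obstacle.
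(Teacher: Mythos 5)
Your proof is correct and rests on the same central idea as the paper's: reduce to a finite index set by applying the AB5 distributivity axiom (Remark \ref{rem:ExactDirectLimits}) to the directed family of finite partial direct sums, then invoke the finite case from Proposition \ref{Prop:AtomSupportAndAssociatedAtomsAndShortExactSequence}. There are two small presentational differences. For (1), you realize the monoform subquotient $H$ as a quotient of a subobject $N$ of $\bigoplus_{\lambda}M_{\lambda}$ and intersect $N$ with the finite partial sums; the paper instead first passes to a quotient object $N$ of $\bigoplus_{\lambda}M_{\lambda}$, takes $H$ a subobject of $N$, and intersects $H$ with the images $\sum_{\lambda\in\Lambda'}N_{\lambda}$, getting a nonzero intersection directly rather than via a ``not contained in $N'$'' step. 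Both are legitimate descriptions of a subquotient. For (2), the paper simply says ``similarly,'' whereas you unwind the finite step concretely: once $H'$ is a nonzero (hence uniform) subobject of $\bigoplus_{\lambda\in F}M_{\lambda}$, the intersection of the kernels of the projections restricted to $H'$ is zero, so by uniformity some restricted projection is injective, embedding $H'$ into a single $M_{\lambda_i}$. This is a sound, self-contained replacement for the appeal to Proposition \ref{Prop:AtomSupportAndAssociatedAtomsAndShortExactSequence}(\ref{item:PropAssociatedAtomsAndShortExactSequence}); it is essentially the proof of that inclusion specialized to direct sums.
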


\begin{proof}
	(\ref{item:PropAtomSupportAndDirectSum}) It is straightforward to see that
	\begin{equation*}
		\bigcup_{\lambda\in\Lambda}\ASupp M_{\lambda}\subset\ASupp\bigoplus_{\lambda\in\Lambda}M_{\lambda}.
	\end{equation*}
	For any $\alpha\in\ASupp\bigoplus_{\lambda\in\Lambda}M_{\lambda}$, there exists a monoform subobject $H$ of a quotient object $N$ of $\bigoplus_{\lambda\in\Lambda}M_{\lambda}$ such that $\overline{H}=\alpha$. Denote by $N_{\lambda}$ the image of $M_{\lambda}$ by the canonical epimorphism $\bigoplus_{\lambda\in\Lambda}M_{\lambda}\twoheadrightarrow N$. Let $\mcS$ be the set of finite subsets of $\Lambda$. By the definition of Grothendieck categories, we have
	\begin{equation*}
		\sum_{\Lambda'\in\mcS}\left(\left(\sum_{\lambda\in\Lambda'}N_{\lambda}\right)\cap H\right)=\left(\sum_{\Lambda'\in\mcS}\sum_{\lambda\in\Lambda'}N_{\lambda}\right)\cap H=N\cap H=H.
	\end{equation*}
	Hence there exists $\Lambda'\in\mcS$ such that $(\sum_{\lambda\in\Lambda'}N_{\lambda})\cap H\neq 0$. By Proposition \ref{Prop:PropertyOfMonoformObject} (\ref{item:PropSubobjectOfMonoformObjectIsMonoform}), $(\sum_{\lambda\in\Lambda'}N_{\lambda})\cap H$ is a monoform subobject of $N$ which is atom-equivalent to $H$. Since we have the diagram
	\begin{equation*}
		\left(\sum_{\lambda\in\Lambda'}N_{\lambda}\right)\cap H\hookrightarrow\sum_{\lambda\in\Lambda'}N_{\lambda}\twoheadleftarrow\bigoplus_{\lambda\in\Lambda'}M_{\lambda},
	\end{equation*}
	it holds that
	\begin{equation*}
		\alpha=\overline{H}\in\ASupp\bigoplus_{\lambda\in\Lambda'}M_{\lambda}=\bigcup_{\lambda\in\Lambda'}\ASupp M_{\lambda}\subset\bigcup_{\lambda\in\Lambda}\ASupp M_{\lambda}
	\end{equation*}
	by Proposition \ref{Prop:AtomSupportAndAssociatedAtomsAndShortExactSequence} (\ref{item:PropAtomSupportAndShortExactSequence}). Hence the statement follows.
	
	(\ref{item:PropAssociatedAtomsAndDirectSum}) This can be shown similarly by using Proposition \ref{Prop:AtomSupportAndAssociatedAtomsAndShortExactSequence} (\ref{item:PropAssociatedAtomsAndShortExactSequence}).
\end{proof}

For a commutative ring $R$, we identify $\ASpec(\Mod R)$ with $\Spec R$ by the correspondence in Proposition \ref{Prop:AtomSpectrumForCommutativeRing}. In this case, we show that atom supports and associated atoms coincide with supports and associated primes.

\begin{Prop}\label{Prop:AtomSupportAndAssociatedAtomsForCommutativeRing}
	Let $R$ be a commutative ring and $M$ an $R$-module.
	\begin{enumerate}
		\item\label{item:PropAtomSupportForCommutativeRing} $\ASupp M=\Supp M$.
		\item\label{item:PropAssociatedAtomsForCommutativeRing} $\AAss M=\Ass M$.
	\end{enumerate}
\end{Prop}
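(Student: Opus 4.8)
The plan is to prove both equalities by unwinding the definitions of atom support and associated atoms over a commutative ring $R$ and matching them with the classical notions, using the identification $\mfp\mapsto\overline{R/\mfp}$ of Proposition~\ref{Prop:AtomSpectrumForCommutativeRing}. The two key inputs are: (a) for a commutative ring, the monoform objects are, up to atom equivalence, exactly the modules $R/\mfp$ with $\mfp$ prime, and (b) for any finitely generated $R$-module $N$, $\AAss N\ne\emptyset$, so that every nonzero module sits over a cyclic monoform piece. I would first record the following basic fact: a cyclic module $R/I$ is monoform if and only if $I$ is prime. Indeed, if $I=\mfp$ is prime then $R/\mfp$ is a domain, hence uniform, and for $0\ne L=J/\mfp\subset R/\mfp$ any nonzero submodule of $R/J'$ common with $R/\mfp$ would have annihilator containing both $\mfp$ and some element outside $\mfp$ (by localizing at $\mfp$), a contradiction; conversely if $R/I$ is monoform it is uniform, and if $I$ were not prime, say $ab\in I$ with $a,b\notin I$, then multiplication by $a$ gives a nonzero map $R/(I:a)\to R/I$ whose image is a common subobject with the proper quotient $R/(I:a)$ of $R/I$, contradicting monoformity. (This is essentially \cite[Proposition~2.2 and~2.6]{Kanda1} specialized; I would cite that rather than redo it.)

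For part~(\ref{item:PropAssociatedAtomsForCommutativeRing}), I would argue $\AAss M\subseteq\Ass M$: given a monoform subobject $H\subseteq M$, by Proposition~\ref{Prop:AtomSpectrumAndGeneratingSet}(\ref{item:PropAtomSpectrumAndGeneratingSet}) applied with the generator $R$, $H$ has a cyclic monoform subobject $R/\mfp$ with $\overline{R/\mfp}=\overline{H}$, and since $R/\mfp\hookrightarrow M$ this gives $\mfp\in\Ass M$, hence $\overline{H}\in\Ass M$ under the identification. Conversely, if $\mfp\in\Ass M$, then $R/\mfp\hookrightarrow M$ realizes $R/\mfp$ as a monoform subobject of $M$ by the cyclic criterion above, so $\overline{R/\mfp}\in\AAss M$. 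For part~(\ref{item:PropAtomSupportForCommutativeRing}), I would show $\ASupp M\subseteq\Supp M$ first: any monoform subquotient $H$ of $M$ contains a cyclic monoform $R/\mfp$, and $R/\mfp$ is a subquotient of $M$, which forces $M_\mfp\ne 0$, i.e. $\mfp\in\Supp M$. For the reverse inclusion $\Supp M\subseteq\ASupp M$: fix $\mfp\in\Supp M$, so $M_\mfp\ne 0$; choose $m\in M$ with $m/1\ne 0$ in $M_\mfp$, and consider the cyclic submodule $N=mR\cong R/\Ann(m)$ with $\Ann(m)\subseteq\mfp$. Then $N_\mfp\ne 0$, so $\mfp\in\Supp N$; picking $\mfp$ minimal over $\Ann(m)$ in $\Supp N$ (or rather: localize and take an associated prime of $N_\mfp$ as an $R_\mfp$-module lying inside $\mfp R_\mfp$), one obtains that $\mfp\in\Ass(N/N')$ for a suitable submodule $N'\subsetneq N$, hence $R/\mfp$ is a monoform subquotient of $N$ and so of $M$; thus $\mfp\in\ASupp M$.

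The one genuinely delicate point — the main obstacle — is the reverse inclusion $\Supp M\subseteq\ASupp M$ without finiteness hypotheses on $M$: $\Ass M$ and $\Supp M$ can fail to have minimal primes in the module-theoretic sense, so I cannot simply "take a minimal prime." The clean fix is to pass to the finitely generated (in fact cyclic) submodule $N=mR$ and work with $N_\mfp$, a nonzero finitely generated $R_\mfp$-module; this has an associated prime $\mathfrak q R_\mfp$ with $\mathfrak q\subseteq\mfp$, and in fact one can arrange $\mathfrak q=\mfp$ by replacing $m$ appropriately or by a further localization argument — more precisely, a standard prime filtration of the noetherian-localized module $N_\mfp$, or just the observation that $\mfp$ itself is the annihilator of a suitable subquotient of $N$, shows $\mfp\in\ASupp N$. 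Once this reduction to the cyclic/finitely generated case and the localization are in place, everything else is bookkeeping with the cyclic monoform criterion and Propositions~\ref{Prop:AtomSupportAndAssociatedAtomsAndShortExactSequence} and~\ref{Prop:AtomSpectrumAndGeneratingSet}; I expect the whole argument to be short.
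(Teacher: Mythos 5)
You correctly identify $\Supp M\subseteq\ASupp M$ as the delicate direction, and you correctly reduce to a cyclic submodule $N=mR\cong R/\Ann_{R}(m)$ with $\Ann_{R}(m)\subseteq\mfp$. But the path you then commit to --- passing to $N_{\mfp}$ and extracting an associated prime of the $R_{\mfp}$-module $N_{\mfp}$, or invoking a ``standard prime filtration'' of $N_{\mfp}$ --- does not work, because $R$ (hence $R_{\mfp}$) is not assumed noetherian. Over a non-noetherian ring a nonzero, even cyclic, module can have no associated primes at all, and prime filtrations are not available. You do mention, as an aside, ``the observation that $\mfp$ itself is the annihilator of a suitable subquotient of $N$,'' which is precisely what is needed, but you leave it unproved. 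In fact this observation is elementary and requires no localization or filtration: since $\Ann_{R}(m)\subseteq\mfp$, the module $R/\mfp$ is a quotient of $R/\Ann_{R}(m)\cong N\hookrightarrow M$, namely $N/(\mfp/\Ann_{R}(m))$, hence a subquotient of $M$; as $R/\mfp$ is monoform, $\overline{R/\mfp}\in\ASupp M$. This is exactly the move the paper makes, and it closes the argument in one line.

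The remainder of your proof --- the cyclic monoform criterion (``$R/I$ monoform iff $I$ prime''), the deduction $\AAss M=\Ass M$ from it together with Proposition~\ref{Prop:AtomSpectrumAndGeneratingSet}, and the inclusion $\ASupp M\subseteq\Supp M$ via localization of a subquotient $R/\mfp$ at $\mfp$ --- is sound and close in spirit to the paper's. The paper organizes the $\AAss$-equality slightly differently, around the single observation that every nonzero submodule of $R/\mfp$ contains a copy of $R/\mfp$ (because $R/\mfp$ is a domain); that same observation also streamlines $\ASupp M\subseteq\Supp M$, by showing directly that $\overline{R/\mfp}\in\ASupp M$ forces $R/\mfp$ itself to be a subquotient of $M$.
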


\begin{proof}
	Let $\mfp$ be a prime ideal of $R$. Since any nonzero submodule of $R/\mfp$ contains a submodule isomorphic to $R/\mfp$, we have $\overline{R/\mfp}\in\AAss M$ if and only if there exists a monomorphism $R/\mfp\hookrightarrow M$. Hence $\AAss M=\Ass M$.
	
	Assume that $\overline{R/\mfp}\in\ASupp M$. Then $R/\mfp$ is a subquotient of $M$. Since $(R/\mfp)_{\mfp}$ is a nonzero subquotient of $M_{\mfp}$, we have $\mfp\in\Supp M$.
	
	Conversely, assume that $\mfp\in\Supp M$. Since we have $M=\sum_{x\in M}xR$, there exists $y\in M$ such that $\mfp\in\Supp yR$. Let $\mfa=\Ann_{R}(y)$. Then it holds that $yR\cong R/\mfa$. Since we have
	\begin{eqnarray*}
		\Supp yR=\Supp\frac{R}{\mfa}=\{\mfq\in\Spec R\mid\mfa\subset\mfq\},
	\end{eqnarray*}
	it follows that $\mfa\subset\mfp$. We have the diagram
	\begin{eqnarray*}
		\frac{R}{\mfp}\twoheadleftarrow\frac{R}{\mfa}\cong yR\hookrightarrow M.
	\end{eqnarray*}
	Therefore $\overline{R/\mfp}\in\ASupp M$.
\end{proof}

By using atom supports, we show a property of monoform objects.

\begin{Prop}\label{Prop:MonoformObjectAndAtomSupport}
	Let $\mcA$ be an abelian category and $H$ a monoform object in $\mcA$. Then for any nonzero subobject $L$ of $H$, we have $\overline{H}\notin\ASupp(H/L)$.
\end{Prop}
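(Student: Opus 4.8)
The plan is to argue by contradiction using the defining property of monoform objects. Suppose $L$ is a nonzero subobject of $H$ and $\overline{H}\in\ASupp(H/L)$. By the definition of atom support, there is a monoform subquotient $H'$ of $H/L$ with $\overline{H'}=\overline{H}$; write $H'=N/N'$ where $N'\subseteq N\subseteq H/L$. Since $\overline{H'}=\overline{H}$, the two monoform objects $H$ and $H'$ are atom-equivalent, so they share a common nonzero subobject: there exist nonzero $H_{1}\subseteq H$ and $H_{1}'\subseteq H'$ with $H_{1}\cong H_{1}'$.

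Next I would transport this isomorphism so that it becomes a contradiction with the monoform property of $H$. The object $H_{1}'$, being a nonzero subobject of $H'=N/N'$, is of the form $N_{1}/N'$ for some subobject $N_{1}$ with $N'\subsetneq N_{1}\subseteq N\subseteq H/L$. Thus $H_{1}$ is isomorphic to the subquotient $N_{1}/N'$ of $H/L$. I now want to produce from this a common nonzero subobject of $H$ and of $H/L$, contradicting the monoformness of $H$ applied to the nonzero subobject $L$. The subtlety is that $N_{1}/N'$ is a genuine subquotient of $H/L$, not a subobject, so I cannot immediately invoke the definition; I must pass to a nonzero subobject that embeds into $H/L$. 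Here I would use Proposition \ref{Prop:PropertyOfMonoformObject}: $H_{1}\cong H_{1}'$ is a nonzero subobject of the monoform object $H$, hence is itself monoform, hence uniform (Proposition \ref{Prop:PropertyOfMonoformObject} (\ref{item:PropMonoformObjectIsUniform})).

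The key step — and the one I expect to be the main obstacle — is extracting a \emph{subobject} of $H/L$ isomorphic to a nonzero subobject of $H$ out of the \emph{subquotient} $N_{1}/N'$. The idea is: the preimage $N_{1}\subseteq H/L$ surjects onto $N_{1}/N'=H_{1}'\cong H_{1}$. But $H_{1}$ is monoform, so this surjection $N_{1}\twoheadrightarrow H_{1}$ must, in a suitable sense, not destroy too much; concretely, if the surjection had a nonzero kernel $N'$, then $H_{1}\cong N_{1}/N'$ would be a nonzero common subobject of the monoform object $H$ (via $H_{1}\subseteq H$) and of a proper quotient of $N_{1}$ — and since $N_{1}$ is itself monoform (being a nonzero subobject of $H/L$ — wait, $H/L$ need not be monoform). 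So instead: $N_{1}$ is a subquotient... Let me restructure this. The cleanest route: take the nonzero subobject $H_{1}\subseteq H$; since $H_{1}$ is monoform and $H_{1}\cong N_{1}/N'$, and $N_{1}\subseteq H/L$, I use the monoform property of $H_{1}$ itself — for the nonzero subobject $N'\cap(\text{something})$... Rather than force this, I would instead directly appeal to the characterization that $\overline{H_{1}}=\overline{H}$ implies $H_{1}$ has a nonzero subobject embedding into $H/L$ only if... Actually the sharpest tool available is this: since $H_{1}$ is monoform and embeds as a subquotient of $H/L$, and $H$ is monoform with $\overline{H}=\overline{H_{1}}$, one shows that a nonzero subobject of $H_{1}$ (equivalently of $H$) embeds into $H/L$, namely take the image of a suitable lift — if $N'\neq 0$ one gets a nonzero subobject of the monoform $N_{1}$ isomorphic to a subobject of the proper quotient $N_{1}/N'$, contradicting monoformness of $N_{1}$ provided $N_{1}$ is monoform, which holds if $N_{1}$ is a nonzero subobject of a monoform object. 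So the remaining gap is ensuring $N_{1}$ (or a nonzero subobject of it) is monoform; since $H_{1}=N_{1}/N'$ is uniform and monoform, one can shrink $N_{1}$ so that $N'$ is essential in it, and then any nonzero subobject of $N_{1}$ maps nonzero to $H_{1}$. Chasing this shows $N'=0$ is forced, whence $H_{1}\cong N_{1}\subseteq H/L$ is a nonzero common subobject of $H$ and $H/L$, contradicting that $H$ is monoform via the nonzero subobject $L$. This completes the argument; the bookkeeping of passing between subobjects and subquotients, using uniformity to control kernels, is the delicate point, but each individual reduction is a routine application of Proposition \ref{Prop:PropertyOfMonoformObject}.
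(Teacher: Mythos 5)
There is a genuine gap, and interestingly you already have all the ingredients of the correct short argument in your hands --- you just apply the monoform hypothesis with the wrong subobject, which forces you into an unnecessary and incorrect lifting argument.

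You correctly identify the data: a monoform subquotient $H'=N/N'$ of $H/L$ with $\overline{H'}=\overline{H}$, a common nonzero subobject $H_1\subset H$ with $H_1\cong H_1'=N_1/N'\subset H'$. Your mistake is insisting on producing a common nonzero subobject of $H$ and $H/L$ \emph{specifically}, i.e.\ insisting that you can arrange $N'=0$. This is neither necessary nor achievable by your argument. Instead, take the preimage $L'\subset H$ of $N'$ under $H\twoheadrightarrow H/L$. Then $L\subset L'$, so $L'\neq 0$, and $H'=N/N'$ is literally a \emph{subobject} of $H/L'$ (not just a subquotient of $H/L$). Hence $H_1$ is a common nonzero subobject of $H$ and $H/L'$, which contradicts the monoformness of $H$ applied to the nonzero subobject $L'$. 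That is the paper's entire proof, and it requires none of the bookkeeping you flag as delicate; the definition of monoformness quantifies over \emph{all} nonzero subobjects, so there is no obligation to return to the original $L$.

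Your attempt to force $N'=0$ does not go through. You argue that one can shrink $N_1$ so that $N'$ is essential in it and then ``any nonzero subobject of $N_1$ maps nonzero to $H_1$.'' This is backwards: if $N'\neq 0$ is essential in $N_1$, then $N'$ itself is a nonzero subobject of $N_1$ that maps to \emph{zero} in $N_1/N'$. The other branch of your dichotomy also leans on $N_1$ being monoform, but $N_1$ is a subobject of $H/L$, which need not be monoform, and your proposed repair via uniformity does not supply this. The moral is that when the quantity you control ($L'$) is allowed to enlarge, the proof collapses to two lines; trying to pin it to $L$ creates precisely the ``delicate point'' you anticipate, and that delicacy is a sign the route is wrong rather than merely technical.
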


\begin{proof}
	If we have $\overline{H}\in\ASupp(H/L)$, then there exists a subobject $L'$ of $H$ with $L\subset L'$ such that there exists a common nonzero subobject of $H$ and $H/L'$. This is a contradiction.
\end{proof}

In commutative ring theory, it is well known that the number of associated primes of a finitely generated module over a commutative noetherian ring is finite. We can show this type of property for associated atoms.

\begin{Prop}\label{Prop:NumberOfAssociatedAtoms}
	Let $\mcA$ be an abelian category.
	\begin{enumerate}
		\item\label{item:PropNumberOfAssociatedAtomsOfUniformObject} \textnormal{(\cite[Lemma 3.4]{Storrer})} Let $U$ be a uniform object in $\mcA$. Then $\AAss U$ has at most one element. In particular, for any monoform object $H$ in $\mcA$, we have $\AAss H=\{\overline{H}\}$.
		\item\label{item:PropNumberOfAssociatedAtomsOfNoetherianObject} Let $M$ be a nonzero noetherian object in $\mcA$. Then $\AAss M$ is a nonempty finite set.
	\end{enumerate}
\end{Prop}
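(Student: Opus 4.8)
The plan is to reduce part (\ref{item:PropNumberOfAssociatedAtomsOfNoetherianObject}) to the single lemma that every nonzero noetherian object of $\mcA$ has a monoform subobject. Part (\ref{item:PropNumberOfAssociatedAtomsOfUniformObject}) is immediate from what is available: the first assertion is \cite[Lemma 3.4]{Storrer}, and the ``in particular'' follows because a monoform object $H$ is a monoform subobject of itself, so $\overline{H}\in\AAss H$, which combined with the uniformity of $H$ given by Proposition \ref{Prop:PropertyOfMonoformObject} (\ref{item:PropMonoformObjectIsUniform}) and the first assertion forces $\AAss H=\{\overline{H}\}$.

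To prove the lemma I would argue by contradiction. Suppose a nonzero noetherian object $N$ has no monoform subobject. Then $N$ is not monoform, so there is a nonzero subobject $L_{0}\subsetneq N$ such that $N$ and $N/L_{0}$ have a common nonzero subobject; pulling that subobject of $N/L_{0}$ back along $N\twoheadrightarrow N/L_{0}$ yields $K_{1}$ with $L_{0}\subsetneq K_{1}\subseteq N$ and $K_{1}/L_{0}$ isomorphic to a nonzero subobject of $N$. Being isomorphic to a subobject of $N$, the object $K_{1}/L_{0}$ is nonzero and has no monoform subobject, hence is not monoform, and the same construction inside it produces $L_{0}\subsetneq L_{1}\subsetneq K_{1}$ and then $K_{2}$ with $L_{1}\subsetneq K_{2}\subseteq K_{1}$ such that $K_{2}/L_{1}$ is again isomorphic to a nonzero subobject of $N$. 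Iterating, I obtain a strictly increasing chain $L_{0}\subsetneq L_{1}\subsetneq L_{2}\subsetneq\cdots$ of subobjects of $N$, contradicting noetherianness; this proves the lemma.

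Granting the lemma, I would build a filtration of $M$. Since quotients of noetherian objects are noetherian, setting $M_{0}=0$ and, as long as $M_{i}\subsetneq M$, choosing a monoform subobject of $M/M_{i}$ and letting $M_{i+1}$ be its preimage in $M$, I obtain subobjects with $M_{i}\subsetneq M_{i+1}\subseteq M$ and $M_{i+1}/M_{i}$ monoform; by noetherianness the chain stabilizes, giving $0=M_{0}\subsetneq M_{1}\subsetneq\cdots\subsetneq M_{n}=M$ with each $M_{i}/M_{i-1}$ monoform and $n\geq 1$ (as $M\neq 0$). Then $M_{1}=M_{1}/M_{0}$ is a nonzero monoform subobject of $M$, so $\AAss M\neq\emptyset$ by Proposition \ref{Prop:AtomSupportAndAssociatedAtomsAndShortExactSequence} (\ref{item:PropAssociatedAtomsAndShortExactSequence}). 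Applying that proposition repeatedly to the short exact sequences $0\to M_{i}/M_{i-1}\to M/M_{i-1}\to M/M_{i}\to 0$ gives $\AAss M\subseteq\bigcup_{i=1}^{n}\AAss(M_{i}/M_{i-1})$, and by part (\ref{item:PropNumberOfAssociatedAtomsOfUniformObject}) each $\AAss(M_{i}/M_{i-1})$ is a single atom, so $\AAss M$ is finite.

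The main obstacle is the lemma: the iteration must be arranged so that the object under consideration at each stage (the $K_{i}/L_{i}$ above) stays nonzero and isomorphic to a subobject of $M$, since this is exactly what ensures both that it is not monoform, so the process does not terminate prematurely, and that the $L_{i}$ are genuine subobjects of $M$, so that the ascending chain condition delivers the contradiction. Everything afterwards -- the filtration and the bookkeeping with $\AAss$ along short exact sequences -- is routine.
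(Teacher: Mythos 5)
Your proof is correct. Part (\ref{item:PropNumberOfAssociatedAtomsOfUniformObject}) matches the paper's argument. For part (\ref{item:PropNumberOfAssociatedAtomsOfNoetherianObject}) the paper gives no in-text proof, deferring to \cite[Remark 3.6]{Kanda1}; your reconstruction is the standard one and is sound. The key lemma, that every nonzero noetherian object has a monoform subobject, is established correctly: at each stage the quotient $K_{i+1}/L_i$ is isomorphic to a nonzero subobject of $N$, hence inherits the absence of monoform subobjects and thus cannot itself be monoform, so the recursion cannot halt and the strictly ascending chain $L_0\subsetneq L_1\subsetneq\cdots$ of subobjects of $N$ contradicts noetherianness. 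The finite filtration with monoform factors then gives $\emptyset\neq\AAss M\subseteq\bigcup_{i=1}^n\AAss(M_i/M_{i-1})$, a finite union of singletons by part (\ref{item:PropNumberOfAssociatedAtomsOfUniformObject}), using Proposition \ref{Prop:AtomSupportAndAssociatedAtomsAndShortExactSequence} (\ref{item:PropAssociatedAtomsAndShortExactSequence}) for the inclusion.
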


\begin{proof}
	(\ref{item:PropNumberOfAssociatedAtomsOfUniformObject}) The former part holds since any monoform subobjects $H_{1}$ and $H_{2}$ of $U$ have the common nonzero subobject $H_{1}\cap H_{2}$. The latter part follows from Proposition \ref{Prop:PropertyOfMonoformObject} (\ref{item:PropMonoformObjectIsUniform}).
	
	(\ref{item:PropNumberOfAssociatedAtomsOfNoetherianObject}) \cite[Remark 3.6]{Kanda1}.
\end{proof}

For an abelian category $\mcA$, a subobject $L$ of an object $M$ in $\mcA$ is called \emph{essential} if for any nonzero subobject $L'$ of $M$, we have $L\cap L'\neq 0$. The following fact is also a generalization of that in commutative ring theory.

\begin{Prop}\label{Prop:AssociatedAtomsOfEssentialSubobject}
	Let $\mcA$ be an abelian category, $M$ an object in $\mcA$, and $L$ an essential subobject of $M$. Then we have $\AAss L=\AAss M$.
\end{Prop}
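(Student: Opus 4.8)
The plan is to prove the two inclusions $\AAss L\subset\AAss M$ and $\AAss M\subset\AAss L$ separately. The first inclusion is immediate and requires no essentiality: it is simply part of Proposition \ref{Prop:AtomSupportAndAssociatedAtomsAndShortExactSequence} (\ref{item:PropAssociatedAtomsAndShortExactSequence}) applied to the exact sequence $0\to L\to M\to M/L\to 0$, since any monoform subobject of $L$ is in particular a monoform subobject of $M$. So the entire content of the statement is the reverse inclusion, and this is where essentiality enters.

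For $\AAss M\subset\AAss L$, I would start with an arbitrary $\alpha\in\AAss M$, so there is a monoform subobject $H$ of $M$ with $\overline{H}=\alpha$. Since $L$ is essential in $M$, we have $H\cap L\neq 0$ (taking $L'=H$ in the definition of essential subobject). Now $H\cap L$ is a nonzero subobject of the monoform object $H$, hence by Proposition \ref{Prop:PropertyOfMonoformObject} (\ref{item:PropSubobjectOfMonoformObjectIsMonoform}) it is itself monoform, and it is atom-equivalent to $H$ because $H\cap L$ is a common nonzero subobject of $H$ and $H\cap L$; thus $\overline{H\cap L}=\overline{H}=\alpha$. But $H\cap L$ is a subobject of $L$, so $\alpha=\overline{H\cap L}\in\AAss L$. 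This establishes $\AAss M\subset\AAss L$, and combined with the first inclusion gives $\AAss L=\AAss M$.

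There is essentially no obstacle here; the only point requiring slight care is the observation that a nonzero subobject of a monoform object is atom-equivalent to the original object, which is exactly what makes the atom class $\overline{H}$ well behaved under passing to subobjects — and this is precisely the definition of atom-equivalence together with Proposition \ref{Prop:PropertyOfMonoformObject} (\ref{item:PropSubobjectOfMonoformObjectIsMonoform}). One could alternatively phrase the reverse inclusion using Proposition \ref{Prop:NumberOfAssociatedAtoms} (\ref{item:PropNumberOfAssociatedAtomsOfUniformObject}) applied to the uniform object $H$, noting $\AAss(H\cap L)=\{\overline{H\cap L}\}=\AAss H=\{\overline{H}\}$, but the direct argument above is cleaner.
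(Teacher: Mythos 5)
Your proof is correct and follows the same route as the paper: the inclusion $\AAss L\subset\AAss M$ is immediate, and the reverse inclusion is obtained from essentiality together with Proposition \ref{Prop:PropertyOfMonoformObject} (\ref{item:PropSubobjectOfMonoformObjectIsMonoform}) applied to $H\cap L$. The paper's proof is just a terse version of exactly this argument, so there is nothing to add.
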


\begin{proof}
	$\AAss L\subset\AAss M$ is obvious. $\AAss L\supset\AAss M$ follows straightforwardly from Proposition \ref{Prop:PropertyOfMonoformObject} (\ref{item:PropSubobjectOfMonoformObjectIsMonoform}).
\end{proof}

\section{Topological properties of atom spectra}
\label{sec:TopologicalPropertiesOfAtomSpectra}

In this section, we show several topological properties of the atom spectrum of an abelian category. These are used in section \ref{sec:PartialOrdersOnAtomSpectra} in order to consider a partial order on the atom spectrum.

We recall the definition of the topology on the atom spectrum.

\begin{Def}\label{Def:TopologyOnAtomSpectrum}
	Let $\mcA$ be an abelian category. We say that a subclass $\Phi$ of $\ASpec\mcA$ is \emph{open} if for any $\alpha\in\Phi$, there exists a monoform object $H$ in $\mcA$ satisfying $\overline{H}=\alpha$ and $\ASupp H\subset\Phi$.
\end{Def}

As in \cite[Proposition 3.8]{Kanda1}, the family of all the open subclasses of $\ASpec\mcA$ satisfies the axioms of topology. This topology is described as follows.

\begin{Prop}\label{Prop:OpenBasisOfAtomSpectrum}
	Let $\mcA$ be an abelian category. Then the family
	\begin{equation*}
		\{\ASupp M\mid M\in\mcA\}
	\end{equation*}
	is an open basis of $\ASpec\mcA$. If $\mcA$ is a Grothendieck category, then this is the set of all the open subsets of $\ASpec\mcA$.
	
	Moreover, if $\mcA$ is a locally noetherian Grothendieck category, then the family
	\begin{equation*}
		\{\ASupp M\mid M\text{ is a noetherian object in }\mcA\}
	\end{equation*}
	is an open basis of $\ASpec\mcA$.
\end{Prop}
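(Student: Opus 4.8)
The plan is to verify the three assertions in turn, all starting from the description of the topology in Definition~\ref{Def:TopologyOnAtomSpectrum}.

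\emph{Step 1 (open basis in the abelian case).} First I would check that each $\ASupp M$ is open: given $\alpha\in\ASupp M$, write $\alpha=\overline{H}$ for a monoform subquotient $H$ of $M$; from the short exact sequences expressing $H$ as a subobject of a quotient of $M$, Proposition~\ref{Prop:AtomSupportAndAssociatedAtomsAndShortExactSequence}~(\ref{item:PropAtomSupportAndShortExactSequence}) gives $\ASupp H\subset\ASupp M$, and since $\overline{H}\in\ASupp H$ the object $H$ itself witnesses openness of $\ASupp M$ at $\alpha$. Conversely, Definition~\ref{Def:TopologyOnAtomSpectrum} says exactly that an open subclass $\Phi$ is covered by sets of the form $\ASupp H$ with $\overline{H}\in\Phi$, hence is a union of members of $\{\ASupp M\mid M\in\mcA\}$. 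Together with the openness just shown, this is precisely the assertion that the family is an open basis.

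\emph{Step 2 (every open subset in the Grothendieck case).} Let $\mcA$ be a Grothendieck category and $\Phi\subset\ASpec\mcA$ open. By Proposition~\ref{Prop:AtomSpectrumAndGeneratingSet}~(\ref{item:PropAtomSpectrumIsNotNecessarilySet}) the class $\ASpec\mcA$ is a set, so $\Phi$ is a set and I may choose, for each $\alpha\in\Phi$, a monoform object $H_{\alpha}$ with $\overline{H_{\alpha}}=\alpha$ and $\ASupp H_{\alpha}\subset\Phi$. Since $\mcA$ has arbitrary direct sums, $M:=\bigoplus_{\alpha\in\Phi}H_{\alpha}$ exists, and Proposition~\ref{Prop:AtomSupportAndAssociatedAtomsAndDirectSum}~(\ref{item:PropAtomSupportAndDirectSum}) gives $\ASupp M=\bigcup_{\alpha\in\Phi}\ASupp H_{\alpha}$; this is contained in $\Phi$ and contains each $\alpha=\overline{H_{\alpha}}$, so $\ASupp M=\Phi$.

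\emph{Step 3 (noetherian objects suffice when $\mcA$ is locally noetherian).} The key observation is that, if $\mcA$ has a generating set $\mcG$ of noetherian objects, then every monoform object $H$ contains a \emph{noetherian} monoform subobject: take a nonzero morphism $G\to H$ with $G\in\mcG$; its image $H'$ is a nonzero quotient of $G$, hence noetherian, hence monoform by Proposition~\ref{Prop:PropertyOfMonoformObject}~(\ref{item:PropSubobjectOfMonoformObjectIsMonoform}), and it is atom-equivalent to $H$ with $\ASupp H'\subset\ASupp H$. Applying this to the witnesses $H_{\alpha}$ of Step~2, I may assume each $H_{\alpha}$ is noetherian, whence every open subset is a union of atom supports of noetherian objects; combined with Step~1 (each such atom support is open), this proves the final assertion.

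I do not expect a genuine obstacle here: the only points needing care are the set-theoretic legitimacy of the direct sum in Step~2, which is exactly why we invoke the already-established fact that $\ASpec\mcA$ is a set, and the passage to noetherian monoform subobjects in Step~3, which is the one mildly nonobvious move but is settled in a line once one uses a noetherian generating set.
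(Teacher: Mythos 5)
Your proof is correct, and it is a clean self-contained argument whereas the paper only cites three results from \cite{Kanda1} without supplying the details; so this is exactly the kind of unpacking the reader would otherwise have to do by hand. Your three steps are in the natural order: Step~1 is the only place real content about the topology enters (each $\ASupp M$ is open, and every open class is by definition a union of the $\ASupp H$ for witnesses $H$, hence a union of members of the family), Step~2 glues the witnesses via a direct sum and correctly invokes the set-hood of $\ASpec\mcA$ to make the indexing legitimate (this is not a cosmetic point: without it the direct sum over $\Phi$ is not known to exist), and Step~3 replaces each witness $H_{\alpha}$ by a noetherian monoform subobject obtained as the image of a nonzero map from a noetherian generator, using Proposition~\ref{Prop:PropertyOfMonoformObject}~(\ref{item:PropSubobjectOfMonoformObjectIsMonoform}) both for monoformness and to keep $\overline{H'_{\alpha}}=\alpha$ and $\ASupp H'_{\alpha}\subset\ASupp H_{\alpha}\subset\Phi$. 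One small remark on exposition: in Step~3 you do not actually need the direct-sum device (nor set-hood of $\ASpec\mcA$) again, since the claim there is only that the noetherian atom supports form a basis rather than the full open-set lattice, and your argument already produces $\Phi$ as the union $\bigcup_{\alpha\in\Phi}\ASupp H'_{\alpha}$; it is fine as written, but worth noting that the last assertion is the weaker ``basis'' statement, consistent with Example~\ref{ex:AtomSpectrumOfCategoryOfGradedModules} where not every open set is an atom support of a single noetherian object.
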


\begin{proof}
	This follows from \cite[Proposition 3.9]{Kanda1}, \cite[Proposition 5.6]{Kanda1}, and \cite[Proposition 5.3]{Kanda1}.
\end{proof}

Note that the atom spectrum of a locally noetherian Grothendieck category is homeomorphic to the \emph{Ziegler spectrum} described in \cite{Herzog} (or in \cite{Krause} as the \emph{spectrum}) as in \cite[Theorem 5.9]{Kanda1}.

For a commutative ring $R$, we can describe the topology on $\ASpec(\Mod R)$ defined above in terms of prime ideals of $R$. Recall that a subset $\Phi$ of $\Spec R$ is called \emph{closed under specialization} if for any prime ideals $\mfp$ and $\mfq$ of $R$ with $\mfp\subset\mfq$, the condition $\mfp\in\Phi$ implies $\mfq\in\Phi$.

\begin{Prop}[{\cite[Proposition 7.2]{Kanda1}}]\label{Prop:TopologyOnAtomSpectrumOfCommutativeRing}
	Let $R$ be a commutative ring and $\Phi$ a subset of $\Spec R$. Then $\Phi$ is an open subset of $\ASpec(\Mod R)$ if and only if $\Phi$ is closed under specialization.
\end{Prop}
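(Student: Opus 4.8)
The plan is to prove both implications using the identification $\ASpec(\Mod R)\cong\Spec R$ of Proposition \ref{Prop:AtomSpectrumForCommutativeRing}, the characterization of atom supports as supports (Proposition \ref{Prop:AtomSupportAndAssociatedAtomsForCommutativeRing}), and the description of open subsets of $\ASpec\mcA$ in terms of atom supports (Proposition \ref{Prop:OpenBasisOfAtomSpectrum}), which tells us that for a Grothendieck category the open subsets are exactly the sets of the form $\ASupp M$ for $M\in\mcA$.

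First, suppose $\Phi$ is open in $\ASpec(\Mod R)$. By Proposition \ref{Prop:OpenBasisOfAtomSpectrum} there is an $R$-module $M$ with $\Phi=\ASupp M$, and by Proposition \ref{Prop:AtomSupportAndAssociatedAtomsForCommutativeRing}(\ref{item:PropAtomSupportForCommutativeRing}) this equals $\Supp M=\{\mfq\in\Spec R\mid M_{\mfq}\neq 0\}$. It is a standard fact that the support of any module is closed under specialization: if $\mfp\subset\mfq$ and $M_{\mfp}\neq 0$, then localizing further gives $(M_{\mfq})_{\mfp M_{\mfq}}\cong M_{\mfp}\neq 0$, so $M_{\mfq}\neq 0$. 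Hence $\Phi$ is closed under specialization. (Alternatively, one can argue directly from Definition \ref{Def:TopologyOnAtomSpectrum}: if $\overline{R/\mfp}\in\Phi$, pick a monoform $H$ with $\overline{H}=\overline{R/\mfp}$ and $\ASupp H\subset\Phi$; since $H$ and $R/\mfp$ share a nonzero subobject and every nonzero submodule of $R/\mfp$ contains a copy of $R/\mfp$, we get $R/\mfp\in\ASupp H$, and $\Supp(R/\mfp)=\{\mfq\mid\mfp\subset\mfq\}$, so all specializations of $\mfp$ lie in $\ASupp H\subset\Phi$.)

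Conversely, suppose $\Phi\subset\Spec R$ is closed under specialization. Consider the $R$-module $M=\bigoplus_{\mfp\in\Phi}R/\mfp$. By Proposition \ref{Prop:AtomSupportAndAssociatedAtomsAndDirectSum}(\ref{item:PropAtomSupportAndDirectSum}) together with Proposition \ref{Prop:AtomSupportAndAssociatedAtomsForCommutativeRing}(\ref{item:PropAtomSupportForCommutativeRing}), we have
\begin{align*}
	\ASupp M=\bigcup_{\mfp\in\Phi}\Supp\frac{R}{\mfp}=\bigcup_{\mfp\in\Phi}\{\mfq\in\Spec R\mid\mfp\subset\mfq\}.
\end{align*}
The right-hand side contains $\Phi$ (take $\mfq=\mfp$) and is contained in $\Phi$ because $\Phi$ is closed under specialization; hence $\ASupp M=\Phi$. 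By Proposition \ref{Prop:OpenBasisOfAtomSpectrum}, $\Phi=\ASupp M$ is open in $\ASpec(\Mod R)$.

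I do not anticipate a serious obstacle here: the statement is essentially a bookkeeping exercise once the dictionary between atoms and primes, between atom supports and supports, and between open sets and atom supports is in place. The only point requiring a little care is making sure the characterization "open $\iff$ equals some $\ASupp M$" is legitimately available for $\Mod R$ — but this is exactly the content of Proposition \ref{Prop:OpenBasisOfAtomSpectrum} applied to the Grothendieck category $\Mod R$ — and that the well-known behavior of $\Supp$ under localization is correctly invoked in the forward direction.
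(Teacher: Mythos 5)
The paper does not give a proof here; it cites the earlier reference \cite[Proposition 7.2]{Kanda1}. Your argument is correct and is the natural one given the machinery collected in this paper: the forward direction uses that every open set in $\ASpec(\Mod R)$ is $\ASupp M=\Supp M$ for some module $M$ (Proposition \ref{Prop:OpenBasisOfAtomSpectrum}, Proposition \ref{Prop:AtomSupportAndAssociatedAtomsForCommutativeRing}) together with the standard fact that $\Supp M$ is specialization-closed, and the converse exhibits a specialization-closed $\Phi$ as $\ASupp\bigl(\bigoplus_{\mfp\in\Phi}R/\mfp\bigr)$ via the direct-sum formula for atom supports. Both directions are complete; the only cosmetic slip is writing $R/\mfp\in\ASupp H$ where you mean $\overline{R/\mfp}\in\ASupp H$ in the alternative argument, and in that alternative you should make explicit that you invoke Proposition \ref{Prop:AtomSupportAndAssociatedAtomsAndShortExactSequence}(\ref{item:PropAtomSupportAndShortExactSequence}) to pass from ``$R/\mfp$ embeds in $H$'' to $\Supp(R/\mfp)=\ASupp(R/\mfp)\subset\ASupp H$.
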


A topological space $X$ is called \emph{Alexandroff} if the intersection of any family of open subsets of $X$ is also open (or equivalently, if the union of any family of closed subsets of $X$ is also closed). For any commutative ring $R$, the topological space $\ASpec(\Mod R)$ is Alexandroff by Proposition \ref{Prop:TopologyOnAtomSpectrumOfCommutativeRing}. However, the atom spectrum of an abelian category is not necessarily an Alexandroff space.

\begin{ex}[{\cite[Example 4.7]{Pappacena}}]\label{ex:AtomSpectrumOfCategoryOfGradedModules}
	Let $k$ be a field. We regard $k[x]$ as a graded ring with $\deg x=1$ and consider the locally noetherian Grothendieck category $\GrMod k[x]$ of graded $k[x]$-modules. Let $H=k[x]$ and $S=k[x]/(x)$ as graded $k[x]$-modules. Then these are monoform objects in $\GrMod k[x]$, and we have
	\begin{equation*}
		\ASpec(\GrMod k[x])=\{\overline{H}\}\cup\{\overline{S(j)}\mid j\in\mbZ\}.
	\end{equation*}
	Since
	\begin{equation*}
		\ASupp H(i)=\{\overline{H}\}\cup\{\overline{S(j)}\mid j\in\mbZ,\ j\leq i\},
	\end{equation*}
	we have
	\begin{equation*}
		\bigcap_{i\in\mbZ}\ASupp H(i)=\{\overline{H}\},
	\end{equation*}
	where $\ASupp H(i)$ is an open subset of $\ASpec(\GrMod k[x])$ for any $i\in\mbZ$. Since any nonzero subobject of $H$ has a simple subquotient, the subset $\{\overline{H}\}$ of $\ASpec(\GrMod k[x])$ is not open. Therefore $\ASpec(\GrMod k[x])$ is not an Alexandroff space.
\end{ex}

Recall that a topological space $X$ is called a \emph{Kolmogorov space} (or a $T_{0}$\emph{-space}) if for any distinct points $x_{1}$ and $x_{2}$ in $X$, there exists an open subset of $X$ containing exactly one of them.

\begin{Prop}\label{Prop:AtomSpectrumIsKolmogorovSpace}
	Let $\mcA$ be an abelian category. Then $\ASpec\mcA$ is a Kolmogorov space.
\end{Prop}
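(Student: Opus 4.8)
The plan is to exhibit, for any two distinct atoms $\alpha_{1}\neq\alpha_{2}$ in $\mcA$, an open subset of $\ASpec\mcA$ containing exactly one of them. The building blocks are the sets $\ASupp M$: by Proposition \ref{Prop:OpenBasisOfAtomSpectrum} these form an open basis, so in particular each $\ASupp M$ is open (alternatively this is immediate from Definition \ref{Def:TopologyOnAtomSpectrum}, using that $\ASupp(L/L')\subset\ASupp L\subset\ASupp M$ for a subquotient $L/L'$ of $M$ by Proposition \ref{Prop:AtomSupportAndAssociatedAtomsAndShortExactSequence} (\ref{item:PropAtomSupportAndShortExactSequence})). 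I would then fix a monoform object $H_{1}$ in $\mcA$ with $\overline{H_{1}}=\alpha_{1}$, and observe that $\alpha_{1}\in\ASupp H_{1}$ since $H_{1}$ is a monoform subquotient of itself.

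The argument splits into two cases. If $\alpha_{2}\notin\ASupp H_{1}$, then the open set $\ASupp H_{1}$ already separates $\alpha_{1}$ from $\alpha_{2}$. If instead $\alpha_{2}\in\ASupp H_{1}$, there is a monoform subquotient $H$ of $H_{1}$ with $\overline{H}=\alpha_{2}$; write $H=L/L'$ with $0\subseteq L'\subsetneq L\subseteq H_{1}$. The key point is that $L'\neq 0$: otherwise $H=L$ would be a nonzero subobject of the monoform object $H_{1}$, hence a common nonzero subobject of $L$ and $H_{1}$, forcing $\alpha_{2}=\overline{L}=\overline{H_{1}}=\alpha_{1}$, a contradiction. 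Now $H=L/L'$ embeds in $H_{1}/L'$, so $\alpha_{2}=\overline{H}\in\ASupp(H_{1}/L')$, while Proposition \ref{Prop:MonoformObjectAndAtomSupport}, applied to the nonzero subobject $L'$ of the monoform object $H_{1}$, gives $\alpha_{1}=\overline{H_{1}}\notin\ASupp(H_{1}/L')$. Thus the open set $\ASupp(H_{1}/L')$ contains $\alpha_{2}$ but not $\alpha_{1}$, which finishes the proof.

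There is no real obstacle here; the only point requiring care is the case distinction together with the verification that $L'\neq 0$, which is precisely where the monoform hypothesis on $H_{1}$ enters (via the fact that a nonzero subobject of a monoform object is atom-equivalent to it, and via Proposition \ref{Prop:MonoformObjectAndAtomSupport}).
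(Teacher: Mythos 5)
Your proof is correct and takes essentially the same route as the paper: both fix a monoform $H_{1}$ representing $\alpha_{1}$, observe that if $\alpha_{2}\in\ASupp H_{1}$ then $\alpha_{2}$ must come from a quotient $H_{1}/L'$ with $L'\neq 0$, and then apply Proposition~\ref{Prop:MonoformObjectAndAtomSupport} to conclude $\alpha_{1}\notin\ASupp(H_{1}/L')$. The only difference is presentational: you argue directly (with an explicit case split and a hands-on check that $L'\neq 0$), whereas the paper argues by contradiction and verifies $L\neq 0$ by invoking $\AAss H=\{\alpha_{1}\}$ via Proposition~\ref{Prop:NumberOfAssociatedAtoms}(\ref{item:PropNumberOfAssociatedAtomsOfUniformObject}), taking one additional (in fact redundant) iteration before reaching the same contradiction.
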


\begin{proof}
	Assume that there exist distinct atoms $\alpha_{1}$ and $\alpha_{2}$ such that no open subclass of $\ASpec\mcA$ contains exactly one of them. Let $H$ be a monoform object such that $\overline{H}=\alpha_{1}$. Then $\ASupp H$ is an open subclass of $\ASpec\mcA$ containing $\alpha_{1}$. By the assumption, we have $\alpha_{2}\in\ASupp H$, and hence there exists a subobject $L$ of $H$ such that $\alpha_{2}\in\AAss(H/L)$. Since $\AAss H=\{\alpha_{1}\}$ by Proposition \ref{Prop:NumberOfAssociatedAtoms} (\ref{item:PropNumberOfAssociatedAtomsOfUniformObject}), we have $L\neq 0$. Similarly, $\alpha_{2}\in\ASupp(H/L)$ implies that there exists a subobject $L'$ of $H$ satisfying $L\subset L'$ and $\alpha_{1}\in\AAss(H/L')$. This contradicts Proposition \ref{Prop:MonoformObjectAndAtomSupport}.
\end{proof}

A topological space $X$ is called \emph{compact} if any open cover of $X$ admits a finite subcover. In the case of locally noetherian Grothendieck categories, open compact subsets of $\ASpec\mcA$ are characterized as follows.

\begin{Prop}[{Herzog \cite[Corollary 3.9]{Herzog} and Krause \cite[Corollary 4.5]{Krause}}]\label{Prop:CharacterizationOfOpenCompactSubset}
	Let $\mcA$ be a locally noetherian Grothendieck category. Then a subset $\Phi$ of $\ASpec\mcA$ is open and compact if and only if there exists a noetherian object $M$ in $\mcA$ such that $\Phi=\ASupp M$.
\end{Prop}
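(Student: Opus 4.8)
The plan is to prove both implications directly, using the open basis of noetherian atom supports from Proposition~\ref{Prop:OpenBasisOfAtomSpectrum} together with the behavior of $\ASupp$ under short exact sequences and direct sums (Propositions~\ref{Prop:AtomSupportAndAssociatedAtomsAndShortExactSequence} and~\ref{Prop:AtomSupportAndAssociatedAtomsAndDirectSum}). For the ``if'' direction, suppose $\Phi=\ASupp M$ for a noetherian object $M$. Openness is immediate from Proposition~\ref{Prop:OpenBasisOfAtomSpectrum}. For compactness, let $\{\Psi_{i}\}_{i\in I}$ be an open cover of $\ASupp M$; since the noetherian atom supports form an open basis, each $\Psi_{i}$ is a union of sets of the form $\ASupp N$ with $N$ noetherian, so we may refine to a cover of $\ASupp M$ by sets $\ASupp N_{j}$, $j\in J$, with each $N_{j}$ noetherian. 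The key point is to reduce to finitely many $j$. I would consider the subobjects of $M$ of the form $\sum_{j\in J'} \operatorname{Im}(f)$ ranging over finite $J'\subset J$ and morphisms $f$ from finite direct sums $\bigoplus_{j\in J'} N_{j}$ into $M$; by noetherianness of $M$ this ascending family of subobjects has a maximal element $M'$, attained by some finite $J'$. One then checks $\ASupp M'=\ASupp M$: if not, $M/M'\neq 0$ has a monoform subquotient $H$ with $\overline H\notin\ASupp M'$, but $\overline H\in\ASupp M=\bigcup_{j}\ASupp N_{j}$, so $\overline H\in\ASupp N_{j_{0}}$ for some $j_{0}$; tracing a monoform common subobject back through a morphism into $M$ and enlarging $J'$ by $j_{0}$ contradicts maximality of $M'$. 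Hence $\{\ASupp N_{j}\}_{j\in J'}$ is a finite subcover, and compactness follows.

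For the ``only if'' direction, suppose $\Phi$ is open and compact. By Proposition~\ref{Prop:OpenBasisOfAtomSpectrum}, $\Phi=\bigcup_{i\in I}\ASupp M_{i}$ with each $M_{i}$ noetherian. Compactness gives a finite subset $I'\subset I$ with $\Phi=\bigcup_{i\in I'}\ASupp M_{i}$, and then $M=\bigoplus_{i\in I'}M_{i}$ is a noetherian object with $\ASupp M=\bigcup_{i\in I'}\ASupp M_{i}=\Phi$ by Proposition~\ref{Prop:AtomSupportAndAssociatedAtomsAndDirectSum}~(\ref{item:PropAtomSupportAndDirectSum}). This direction is essentially formal.

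The main obstacle is the compactness argument in the ``if'' direction: one must genuinely exploit that $M$ is noetherian to extract a finite subcover, and the delicate step is verifying that the maximal subobject $M'$ (built from finitely many images of the $N_{j}$) already has $\ASupp M'=\ASupp M$. The argument hinges on the fact that an atom in $\ASupp M$ is witnessed by a monoform subquotient, which can be pulled back along a morphism from some $N_{j}$ after replacing it by a suitable nonzero (hence still monoform, by Proposition~\ref{Prop:PropertyOfMonoformObject}~(\ref{item:PropSubobjectOfMonoformObjectIsMonoform})) subobject; handling the subquotient structure carefully---passing between $M$, its subobjects, and quotients---is where the care is needed. Alternatively, if the desired statement is already available in the cited work of Herzog and Krause via the identification of $\ASpec\mcA$ with the Ziegler spectrum (cf.\ the remark after Proposition~\ref{Prop:OpenBasisOfAtomSpectrum}), the proof may simply invoke that correspondence and cite \cite[Corollary 3.9]{Herzog} and \cite[Corollary 4.5]{Krause}.
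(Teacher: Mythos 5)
Your ``only if'' direction (open and compact $\Rightarrow$ $\Phi=\ASupp M$ with $M$ noetherian) is correct and matches the paper: extract a finite subcover of basis sets $\ASupp L_{\lambda_i}$ and take their direct sum.

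Your ``if'' direction, however, has a genuine gap. You propose to build a subobject $M'\subset M$ as a sum of images of morphisms from the covering objects $N_j$ into $M$, and then to derive a contradiction from maximality of $M'$ by ``tracing a monoform common subobject back through a morphism into $M$''. The problem is that $\overline{H}\in\ASupp N_{j_0}\cap\ASupp M$ does not yield any morphism from $N_{j_0}$ (or from a relevant subobject of $N_{j_0}$) into $M$. Atom support is defined via common monoform \emph{subquotients}, which need not be realized by maps between the ambient objects. Concretely, take $\mcA=\Mod R$ with $R$ commutative noetherian, $M=R/\mfp$, and $N_{j_0}=R/\mfq$ with $\mfp\subsetneq\mfq$: then $\overline{R/\mfq}\in\ASupp M\cap\ASupp N_{j_0}$, yet $\Hom_R(R/\mfq,R/\mfp)=0$ because any $q\in\mfq\setminus\mfp$ acts injectively on $R/\mfp$. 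So the family of subobjects you are maximizing over need not see all atoms of $\ASupp M$, and the contradiction step breaks down. (Also note that the family of subobjects $\sum_{j\in J'}\Im f$ is merely a poset, not an ascending chain; a maximal element exists because any nonempty set of subobjects of a noetherian object has one, but this is a cosmetic point.)

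The paper instead runs a d\'evissage \emph{on $M$ itself}: since $M$ is noetherian and nonzero, it has a monoform subobject $H$; shrinking $H$ along the open cover puts $\ASupp H_1\subset\Phi_{\lambda_1}$ for a nonzero $H_1\subset M$; pass to $M/H_1$ and iterate. Noetherianness forces the ascending chain of kernels to stabilize, so $M$ is filtered in finitely many steps by monoform pieces $H_1,\dots,H_n$, and Proposition \ref{Prop:AtomSupportAndAssociatedAtomsAndShortExactSequence} gives $\ASupp M=\bigcup_i\ASupp H_i\subset\bigcup_i\Phi_{\lambda_i}$. The key structural difference: the paper filters the given object $M$ downward, never needing morphisms \emph{from} the $N_j$, whereas your approach needs such morphisms to exist and they do not. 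Your fallback of simply citing Herzog and Krause via the Ziegler spectrum identification is of course legitimate, but the direct argument as written does not go through.
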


\begin{proof}
	Herzog \cite{Herzog} and Krause \cite{Krause} showed more general results for locally coherent Grothendieck category by using a classification of Serre subcategories. We give a direct proof in the case of locally noetherian Grothendieck categories for the convenience of the reader.
	
	Assume that $\Phi$ is open and compact. By Proposition \ref{Prop:OpenBasisOfAtomSpectrum}, there exists a family $\{L_{\lambda}\}_{\lambda\in\Lambda}$ of noetherian objects in $\mcA$ such that $\Phi=\bigcup_{\lambda\in\Lambda}\ASupp L_{\lambda}$, where $\ASupp L_{\lambda}$ is open for any $\lambda\in\Lambda$. Since $\Phi$ is compact, there exist $\lambda_{1},\ldots,\lambda_{n}\in\Lambda$ such that $\Phi=\bigcup_{i=1}^{n}\ASupp L_{\lambda_{i}}$. By Proposition \ref{Prop:AtomSupportAndAssociatedAtomsAndShortExactSequence} (\ref{item:PropAtomSupportAndShortExactSequence}), we have $\Phi=\ASupp\bigoplus_{i=1}^{n}L_{\lambda_{i}}$, where $\bigoplus_{i=1}^{n}L_{\lambda_{i}}$ is noetherian.
	
	Conversely, assume that $\Phi=\ASupp M$ for a noetherian object $M$ in $\mcA$. Then $\Phi$ is open in $\ASpec\mcA$. We can assume that $M$ is nonzero. Let $\{\Phi_{\lambda}\}_{\lambda\in\Lambda}$ be an open cover of $\Phi$. By Proposition \ref{Prop:NumberOfAssociatedAtoms} (\ref{item:PropNumberOfAssociatedAtomsOfNoetherianObject}), there exists a monoform subobject $H$ of $M$. Since we have $\overline{H}\in\ASupp M=\bigcup_{\lambda\in\Lambda}\Phi_{\lambda}$, there exists $\lambda_{1}\in\Lambda$ such that $\overline{H}\in\Phi_{\lambda_{1}}$. Since $\Phi_{\lambda_{1}}$ is an open subset of $\ASpec\mcA$, there exists a monoform object $H'$ in $\mcA$ satisfying $\ASupp H'\subset\Phi_{\lambda_{1}}$ and $\overline{H'}=\overline{H}$. Hence there exists a nonzero subobject $H_{1}$ of $H$ which is isomorphic to a subobject of $H'$. Let $M_{0}=M$ and $M_{1}=M_{0}/H_{1}$. Since $M$ is noetherian, inductively we obtain objects $M_{0},\ldots,M_{n}$ in $\mcA$, monoform objects $H_{1},\ldots,H_{n}$ in $\mcA$, and $\lambda_{1},\ldots,\lambda_{n}\in\Lambda$ with $\ASupp H_{i}\subset\Phi_{\lambda_{i}}$ and an exact sequence
	\begin{equation*}
		0\to H_{i}\to M_{i-1}\to M_{i}\to 0
	\end{equation*}
	for each $i=1,\ldots,n$, where $M_{0}=M$ and $M_{n}=0$. Then we have
	\begin{equation*}
		\Phi=\ASupp M=\bigcup_{i=1}^{n}\ASupp H_{i}\subset\bigcup_{i=1}^{n}\Phi_{\lambda_{i}}
	\end{equation*}
	This shows that $\Phi$ is compact.
\end{proof}

The discreteness of the atom spectrum is characterized as follows.

\begin{Prop}\label{Prop:CharacterizationOfOpenAtomAndDiscreteAtomSpectrum}
	Let $\mcA$ be a locally noetherian Grothendieck category.
	\begin{enumerate}
		\item\label{item:PropCharacterizationOfOpenAtom} For any atom $\alpha$ in $\mcA$, the subset $\{\alpha\}$ of $\ASpec\mcA$ is open if and only if there exists a simple object $S$ in $\mcA$ such that $\overline{S}=\alpha$.
		\item\label{item:PropCharacterizationOfDiscreteAtomSupport} For any noetherian object $M$ in $\mcA$, the subset $\ASupp M$ of $\ASpec\mcA$ with the induced topology is discrete if and only if $M$ has finite length.
		\item\label{item:PropCharacterizationOfDiscreteAtomSpectrum} $\ASpec\mcA$ is a discrete topological space if and only if any noetherian object in $\mcA$ has finite length.
	\end{enumerate}
\end{Prop}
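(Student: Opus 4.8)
The plan is to establish (1) directly, extract a reusable lemma and deduce (2) from it, and finally obtain (3) from (1) and (2). The standing tools are Proposition \ref{Prop:OpenBasisOfAtomSpectrum} (atom supports are open, and those of noetherian objects form an open basis of $\ASpec\mcA$), Proposition \ref{Prop:CharacterizationOfOpenCompactSubset}, Proposition \ref{Prop:NumberOfAssociatedAtoms}, Proposition \ref{Prop:AtomSupportAndAssociatedAtomsAndShortExactSequence}, and the elementary fact that a nonzero noetherian object has a maximal proper subobject (ACC on subobjects), hence a simple quotient.

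For (1): if there is a simple object $S$ with $\overline{S}=\alpha$, then the only subquotients of $S$ are $0$ and $S$, and $S$ is monoform, so $\ASupp S=\{\alpha\}$, which is open. Conversely, if $\{\alpha\}$ is open, then since the atom supports of noetherian objects form a basis, there is a noetherian object $M$ with $\alpha\in\ASupp M\subseteq\{\alpha\}$, i.e.\ $\ASupp M=\{\alpha\}$ and $M\neq 0$; taking a maximal proper subobject $N\subseteq M$, the simple object $S=M/N$ satisfies $\overline{S}\in\ASupp M=\{\alpha\}$, so $\overline{S}=\alpha$.

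For (2), the crux is the following lemma, which I would prove first: \emph{a nonzero noetherian object $N$ all of whose atoms $\alpha\in\ASupp N$ satisfy ``$\{\alpha\}$ is open'' has finite length.} Indeed, by Proposition \ref{Prop:NumberOfAssociatedAtoms}\,(\ref{item:PropNumberOfAssociatedAtomsOfNoetherianObject}) $N$ has a monoform subobject $H$; then $\overline{H}\in\ASupp N$, so by (1) $\overline{H}=\overline{S}$ for a simple object $S$, and a common nonzero subobject of $S$ and $H$ is isomorphic to $S$, so $N$ has a simple subobject. Iterating this with the quotients $N/N_i$ (still noetherian, with $\ASupp(N/N_i)\subseteq\ASupp N$ by Proposition \ref{Prop:AtomSupportAndAssociatedAtomsAndShortExactSequence}\,(\ref{item:PropAtomSupportAndShortExactSequence})) and invoking the ACC, one obtains a finite chain $0=N_0\subsetneq N_1\subsetneq\cdots\subsetneq N_n=N$ with simple quotients, so $N$ has finite length. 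Granting the lemma, (2) goes as follows. If $M$ has finite length, a composition series together with (1) shows $\ASupp M$ is a finite set of atoms that are open points of $\ASpec\mcA$, hence $\ASupp M$ is discrete. Conversely, if $\ASupp M$ is discrete, then, since $M$ is noetherian, $\ASupp M$ is an open subset of $\ASpec\mcA$ (and compact, by Proposition \ref{Prop:CharacterizationOfOpenCompactSubset}); for each $\alpha\in\ASupp M$ the singleton $\{\alpha\}$ is open in the subspace, so $\{\alpha\}=\ASupp M\cap V$ for some open $V\subseteq\ASpec\mcA$, and this is open in $\ASpec\mcA$ as an intersection of two open sets. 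Thus $\ASupp M$ consists of open points, and the lemma gives that $M$ has finite length.

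Finally, (3) follows quickly. If every noetherian object has finite length, take any $\alpha\in\ASpec\mcA$; by local noetherianness and Proposition \ref{Prop:AtomSpectrumAndGeneratingSet}\,(\ref{item:PropAtomSpectrumAndGeneratingSet}) we may write $\alpha=\overline{H}$ for a noetherian monoform object $H$, which then has finite length and hence a simple subobject $S$ with $\overline{S}=\overline{H}=\alpha$; by (1), $\{\alpha\}$ is open, so $\ASpec\mcA$ is discrete. Conversely, if $\ASpec\mcA$ is discrete, then for any noetherian object $M$ the subspace $\ASupp M$ is discrete, so $M$ has finite length by (2). The step I expect to be most delicate is the converse of (2): passing from discreteness of the \emph{subspace} $\ASupp M$ to openness of its singletons in the whole space $\ASpec\mcA$ (this is exactly where noetherianness of $M$, via openness of $\ASupp M$, enters) and then running the simple-subobject extraction; the remaining steps are routine bookkeeping with the results recalled above.
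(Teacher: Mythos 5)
Your proof is correct and follows essentially the same route as the paper's: establish (1) via the simple-subquotient extraction, deduce (2) by iterating the simple-subobject step via ACC, and then (3) from (1) and (2). The one place you are a touch more careful than the paper is the converse of (2), where you spell out that discreteness of the \emph{subspace} $\ASupp M$ yields open singletons in the whole space $\ASpec\mcA$ because $\ASupp M$ is itself open; the paper leaves this implicit when invoking (1).
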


\begin{proof}
	(\ref{item:PropCharacterizationOfOpenAtom}) Since any nonzero object in $\mcA$ has a simple subquotient, any nonempty open subset of $\ASpec\mcA$ contains an atom represented by a simple object in $\mcA$. For a simple object $S$ in $\mcA$, it is clear that $\{\overline{S}\}$ is an open subset of $\mcA$.
	
	(\ref{item:PropCharacterizationOfDiscreteAtomSupport}) Assume that $\ASupp M$ is discrete. Then by (\ref{item:PropCharacterizationOfOpenAtom}), any element of $\ASupp M$ is represented by a simple object in $\mcA$. By Proposition \ref{Prop:NumberOfAssociatedAtoms} (\ref{item:PropNumberOfAssociatedAtomsOfNoetherianObject}), any quotient object of $M$ has a simple subobject. Therefore $M$ has finite length.
	
	Conversely, assume that $M$ has finite length. Then by Proposition \ref{Prop:AtomSupportAndAssociatedAtomsAndShortExactSequence} (\ref{item:PropAtomSupportAndShortExactSequence}), any element of $\ASupp M$ is represented by a simple object in $\mcA$, and hence $\ASupp M$ is discrete.
	
	(\ref{item:PropCharacterizationOfDiscreteAtomSpectrum}) Assume that $\ASpec\mcA$ is discrete. Then by (\ref{item:PropCharacterizationOfDiscreteAtomSupport}), any noetherian object in $\mcA$ has finite length.
	
	Conversely, assume that any noetherian object in $\mcA$ has finite length. For any atom $\alpha$ in $\mcA$, there exists a noetherian monoform object $H$ in $\mcA$ such that $\overline{H}=\alpha$. Since $H$ has finite length by the assumption, there exists a simple subobject $S$ of $H$, and hence $\alpha=\overline{S}$. By (\ref{item:PropCharacterizationOfOpenAtom}), the topological space $\ASpec\mcA$ is discrete.
\end{proof}

\section{Partial orders on atom spectra}
\label{sec:PartialOrdersOnAtomSpectra}

In this section, we define a partial order on the atom spectrum of an abelian category. It is a generalization of the inclusion relation between prime ideals of a commutative ring. We show the existence of a maximal atom and that of a minimal atom under some conditions of noetherianness.

For a Kolmogorov space $X$, the partial order called \emph{specialization order} $\preceq$ on $X$ is defined as follows: for any $x,y\in X$, we have $x\preceq y$ if and only if $x$ belongs to the topological closure $\overline{\{y\}}$ of $\{y\}$ in $X$. On the other hand, each partially ordered set $P$ can be regarded as a topological space as follows: a subset $\Phi$ of $P$ is open if and only if for any $p,q\in P$ with $p\preceq q$, the condition $p\in\Phi$ implies $q\in\Phi$. The following results are well known.

\begin{Prop}\label{Prop:BijectionBetweenKolmogorovAlexandroffSpacesAndPartiallyOrderedSets}\leavevmode
	\begin{enumerate}
		\item\label{item:PropBijectionBetweenKolmogorovAlexandroffSpacesAndPartiallyOrderedSets} The map sending a Kolmogorov Alexandroff space $X$ to the partially ordered set $(X,\preceq)$, where $\preceq$ is the specialization order on $X$, is a bijection between homeomorphism classes of Kolmogorov Alexandroff spaces and isomorphism classes of partially ordered sets. The inverse map sends a partially ordered set $P$ to the topological space $P$ defined above.
		\item\label{item:PropBijectionBetweenFiniteKolmogorovSpacesAndFinitePartiallyOrderedSets} The maps in (\ref{item:PropBijectionBetweenKolmogorovAlexandroffSpacesAndPartiallyOrderedSets}) induce a bijection between homeomorphism classes of finite Kolmogorov spaces and isomorphism classes of finite partially ordered sets.
	\end{enumerate}
\end{Prop}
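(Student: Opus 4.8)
The plan is to verify that each of the two assignments is well-defined and that they are mutually inverse, deducing (2) as an immediate consequence.

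First I would check that for a Kolmogorov space $X$ the relation defined by $x\preceq y\iff x\in\overline{\{y\}}$ is a partial order. Reflexivity is clear, and transitivity follows since $x\in\overline{\{y\}}$ and $y\in\overline{\{z\}}$ give $\overline{\{y\}}\subseteq\overline{\{z\}}$, hence $x\in\overline{\{z\}}$. Antisymmetry is exactly where the Kolmogorov hypothesis is used: if $x\preceq y$ and $y\preceq x$ then $\overline{\{x\}}=\overline{\{y\}}$, so every open subset containing one of $x,y$ contains the other, forcing $x=y$. I would also record the reformulation $x\preceq y$ iff every open subset of $X$ containing $x$ contains $y$; in particular every open subset of $X$ is an up-set for $\preceq$, so $X\mapsto(X,\preceq)$ does land in partially ordered sets.

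Next, for a partially ordered set $P$, I would check that the up-sets (the $\Phi$ with $p\in\Phi$, $p\preceq q\Rightarrow q\in\Phi$) form a topology: $\emptyset$ and $P$ are up-sets, and arbitrary unions and arbitrary intersections of up-sets are up-sets, so the resulting space $X_{P}$ is Alexandroff. It is Kolmogorov because if $p\neq q$ then $p\not\preceq q$ or $q\not\preceq p$, and e.g. in the first case the up-set $\{r\in P\mid q\preceq r\}$ contains $q$ but not $p$. Then I would show the constructions are inverse. Starting from $P$: the specialization order $\preceq'$ of $X_{P}$ satisfies $p\preceq'q$ iff every up-set containing $p$ contains $q$; testing with $\{r\mid p\preceq r\}$ gives $p\preceq'q\Rightarrow p\preceq q$, and the converse is immediate, so $\preceq'=\preceq$. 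Starting from a Kolmogorov Alexandroff space $X$ with specialization order $\preceq$: I must show the open subsets of $X$ are exactly the up-sets of $\preceq$. Every open set is an up-set, as already noted; conversely, by the Alexandroff property each $x\in X$ has a smallest open neighbourhood $U_{x}=\bigcap\{U\text{ open}\mid x\in U\}$, and the reformulation of $\preceq$ identifies $U_{x}=\{y\in X\mid x\preceq y\}$. Hence any up-set $\Phi$ equals $\bigcup_{x\in\Phi}U_{x}$, a union of open sets, so $\Phi$ is open.

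The one step that genuinely uses the Alexandroff hypothesis, and the main obstacle, is the existence of the smallest open neighbourhood $U_{x}$ and its identification with the principal up-set of $x$; everything else is formal bookkeeping about orders and topologies. Finally, for (2) I would observe that a finite topological space is automatically Alexandroff, since a finite intersection of open sets is open, so finite Kolmogorov spaces coincide with finite Kolmogorov Alexandroff spaces; as the two constructions preserve the underlying set, the bijection of (1) restricts to a bijection between finite Kolmogorov spaces and finite partially ordered sets.
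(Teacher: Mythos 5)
Your proof is correct, and it fills in exactly the standard argument that the paper declines to spell out (the paper dismisses part (1) with ``this can be shown straightforwardly'' and part (2) as ``obvious''). The key observation you correctly isolate---that in an Alexandroff space the minimal open neighbourhood $U_{x}=\bigcap\{U\text{ open}\mid x\in U\}$ exists and equals the principal up-set $\{y\mid x\preceq y\}$, so that open sets and up-sets coincide---is the crux, and the rest is routine verification as you say.
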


\begin{proof}
	(\ref{item:PropBijectionBetweenKolmogorovAlexandroffSpacesAndPartiallyOrderedSets}) This can be shown straightforwardly.
	
	(\ref{item:PropBijectionBetweenFiniteKolmogorovSpacesAndFinitePartiallyOrderedSets}) This is obvious since any finite topological space is Alexandroff.
\end{proof}

For an abelian category $\mcA$, the topological space $\ASpec\mcA$ is Kolmogorov by Proposition \ref{Prop:AtomSpectrumIsKolmogorovSpace}. We regard $\ASpec\mcA$ as a partially ordered set together with the specialization order $\leq$ on $\ASpec\mcA$. This partial order is described explicitly as follows.

\begin{Prop}\label{Prop:TopologicalCharacterizationOfPartialOrder}
	Let $\mcA$ be an abelian category, and let $\alpha$ and $\beta$ be atoms in $\mcA$. Then the following assertions are equivalent.
	\begin{enumerate}
		\item $\alpha\leq\beta$, that is, $\alpha\in\overline{\{\beta\}}$.
		\item Any open subclass $\Phi$ of $\ASpec\mcA$ containing $\alpha$ also contains $\beta$. In other words, the atom $\beta$ belongs to the intersection of all the open subclasses containing $\alpha$.
		\item For any object $M$ in $\mcA$ satisfying $\alpha\in\ASupp M$, we have $\beta\in\ASupp M$.
		\item For any monoform object $H$ in $\mcA$ satisfying $\overline{H}=\alpha$, we have $\beta\in\ASupp H$.
	\end{enumerate}
\end{Prop}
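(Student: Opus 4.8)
The plan is to establish the cycle of implications $(1)\Rightarrow(2)\Rightarrow(3)\Rightarrow(4)\Rightarrow(1)$, each of which is a direct unwinding of the pertinent definition; no genuinely hard step is expected here.

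The equivalence $(1)\Leftrightarrow(2)$ is nothing more than the standard description of the closure operator in a topological space: $\alpha\in\overline{\{\beta\}}$ holds precisely when every open neighborhood of $\alpha$ meets $\{\beta\}$, that is, contains $\beta$, so $(2)$ is a verbatim restatement of $(1)$. For $(2)\Rightarrow(3)$, suppose $\alpha\in\ASupp M$ for an object $M$ in $\mcA$. By Proposition \ref{Prop:OpenBasisOfAtomSpectrum}, $\ASupp M$ is an open subclass of $\ASpec\mcA$, so applying $(2)$ with $\Phi=\ASupp M$ yields $\beta\in\ASupp M$. The implication $(3)\Rightarrow(4)$ is immediate, since a monoform object $H$ with $\overline{H}=\alpha$ is a monoform subquotient of itself, whence $\alpha=\overline{H}\in\ASupp H$, and $(3)$ applies with $M=H$.

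It remains to prove $(4)\Rightarrow(1)$, and here I would invoke Definition \ref{Def:TopologyOnAtomSpectrum} of the topology on $\ASpec\mcA$: given any open subclass $\Phi$ containing $\alpha$, there exists a monoform object $H$ with $\overline{H}=\alpha$ and $\ASupp H\subset\Phi$; by $(4)$ we get $\beta\in\ASupp H\subset\Phi$. Thus every open subclass containing $\alpha$ contains $\beta$, which is $(2)$, hence $(1)$, closing the cycle. The only point requiring care throughout is keeping track of the direction of the specialization order and of the asymmetric form of Definition \ref{Def:TopologyOnAtomSpectrum} — the notion of ``open'' subclass is set up precisely so that this last implication goes through, and it is the step I would double-check most carefully.
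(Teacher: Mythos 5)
Your proposal is correct and follows essentially the same approach the paper intends: the paper's one-line proof cites precisely the definition of the topology on $\ASpec\mcA$ and Proposition \ref{Prop:OpenBasisOfAtomSpectrum} (which guarantees that $\ASupp M$ is open), and your cycle $(1)\Rightarrow(2)\Rightarrow(3)\Rightarrow(4)\Rightarrow(1)$ is the natural unwinding of exactly those two ingredients. The only thing worth noting is that the openness of $\ASupp M$ holds already for an arbitrary abelian category (via Proposition \ref{Prop:AtomSupportAndAssociatedAtomsAndShortExactSequence}~(\ref{item:PropAtomSupportAndShortExactSequence})), so no Grothendieck hypothesis is being used tacitly; your step $(2)\Rightarrow(3)$ is sound at the stated level of generality.
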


\begin{proof}
	This follows from the definition of the topology on $\ASpec\mcA$ and Proposition \ref{Prop:OpenBasisOfAtomSpectrum}.
\end{proof}

The following result shows that the partial order defined above is a generalization of the inclusion relation between prime ideals of a commutative ring.

\begin{Prop}\label{Prop:PartialOrderBetweenAtomOfCommutativeRing}
	Let $R$ be a commutative ring, and let $\mfp$ and $\mfq$ be prime ideals of $R$. Then $\overline{R/\mfp}\leq\overline{R/\mfq}$ if and only if $\mfp\subset\mfq$.
\end{Prop}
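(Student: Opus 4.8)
The plan is to deduce the statement from the topological characterization of the partial order in Proposition \ref{Prop:TopologicalCharacterizationOfPartialOrder}, combined with the two translations already available in the commutative case: the bijection $\mfp\mapsto\overline{R/\mfp}$ between $\Spec R$ and $\ASpec(\Mod R)$ (Proposition \ref{Prop:AtomSpectrumForCommutativeRing}) and the identity $\ASupp M=\Supp M$ for every $R$-module $M$ (Proposition \ref{Prop:AtomSupportAndAssociatedAtomsForCommutativeRing}). Under these identifications the inequality $\overline{R/\mfp}\le\overline{R/\mfq}$ should turn into a statement purely about $\Supp$, which is exactly where the comparison of primes enters. Throughout I identify the atom $\overline{R/\mfr}$ with the prime $\mfr$.

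For the implication $\overline{R/\mfp}\le\overline{R/\mfq}\Rightarrow\mfp\subset\mfq$, I would invoke the implication (1)$\Rightarrow$(3) of Proposition \ref{Prop:TopologicalCharacterizationOfPartialOrder} using the single test module $M=R/\mfp$. Since $\mfp\in\Supp(R/\mfp)=\ASupp(R/\mfp)$, the element $\overline{R/\mfp}$ lies in $\ASupp(R/\mfp)$; hence $\overline{R/\mfq}\in\ASupp(R/\mfp)=\Supp(R/\mfp)=\{\mfr\in\Spec R\mid\mfp\subset\mfr\}$, so $\mfp\subset\mfq$.

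For the converse, assume $\mfp\subset\mfq$, and verify condition (3) of Proposition \ref{Prop:TopologicalCharacterizationOfPartialOrder}. Let $M$ be any $R$-module with $\overline{R/\mfp}\in\ASupp M=\Supp M$, i.e. $\mfp\in\Supp M$. Since $\Supp M$ is closed under specialization (in one line: if $x/1\neq 0$ in $M_{\mfp}$ then $\Ann_R(x)\subset\mfp\subset\mfq$, so $x/1\neq 0$ in $M_{\mfq}$; alternatively this is immediate from Proposition \ref{Prop:TopologyOnAtomSpectrumOfCommutativeRing}, as $\ASupp M$ is open in $\ASpec(\Mod R)$), we conclude $\mfq\in\Supp M=\ASupp M$, that is, $\overline{R/\mfq}\in\ASupp M$. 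By Proposition \ref{Prop:TopologicalCharacterizationOfPartialOrder} this gives $\overline{R/\mfp}\le\overline{R/\mfq}$.

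I do not expect a genuine obstacle, since all the substantive content has been front-loaded into the cited propositions. The only point requiring a little care is the bookkeeping with the quantifiers in Proposition \ref{Prop:TopologicalCharacterizationOfPartialOrder}: a single witnessing module $M=R/\mfp$ suffices for the forward implication (via (1)$\Rightarrow$(3)), whereas the converse must be checked for every module $M$; and one must consistently read ``$\beta\in\ASupp M$'' as ``$\mfq\in\Supp M$'' under the identification of $\overline{R/\mfr}$ with $\mfr$.
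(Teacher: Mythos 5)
Your proof is correct and takes essentially the same approach as the paper: the paper's one-line proof cites Proposition \ref{Prop:TopologyOnAtomSpectrumOfCommutativeRing} and Proposition \ref{Prop:TopologicalCharacterizationOfPartialOrder}, and you simply run the argument through condition (3) of the latter (via $\ASupp M=\Supp M$, Proposition \ref{Prop:AtomSupportAndAssociatedAtomsForCommutativeRing}) rather than condition (2) (via the open-set description of Proposition \ref{Prop:TopologyOnAtomSpectrumOfCommutativeRing}). These are two phrasings of the same reduction, since open subsets of $\ASpec(\Mod R)$ are exactly the sets $\ASupp M$, so the bookkeeping you flag at the end is the only real content beyond what the paper's citation already packs in.
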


\begin{proof}
	This follows from Proposition \ref{Prop:TopologyOnAtomSpectrumOfCommutativeRing} and Proposition \ref{Prop:TopologicalCharacterizationOfPartialOrder}.
\end{proof}

Let $\mcA$ be an abelian category. We say that an atom is \emph{maximal} (resp.\ \emph{minimal}) in a subclass $\Phi$ of $\ASpec\mcA$ if it is maximal (resp.\ minimal) in $\Phi$ with respect to the partial order $\leq$. Such an atom is characterized in terms of the topology by the following proposition.

\begin{Prop}\label{Prop:TopologicalCharacterizationOfMaximalAtomAndMinimalAtom}
	Let $X$ be a Kolmogorov space with the specialization order $\preceq$. Let $x$ be an element of $X$ and $\Phi$ a subclass of $X$ containing $x$.
	\begin{enumerate}
		\item\label{item:PropTopologicalCharacterizationOfMaximalAtom} $x$ is maximal in $\Phi$ if and only if $\{x\}$ is the intersection of some family of open subclasses of $\Phi$.
		\item\label{item:PropTopologicalCharacterizationOfMinimalAtom} $x$ is minimal in $\Phi$ if and only if $\{x\}$ is a closed subclass of $\Phi$.
	\end{enumerate}
\end{Prop}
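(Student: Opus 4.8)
The plan is to deduce both equivalences from the definition of the specialization order together with the elementary behaviour of the relative topology on $\Phi$. First I would set up the dictionary: for points of any topological space, $y \preceq x$ holds precisely when $x$ lies in every open subclass containing $y$, equivalently when $y \in \overline{\{x\}}$; passing to the subspace $\Phi$, the specialization order determined by the relative topology on $\Phi$ is simply the restriction of $\preceq$ (so that ``maximal/minimal in $\Phi$'' is unambiguous), and the closure of $\{x\}$ inside $\Phi$ equals $\{\,y \in \Phi \mid y \preceq x\,\}$. Once this is recorded, the two parts are short.

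For (\ref{item:PropTopologicalCharacterizationOfMaximalAtom}): assuming $x$ maximal in $\Phi$, I would let $V$ be the intersection of the family of all open subclasses of $\Phi$ containing $x$ — a nonempty family, since $\Phi$ itself is one of its members. Every $y \in V$ lies in $\Phi$ and in each open subclass of $\Phi$ containing $x$, hence satisfies $x \preceq y$, so $y = x$ by maximality; thus $V = \{x\}$ is the desired intersection. Conversely, if $\{x\} = \bigcap_{\lambda} U_{\lambda}$ with each $U_{\lambda}$ open in $\Phi$, then each $U_{\lambda}$ contains $x$, so any $y \in \Phi$ with $x \preceq y$ lies in every $U_{\lambda}$ and therefore equals $x$; hence $x$ is maximal in $\Phi$.

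For (\ref{item:PropTopologicalCharacterizationOfMinimalAtom}): I would simply observe that $\{x\}$ is closed in $\Phi$ if and only if it coincides with its closure $\{\,y \in \Phi \mid y \preceq x\,\}$ in $\Phi$; since $x$ always belongs to this closure, the equality holds if and only if there is no $y \in \Phi$ with $y \neq x$ and $y \preceq x$, which is exactly the statement that $x$ is minimal in $\Phi$.

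I do not expect a genuine obstacle here, since the content is formal. The only points requiring a little care are checking that the specialization order induced by the relative topology on $\Phi$ is the restriction of $\preceq$, and, in the first implication of (\ref{item:PropTopologicalCharacterizationOfMaximalAtom}), remembering that the family of open subclasses of $\Phi$ containing $x$ is nonempty so that the intersection $V$ is meaningful and actually contains $x$.
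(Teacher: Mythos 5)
Your proposal is correct and supplies precisely the ``straightforward'' argument the paper invokes without writing out: you identify the specialization order on the subspace $\Phi$ as the restriction of $\preceq$, observe that the closure of $\{x\}$ in $\Phi$ is $\{\,y\in\Phi\mid y\preceq x\,\}$, and deduce both parts directly from the order-theoretic meanings of maximality and minimality. The only mild care points you flag (nonemptiness of the family in part (1), the consistency of the subspace specialization order) are the right ones, and they are handled correctly.
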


\begin{proof}
	This can be shown straightforwardly.
\end{proof}

\begin{rem}\label{rem:MinimalAtomIsNotNecessarilySimpleAtom}
	For a commutative ring $R$, it is clear that a prime ideal $\mfp$ of $R$ is maximal if and only if $R/\mfp$ is a simple $R$-module.
	
	Let $S$ be a simple object in an abelian category $\mcA$. Since $\{\overline{S}\}$ is an open subclass of $\ASpec\mcA$, the atom $\overline{S}$ is maximal by Proposition \ref{Prop:TopologicalCharacterizationOfMaximalAtomAndMinimalAtom} (\ref{item:PropTopologicalCharacterizationOfMaximalAtom}). However, a maximal atom is not necessarily represented by a simple object in general. In Example \ref{ex:AtomSpectrumOfCategoryOfGradedModules}, $\overline{H}$ is maximal in $\ASpec\mcA$ but is not represented by a simple object in $\GrMod k[x]$.
\end{rem}

In the case of a locally noetherian Grothendieck category, the existence of a maximal atom follows from the next proposition. We say that a partially ordered set $P$ satisfies the \emph{ascending chain condition} if any ascending chain $p_{0}\leq p_{1}\leq\cdots$ in $P$ eventually stabilizes. This is equivalent to that any subset of $P$ has a maximal element. The \emph{descending chain condition} is defined similarly.

\begin{Prop}\label{Prop:ExistenceOfMaximalAtom}
	Let $\mcA$ be a locally noetherian Grothendieck category. Then $\ASpec\mcA$ satisfies the ascending chain condition.
\end{Prop}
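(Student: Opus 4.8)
The plan is to argue by contradiction: suppose there is a strictly ascending chain $\alpha_{0}<\alpha_{1}<\alpha_{2}<\cdots$ in $\ASpec\mcA$ and derive a strictly descending chain of subobjects of a fixed noetherian object, contradicting the noetherian hypothesis. First I would use Proposition~\ref{Prop:OpenBasisOfAtomSpectrum}: since $\mcA$ is locally noetherian, there is a noetherian monoform object $H$ in $\mcA$ with $\overline{H}=\alpha_{0}$. (Concretely, pick a noetherian generator-set member mapping onto a monoform subquotient representing $\alpha_{0}$; Corollary-type arguments in Section~2 give a noetherian monoform representative.) By Proposition~\ref{Prop:TopologicalCharacterizationOfPartialOrder}, for every $n$ the relation $\alpha_{0}\leq\alpha_{n}$ forces $\alpha_{n}\in\ASupp H$; hence the entire chain lives inside $\ASupp H$, and it suffices to show $\ASupp H$ has no infinite strictly ascending chain whenever $H$ is noetherian.

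Next I would extract from each strict inequality $\alpha_{n}<\alpha_{n+1}$ a proper subobject. Since $\alpha_{n}\in\ASupp H$, there is a subobject $L_{n}\subseteq H$ and a monoform subobject $H_{n}$ of $H/L_{n}$ with $\overline{H_{n}}=\alpha_{n}$; pulling back, we get a subobject $L_{n}\subsetneq L_{n}'\subseteq H$ with $L_{n}'/L_{n}\cong H_{n}$ monoform representing $\alpha_{n}$. Now the key point: because $\alpha_{n}<\alpha_{n+1}$ we have $\alpha_{n}\neq\alpha_{n+1}$, and by Proposition~\ref{Prop:TopologicalCharacterizationOfPartialOrder} applied to the monoform object $H_{n}$, $\alpha_{n+1}\in\ASupp H_{n}=\ASupp(L_{n}'/L_{n})$, so there is a subobject $L_{n}'\subseteq L_{n+1}\subseteq H$ (after re-indexing the chain of subobjects) with $\alpha_{n+1}$ appearing in a subquotient strictly above $L_{n}'$. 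The idea is to organize these into a genuinely strictly increasing chain $0\subsetneq K_{1}\subsetneq K_{2}\subsetneq\cdots$ of subobjects of $H$, using at each stage that $\alpha_{n}\notin\ASupp(H/K_{n})$ would follow if the chain stabilized — more precisely, if $K_{n}=K_{n+1}$ then $\alpha_{n+1}$ and $\alpha_{n}$ both arise from monoform subquotients occurring at the same place, and one shows via Proposition~\ref{Prop:MonoformObjectAndAtomSupport} (that $\overline{H'}\notin\ASupp(H'/L)$ for a nonzero $L$ and monoform $H'$) that this forces $\alpha_{n}=\alpha_{n+1}$, a contradiction.

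The main obstacle, and the step requiring the most care, is the bookkeeping in the previous paragraph: turning "$\alpha_{n}<\alpha_{n+1}$, both in $\ASupp H$" into an \emph{honest} strictly ascending chain of subobjects of the single noetherian object $H$. The delicate issue is that a priori the monoform subquotients witnessing $\alpha_{n}$ and $\alpha_{n+1}$ could sit at unrelated places in the subobject lattice of $H$; one must use the strict inequality $\alpha_{n}<\alpha_{n+1}$ (not merely $\alpha_{n}\neq\alpha_{n+1}$) together with Proposition~\ref{Prop:TopologicalCharacterizationOfPartialOrder}(4) — every monoform representative of $\alpha_{n}$ has $\alpha_{n+1}$ in its atom support — to guarantee that the witness for $\alpha_{n+1}$ can be found \emph{strictly inside} (a subquotient of) the witness for $\alpha_{n}$, and then that these nest. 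Once the strictly ascending chain of subobjects of $H$ is produced, the noetherianness of $H$ gives the contradiction immediately. I would also remark that this recovers, for commutative noetherian rings, the classical fact that $\Spec R$ has ACC on descending... — here ascending — chains of primes, via Proposition~\ref{Prop:PartialOrderBetweenAtomOfCommutativeRing}.
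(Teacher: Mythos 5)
Your strategy is essentially the paper's: fix a noetherian monoform $H$ with $\overline{H}=\alpha_{0}$ and push the chain $\alpha_{0}\leq\alpha_{1}\leq\cdots$ into a chain of subobjects of $H$ via Proposition~\ref{Prop:TopologicalCharacterizationOfPartialOrder}. The paper builds a not-necessarily-strictly ascending chain $0=L_{0}\subset L_{1}\subset\cdots$ with $\alpha_{i}\in\AAss(H/L_{i})$, lets noetherianness of $H$ stabilize it, and then observes that all later $\alpha_{i}$ lie in the finite set $\AAss(H/L_{n})$ (Proposition~\ref{Prop:NumberOfAssociatedAtoms}(\ref{item:PropNumberOfAssociatedAtomsOfNoetherianObject})), so the chain of atoms stabilizes. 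You instead reduce to a strictly ascending subchain of atoms and aim to make the subobject chain strictly ascending, getting the contradiction at once without invoking finiteness of $\AAss$ for noetherian objects; this is a legitimate variant that trades that finiteness statement against the uniqueness of $\AAss$ for a monoform object. But it depends on exactly the bookkeeping you flag as delicate, and as written there are two slips in it.

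First, the nesting you write down is reversed. You say the witness for $\alpha_{n+1}$ appears "strictly above $L_{n}'$" and that $L_{n}'\subseteq L_{n+1}$; in fact, since $\alpha_{n+1}\in\ASupp H_{n}=\ASupp(L_{n}'/L_{n})$, the monoform witness is some $L_{n+1}'/L_{n+1}$ inside a quotient of $L_{n}'/L_{n}$, so the correct chain is $L_{n}\subsetneq L_{n+1}\subseteq L_{n+1}'\subseteq L_{n}'$: the $L_{i}$ ascend while the $L_{i}'$ descend, and everything stays inside the previous bracket. (You do state the right picture a few lines later, so this is a slip rather than a misunderstanding, but it needs correcting for the chain to nest.) Second, the strictness $L_{n}\subsetneq L_{n+1}$ does not come from Proposition~\ref{Prop:MonoformObjectAndAtomSupport}; the fact you need is Proposition~\ref{Prop:NumberOfAssociatedAtoms}(\ref{item:PropNumberOfAssociatedAtomsOfUniformObject}): $\AAss H_{n}=\{\overline{H_{n}}\}=\{\alpha_{n}\}$ because $H_{n}$ is monoform. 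The witness for $\alpha_{n+1}$ arises as an associated atom of $H_{n}/M$ for some subobject $M$ of $H_{n}$; if $M=0$ then $\alpha_{n+1}\in\AAss H_{n}=\{\alpha_{n}\}$, contradicting $\alpha_{n}<\alpha_{n+1}$, hence $M\neq 0$ and the pullback $L_{n+1}$ strictly contains $L_{n}$. With these two points repaired, the strictly ascending chain of subobjects of the noetherian $H$ gives the contradiction and the proof closes.
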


\begin{proof}
	Let $\alpha_{0}\leq\alpha_{1}\leq\cdots$ be an ascending chain of atoms in $\mcA$, and let $H$ be a noetherian monoform object such that $\overline{H}=\alpha_{0}$. Since $\ASupp H$ is an open subset of $\ASpec\mcA$, we have $\alpha_{1}\in\ASupp H$ by Proposition \ref{Prop:TopologicalCharacterizationOfPartialOrder}. Then there exists a subobject $L_{1}$ of $H$ such that $\alpha_{1}\in\AAss(H/L_{1})$. We have $\alpha_{1}\in\ASupp(H/L_{1})$. Inductively we obtain an ascending chain $0=L_{0}\subset L_{1}\subset L_{2}\subset\cdots$ of subobjects of $H$ such that $\alpha_{i}\in\AAss(H/L_{i})$ for any $i\in\mbZ_{\geq 0}$. Since $H$ is noetherian, there exists $n\in\mbZ_{\geq 0}$ such that $L_{n}=L_{n+1}=\cdots$. Then by $\{\alpha_{n},\alpha_{n+1},\ldots\}\subset\AAss(H/L_{n})$ and Proposition \ref{Prop:NumberOfAssociatedAtoms} (\ref{item:PropNumberOfAssociatedAtomsOfNoetherianObject}), the set $\{\alpha_{n},\alpha_{n+1},\ldots\}$ is finite. Since $\leq$ is a partial order on $\ASpec\mcA$, this implies that the chain $\alpha_{0}\leq\alpha_{1}\leq\cdots$ eventually stabilizes.
\end{proof}

We show the existence of minimal elements of certain subsets of the atom spectrum of a locally noetherian Grothendieck category.

\begin{Prop}\label{Prop:ExistenceOfMinimalAtomInOpenCompactSubset}
	Let $\mcA$ be a locally noetherian Grothendieck category, $M$ a noetherian object in $\mcA$, and $\alpha$ an atom in $\mcA$ belonging to $\ASupp M$. Then there exists a minimal element $\beta$ of $\ASupp M$ such that $\beta\leq\alpha$.
\end{Prop}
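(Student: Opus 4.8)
The plan is to use a Zorn's lemma argument on the set of open subsets of $\ASpec\mcA$ contained in $\ASupp M$ that contain $\alpha$, ordered by reverse inclusion, and to extract a minimal atom as the atom represented by a ``smallest possible'' noetherian subobject appearing along the way. Concretely, I would consider the poset
\[
	\mcP=\{\ASupp N\mid N\text{ a noetherian object in }\mcA,\ \alpha\in\ASupp N\subset\ASupp M\}
\]
ordered by inclusion; note $\ASupp M\in\mcP$, so $\mcP$ is nonempty, and each member is open and compact by Proposition \ref{Prop:CharacterizationOfOpenCompactSubset}. The goal is to find a minimal element of $\mcP$, because if $\ASupp N$ is minimal in $\mcP$ then every atom of $\ASupp N$ below $\alpha$ will in fact be minimal in $\ASupp M$; and there is at least one such atom, namely (after passing to a suitable subquotient) an associated atom of a monoform subobject, which turns out to satisfy the minimality.

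First I would verify that $\mcP$ has minimal elements. Since every member of $\mcP$ is an open \emph{and compact} subset, one can try a descending-chain argument: given a descending chain $\ASupp N_{0}\supset\ASupp N_{1}\supset\cdots$ in $\mcP$, I want a lower bound in $\mcP$. The complements form an ascending chain of closed subsets; alternatively, realize each $N_{i}$ inside $M$ up to subquotients and use noetherianness of $M$ — but this is the delicate point (see below). Granting a minimal element $\Phi_{0}=\ASupp N_{0}$ of $\mcP$: since $\alpha\in\Phi_{0}$, there is a subobject $L$ of (a quotient of) $N_{0}$ with $\alpha\in\AAss$ of that subquotient, so after replacing $N_{0}$ by a monoform subobject $H$ of a subquotient with $\overline{H}=\alpha$ we may assume $\ASupp H$ is still in $\mcP$, hence equals $\Phi_{0}$ by minimality; thus $\ASupp H=\Phi_{0}$ and $\overline{H}=\alpha$. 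Now I claim $\alpha$ need not itself be minimal, so I instead argue: if $\beta\in\ASupp H$ with $\beta<\alpha$, then $\ASupp H\not\ni\alpha$ would be needed for strict decrease — rather, one uses Proposition \ref{Prop:MonoformObjectAndAtomSupport}: for any nonzero $L\subset H$, $\overline{H}\notin\ASupp(H/L)$, so $\ASupp(H/L)\subsetneq\ASupp H=\Phi_{0}$, whence $\alpha\notin\ASupp(H/L)$ and also $\ASupp(H/L)\notin\mcP$. Pick $\beta\in\ASupp H$ with $\beta\le\alpha$ and minimal among atoms of $\ASupp H$ (possible since, by Proposition \ref{Prop:NumberOfAssociatedAtoms}, the relevant set of atoms below $\alpha$ obtained along any strictly descending chain of subobjects of $H$ is controlled and $\ASpec\mcA$ satisfies ACC by Proposition \ref{Prop:ExistenceOfMaximalAtom}, hence any chain of atoms is finite and a minimal one exists); then $\beta$ is minimal in $\ASupp M$: if $\gamma<\beta$ with $\gamma\in\ASupp M$, then $\gamma\in\ASupp H'$ for some noetherian monoform $H'$ with $\overline{H'}=\beta$, and $\ASupp H'\subset\overline{\{\beta\}}\cap\Phi_{0}$ would contradict minimality of $\Phi_{0}$ in $\mcP$ unless $\gamma$ does not in fact drop below, i.e. $\gamma=\beta$.

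The main obstacle I anticipate is establishing that $\mcP$ (or the family of open compact subsets of $\ASupp M$ containing $\alpha$) actually has minimal elements — Zorn's lemma requires a lower bound for every chain, and the intersection of a descending chain of open compact subsets is open but need not be compact, and worse need not contain $\alpha$ in a way witnessed by a \emph{noetherian} object. I expect the correct device is to avoid chains of subsets altogether and instead run a direct dévissage inside $M$: using that $M$ is noetherian, build a finite filtration $0=M_{0}\subset M_{1}\subset\cdots\subset M_{n}=M$ with monoform quotients $H_{i}=M_{i}/M_{i-1}$ (as in the proof of Proposition \ref{Prop:CharacterizationOfOpenCompactSubset}), so that $\ASupp M=\bigcup_{i=1}^{n}\ASupp H_{i}$; then $\alpha\in\ASupp H_{i}$ for some $i$, i.e. $\overline{H_{i}}\le\alpha$ for some $i$, and among all such indices (across all such filtrations, or just iterating on $H_{i}$ using Proposition \ref{Prop:MonoformObjectAndAtomSupport} and ACC from Proposition \ref{Prop:ExistenceOfMaximalAtom}) choose one with $\overline{H_{i}}$ minimal; set $\beta=\overline{H_{i}}$. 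That $\beta$ is minimal in $\ASupp M$ then follows because any $\gamma\in\ASupp M$ with $\gamma\le\beta$ lies in $\ASupp H_{j}$ for some $j$, giving $\overline{H_{j}}\le\gamma\le\beta$, and minimality of $\beta$ forces $\overline{H_{j}}=\beta$, hence $\gamma=\beta$. The finiteness needed to ``choose a minimal one'' comes from Proposition \ref{Prop:ExistenceOfMaximalAtom} (ACC on $\ASpec\mcA$), which makes the set of atoms $\le\alpha$ in $\ASupp M$ — though possibly infinite — still admit minimal elements via a straightforward Zorn/ACC argument on chains of atoms, each such chain being finite.
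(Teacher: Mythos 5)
There is a genuine gap in the argument. Your preferred route (dévissage by monoform quotients $H_{i}$) hinges on the implication ``$\gamma\in\ASupp H_{j}$ implies $\overline{H_{j}}\leq\gamma$'', which is false. By Proposition \ref{Prop:TopologicalCharacterizationOfPartialOrder}, $\overline{H_{j}}\leq\gamma$ requires $\gamma\in\ASupp H'$ for \emph{every} monoform representative $H'$ of $\overline{H_{j}}$, not just the particular representative $H_{j}$ that happens to appear in the filtration, and these atom supports genuinely differ. Example~\ref{ex:AtomSpectrumOfCategoryOfGradedModules} already shows this: in $\GrMod k[x]$ the modules $H(0)$ and $H(1)$ represent the same atom $\overline{H}$, yet $\overline{S(1)}\in\ASupp H(1)\setminus\ASupp H(0)$, so $\overline{S(1)}\in\ASupp H(1)$ while $\overline{H}\not\leq\overline{S(1)}$. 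Concretely, take $M=H(1)$ and $\alpha=\overline{S(0)}$, and use the filtration $0\subset xM\subset M$ with $H_{1}=xM\cong H(0)$ and $H_{2}=M/xM\cong S(1)$. Then $\alpha\in\ASupp H_{1}$ only, so your recipe outputs $\beta=\overline{H_{1}}=\overline{H}$; but $\overline{H}\not\leq\overline{S(0)}$, so this $\beta$ does not even satisfy $\beta\leq\alpha$ (the correct $\beta$ here is $\alpha$ itself, since the specialization order on that atom spectrum is trivial). Your concluding minimality argument fails for the same reason.

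The existence question you worry about is also not settled by the ascending chain condition. Proposition~\ref{Prop:ExistenceOfMaximalAtom} controls ascending chains and hence produces \emph{maximal} atoms; it does not make chains finite, and descending chains can be infinite even when $\ASpec\mcA=\ASupp M$ for a single noetherian $M$ --- this is exactly Proposition~\ref{Prop:LocallyNoetherianGrothendieckCategoryWithNoDescendingChainCondition}. The ingredient that actually makes the statement work is the one you set aside: compactness of $\ASupp M$ (Proposition~\ref{Prop:CharacterizationOfOpenCompactSubset}). The paper's proof is purely topological: compactness guarantees that every descending chain of nonempty closed subsets of $\ASupp M$ has nonempty intersection, so Zorn's lemma yields a minimal nonempty closed subset $\Psi\subset\overline{\{\alpha\}}\cap\ASupp M$; minimality of $\Psi$ together with the Kolmogorov property (Proposition~\ref{Prop:AtomSpectrumIsKolmogorovSpace}) forces $\Psi=\{\beta\}$; then Proposition~\ref{Prop:TopologicalCharacterizationOfMaximalAtomAndMinimalAtom}~(\ref{item:PropTopologicalCharacterizationOfMinimalAtom}) shows $\beta$ is minimal in $\ASupp M$, and $\Psi\subset\overline{\{\alpha\}}$ gives $\beta\leq\alpha$. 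You should replace both your poset $\mcP$ and the dévissage with this compactness-plus-Zorn argument on closed subsets.
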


\begin{proof}
	By Proposition \ref{Prop:CharacterizationOfOpenCompactSubset}, the subset $\ASupp M$ of $\ASpec\mcA$ is compact. Hence we can show that there exists a minimal closed subset $\Psi$ of $\ASupp M$ contained by $\overline{\{\alpha\}}\cap\ASupp M$ by using Zorn's lemma. Since $\ASupp M$ is a Kolmogorov space by Proposition \ref{Prop:AtomSpectrumIsKolmogorovSpace}, the set $\Psi$ consists of an atom $\beta$ in $\mcA$ such that $\{\beta\}$ is a closed subset of $\ASupp M$. By Proposition \ref{Prop:TopologicalCharacterizationOfMaximalAtomAndMinimalAtom} (\ref{item:PropTopologicalCharacterizationOfMinimalAtom}), the atom $\beta$ is minimal in $\ASupp M$. By the definition of the specialization order, we have $\beta\leq\alpha$.
\end{proof}

If a locally noetherian Grothendieck category $\mcA$ has a noetherian object $M$ such that $\ASpec\mcA=\ASupp M$, the existence of a minimal atom in $\mcA$ follows from Proposition \ref{Prop:ExistenceOfMinimalAtomInOpenCompactSubset}. In Proposition \ref{Prop:LocallyNoetherianGrothendieckCategoryWithNoMinimalAtom}, we show that there exists a nonzero locally noetherian Grothendieck category having no minimal atom. Furthermore, it is shown in Proposition \ref{Prop:LocallyNoetherianGrothendieckCategoryWithNoDescendingChainCondition} that the descending chain condition on $\ASpec\mcA$ does not necessarily hold even in the case where $\ASpec\mcA=\ASupp M$.

\section{Atom spectra of quotient categories}
\label{sec:AtomSpectraOfQuotientCategories}

In this section, we describe the atom spectrum of the quotient category of a Grothendieck category by a localizing subcategory. We show an analogous result on the atom spectrum to the results \cite[Proposition 3.6]{Herzog} and \cite[Corollary 4.4]{Krause} on the Ziegler spectrum. Results in this section is used in section \ref{sec:Localization} in order to investigate the localization of Grothendieck category at an atom.

We recall the definition of Serre subcategories and the quotient categories by them.

\begin{Def}\label{Def:SerreSubcategory}
	Let $\mcA$ be an abelian category. A full subcategory $\mcX$ of $\mcA$ is called a \emph{Serre subcategory} of $\mcA$ if the following assertion holds: for any exact sequence
	\begin{equation*}
		0\to L\to M\to N\to 0
	\end{equation*}
	in $\mcA$, the object $M$ belongs to $\mcX$ if and only if $L$ and $N$ belong to $\mcX$.
\end{Def}

\begin{Def}\label{Def:QuotientCategory}
	Let $\mcA$ be an abelian category and $\mcX$ a Serre subcategory of $\mcA$. Then the \emph{quotient category} $\mcA/\mcX$ of $\mcA$ by $\mcX$ is defined as follows.
	\begin{enumerate}
		\item The objects of $\mcA/\mcX$ are those of $\mcA$.
		\item For objects $M$ and $N$ in $\mcA$,
		\begin{equation*}
			\Hom_{\mcA/\mcX}(M,N)=\varinjlim_{(M',N')\in\mcS_{M,N}}\Hom_{\mcA}(M',\frac{N}{N'}),
		\end{equation*}
		where $\mcS_{M,N}$ is the directed set defined by
		\begin{equation*}
			\mcS_{M,N}=\biggl\{(M',N')\biggm| M'\subset M,\ N'\subset N\text{ with }\frac{M}{M'},N'\in\mcX\biggr\}
		\end{equation*}
		and the relation: for $(M',N'),(M'',N'')\in\mcS_{M,N}$, $(M',N')\leq (M'',N'')$ if we have $M'\supset M''$ and $N'\subset N''$.
		\item Let $L$, $M$, and $N$ be objects in $\mcA$. For $[f]\in\Hom_{\mcA/\mcX}(L,M)$ and $[g]\in\Hom_{\mcA/\mcX}(M,N)$ represented by $f\colon L'\to M/M'$ and $g\colon M''\to N/N''$ in $\mcA$, where $(L',M')\in\mcS_{L,M}$ and $(M'',N'')\in\mcS_{M,N}$, the composite $[g][f]$ is represented by the composite
		\begin{equation*}
			L''\xrightarrow{f''}\frac{M'+M''}{M'}\cong\frac{M''}{M'\cap M''}\xrightarrow{g'}\frac{N}{N'},
		\end{equation*}
		where
		\begin{equation*}
			L''=f^{-1}\left(\frac{M'+M''}{M'}\right),\ \frac{N'}{N''}=g(M'\cap M''),
		\end{equation*}
		and $f''$ and $g'$ are the induced morphisms from $f$ and $g$, respectively.
	\end{enumerate}
\end{Def}

With the notation in Definition \ref{Def:QuotientCategory}, we can define a canonical additive functor by the correspondence $M\mapsto M$ for each object $M$ in $\mcA$ and the canonical map $\Hom_{\mcA}(M,N)\to\Hom_{\mcA/\mcX}(M,N)$ for each objects $M$ and $N$ in $\mcA$.

\begin{Def}\label{Def:LocalizingSubcategory}
	Let $\mcA$ be an abelian category. A Serre subcategory $\mcX$ of $\mcA$ is called a \emph{localizing subcategory} of $\mcA$ if the canonical functor $\mcA\to\mcA/\mcX$ has a right adjoint functor.
\end{Def}

It is known that localizing subcategories of a Grothendieck category are characterized as follows.

\begin{Prop}\label{Prop:CharacterizationOfLocalizingSubcategory}
	Let $\mcA$ be a Grothendieck category and $\mcX$ a Serre subcategory of $\mcA$. Then the following assertions are equivalent.
	\begin{enumerate}
		\item\label{item:PropBeingLocalizingSubcategory} $\mcX$ is a localizing subcategory of $\mcA$.
		\item\label{item:PropClosedUnderMaximalSubobject} For any object $M$ in $\mcA$, the set of subobjects of $M$ belonging to $\mcX$ has a maximal element.
		\item\label{item:PropClosedUnderGreatestSubobject} For any object $M$ in $\mcA$, the set of subobjects of $M$ belonging to $\mcX$ has a largest element.
		\item\label{item:PropClosedUnderDirectSum} $\mcX$ is closed under arbitrary direct sums.
	\end{enumerate}
\end{Prop}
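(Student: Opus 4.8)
The plan is to prove the cycle of implications $(\ref{item:PropBeingLocalizingSubcategory}) \Rightarrow (\ref{item:PropClosedUnderMaximalSubobject}) \Rightarrow (\ref{item:PropClosedUnderGreatestSubobject}) \Rightarrow (\ref{item:PropClosedUnderDirectSum}) \Rightarrow (\ref{item:PropBeingLocalizingSubcategory})$, which is the standard route for this kind of equivalence. The only implication requiring genuine work is the last one; the others are short. This is a classical result, essentially due to Gabriel, so I would also cite \cite[Proposition III.1.1 and Proposition III.1.7]{Stenstrom} (or the analogous statement in \cite{Popescu}) and keep the argument brief.

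For $(\ref{item:PropBeingLocalizingSubcategory}) \Rightarrow (\ref{item:PropClosedUnderMaximalSubobject})$: if $\mcX$ is localizing, let $Q\colon\mcA\to\mcA/\mcX$ be the canonical functor and $S$ its right adjoint; for an object $M$, the kernel of the unit morphism $M\to SQ(M)$ is a subobject of $M$ belonging to $\mcX$, and one checks it is the largest such subobject, hence in particular a maximal one. (This already gives a hint of $(\ref{item:PropClosedUnderGreatestSubobject})$ directly, but it is cleaner to route through $(\ref{item:PropClosedUnderMaximalSubobject})$.) For $(\ref{item:PropClosedUnderMaximalSubobject}) \Rightarrow (\ref{item:PropClosedUnderGreatestSubobject})$: given a maximal subobject $N$ of $M$ in $\mcX$ and any other subobject $N'$ of $M$ in $\mcX$, the sum $N+N'$ is a quotient of $N\oplus N'$, hence belongs to $\mcX$ (Serre subcategories are closed under extensions and quotients), so by maximality $N+N'=N$, i.e.\ $N'\subset N$; thus $N$ is the largest element. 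For $(\ref{item:PropClosedUnderGreatestSubobject}) \Rightarrow (\ref{item:PropClosedUnderDirectSum})$: given a family $\{X_{\lambda}\}_{\lambda\in\Lambda}$ in $\mcX$, form $M=\bigoplus_{\lambda}X_{\lambda}$ in $\mcA$; each $X_{\lambda}$ is a subobject of $M$ lying in $\mcX$, so it is contained in the largest subobject $N$ of $M$ in $\mcX$, whence $M=\sum_{\lambda}X_{\lambda}\subset N\subset M$, giving $M=N\in\mcX$.

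The main obstacle is $(\ref{item:PropClosedUnderDirectSum}) \Rightarrow (\ref{item:PropBeingLocalizingSubcategory})$, i.e.\ constructing the right adjoint from closure under direct sums. Here I would first observe that a Serre subcategory closed under arbitrary direct sums is automatically closed under arbitrary subobjects-of-direct-sums and, more to the point, that every object $M$ has a largest subobject lying in $\mcX$: the sum of all subobjects of $M$ belonging to $\mcX$ is a quotient of the direct sum of those subobjects, hence lies in $\mcX$ by hypothesis, and is evidently the largest. This gives the torsion radical $t\colon\mcA\to\mcX$. The right adjoint $S$ of $Q$ is then built by the usual localization construction: for an object $M$ one sets $SQ(M)$ to be the (essentially unique) $\mcX$-closed object $\widetilde{M}$ receiving a morphism from $M/t(M)$ with $\mcX$-torsion cokernel; concretely $\widetilde{M}$ can be realized inside the injective envelope $E(M/t(M))$ (which exists since $\mcA$ is Grothendieck, by \cite[Theorem 2.9]{Mitchell}) as the preimage of $t\big(E(M/t(M))/(M/t(M))\big)$. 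One then verifies the adjunction $\Hom_{\mcA/\mcX}(Q(A),Q(M))\cong\Hom_{\mcA}(A,\widetilde{M})$ using the definition of morphisms in $\mcA/\mcX$ (Definition \ref{Def:QuotientCategory}) together with the fact that $\widetilde{M}$ is $\mcX$-closed, meaning $\Hom_{\mcA}(X,\widetilde{M})=0=\Ext^{1}_{\mcA}(X,\widetilde{M})$ for all $X\in\mcX$. Since this is a well-documented construction, I would present it in compressed form and defer the routine verifications to \cite{Stenstrom}.
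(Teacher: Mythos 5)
Your proposal is correct. The paper itself gives no self-contained proof here; it cites \cite[Proposition 4.5.2]{Popescu} and \cite[Proposition 4.6.3]{Popescu} and records the auxiliary observation that a Serre subcategory is automatically closed under finite sums of subobjects. Your cycle $(\ref{item:PropBeingLocalizingSubcategory})\Rightarrow(\ref{item:PropClosedUnderMaximalSubobject})\Rightarrow(\ref{item:PropClosedUnderGreatestSubobject})\Rightarrow(\ref{item:PropClosedUnderDirectSum})\Rightarrow(\ref{item:PropBeingLocalizingSubcategory})$ is precisely the classical argument those references supply: the kernel of the unit $M\to SQ(M)$ is the largest torsion subobject; maximality promotes to largestness because $N+N'$ is a quotient of $N\oplus N'$ (the same observation the paper records explicitly); a direct sum of objects in $\mcX$ coincides with its largest $\mcX$-subobject; and the torsion radical together with the injective-envelope construction of $\widetilde{M}$ (which also appears later in the paper as Theorem \ref{Thm:PropertyOfQuotientCategory}~(\ref{item:ThmDescriptionOfCompositeFunctorForQuotientCategory})) yields the right adjoint. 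The only difference is that you have unpacked the citations into an explicit argument; no gap or deviation in method.
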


\begin{proof}
	This follows from \cite[Proposition 4.5.2]{Popescu} and \cite[Proposition 4.6.3]{Popescu}. Note that $\mcX$ is closed under finite sums of subobjects since it is closed under quotient objects and extensions.
\end{proof}

By Proposition \ref{Prop:CharacterizationOfLocalizingSubcategory}, the localizing subcategories of a Grothendieck category $\mcA$ is the full subcategories closed under subobjects, quotient objects, extensions, and arbitrary direct sums. Hence for any full subcategory $\mcX$ of a Grothendieck category $\mcA$, there exists a smallest localizing subcategory $\mcY$ of $\mcA$ containing $\mcX$. In the case where $\mcX$ is closed under subobjects and quotient objects, such $\mcY$ is described as follows.

\begin{Prop}\label{Prop:LocalizingSubcategoryGeneratedBySubcategory}
	Let $\mcA$ be a Grothendieck category, $\mcX$ a full subcategory of $\mcA$ closed under subobjects and quotient objects, and $\mcY$ the smallest localizing subcategory of $\mcA$ containing $\mcX$. Then an object $M$ in $\mcA$ belongs to $\mcY$ if and only if any nonzero quotient object of $M$ has a nonzero subobject belonging to $\mcX$.
\end{Prop}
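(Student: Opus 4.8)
The plan is to prove the stated characterization of $\mcY$ by double inclusion, defining the candidate class
\[
\mcY'=\left\{M\in\mcA\ \middle|\ \text{every nonzero quotient object of }M\text{ has a nonzero subobject in }\mcX\right\},
\]
and showing (a) $\mcY'$ is a localizing subcategory containing $\mcX$, hence $\mcY\subset\mcY'$, and (b) $\mcY'\subset\mcY$. Inclusion (b) is the softer direction: given $M\in\mcY'$, I would argue that $M$ cannot have a nonzero quotient $N$ with $N\cap\mcY=0$ in the relevant sense — more precisely, since $\mcY$ is a localizing subcategory, $M$ has a largest subobject $M_0$ belonging to $\mcY$ (Proposition \ref{Prop:CharacterizationOfLocalizingSubcategory}), and if $M/M_0\neq 0$ then by hypothesis it has a nonzero subobject $K/M_0$ with $K/M_0\in\mcX\subset\mcY$; then $K\in\mcY$ by closure under extensions, contradicting maximality of $M_0$. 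Hence $M=M_0\in\mcY$.

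The real work is inclusion (a): checking that $\mcY'$ is closed under subobjects, quotient objects, extensions, and arbitrary direct sums, and contains $\mcX$. That $\mcX\subset\mcY'$ is immediate since $\mcX$ is closed under quotient objects. Closure of $\mcY'$ under quotient objects is immediate from the definition (a quotient of a quotient is a quotient). For closure under subobjects, let $L\subset M$ with $M\in\mcY'$ and let $\overline{L}\neq 0$ be a nonzero quotient of $L$, say $L/L'$; I would push out along $L\hookrightarrow M$ to realize $L/L'$ as a subobject of $M/L'$, take a nonzero subobject of $M/L'$ lying in $\mcX$, intersect it with $L/L'$, and use that $\mcX$ is closed under subobjects — the point being that $L/L'$ is essential in $M/L'$? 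No: one must instead argue that any nonzero subobject of $M/L'$ meets $L/L'$ nontrivially, which need not hold. The correct route is: $M/L'$ has a nonzero subobject $X\in\mcX$; if $X\cap(L/L')=0$ that is fine only if we can still find something inside $L/L'$. So closure under subobjects genuinely needs care — one shows instead that if $L/L'$ is a nonzero quotient of $L$, choose $L''\subset M$ with $L''\cap L=L'$ maximal such (Zorn), so $L/L'\hookrightarrow M/L''$ is essential; then the nonzero $X\in\mcX$ inside $M/L''$ meets $L/L'$, and $X\cap(L/L')\in\mcX$. For direct sums: given $\{M_\lambda\}\subset\mcY'$ and a nonzero quotient $N$ of $\bigoplus M_\lambda$, some composite $M_\lambda\to\bigoplus M_\mu\to N$ is nonzero for some $\lambda$, giving a nonzero quotient of $M_\lambda$, hence via the quotient-object case a nonzero subobject of (a quotient of) $M_\lambda$ inside $\mcX$, which embeds into $N$; a little care is needed to land inside a subobject of $N$ rather than a subquotient, handled again by an essential-extension/Zorn argument as above. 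Extensions: given $0\to L\to M\to N\to 0$ with $L,N\in\mcY'$ and $0\neq M/M'$ a quotient of $M$, either $L\to M/M'$ is nonzero, reducing to $L\in\mcY'$ via the subobject and quotient cases, or $L\subset M'$, in which case $M/M'$ is a quotient of $N$ and we use $N\in\mcY'$.

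The main obstacle, as flagged, is the closure under subobjects and the "land inside $N$ not merely a subquotient" issue for direct sums and extensions; both are resolved by the same device: replacing an arbitrary subobject $L'$ of a quotient by one maximal with respect to $L'\cap(\text{relevant piece})$ fixed, so that the relevant piece becomes an \emph{essential} subobject of the ambient quotient, and then invoking that $\mcX$ is closed under subobjects together with the fact that an essential subobject meets every nonzero subobject. Once $\mcY'$ is shown to be a localizing subcategory containing $\mcX$, minimality of $\mcY$ gives $\mcY\subset\mcY'$, and combined with (b) we conclude $\mcY=\mcY'$, which is exactly the claimed description.
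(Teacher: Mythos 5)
Your proof is correct; since the paper's own ``proof'' is just a deferral to Popescu's Proposition 4.6.10, there is no internal argument to compare against, but the double-inclusion strategy you use — defining the candidate class $\mcY'$, verifying it is a localizing subcategory containing $\mcX$ (with the key technical step being the Zorn/essential-extension device for closure under subobjects, which does rely on the AB5 condition to make the Zorn chain argument work), and then showing $\mcY'\subset\mcY$ via the largest $\mcY$-torsion subobject of $M$ — is the standard argument and is almost certainly what the cited proof does. One minor remark: your worry in the direct-sum step is unfounded; the image of $M_\lambda$ in the quotient $N$ is simultaneously a subobject of $N$ and a quotient of $M_\lambda$, so the nonzero $\mcX$-subobject furnished by $M_\lambda\in\mcY'$ already sits inside $N$ as a genuine subobject, with no need for a second essential-extension argument there.
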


\begin{proof}
	This is shown in a similar way to the proof of \cite[Proposition 4.6.10]{Popescu}.
\end{proof}

We also use the following wider class of subcategories.

\begin{Def}\label{Def:PrelocalizingSubcategory}
	Let $\mcA$ be a Grothendieck category. A full subcategory $\mcX$ of $\mcA$ is called a \emph{prelocalizing subcategory} if it is closed under subobjects, quotient objects, and arbitrary direct sums.
\end{Def}

Note that any prelocalizing subcategory of a Grothendieck category is also a Grothendieck category. The smallest prelocalizing subcategory containing a given full subcategory is described as follows.

\begin{Prop}\label{Prop:PrelocalizingSubcategoryGeneratedBySubcategory}
	Let $\mcA$ be a Grothendieck category, $\mcX$ a full subcategory of $\mcA$, and $\mcY$ the smallest prelocalizing subcategory of $\mcA$ containing $\mcX$. Then an object $X$ in $\mcA$ belongs to $\mcY$ if and only if $X$ is a subquotient of the direct sum of some family of objects belonging to $\mcX$.
\end{Prop}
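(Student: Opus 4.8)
The plan is to show that the class $\mcZ$ of all subquotients of direct sums of objects in $\mcX$ coincides with $\mcY$. One inclusion is trivial: since $\mcY$ is a prelocalizing subcategory containing $\mcX$, it is closed under direct sums, subobjects, and quotient objects, so any subquotient of a direct sum of objects in $\mcX$ lies in $\mcY$; thus $\mcZ\subset\mcY$. For the reverse inclusion, by the minimality of $\mcY$ it suffices to check that $\mcZ$ is itself a prelocalizing subcategory containing $\mcX$ (the containment $\mcX\subset\mcZ$ being clear, as each object of $\mcX$ is a subquotient of itself).

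So the core of the argument is verifying that $\mcZ$ is closed under the three operations. Closure under arbitrary direct sums is straightforward: if each $X_{\lambda}$ is a subquotient of $\bigoplus_{\mu} Y_{\lambda\mu}$ with $Y_{\lambda\mu}\in\mcX$, say $X_{\lambda}\cong U_{\lambda}/V_{\lambda}$ with $V_{\lambda}\subset U_{\lambda}\subset\bigoplus_{\mu}Y_{\lambda\mu}$, then $\bigoplus_{\lambda}X_{\lambda}\cong(\bigoplus_{\lambda}U_{\lambda})/(\bigoplus_{\lambda}V_{\lambda})$ is a subquotient of $\bigoplus_{\lambda,\mu}Y_{\lambda\mu}$, which is again a direct sum of objects in $\mcX$; here I use that direct sums are exact in a Grothendieck category so that the inclusions and quotients pass to the direct sum. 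Closure under subobjects and quotient objects reduces to the following transitivity-type observation: a subquotient of a subquotient is a subquotient. Concretely, if $X\cong U/V$ with $V\subset U\subset Y$ and $Y$ is a direct sum of objects in $\mcX$, and $W$ is a subobject of $X$, pull $W$ back along $U\twoheadrightarrow U/V$ to a subobject $\widetilde{W}$ of $U$ with $V\subset\widetilde{W}$; then $W\cong\widetilde{W}/V$ is a subquotient of $Y$. A quotient $X/W$ is then $(U/V)/(\widetilde{W}/V)\cong U/\widetilde{W}$, again a subquotient of $Y$. Hence $\mcZ$ is closed under subobjects and quotient objects.

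I do not expect a serious obstacle here; the only point requiring mild care is making sure the direct-sum manipulations in the first bullet are legitimate, i.e.\ that forming the direct sum of a family of short exact sequences $0\to V_{\lambda}\to U_{\lambda}\to X_{\lambda}\to 0$ yields a short exact sequence and that $\bigoplus_{\lambda}U_{\lambda}$ embeds in $\bigoplus_{\lambda,\mu}Y_{\lambda\mu}$, both of which follow from exactness of direct limits (Remark \ref{rem:ExactDirectLimits}) in the Grothendieck category $\mcA$. Once $\mcZ$ is shown to be a prelocalizing subcategory containing $\mcX$, minimality of $\mcY$ gives $\mcY\subset\mcZ$, completing the proof.
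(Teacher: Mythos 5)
Your proposal is correct and follows essentially the same route as the paper: you show that the class $\mcZ$ of subquotients of direct sums of $\mcX$-objects is itself a prelocalizing subcategory, with the main point being closure under arbitrary direct sums via the exactness of direct sums in a Grothendieck category. The paper treats closure under subobjects and quotient objects as immediate (the ``subquotient of a subquotient is a subquotient'' principle) and only spells out the direct-sum step; your write-up makes that elementary step explicit as well, but the content is the same.
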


\begin{proof}
	It is enough to show that the full subcategory consisting of all the subquotients of direct sums of objects in $\mcX$ is closed under arbitrary direct sums. Let $\{M_{\lambda}\}_{\lambda\in\Lambda}$ be a family of objects in $\mcX$, and let $L_{\lambda}\subset L'_{\lambda}\subset M_{\lambda}$ be subobjects of $M_{\lambda}$ for each $\lambda\in\Lambda$. Then we have the diagram
	\begin{equation*}
		\bigoplus_{\lambda\in\Lambda}\frac{L'_{\lambda}}{L_{\lambda}}\twoheadleftarrow\bigoplus_{\lambda\in\Lambda}L'_{\lambda}\hookrightarrow\bigoplus_{\lambda\in\Lambda}M_{\lambda}
	\end{equation*}
	since direct sum is exact. Hence the claim follows.
\end{proof}

For a ring $R$ and a right $R$-module $M$, the smallest prelocalizing subcategory containing $M$ was investigated by Wisbauer \cite{Wisbauer} from the viewpoint of representation theory of rings.

The localizing subcategories of a locally coherent Grothendieck category were classified in \cite{Herzog} and \cite{Krause}. In \cite{Kanda1}, we stated this result for locally noetherian Grothendieck categories in terms of the atom spectrum by using the following maps.

\begin{Def}\label{Def:AtomSupportOfSubcategory}
	Let $\mcA$ be an abelian category.
	\begin{enumerate}
		\item\label{item:DefAtomSupportOfSubcategory} For a full subcategory $\mcX$ of $\mcA$, define a subclass $\ASupp\mcX$ of $\ASpec\mcA$ by
		\begin{equation*}
			\ASupp\mcX=\bigcup_{M\in\mcX}\ASupp M.
		\end{equation*}
		\item\label{item:DefInverseImageOfAtomSupport} For a subclass $\Phi$ of $\ASpec\mcA$, define a full subcategory $\ASupp^{-1}\Phi$ by
		\begin{equation*}
			\ASupp^{-1}\Phi=\{M\in\mcA\mid\ASupp M\subset\Phi\}.
		\end{equation*}
	\end{enumerate}
\end{Def}

We can show the following lemma for arbitrary abelian categories.

\begin{Lem}\label{Lem:OpenSubclassOfAtomSpectrumAndAtomSupport}
	Let $\mcA$ be an abelian category and $\Phi$ an open subclass of $\ASpec\mcA$. Then we have $\ASupp(\ASupp^{-1}\Phi)=\Phi$.
\end{Lem}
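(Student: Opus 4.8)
The plan is to prove the two inclusions $\ASupp(\ASupp^{-1}\Phi)\subset\Phi$ and $\Phi\subset\ASupp(\ASupp^{-1}\Phi)$ separately, both by unwinding the definitions in Definition \ref{Def:AtomSupportOfSubcategory}.

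For the inclusion $\ASupp(\ASupp^{-1}\Phi)\subset\Phi$, I would simply observe that $\ASupp(\ASupp^{-1}\Phi)=\bigcup_{M\in\ASupp^{-1}\Phi}\ASupp M$, and every $M$ in $\ASupp^{-1}\Phi$ satisfies $\ASupp M\subset\Phi$ by definition; hence the union is contained in $\Phi$. This uses no hypothesis on $\Phi$.

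For the reverse inclusion, I would take an arbitrary atom $\alpha\in\Phi$. Since $\Phi$ is open, Definition \ref{Def:TopologyOnAtomSpectrum} provides a monoform object $H$ in $\mcA$ with $\overline{H}=\alpha$ and $\ASupp H\subset\Phi$. The first condition shows $H\in\ASupp^{-1}\Phi$, so $\ASupp H\subset\ASupp(\ASupp^{-1}\Phi)$. The only point to note is that $\alpha=\overline{H}\in\ASupp H$, which holds because $H$ is a monoform subquotient of itself; therefore $\alpha\in\ASupp(\ASupp^{-1}\Phi)$, as desired.

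There is essentially no serious obstacle here: the statement is a formal consequence of the definition of the topology on $\ASpec\mcA$, and the only mildly delicate observation is that a monoform object always lies in its own atom support. I would keep the write-up to a few lines.
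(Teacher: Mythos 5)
Your proof is correct and complete, and it is the natural direct argument that unwinds the definitions. The paper simply cites the proof of Theorem 4.3 of \cite{Kanda1} for this lemma, so there is no in-text proof to compare against; your two-inclusion argument, together with the small observation that $\overline{H}\in\ASupp H$ because $H$ is a monoform subquotient of itself, is exactly what is needed and works at the stated level of generality (any abelian category, no Grothendieck hypothesis).
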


\begin{proof}
	This follows from the proof of \cite[Theorem 4.3]{Kanda1}.
\end{proof}

\begin{Thm}[{\cite[Theorem 3.8]{Herzog}, \cite[Corollary 4.3]{Krause}, and \cite[Theorem 5.5]{Kanda1}}]\label{Thm:LocalizingSubcategoryAndOpenSubset}
	Let $\mcA$ be a locally noetherian Grothendieck category. Then the map $\mcX\mapsto\ASupp\mcX$ is a bijection between localizing subcategories of $\mcA$ and open subsets of $\ASpec\mcA$. The inverse map is given by $\Phi\mapsto\ASupp^{-1}\Phi$.
\end{Thm}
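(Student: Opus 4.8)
The plan is to establish well-definedness of both maps and then that they are mutually inverse. First I would check that $\ASupp\mcX$ is open for a localizing subcategory $\mcX$: it equals $\bigcup_{M\in\mcX}\ASupp M$, a union of open subsets of $\ASpec\mcA$ by Proposition~\ref{Prop:OpenBasisOfAtomSpectrum}. Conversely, for an open subset $\Phi$ of $\ASpec\mcA$ I would verify that $\ASupp^{-1}\Phi$ is a localizing subcategory: Proposition~\ref{Prop:AtomSupportAndAssociatedAtomsAndShortExactSequence}~(\ref{item:PropAtomSupportAndShortExactSequence}), namely $\ASupp M=\ASupp L\cup\ASupp N$ for a short exact sequence $0\to L\to M\to N\to0$, makes $\ASupp^{-1}\Phi$ closed under subobjects, quotient objects, and extensions, hence a Serre subcategory, and Proposition~\ref{Prop:AtomSupportAndAssociatedAtomsAndDirectSum}~(\ref{item:PropAtomSupportAndDirectSum}) makes it closed under arbitrary direct sums, so it is localizing by Proposition~\ref{Prop:CharacterizationOfLocalizingSubcategory}.

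One composite is then immediate: $\ASupp(\ASupp^{-1}\Phi)=\Phi$ for open $\Phi$ is exactly Lemma~\ref{Lem:OpenSubclassOfAtomSpectrumAndAtomSupport}. For the other composite I must show $\ASupp^{-1}(\ASupp\mcX)=\mcX$ for a localizing subcategory $\mcX$. The inclusion $\mcX\subset\ASupp^{-1}(\ASupp\mcX)$ is clear from the definition of $\ASupp\mcX$. For the reverse inclusion I would take an object $M$ with $\ASupp M\subset\ASupp\mcX$ and, using Proposition~\ref{Prop:CharacterizationOfLocalizingSubcategory}, let $M_{0}$ be the largest subobject of $M$ lying in $\mcX$; it then suffices to prove $M/M_{0}=0$. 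If $M/M_{0}\neq0$, local noetherianness gives it a nonzero noetherian subobject, which by Proposition~\ref{Prop:NumberOfAssociatedAtoms}~(\ref{item:PropNumberOfAssociatedAtomsOfNoetherianObject}) contains a monoform subobject $H$; then $\overline{H}\in\ASupp(M/M_{0})\subset\ASupp M\subset\ASupp\mcX$, so $\overline{H}\in\ASupp M'$ for some $M'\in\mcX$. Unwinding Definition~\ref{Def:AtomSupportAndAssociatedAtoms} there is a monoform subquotient $H'$ of $M'$ with $\overline{H'}=\overline{H}$, hence a common nonzero subobject $H''$ of $H$ and $H'$; since $H''$ is a subquotient of $M'\in\mcX$, we get $H''\in\mcX$, and pulling $H''\subset M/M_{0}$ back to a subobject $M_{1}$ of $M$ with $M_{1}/M_{0}\cong H''$ gives $M_{1}\in\mcX$ by closure under extensions, with $M_{1}\supsetneq M_{0}$ because $H''\neq0$, contradicting the maximality of $M_{0}$. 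Hence $M=M_{0}\in\mcX$.

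The step I expect to be the main obstacle is this last inclusion $\ASupp^{-1}(\ASupp\mcX)\subset\mcX$, and specifically the role of the locally noetherian hypothesis: it is needed precisely to produce a monoform (in fact noetherian) subobject through which the purely support-theoretic hypothesis $\ASupp M\subset\ASupp\mcX$ can be converted into actual membership in $\mcX$. Care is also required in tracking atom-equivalence --- one recovers not $H$ itself but a common nonzero subobject $H''$ in $\mcX$ --- and in invoking closure under extensions to contradict maximality of $M_{0}$; this is essentially the argument of Proposition~\ref{Prop:LocalizingSubcategoryGeneratedBySubcategory} applied to $\mcX$ viewed as generating itself.
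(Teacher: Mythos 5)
Your proof is correct. Note that the paper does not present its own proof of this theorem; it is cited to Herzog, Krause, and the author's earlier paper [Kanda1, Theorem 5.5], so there is no in-text argument to compare against. Your reconstruction is sound given the tools the present paper supplies: the well-definedness of both maps follows from Propositions \ref{Prop:OpenBasisOfAtomSpectrum}, \ref{Prop:AtomSupportAndAssociatedAtomsAndShortExactSequence}, \ref{Prop:AtomSupportAndAssociatedAtomsAndDirectSum}, and \ref{Prop:CharacterizationOfLocalizingSubcategory}; the identity $\ASupp(\ASupp^{-1}\Phi)=\Phi$ is Lemma \ref{Lem:OpenSubclassOfAtomSpectrumAndAtomSupport}; and the substantive step $\ASupp^{-1}(\ASupp\mcX)\subset\mcX$ is handled correctly by passing to the largest subobject $M_{0}$ of $M$ lying in $\mcX$, producing a noetherian (hence, by Proposition \ref{Prop:NumberOfAssociatedAtoms}, monoform) subobject of $M/M_{0}$ using local noetherianness, tracking atom-equivalence through a common nonzero subobject $H''\in\mcX$, and contradicting maximality of $M_{0}$ via closure under extensions. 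You have also correctly identified local noetherianness as the hypothesis that converts the support-theoretic condition $\ASupp M\subset\ASupp\mcX$ into actual membership; indeed the theorem fails without it, as Theorem \ref{Thm:GrothendieckCategoryWithNoAtom} shows.
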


Note that Theorem \ref{Thm:LocalizingSubcategoryAndOpenSubset} does not necessarily hold for a general Grothendieck category. Indeed, a nonzero Grothendieck category with no atom, which we construct in Theorem \ref{Thm:GrothendieckCategoryWithNoAtom}, is a counter-example.

The atom support of a localizing subcategory is also used in order to describe the atom spectra of quotient categories. We recall several fundamental properties of the quotient categories of Grothendieck categories by localizing subcategories.

\begin{Thm}\label{Thm:PropertyOfQuotientCategory}
	Let $\mcA$ be a Grothendieck category, $\mcX$ a localizing subcategory of $\mcA$, $F\colon\mcA\to\mcA/\mcX$ the canonical functor, and $G\colon\mcA/\mcX\to\mcA$ its right adjoint functor.
	\begin{enumerate}
		\item\label{item:ThmLocalizingSubcategoryAndQuotientSubcategoryAreGrothendieckCategory} $\mcX$ and $\mcA/\mcX$ are Grothendieck categories. If $\mcA$ is a locally noetherian Grothendieck category, so are $\mcX$ and $\mcA/\mcX$.
		\item\label{item:ThmPropertyOfCanonicalFunctorForQuotientCategory} The functor $F$ is dense and exact. The functor $G$ is fully faithful. The composite $FG\colon\mcA/\mcX\to\mcA/\mcX$ is isomorphic to the identity functor on $\mcA/\mcX$.
		\item\label{item:ThmDescriptionOfCompositeFunctorForQuotientCategory} Let $M$ be an object in $\mcA$. Then the object $GF(M)$ is described as follows.
		\begin{enumerate}
			\item Let $L$ be the largest subobject of $M$ belonging to $\mcX$ and $N=M/L$.
			\item Let $\widetilde{M}/N$ be the largest subobject of $E(N)/N$ belonging to $\mcX$, where $E(N)$ is the injective envelope of $N$.
		\end{enumerate}
		Then $\widetilde{M}$ is isomorphic to $GF(M)$. In particular, for any object $M'$ in $\mcA/\mcX$, any nonzero subobject of $G(M')$ does not belong to $\mcX$.
		\item\label{item:ThmCharacterizationOfZeroObjectInQuotientCategory} An object $M$ in $\mcA$ satisfies $F(M)=0$ if and only if $M$ belongs to $\mcX$.
	\end{enumerate}
\end{Thm}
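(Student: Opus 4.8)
The plan is to treat all four assertions as consequences of the classical Gabriel--Popescu localization theory, assembling most of them from \cite{Popescu} and supplying only the arguments about generators and noetherianness. For (\ref{item:ThmLocalizingSubcategoryAndQuotientSubcategoryAreGrothendieckCategory}), I would first note (via Proposition \ref{Prop:CharacterizationOfLocalizingSubcategory} and the remark after it) that $\mcX$ is closed under subobjects, quotient objects, extensions, and arbitrary direct sums, so $\mcX$ is an abelian subcategory of $\mcA$ in which direct limits agree with those of $\mcA$ and are therefore exact; a generator of $\mcX$ is obtained from a generator $U$ of $\mcA$ by taking the \emph{set} of all quotients of $U$ that lie in $\mcX$, since any nonzero $X\in\mcX$ receives a nonzero morphism from $U$ whose image is such a quotient. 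In the locally noetherian case I would run the same argument with a generating set of noetherian objects, using that noetherianness in $\mcX$ coincides with noetherianness in $\mcA$ because $\mcX$ is closed under subobjects. That $\mcA/\mcX$ is again a Grothendieck category I would cite from \cite[Chapter~4]{Popescu}; for local noetherianness of $\mcA/\mcX$ I would use that $F$ is exact and is a left adjoint (hence preserves colimits), so the $F$-images of a noetherian generating set generate $\mcA/\mcX$, while the subobjects of $F(M)$ correspond bijectively to the $\mcX$-saturated subobjects of $M$, so the ascending chain condition descends from $M$ to $F(M)$.

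Parts (\ref{item:ThmPropertyOfCanonicalFunctorForQuotientCategory}) and (\ref{item:ThmCharacterizationOfZeroObjectInQuotientCategory}) are then mostly formal. Density of $F$ holds because the objects of $\mcA/\mcX$ are by definition those of $\mcA$; exactness of $F$, full faithfulness of $G$, and $FG\cong\id$ are the defining properties of the localization, for which I would cite \cite{Popescu} (remarking that $G$ fully faithful is equivalent to the counit $FG\to\id$ being invertible, which holds since $\mcX$ is localizing). For (\ref{item:ThmCharacterizationOfZeroObjectInQuotientCategory}): if $M\in\mcX$ then $(0,M)\in\mcS_{M,M}$ and $\id_M$ maps to $0\in\Hom_\mcA(0,M/M)$, so $F(M)=0$; conversely, if $F(M)=0$ then $\id_M$ becomes zero in the colimit defining $\Hom_{\mcA/\mcX}(M,M)$, so there is $(M',N')\in\mcS_{M,M}$ for which the composite $M'\hookrightarrow M\twoheadrightarrow M/N'$ vanishes, i.e.\ $M'\subseteq N'$; then $M/N'$ is a quotient of $M/M'\in\mcX$ and $N'\in\mcX$, so closure of $\mcX$ under extensions applied to $0\to N'\to M\to M/N'\to 0$ gives $M\in\mcX$.

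The substantive part is (\ref{item:ThmDescriptionOfCompositeFunctorForQuotientCategory}), the two-step description of the localization of $M$. Since $L\in\mcX$, part (\ref{item:ThmCharacterizationOfZeroObjectInQuotientCategory}) and exactness give $F(M)\cong F(N)$, and $N=M/L$ has no nonzero subobject in $\mcX$ (one would lift to a subobject of $M$ strictly containing $L$ and still in $\mcX$). Hence $E(N)$ has no nonzero subobject in $\mcX$ either, since such a subobject would meet the essential subobject $N$; therefore $\widetilde M\subseteq E(N)$ is $\mcX$-torsion-free. Moreover $\widetilde M$ is essential in $E(N)$, so $E(\widetilde M)=E(N)$ and $E(\widetilde M)/\widetilde M\cong(E(N)/N)/(\widetilde M/N)$ is $E(N)/N$ modulo its largest subobject in $\mcX$, hence also $\mcX$-torsion-free. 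These two conditions are exactly the characterization of the objects in the essential image of $G$, so the unit $\widetilde M\to GF(\widetilde M)$ is an isomorphism; combined with $F(\widetilde M)\cong F(N)\cong F(M)$ (as $\widetilde M/N\in\mcX$) this yields $GF(M)\cong GF(\widetilde M)\cong\widetilde M$, and the ``in particular'' clause follows since $G(M')\cong GFG(M')$ is then of the form $\widetilde{(\,\cdot\,)}$ and so is $\mcX$-torsion-free. I expect the only genuinely non-formal point to be this last step---that $\mcX$-torsion-freeness of both $\widetilde M$ and $E(\widetilde M)/\widetilde M$ forces $\widetilde M\xrightarrow{\sim}GF(\widetilde M)$---and rather than reconstruct the localization formalism I would cite the corresponding statement in \cite{Popescu}, checking by hand only the two torsion-freeness conditions above.
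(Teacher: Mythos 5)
Your proposal is correct and follows essentially the same route as the paper, which simply cites Gabriel and Popescu for all four parts; you add explicit verifications (notably the direct computation of $F(M)=0 \Leftrightarrow M\in\mcX$ from Definition~\ref{Def:QuotientCategory}, and the two $\mcX$-torsion-freeness checks identifying $\widetilde{M}$ as an object in the essential image of $G$), but the substantive content is still delegated to the same classical localization results. No gaps.
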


\begin{proof}
	(\ref{item:ThmLocalizingSubcategoryAndQuotientSubcategoryAreGrothendieckCategory}) \cite[Proposition III.9]{Gabriel} and Corollary 1 of it.
	
	(\ref{item:ThmPropertyOfCanonicalFunctorForQuotientCategory}) \cite[Theorem 4.3.8]{Popescu} and \cite[Proposition 4.4.3 (1)]{Popescu}.
	
	(\ref{item:ThmDescriptionOfCompositeFunctorForQuotientCategory}) This follows from the proof of \cite[Theorem 4.4.5]{Popescu}.
	
	(\ref{item:ThmCharacterizationOfZeroObjectInQuotientCategory}) \cite[Lemma 4.3.4]{Popescu}.
\end{proof}

The following result gives a description of the atom spectra of (pre)localizing subcategories.

\begin{Prop}\label{Prop:AtomSpectraOfLocalizingSubcategory}
	Let $\mcA$ be an abelian category and $\mcX$ a full subcategory of $\mcA$ closed under subobjects, quotient objects, and finite direct sums. Then $\ASpec\mcX$ is homeomorphic to the open subclass $\ASupp\mcX$ of $\ASpec\mcA$ with the induced topology.
\end{Prop}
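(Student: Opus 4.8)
The plan is to establish a homeomorphism between $\ASpec\mcX$ and the subspace $\ASupp\mcX$ of $\ASpec\mcA$ by producing mutually inverse continuous maps. First I would observe that since $\mcX$ is closed under subobjects, quotient objects, and finite direct sums, it is an abelian category in its own right, and crucially the inclusion functor $\mcX\hookrightarrow\mcA$ is exact and reflects monoform objects: an object $H$ of $\mcX$ is monoform in $\mcX$ if and only if it is monoform in $\mcA$ (one direction is immediate since subobjects of $H$ in $\mcX$ and in $\mcA$ coincide; the other uses the same fact). Likewise, two monoform objects in $\mcX$ are atom-equivalent in $\mcX$ precisely when they are atom-equivalent in $\mcA$, because a common nonzero subobject computed in $\mcA$ already lies in $\mcX$. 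This yields a well-defined injection $\iota\colon\ASpec\mcX\to\ASpec\mcA$, $\overline{H}\mapsto\overline{H}$.

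Next I would check that the image of $\iota$ is exactly $\ASupp\mcX$. If $H\in\mcX$ is monoform then $\overline{H}\in\ASupp H\subset\ASupp\mcX$, giving $\operatorname{Im}\iota\subset\ASupp\mcX$. Conversely, if $\alpha\in\ASupp\mcX$, then $\alpha=\overline{H}$ for some monoform subquotient $H$ of an object $M\in\mcX$; since $\mcX$ is closed under subobjects and quotient objects, $H$ itself belongs to $\mcX$, so $\alpha\in\operatorname{Im}\iota$. Thus $\iota$ restricts to a bijection $\ASpec\mcX\xrightarrow{\sim}\ASupp\mcX$.

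It then remains to verify that this bijection and its inverse are continuous, i.e. that $\iota$ identifies the topologies. For an object $M\in\mcX$, I would show that $\ASupp_{\mcX}M$ maps under $\iota$ to $\ASupp_{\mcA}M\cap\operatorname{Im}\iota=\ASupp_{\mcA}M$ (the latter equality because $\ASupp_{\mcA}M\subset\ASupp\mcX$ already): the monoform subquotients of $M$ are the same whether computed in $\mcX$ or in $\mcA$. By Proposition \ref{Prop:OpenBasisOfAtomSpectrum}, the sets $\ASupp_{\mcX}M$ with $M\in\mcX$ form an open basis of $\ASpec\mcX$, and they correspond bijectively to the basic open sets $\ASupp_{\mcA}M$ ($M\in\mcX$) of $\ASpec\mcA$ that are contained in $\ASupp\mcX$; since an arbitrary object $N\in\mcA$ with $\ASupp N\subset\ASupp\mcX$ need not lie in $\mcX$, I would note that $\ASupp N$ is still a union of basic opens of the form $\ASupp_{\mcA}M$ with $M$ a suitable subquotient-assembly in $\mcX$ — more directly, every point $\alpha\in\ASupp N\subset\ASupp\mcX$ is $\overline{H}$ for $H\in\mcX$ monoform, and $\ASupp H\subset\ASupp N$ is then a basic open neighborhood realized in $\mcX$. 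Hence the subspace topology on $\ASupp\mcX$ has $\{\ASupp_{\mcA}M\mid M\in\mcX\}$ as an open basis, matching $\ASpec\mcX$ under $\iota$. The main obstacle is this last bookkeeping with bases: one must be careful that objects of $\mcA$ supported in $\ASupp\mcX$ are not assumed to lie in $\mcX$ (that would require the stronger prelocalizing hypothesis), so the argument must go through monoform representatives rather than through the objects $N$ themselves.
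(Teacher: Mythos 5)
Your overall strategy is the paper's: identify $\ASpec\mcX$ with a subclass of $\ASpec\mcA$ via the (exact, subobject/quotient-preserving) inclusion $\mcX\hookrightarrow\mcA$, check the image is $\ASupp\mcX$, and then match the topologies by comparing open bases. The first two steps are correct and match the paper.

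There is, however, a genuine gap in the topology-matching step, and it sits exactly where the paper has to work. You assert: ``every point $\alpha\in\ASupp N\subset\ASupp\mcX$ is $\overline{H}$ for $H\in\mcX$ monoform, and $\ASupp H\subset\ASupp N$ is then a basic open neighborhood realized in $\mcX$.'' This conflates two different choices of representative. From $\alpha\in\ASupp N$ you can extract a monoform subquotient $H_0$ of $N$ with $\overline{H_0}=\alpha$ and $\ASupp H_0\subset\ASupp N$, but there is no reason $H_0\in\mcX$. From $\alpha\in\ASupp\mcX$ you can extract a monoform $H'\in\mcX$ with $\overline{H'}=\alpha$, but there is no reason $\ASupp H'\subset\ASupp N$: atom supports depend heavily on the choice of representative, and a representative taken from $\mcX$ may have much larger support than $\ASupp N$. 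To get a single representative with both properties you must take a common nonzero subobject $H''$ of $H_0$ and $H'$, which exists because they represent the same atom; by Proposition \ref{Prop:PropertyOfMonoformObject} (\ref{item:PropSubobjectOfMonoformObjectIsMonoform}) $H''$ is still monoform, it lies in $\mcX$ because $\mcX$ is closed under subobjects, and $\ASupp H''\subset\ASupp H_0\subset\ASupp N$. This is precisely the argument the paper supplies, and it is the step your proof is missing; your closing remark that ``the argument must go through monoform representatives'' correctly diagnoses the difficulty but does not resolve it. Once you insert the common-subobject step, the rest of your proof goes through.
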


\begin{proof}
	Note that $\mcX$ is an abelian category. Since $\mcX$ is closed under subobjects and quotient objects, the canonical functor $\mcX\to\mcA$ sends any monoform object in $\mcX$ to a monoform object in $\mcA$, and this correspondence induces an injective map $\ASpec\mcX\to\ASpec\mcA$. It is obvious that the image of this map is $\ASupp\mcX$ and that this map sends any open subclass of $\ASpec\mcX$ to an open subclass of $\ASupp\mcX$. Let $\Phi$ be an open subclass of $\ASupp\mcX$. For any $\alpha\in\Phi$, there exists a monoform object in $\mcA$ satisfying $\overline{H}=\alpha$ and $\ASupp H\subset\Phi$. Since $\alpha\in\ASupp\mcX$, and $\mcX$ is closed under subobjects and quotient objects, there exists a monoform object $H'$ in $\mcX$ such that $\overline{H'}=\alpha$. We have a common nonzero subobject $H''$ of $H$ and $H'$. Then by Proposition \ref{Prop:PropertyOfMonoformObject} (\ref{item:PropSubobjectOfMonoformObjectIsMonoform}), $H''$ is a monoform object in $\mcX$ satisfying $\overline{H''}=\alpha$ and $\ASupp H''\subset\Phi$. This shows that the map $\ASpec\mcX\to\ASupp\mcX$ is a homeomorphism.
\end{proof}

We describe the atom spectra of quotient categories in Theorem \ref{Thm:AtomSpectraOfQuotientCategory}. We start with the following lemma for the proof of it.

\begin{Lem}\label{Lem:QuotientObjectInQuotientCategory}
	Let $\mcA$ be a Grothendieck category, $\mcX$ a localizing subcategory of $\mcA$, $F\colon\mcA\to\mcA/\mcX$ the canonical functor, and $G\colon\mcA/\mcX\to\mcA$ its right adjoint functor. For any object $M'$ in $\mcA/\mcX$ and any subobject $L'$ of $M'$, we have an exact sequence
	\begin{equation*}
		0\to\frac{G(M')}{G(L')}\to G\left(\frac{M'}{L'}\right)\to N\to 0,
	\end{equation*}
	in $\mcA$ such that $N$ is an object in $\mcA$ belonging to $\mcX$, and $G(M')/G(L')$ is essential as a subobject of $G(M'/L')$.
\end{Lem}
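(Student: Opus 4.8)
The plan is to analyze the unit and counit of the adjunction $(F,G)$ and exploit the description of $GF$ given in Theorem~\ref{Thm:PropertyOfQuotientCategory}~(\ref{item:ThmDescriptionOfCompositeFunctorForQuotientCategory}). First I would apply $G$ to the short exact sequence $0\to L'\to M'\to M'/L'\to 0$ in $\mcA/\mcX$. Since $G$ is a right adjoint, it is left exact, so we obtain an exact sequence $0\to G(L')\to G(M')\to G(M'/L')$ in $\mcA$; let $N'\subset G(M'/L')$ denote the image of $G(M')$, so that $N'\cong G(M')/G(L')$. Applying the exact functor $F$ and using $FG\cong\id$ (Theorem~\ref{Thm:PropertyOfQuotientCategory}~(\ref{item:ThmPropertyOfCanonicalFunctorForQuotientCategory})), the sequence $0\to F(L')\to F(M')\to F(M'/L')\to 0$ is identified with the original sequence in $\mcA/\mcX$, so $F$ applied to the inclusion $N'\hookrightarrow G(M'/L')$ is an isomorphism; hence $F(\,G(M'/L')/N'\,)=0$, and by Theorem~\ref{Thm:PropertyOfQuotientCategory}~(\ref{item:ThmCharacterizationOfZeroObjectInQuotientCategory}) the quotient $N:=G(M'/L')/N'$ belongs to $\mcX$. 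This gives the desired exact sequence
\begin{align*}
0\to\frac{G(M')}{G(L')}\to G\left(\frac{M'}{L'}\right)\to N\to 0
\end{align*}
with $N\in\mcX$.

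Next I would prove that $G(M')/G(L')$ is essential in $G(M'/L')$. The key input is Theorem~\ref{Thm:PropertyOfQuotientCategory}~(\ref{item:ThmDescriptionOfCompositeFunctorForQuotientCategory}), which tells us that for any object $P'$ in $\mcA/\mcX$, no nonzero subobject of $G(P')$ lies in $\mcX$; in particular $G(M'/L')$ is $\mcX$-torsion-free. Now suppose $K$ is a nonzero subobject of $G(M'/L')$ with $K\cap N'=0$. Then $K$ embeds into the quotient $N=G(M'/L')/N'$, which belongs to $\mcX$; since $\mcX$ is closed under subobjects, $K\in\mcX$, contradicting that $G(M'/L')$ has no nonzero subobject in $\mcX$. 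Therefore every nonzero subobject of $G(M'/L')$ meets $N'=G(M')/G(L')$ nontrivially, i.e. $G(M')/G(L')$ is an essential subobject of $G(M'/L')$.

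The two pieces above are both short, so there is no single hard obstacle; the only point requiring care is the bookkeeping in the first paragraph, namely verifying that $F$ carries the inclusion $N'\hookrightarrow G(M'/L')$ to an \emph{iso} rather than merely a mono. This follows because $F$ is exact and sends $G(L')\hookrightarrow G(M')$ to the mono $FG(L')\to FG(M')$, which under the natural isomorphism $FG\cong\id$ is exactly $L'\hookrightarrow M'$; comparing cokernels, $F$ sends the cokernel $G(M'/L')/N'$ to the cokernel of $L'\hookrightarrow M'$ modulo the image, which is $0$. Once this identification is in place, the torsion-freeness statement of Theorem~\ref{Thm:PropertyOfQuotientCategory}~(\ref{item:ThmDescriptionOfCompositeFunctorForQuotientCategory}) does all the remaining work, and no separate Grothendieck-category axiom is needed beyond closure of $\mcX$ under subobjects and quotient objects.
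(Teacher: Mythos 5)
Your proof is correct and follows essentially the same approach as the paper: apply the left-exact functor $G$, use the natural isomorphism $FG\cong\id$ together with exactness of $F$ to show the cokernel $N$ lies in $\mcX$, and then derive essentiality from Theorem~\ref{Thm:PropertyOfQuotientCategory}~(\ref{item:ThmDescriptionOfCompositeFunctorForQuotientCategory}) (torsion-freeness of objects in the image of $G$). The only cosmetic difference is that the paper phrases the essentiality step via a commutative diagram and the kernel $L$ of $M\to B$, while you argue by contraposition that a subobject $K$ with $K\cap N'=0$ would embed into $N\in\mcX$; these are logically the same argument.
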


\begin{proof}
	Since $G$ is a left exact functor, we have the exact sequence
	\begin{equation*}
		0\to G(L')\to G(M')\to G\left(\frac{M'}{L'}\right),
	\end{equation*}
	and hence we have an exact sequence
	\begin{equation*}
		0\to\frac{G(M')}{G(L')}\to G\left(\frac{M'}{L'}\right)\to N\to 0,
	\end{equation*}
	where $N$ is an object in $\mcA$. By Theorem \ref{Thm:PropertyOfQuotientCategory} (\ref{item:ThmPropertyOfCanonicalFunctorForQuotientCategory}), we have
	\begin{equation*}
		F\left(\frac{G(M')}{G(L')}\right)\cong FG\left(\frac{M'}{L'}\right).
	\end{equation*}
	This implies $F(N)=0$. By Theorem \ref{Thm:PropertyOfQuotientCategory} (\ref{item:ThmCharacterizationOfZeroObjectInQuotientCategory}), the object $N$ belongs to $\mcX$.
	
	Let $M$ be a nonzero subobject of $G(M'/L')$. Denote the image of the composite $M\hookrightarrow G(M'/L')\to N$ by $B$. Then $B$ belongs to $\mcX$. We have the commutative diagram
	\begin{equation*}
		\newdir^{ (}{!/-5pt/@^{(}}
		\xymatrix{
			0\ar[r] & L\ar@{^{ (}->}[d]\ar[r] & M\ar@{^{ (}->}[d]\ar[r] & B\ar@{^{ (}->}[d]\ar[r] & 0 \\
			0\ar[r] & \dfrac{G(M')}{G(L')}\ar[r] & G\left(\dfrac{M'}{L'}\right)\ar[r] & N\ar[r] & 0\rlap{\,,}
		}
	\end{equation*}
	where $L$ is a subobject of $M$. If $L=0$, then $M$ is a nonzero subobject of $G(M'/L')$ which belongs to $\mcX$. This contradicts Theorem \ref{Thm:PropertyOfQuotientCategory} (\ref{item:ThmDescriptionOfCompositeFunctorForQuotientCategory}). Hence we have $L\neq 0$. This shows that the intersection of the subobjects $G(M')/G(L')$ and $M$ of $G(M'/L')$ is nonzero. Therefore $G(M')/G(L')$ is essential as a subobject of $G(M'/L')$.
\end{proof}

We show some results on monoform objects in quotient categories.

\begin{Lem}\label{Lem:MonoformObjectInQuotientCategory}
	Let $\mcA$ be a Grothendieck category, $\mcX$ a localizing subcategory of $\mcA$, $F\colon\mcA\to\mcA/\mcX$ the canonical functor, and $G\colon\mcA/\mcX\to\mcA$ its right adjoint functor.
	\begin{enumerate}
		\item\label{item:LemMonoformObjectInQuotientCategoryIsMonoformObject} For any monoform object $H'$ in $\mcA/\mcX$, the object $G(H')$ in $\mcA$ is monoform.
		\item\label{item:LemTorsionfreeMonoformObjectIsMonoformObjectInQuotientCategory} Let $H$ be a monoform object in $\mcA$ such that any nonzero subobject of $H$ does not belong to $\mcX$. Then $GF(H)$ is a monoform object in $\mcA$ which has a subobject isomorphic to $H$, and $F(H)$ is a monoform object in $\mcA/\mcX$.
	\end{enumerate}
\end{Lem}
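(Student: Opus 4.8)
The plan is to transfer both statements across the adjunction between $F$ and $G$, using Theorem \ref{Thm:PropertyOfQuotientCategory} throughout. For (\ref{item:LemMonoformObjectInQuotientCategoryIsMonoformObject}) I would take a nonzero subobject $L$ of $G(H')$ and a hypothetical common nonzero subobject $W$ of $G(H')$ and $G(H')/L$. Since $L$ and $W$ are nonzero subobjects of $G(H')$, Theorem \ref{Thm:PropertyOfQuotientCategory} (\ref{item:ThmDescriptionOfCompositeFunctorForQuotientCategory}) gives $L,W\notin\mcX$, so $F(L)\neq 0$ and $F(W)\neq 0$ by Theorem \ref{Thm:PropertyOfQuotientCategory} (\ref{item:ThmCharacterizationOfZeroObjectInQuotientCategory}). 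Applying the exact functor $F$ to $W\hookrightarrow G(H')$ and $W\hookrightarrow G(H')/L$ and using $FG\cong\id$ (Theorem \ref{Thm:PropertyOfQuotientCategory} (\ref{item:ThmPropertyOfCanonicalFunctorForQuotientCategory})) produces monomorphisms $F(W)\hookrightarrow H'$ and $F(W)\hookrightarrow F(G(H')/L)\cong H'/F(L)$, where $F(L)$ is identified with a nonzero subobject of $H'$. Thus $F(W)$ is a nonzero subobject of $H'$ isomorphic to a subobject of $H'/F(L)$, contradicting that $H'$ is monoform.

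For (\ref{item:LemTorsionfreeMonoformObjectIsMonoformObjectInQuotientCategory}) I would write $\widetilde{H}=GF(H)$. Since no nonzero subobject of $H$ lies in $\mcX$, the construction of $GF$ in Theorem \ref{Thm:PropertyOfQuotientCategory} (\ref{item:ThmDescriptionOfCompositeFunctorForQuotientCategory}) gives $H\subseteq\widetilde{H}\subseteq E(H)$ with $\widetilde{H}/H\in\mcX$; in particular $\widetilde{H}$ contains a subobject isomorphic to $H$, and $H$ is essential in $\widetilde{H}$. Two preliminary observations are needed: (a) $\widetilde{H}$ has no nonzero subobject in $\mcX$, since any such subobject would meet the essential $H$ in a nonzero subobject of $H$ lying in $\mcX$; and (b) $\overline{H}\notin\ASupp\mcX$, because if $\overline{H}\in\ASupp M_{0}$ with $M_{0}\in\mcX$, then $\overline{H}$ is represented by a monoform subquotient of $M_{0}$, so $H$ has a nonzero subobject isomorphic to a subquotient of $M_{0}$, which lies in $\mcX$ and contradicts the hypothesis on $H$.

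The key step is to show that $\widetilde{H}$ is monoform. I would take $0\neq K\subseteq\widetilde{H}$ and suppose some $0\neq V\subseteq\widetilde{H}$ is isomorphic to a subobject of $\widetilde{H}/K$. Since $H$ is essential in $\widetilde{H}$, the object $V\cap H$ is a nonzero subobject of $H$, hence monoform with $\overline{V\cap H}=\overline{H}$ by Proposition \ref{Prop:PropertyOfMonoformObject} (\ref{item:PropSubobjectOfMonoformObjectIsMonoform}); restricting the embedding $V\hookrightarrow\widetilde{H}/K$ to $V\cap H$ shows $\overline{H}\in\ASupp(\widetilde{H}/K)$. From the exact sequence $0\to(H+K)/K\to\widetilde{H}/K\to\widetilde{H}/(H+K)\to 0$ and Proposition \ref{Prop:AtomSupportAndAssociatedAtomsAndShortExactSequence} (\ref{item:PropAtomSupportAndShortExactSequence}) we get $\ASupp(\widetilde{H}/K)=\ASupp((H+K)/K)\cup\ASupp(\widetilde{H}/(H+K))$. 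Now $\widetilde{H}/(H+K)$ is a quotient of $\widetilde{H}/H\in\mcX$, hence lies in $\mcX$, so by (b) its atom support avoids $\overline{H}$; therefore $\overline{H}\in\ASupp((H+K)/K)=\ASupp(H/(H\cap K))$. But $H\cap K\neq 0$ since $H$ is essential in $\widetilde{H}$, so this contradicts Proposition \ref{Prop:MonoformObjectAndAtomSupport}. I expect this to be the main obstacle: the tempting approach of pushing the essential inclusion $H\subseteq\widetilde{H}$ through the quotient map $\widetilde{H}\twoheadrightarrow\widetilde{H}/K$ fails because the image of an essential subobject need not be essential, and one must instead reason with atom supports as above.

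It then remains to show that $F(H)$ is monoform in $\mcA/\mcX$. Suppose $0\neq L'\subseteq F(H)$ and $V'$ is a common nonzero subobject of $F(H)$ and $F(H)/L'$. Since $G$ is a faithful, left exact right adjoint, applying $G$ yields nonzero subobjects $G(L'),G(V')$ of $GF(H)=\widetilde{H}$ together with a monomorphism $G(V')\hookrightarrow G(F(H)/L')$. By Lemma \ref{Lem:QuotientObjectInQuotientCategory}, $\widetilde{H}/G(L')$ is essential in $G(F(H)/L')$, so intersecting the image of $G(V')$ with it and pulling back gives a nonzero subobject of $G(V')$ — hence of $\widetilde{H}$ — isomorphic to a subobject of $\widetilde{H}/G(L')$ with $G(L')\neq 0$. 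This contradicts the monoformness of $\widetilde{H}$ established above, so $F(H)$ is monoform.
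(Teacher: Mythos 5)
Your proof is correct and follows essentially the same route as the paper's: contradiction via exactness of $F$ and $FG\cong\id$ for part~(\ref{item:LemMonoformObjectInQuotientCategoryIsMonoformObject}), and for part~(\ref{item:LemTorsionfreeMonoformObjectIsMonoformObjectInQuotientCategory}) the same decomposition $0\to(H+K)/K\to\widetilde H/K\to\widetilde H/(H+K)\to 0$ plus Proposition~\ref{Prop:MonoformObjectAndAtomSupport} to show $\widetilde H=GF(H)$ is monoform, then Lemma~\ref{Lem:QuotientObjectInQuotientCategory} to show $F(H)$ is monoform. Two small deviations are worth noting. First, you make explicit the auxiliary fact that $\overline H\notin\ASupp\mcX$, which the paper uses silently when it discards $\ASupp(GF(H)/(H+L))$; spelling it out is a genuine improvement in readability. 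Second, for the $F(H)$ step the paper passes through $\AAss$, using Proposition~\ref{Prop:AssociatedAtomsOfEssentialSubobject} to transfer associated atoms across the essential inclusion $\widetilde H/G(L')\hookrightarrow G(F(H)/L')$ and then invoking Proposition~\ref{Prop:MonoformObjectAndAtomSupport}; you instead intersect the image of $G(V')$ with the essential subobject $\widetilde H/G(L')$ directly and produce a common nonzero subobject of $\widetilde H$ and $\widetilde H/G(L')$, contradicting monoformness by its bare definition. Your version is marginally more elementary since it avoids the atom-support machinery in that last step, but both hinge on exactly the same essentiality lemma and are otherwise interchangeable. (Your observation~(a) about $\widetilde H$ having no nonzero subobject in $\mcX$ is true but, as written, never actually used; it could be dropped.)
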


\begin{proof}
	(\ref{item:LemMonoformObjectInQuotientCategoryIsMonoformObject}) Assume that $G(H')$ is not monoform. Then there exist a nonzero subobject $L$ of $G(H')$ and a common nonzero subobject $N$ of $G(H')$ and $G(H')/L$. By Theorem \ref{Thm:PropertyOfQuotientCategory} (\ref{item:ThmPropertyOfCanonicalFunctorForQuotientCategory}), $F$ is an exact functor, and $FG$ is isomorphic to the identity functor on $\mcA/\mcX$. Hence $F(L)$ is a subobject of $H'$, and $F(N)$ is a common subobject of $H'$ and $H'/F(L)$. By Theorem \ref{Thm:PropertyOfQuotientCategory} (\ref{item:ThmDescriptionOfCompositeFunctorForQuotientCategory}), neither $L$ nor $N$ belongs to $\mcX$, and hence $F(L)$ and $F(N)$ are nonzero by Theorem \ref{Thm:PropertyOfQuotientCategory} (\ref{item:ThmCharacterizationOfZeroObjectInQuotientCategory}). This contradicts the monoformness of $H'$.
	
	(\ref{item:LemTorsionfreeMonoformObjectIsMonoformObjectInQuotientCategory}) By Theorem \ref{Thm:PropertyOfQuotientCategory} (\ref{item:ThmDescriptionOfCompositeFunctorForQuotientCategory}), the object $H$ can be regarded as an essential subobject of $GF(H)$, and $GF(H)/H$ belongs to $\mcX$. Assume that $GF(H)$ is not monoform. Then there exist nonzero subobjects $L$ and $M$ of $GF(H)$ such that $M$ is isomorphic to a subobject of $GF(H)/L$. We have $L\cap H\neq 0$ and $M\cap H\neq 0$. By Proposition \ref{Prop:PropertyOfMonoformObject} (\ref{item:PropSubobjectOfMonoformObjectIsMonoform}), $M\cap H$ is a monoform object in $\mcA$ which is atom-equivalent to $H$. Hence we have $\overline{H}\in\ASupp(GF(H)/L)$. Since we have an exact sequence
	\begin{equation*}
		0\to\frac{H+L}{L}\to\frac{GF(H)}{L}\to\frac{GF(H)}{H+L}\to 0,
	\end{equation*}
	we have
	\begin{equation*}
		\ASupp\frac{GF(H)}{L}=\ASupp\frac{H+L}{L}\cup\ASupp\frac{GF(H)}{H+L}
	\end{equation*}
	by Proposition \ref{Prop:AtomSupportAndAssociatedAtomsAndShortExactSequence} (\ref{item:PropAtomSupportAndShortExactSequence}). By Proposition \ref{Prop:MonoformObjectAndAtomSupport}, we have
	\begin{equation*}
		\overline{H}\notin\ASupp\frac{H}{L\cap H}=\ASupp\frac{H+L}{L}.
	\end{equation*}
	Since $GF(H)/(H+L)$ is a quotient object of $GF(H)/H$, it belongs to $\mcX$. Hence we also have
	\begin{equation*}
		\overline{H}\notin\ASupp\frac{GF(H)}{H+L}.
	\end{equation*}
	This is a contradiction. Therefore $GF(H)$ is monoform.
	
	Assume that $F(H)$ is not a monoform object in $\mcA/\mcX$. Then there exist a nonzero subobject $L'$ of $F(H)$ and a nonzero subobject $M'/L'$ of $F(H)/L'$ which is isomorphic to a subobject $N'$ of $F(H)$. Since $G$ is a fully faithful left exact functor, we have the diagram
	\begin{equation*}
		GF(H)\supset G(N')\cong G\left(\frac{M'}{L'}\right)\subset G\left(\frac{F(H)}{L'}\right),
	\end{equation*}
	where $G(L')$ and $G(N')$ are nonzero. Since $G(N')$ is a monoform object in $\mcA$ which is atom-equivalent to $GF(H)$, we have
	\begin{equation*}
		\overline{GF(H)}=\overline{G(N')}\in\AAss G\left(\frac{F(H)}{L'}\right).
	\end{equation*}
	Since $GF(H)/G(L')$ can be regarded as an essential subobject of $G(F(H)/L')$ by Lemma \ref{Lem:QuotientObjectInQuotientCategory}, we have
	\begin{equation*}
		\AAss G\left(\frac{F(H)}{L'}\right)=\AAss\frac{GF(H)}{G(L')}
	\end{equation*}
	by Proposition \ref{Prop:AssociatedAtomsOfEssentialSubobject}. This contradicts the monoformness of $GF(H)$ by Proposition \ref{Prop:MonoformObjectAndAtomSupport}. Therefore $F(H)$ is a monoform object in $\mcA/\mcX$.
\end{proof}

We describe the atom spectra of the quotient categories as sets.

\begin{Lem}\label{Lem:AtomInQuotientCategory}
	Let $\mcA$ be a Grothendieck category and $\mcX$ a localizing subcategory of $\mcA$. Then there exists a bijection between $\ASpec(\mcA/\mcX)$ and $\ASpec\mcA\setminus\ASupp\mcX$.
\end{Lem}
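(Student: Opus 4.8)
The plan is to construct the bijection using the adjoint pair $(F,G)$ between $\mcA$ and $\mcA/\mcX$, together with the monoform-object results just established in Lemma \ref{Lem:MonoformObjectInQuotientCategory}. Concretely, I would define a map
\[
\Psi\colon\ASpec(\mcA/\mcX)\to\ASpec\mcA,\qquad \overline{H'}\mapsto\overline{G(H')},
\]
which makes sense because $G(H')$ is monoform in $\mcA$ by Lemma \ref{Lem:MonoformObjectInQuotientCategory}(\ref{item:LemMonoformObjectInQuotientCategoryIsMonoformObject}). First I would check that $\Psi$ is well-defined: if $H'_1$ and $H'_2$ are atom-equivalent in $\mcA/\mcX$, they share a nonzero subobject $K'$, and applying the left exact functor $G$ yields $G(K')$ as a common nonzero subobject of $G(H'_1)$ and $G(H'_2)$ (nonzero by Theorem \ref{Thm:PropertyOfQuotientCategory}(\ref{item:ThmDescriptionOfCompositeFunctorForQuotientCategory}), since a nonzero object in the image of $G$ cannot lie in $\mcX$). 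Next I would show the image of $\Psi$ lies in $\ASpec\mcA\setminus\ASupp\mcX$: any nonzero subobject of $G(H')$ fails to belong to $\mcX$ by Theorem \ref{Thm:PropertyOfQuotientCategory}(\ref{item:ThmDescriptionOfCompositeFunctorForQuotientCategory}), and since a monoform object representing an atom in $\ASupp\mcX$ would have to lie in $\mcX$ (as $\mcX$ is closed under subobjects and the representing monoform object is a subquotient of an object of $\mcX$, hence — by uniformity and Serre-closure — can be taken inside $\mcX$), the atom $\overline{G(H')}$ is not in $\ASupp\mcX$.

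For surjectivity onto $\ASpec\mcA\setminus\ASupp\mcX$, take $\alpha\notin\ASupp\mcX$ and a monoform object $H$ with $\overline{H}=\alpha$. The key observation is that $\alpha\notin\ASupp\mcX$ forces every nonzero subobject of $H$ to avoid $\mcX$: indeed $\ASupp H\cap\ASupp\mcX=\emptyset$, so no nonzero subquotient of $H$ lies in $\mcX$. Then Lemma \ref{Lem:MonoformObjectInQuotientCategory}(\ref{item:LemTorsionfreeMonoformObjectIsMonoformObjectInQuotientCategory}) gives that $F(H)$ is monoform in $\mcA/\mcX$ and that $GF(H)$ has a subobject isomorphic to $H$; hence $\Psi(\overline{F(H)})=\overline{GF(H)}=\overline{H}=\alpha$, using that $H$ and $GF(H)$ are atom-equivalent. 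For injectivity, suppose $\Psi(\overline{H'_1})=\Psi(\overline{H'_2})$, i.e.\ $G(H'_1)$ and $G(H'_2)$ share a common nonzero subobject $K$ in $\mcA$. Since $FG\cong\id$ on $\mcA/\mcX$ and $F$ is exact, applying $F$ sends $K$ to a subobject $F(K)$ of both $H'_1$ and $H'_2$; and $F(K)\neq0$ because $K\notin\mcX$ (it is a nonzero subobject of $G(H'_1)$). Thus $H'_1$ and $H'_2$ are atom-equivalent, so $\overline{H'_1}=\overline{H'_2}$.

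The main technical point I expect to need care with is the claim that an atom $\alpha$ lies in $\ASupp\mcX$ if and only if it can be represented by a monoform object \emph{belonging to} $\mcX$ — equivalently, that $\ASupp\mcX$ is "detected" on monoform objects inside $\mcX$. This is where the Serre-subcategory hypothesis (closure under subobjects, quotient objects, extensions) is essential: if $\alpha=\overline{H}$ with $H$ a monoform subquotient of some $M\in\mcX$, then $H$ itself lies in $\mcX$ since $\mcX$ is closed under subquotients, and conversely if $H\in\mcX$ is monoform then $\alpha=\overline{H}\in\ASupp\mcX$ trivially. Combined with the dichotomy "either every nonzero subobject of a monoform $H$ is in $\mcX$ (so $\overline{H}\in\ASupp\mcX$) or none is (so $\overline{H}\notin\ASupp\mcX$)" — which follows from uniformity of monoform objects (Proposition \ref{Prop:PropertyOfMonoformObject}(\ref{item:PropMonoformObjectIsUniform})) plus Serre-closure — this makes the set-level description clean. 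Once these bookkeeping facts are in place, the four verifications (well-defined, lands in the complement, surjective, injective) are all short diagram chases with $F$ and $G$.
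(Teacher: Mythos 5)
Your overall architecture matches the paper's proof: both use the adjoint pair $(F,G)$, Lemma~\ref{Lem:MonoformObjectInQuotientCategory} to transport monoform objects in both directions, and Theorem~\ref{Thm:PropertyOfQuotientCategory}~(\ref{item:ThmDescriptionOfCompositeFunctorForQuotientCategory}) to control subobjects of objects in the image of $G$. The paper defines maps $\overline F$ and $\overline G$ and checks $\overline F\,\overline G=\id$, $\overline G\,\overline F=\id$; you define $\Psi=\overline G$ and check injectivity and surjectivity --- the same computation repackaged. The well-definedness and injectivity steps are correct (though for $G(K')\neq 0$ the clean reason is simply $FG(K')\cong K'\neq 0$, not Theorem~\ref{Thm:PropertyOfQuotientCategory}~(\ref{item:ThmDescriptionOfCompositeFunctorForQuotientCategory})).

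However, the justification of your ``key observation'' in the surjectivity step is wrong. You assert that $\alpha=\overline H\notin\ASupp\mcX$ implies $\ASupp H\cap\ASupp\mcX=\emptyset$, and hence that no nonzero subquotient of $H$ lies in $\mcX$. Neither claim is true in general: take $\mcA=\Mod k[x]$, $H=k[x]$ (so $\alpha=\overline{k[x]}$, the generic atom) and $\mcX$ the localizing subcategory of torsion modules, whose atom support is the set of maximal ideals. Then $\alpha\notin\ASupp\mcX$, yet $\ASupp H=\ASpec(\Mod k[x])$ meets $\ASupp\mcX$ in every maximal ideal, and $k[x]/(x)$ is a nonzero subquotient of $H$ lying in $\mcX$. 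What you actually need --- that no nonzero \emph{subobject} $L$ of $H$ lies in $\mcX$ --- is true, but for a different reason: if $0\neq L\subset H$ with $L\in\mcX$, then by Proposition~\ref{Prop:PropertyOfMonoformObject}~(\ref{item:PropSubobjectOfMonoformObjectIsMonoform}) the object $L$ is monoform and $\overline L=\overline H=\alpha$, so $\alpha\in\ASupp L\subset\ASupp\mcX$, a contradiction. The paper invokes exactly this argument implicitly when it says ``any nonzero subobject of $H$ does not belong to $\mcX$.''

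The dichotomy you state at the end (``either \emph{every} nonzero subobject of a monoform $H$ is in $\mcX$, or none is'') is also false: with $R$ the path algebra of the $A_2$-quiver $1\to 2$, $P_1$ the indecomposable projective of length two (which is monoform with unique simple submodule $S_2$), and $\mcX$ the localizing subcategory generated by $S_2$, the submodule $S_2$ of $P_1$ lies in $\mcX$ but $P_1$ itself does not. The correct dichotomy, which is all that your argument needs, is ``either \emph{some} nonzero subobject of $H$ lies in $\mcX$ (equivalently $\overline H\in\ASupp\mcX$) or none does (equivalently $\overline H\notin\ASupp\mcX$).'' With these two justifications repaired, your proof is a correct and faithful reconstruction of the paper's argument.
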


\begin{proof}
	Let $F\colon\mcA\to\mcA/\mcX$ be the canonical functor and $G\colon\mcA/\mcX\to\mcA$ its right adjoint functor.
	
	For each monoform object $H$ in $\mcA$ satisfying $\overline{H}\in\ASpec\mcA\setminus\ASupp\mcX$, any nonzero subobject of $H$ does not belong to $\mcX$. Hence by Lemma \ref{Lem:MonoformObjectInQuotientCategory} (\ref{item:LemTorsionfreeMonoformObjectIsMonoformObjectInQuotientCategory}), $F(H)$ is a monoform object in $\mcA/\mcX$, and we obtain the atom $\overline{F(H)}$ in $\mcA/\mcX$.
	
	For each monoform object $H'$ in $\mcA/\mcX$, by Lemma \ref{Lem:MonoformObjectInQuotientCategory} (\ref{item:LemMonoformObjectInQuotientCategoryIsMonoformObject}), $G(H')$ is a monoform object in $\mcA$. By Theorem \ref{Thm:PropertyOfQuotientCategory} (\ref{item:ThmDescriptionOfCompositeFunctorForQuotientCategory}), any nonzero subobject of $G(H')$ does not belong to $\mcX$. Hence we obtain the atom $\overline{G(H')}$ in $\mcA$ belonging to $\ASpec\mcA\setminus\ASupp\mcX$.
	
	The above two operations induce maps
	\begin{equation*}
		\overline{F}\colon\ASpec\mcA\setminus\ASupp\mcX\to\ASpec(\mcA/\mcX)
	\end{equation*}
	and
	\begin{equation*}
		\overline{G}\colon\ASpec(\mcA/\mcX)\to\ASpec\mcA\setminus\ASupp\mcX.
	\end{equation*}
	Since $FG$ is isomorphic to the identity functor on $\mcA/\mcX$ by Theorem \ref{Thm:PropertyOfQuotientCategory} (\ref{item:ThmPropertyOfCanonicalFunctorForQuotientCategory}), the composite $\overline{F}\,\overline{G}$ is the identity map on $\ASpec(\mcA/\mcX)$. Since the above monoform object $H$ in $\mcA$ is a nonzero subobject of $GF(H)$, we have $\overline{G}\,\overline{F}(\overline{H})=\overline{GF(H)}=\overline{H}$.
\end{proof}

In order to show that the bijection in Lemma \ref{Lem:AtomInQuotientCategory} is a homeomorphism, we show the next results on atom supports and associated atoms in quotient categories. For a Grothendieck category $\mcA$ and a localizing subcategory $\mcX$ of $\mcA$, we identify $\ASpec(\mcA/\mcX)$ with $\ASpec\mcA\setminus\ASupp\mcX$ by the correspondence in Lemma \ref{Lem:AtomInQuotientCategory}.

\begin{Lem}\label{Lem:AtomSupportAndAssociatedAtomsForQuotientCategory}
	Let $\mcA$ be a Grothendieck category, $\mcX$ a localizing subcategory of $\mcA$, $F\colon\mcA\to\mcA/\mcX$ the canonical functor, and $G\colon\mcA/\mcX\to\mcA$ its right adjoint functor.
	\begin{enumerate}
		\item\label{item:LemAtomSupportAndAssociatedAtomsForQuotientCategory} For any object $M'$ in $\mcA/\mcX$, we have
		\begin{equation*}
			\ASupp M'=\ASupp G(M')\setminus\ASupp\mcX
		\end{equation*}
		and
		\begin{equation*}
			\AAss M'=\AAss G(M').
		\end{equation*}
		\item\label{item:LemAtomSupportAndQuotientCategory} For any object $M$ in $\mcA$, we have
		\begin{equation*}
			\ASupp GF(M)\setminus\ASupp\mcX=\ASupp M\setminus\ASupp\mcX
		\end{equation*}
		and
		\begin{equation*}
			\ASupp F(M)=\ASupp M\setminus\ASupp\mcX.
		\end{equation*}
	\end{enumerate}
\end{Lem}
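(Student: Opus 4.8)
The plan is to prove part~(\ref{item:LemAtomSupportAndAssociatedAtomsForQuotientCategory}) first, and then deduce part~(\ref{item:LemAtomSupportAndQuotientCategory}) from it using the description of $GF(M)$ in Theorem~\ref{Thm:PropertyOfQuotientCategory}~(\ref{item:ThmDescriptionOfCompositeFunctorForQuotientCategory}). Throughout I identify $\ASpec(\mcA/\mcX)$ with $\ASpec\mcA\setminus\ASupp\mcX$ via Lemma~\ref{Lem:AtomInQuotientCategory}, so that an atom $\alpha$ of $\mcA/\mcX$ is concretely the atom $\overline{G(H')}$ in $\mcA$ for a monoform object $H'$ with $\overline{H'}=\alpha$, and conversely an atom $\overline{H}\in\ASpec\mcA\setminus\ASupp\mcX$ corresponds to $\overline{F(H)}$.

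For the associated atoms in~(\ref{item:LemAtomSupportAndAssociatedAtomsForQuotientCategory}): if $H'\hookrightarrow M'$ is a monoform subobject in $\mcA/\mcX$, then applying the left exact functor $G$ gives a monoform subobject $G(H')\hookrightarrow G(M')$ (using Lemma~\ref{Lem:MonoformObjectInQuotientCategory}~(\ref{item:LemMonoformObjectInQuotientCategoryIsMonoformObject})), and $\overline{G(H')}\notin\ASupp\mcX$ by Theorem~\ref{Thm:PropertyOfQuotientCategory}~(\ref{item:ThmDescriptionOfCompositeFunctorForQuotientCategory}); this gives ``$\subset$''. Conversely, suppose $\overline{H}\in\AAss G(M')$ with $\overline{H}\notin\ASupp\mcX$, witnessed by a monoform subobject $H\hookrightarrow G(M')$; since $H$ has no nonzero subobject in $\mcX$, Lemma~\ref{Lem:MonoformObjectInQuotientCategory}~(\ref{item:LemTorsionfreeMonoformObjectIsMonoformObjectInQuotientCategory}) says $F(H)$ is monoform in $\mcA/\mcX$, and applying the exact functor $F$ to $H\hookrightarrow G(M')$ and using $FG\cong\id$ yields a monomorphism $F(H)\hookrightarrow M'$, so $\overline{H}=\overline{F(H)}\in\AAss M'$ under the identification. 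For the atom support statement, one direction follows from the exact sequence in Lemma~\ref{Lem:QuotientObjectInQuotientCategory}: any monoform subquotient $H'$ of $M'$ gives a monoform subquotient of $G(M')$ up to atom equivalence (reduce $H'\le M'/L'$ to the essential subobject $G(M')/G(L')\le G(M'/L')$, whose associated atoms agree with those of $G(M'/L')$ by Proposition~\ref{Prop:AssociatedAtomsOfEssentialSubobject}, hence meet a monoform subquotient atom-equivalent to $G(H')$, and invoke Proposition~\ref{Prop:AtomSupportAndAssociatedAtomsAndShortExactSequence}~(\ref{item:PropAtomSupportAndShortExactSequence})). For the reverse direction, if $\overline{H}\in\ASupp G(M')$ with $\overline{H}\notin\ASupp\mcX$, realize $H$ as a monoform subobject of a quotient $N$ of $G(M')$; applying $F$ gives that $F(N)$ is a quotient of $M'$ and $F(H)$ is a monoform subobject of $F(N)$ (again via Lemma~\ref{Lem:MonoformObjectInQuotientCategory}~(\ref{item:LemTorsionfreeMonoformObjectIsMonoformObjectInQuotientCategory}) applied to the nonzero subobject $H\cap(\text{preimage})$, whose complement lies in $\mcX$), so $\overline{H}=\overline{F(H)}\in\ASupp M'$.

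For part~(\ref{item:LemAtomSupportAndQuotientCategory}): applying~(\ref{item:LemAtomSupportAndAssociatedAtomsForQuotientCategory}) with $M'=F(M)$ gives $\ASupp F(M)=\ASupp GF(M)\setminus\ASupp\mcX$, so it suffices to show $\ASupp GF(M)\setminus\ASupp\mcX=\ASupp M\setminus\ASupp\mcX$. By Theorem~\ref{Thm:PropertyOfQuotientCategory}~(\ref{item:ThmDescriptionOfCompositeFunctorForQuotientCategory}) there are exact sequences $0\to L\to M\to N\to 0$ and $0\to N\to GF(M)\to K\to 0$ with $L,K\in\mcX$; applying Proposition~\ref{Prop:AtomSupportAndAssociatedAtomsAndShortExactSequence}~(\ref{item:PropAtomSupportAndShortExactSequence}) to both, $\ASupp M=\ASupp L\cup\ASupp N$ and $\ASupp GF(M)=\ASupp N\cup\ASupp K$, and since $\ASupp L,\ASupp K\subset\ASupp\mcX$, removing $\ASupp\mcX$ leaves $\ASupp N\setminus\ASupp\mcX$ in both cases. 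I expect the only real subtlety to be the careful bookkeeping in the atom-support argument of part~(\ref{item:LemAtomSupportAndAssociatedAtomsForQuotientCategory})—specifically, passing a monoform subquotient through the functors $F$ and $G$ while keeping track of which pieces land in $\mcX$, and correctly invoking Lemma~\ref{Lem:QuotientObjectInQuotientCategory} and Proposition~\ref{Prop:MonoformObjectAndAtomSupport} to control atom equivalence; the rest is a routine diagram chase.
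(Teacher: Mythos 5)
Your proof is correct and takes essentially the same approach as the paper: the $\AAss$ and $\ASupp$ identities in part (1) are established via Lemma~\ref{Lem:MonoformObjectInQuotientCategory}, Lemma~\ref{Lem:QuotientObjectInQuotientCategory}, and Proposition~\ref{Prop:AssociatedAtomsOfEssentialSubobject} exactly as in the paper, and part (2) is deduced from part (1) together with the two exact sequences coming from Theorem~\ref{Thm:PropertyOfQuotientCategory}~(\ref{item:ThmDescriptionOfCompositeFunctorForQuotientCategory}). The only cosmetic blemish is the phrase ``$H\cap(\text{preimage})$'' in the $\supset$ direction of the $\ASupp$ claim, which is unnecessary: once $\overline{H}\notin\ASupp\mcX$, the monoform $H\hookrightarrow N$ has no nonzero subobject in $\mcX$, so one applies $F$ directly to $H\hookrightarrow N\twoheadleftarrow G(M')$ without any intersection step.
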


\begin{proof}
	(\ref{item:LemAtomSupportAndAssociatedAtomsForQuotientCategory}) By Theorem \ref{Thm:PropertyOfQuotientCategory} (\ref{item:ThmPropertyOfCanonicalFunctorForQuotientCategory}) and Lemma \ref{Lem:MonoformObjectInQuotientCategory}, we have
	\begin{equation*}
		\AAss M'=\AAss G(M')\setminus\ASupp\mcX
	\end{equation*}
	and
	\begin{equation*}
		\ASupp M'\supset\ASupp G(M')\setminus\ASupp\mcX.
	\end{equation*}
	By Theorem \ref{Thm:PropertyOfQuotientCategory} (\ref{item:ThmDescriptionOfCompositeFunctorForQuotientCategory}), we have
	\begin{equation*}
		\AAss G(M')\cap\ASupp\mcX=\emptyset.
	\end{equation*}
	For any $\alpha\in\ASupp M'$, there exist a subobject $L'$ of $M'$ and a monoform subobject $H'$ of $M'/L'$ such that $\overline{H'}=\alpha$. By Lemma \ref{Lem:QuotientObjectInQuotientCategory} and Proposition \ref{Prop:AssociatedAtomsOfEssentialSubobject}, we have
	\begin{equation*}
		\overline{G(H')}\in\AAss G\left(\frac{M'}{L'}\right)=\AAss\frac{G(M')}{G(L')}\subset\ASupp G(M').
	\end{equation*}
	This shows that
	\begin{equation*}
		\ASupp M'\subset\ASupp G(M')\setminus\ASupp\mcX.
	\end{equation*}
	
	(\ref{item:LemAtomSupportAndQuotientCategory}) Let $L$ be the largest subobject of $M$ belonging to $\mcX$ and $N=M/L$. Take the largest subobject $\widetilde{M}/N$ of $E(N)/N$ belonging to $\mcX$. By Theorem \ref{Thm:PropertyOfQuotientCategory} (\ref{item:ThmDescriptionOfCompositeFunctorForQuotientCategory}), $GF(M)$ is isomorphic to $\widetilde{M}$. We have the exact sequences
	\begin{equation*}
		0\to L\to M\to N\to 0,
	\end{equation*}
	and
	\begin{equation*}
		0\to N\to\widetilde{M}\to\frac{\widetilde{M}}{N}\to 0.
	\end{equation*}
	By Proposition \ref{Prop:AtomSupportAndAssociatedAtomsAndShortExactSequence} (\ref{item:PropAtomSupportAndShortExactSequence}), we have
	\begin{align*}
		\ASupp GF(M)\setminus\ASupp\mcX
		&=\ASupp\widetilde{M}\setminus\ASupp\mcX\\
		&=\ASupp N\setminus\ASupp\mcX\\
		&=\ASupp M\setminus\ASupp\mcX.
	\end{align*}
	By (\ref{item:LemAtomSupportAndAssociatedAtomsForQuotientCategory}), we obtain
	\begin{equation*}
		\ASupp F(M)=\ASupp GF(M)\setminus\ASupp\mcX=\ASupp M\setminus\ASupp\mcX.\qedhere
	\end{equation*}
\end{proof}

\begin{Thm}\label{Thm:AtomSpectraOfQuotientCategory}
	Let $\mcA$ be a Grothendieck category and $\mcX$ a localizing subcategory of $\mcA$. Then $\ASpec(\mcA/\mcX)$ is homeomorphic to the closed subset $\ASpec\mcA\setminus\ASupp\mcX$ of $\ASpec\mcA$ with the induced topology.
\end{Thm}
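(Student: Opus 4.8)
The plan is to promote the set-theoretic bijection of Lemma~\ref{Lem:AtomInQuotientCategory} to a homeomorphism by checking that, under that identification of $\ASpec(\mcA/\mcX)$ with $\ASpec\mcA\setminus\ASupp\mcX$, the open subsets on the two sides correspond.

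First I would note that the subset $\ASpec\mcA\setminus\ASupp\mcX$ is in fact closed, so that the statement is well posed: by Proposition~\ref{Prop:OpenBasisOfAtomSpectrum} each $\ASupp M$ with $M\in\mcX$ is open in $\ASpec\mcA$, hence $\ASupp\mcX=\bigcup_{M\in\mcX}\ASupp M$ is open and its complement is closed. Keep the notation $F\colon\mcA\to\mcA/\mcX$ and $G\colon\mcA/\mcX\to\mcA$ of Lemma~\ref{Lem:AtomInQuotientCategory}, and identify $\ASpec(\mcA/\mcX)$ with $\ASpec\mcA\setminus\ASupp\mcX$ via that lemma. Since $\mcA/\mcX$ is again a Grothendieck category by Theorem~\ref{Thm:PropertyOfQuotientCategory}~(\ref{item:ThmLocalizingSubcategoryAndQuotientSubcategoryAreGrothendieckCategory}), Proposition~\ref{Prop:OpenBasisOfAtomSpectrum} applies to it, so every open subset of $\ASpec(\mcA/\mcX)$ has the form $\ASupp M'$ for some object $M'$ in $\mcA/\mcX$, and every open subset of the subspace $\ASpec\mcA\setminus\ASupp\mcX$ has the form $\ASupp M\setminus\ASupp\mcX$ for some object $M$ in $\mcA$.

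The two directions are then each a one-line invocation of Lemma~\ref{Lem:AtomSupportAndAssociatedAtomsForQuotientCategory}. Given an open subset $\ASupp M'$ of $\ASpec(\mcA/\mcX)$, part~(\ref{item:LemAtomSupportAndAssociatedAtomsForQuotientCategory}) gives $\ASupp M'=\ASupp G(M')\setminus\ASupp\mcX$, which is open in the subspace topology; hence the identification carries open sets to open sets. Conversely, given an open subset $\ASupp M\setminus\ASupp\mcX$ of the subspace, part~(\ref{item:LemAtomSupportAndQuotientCategory}) gives $\ASupp F(M)=\ASupp M\setminus\ASupp\mcX$, which is open in $\ASpec(\mcA/\mcX)$; hence the identification reflects open sets. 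Combined with the bijectivity supplied by Lemma~\ref{Lem:AtomInQuotientCategory}, this shows the identification is a homeomorphism. I expect no genuine obstacle here; the only point demanding care is bookkeeping, namely making sure both parts of Lemma~\ref{Lem:AtomSupportAndAssociatedAtomsForQuotientCategory} are read with respect to the single identification fixed in Lemma~\ref{Lem:AtomInQuotientCategory} rather than some ad hoc pairing of atoms—but this is automatic, since that is precisely the convention under which Lemma~\ref{Lem:AtomSupportAndAssociatedAtomsForQuotientCategory} is stated.
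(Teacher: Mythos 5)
Your proof is correct and takes essentially the same route as the paper, which simply cites Lemma~\ref{Lem:AtomInQuotientCategory} and Lemma~\ref{Lem:AtomSupportAndAssociatedAtomsForQuotientCategory}; you have just spelled out the standard bookkeeping (using Proposition~\ref{Prop:OpenBasisOfAtomSpectrum} to describe open sets on both sides) that the paper leaves implicit.
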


\begin{proof}
	This follows from Lemma \ref{Lem:AtomInQuotientCategory} and Lemma \ref{Lem:AtomSupportAndAssociatedAtomsForQuotientCategory}.
\end{proof}

It is known that any Grothendieck category $\mcA$ can be obtained as the quotient category of the category of modules over some ring $R$ by some localizing subcategory. Since we have a fully faithful functor $\mcA\to\Mod R$, this result is called the \emph{Gabriel--Popescu embedding}.

\begin{Thm}[Gabriel and Popescu {\cite[Proposition]{PopescoGabriel}}]\label{Thm:GabrielPopescuEmbedding}
	Let $\mcA$ be a Grothendieck category, $G$ a generator of $\mcA$, and $R=\End_{\mcA}(G)$. Then there exists a localizing subcategory $\mcX$ of $\Mod R$ such that $\mcA$ is equivalent to $(\Mod R)/\mcX$.
\end{Thm}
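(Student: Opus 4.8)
The plan is to realize $\mcA$ as a quotient category of $\Mod R$ by means of a pair of adjoint functors. Put $H=\Hom_\mcA(G,-)\colon\mcA\to\Mod R$, where $\Hom_\mcA(G,A)$ is regarded as a right $R$-module, $R=\End_\mcA(G)$, via precomposition, and let $T\colon\Mod R\to\mcA$ be the functor determined by $T(R)=G$ and by the requirement that $T$ preserve all colimits; such a $T$ exists because $\mcA$ is cocomplete, and concretely, for a free presentation $R^{(J)}\xrightarrow{\phi}R^{(I)}\to M\to 0$ one sets $T(M)=\Cok(G^{(J)}\to G^{(I)})$, the displayed morphism being given by letting the entries of $\phi$, which lie in $R=\End_\mcA(G)$, act on $G$; one checks that this is independent of the presentation, is functorial, and yields an adjunction $T\dashv H$, that is, $\Hom_\mcA(T(M),A)\cong\Hom_R(M,H(A))$ naturally. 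Because $G$ is a generator, $H$ is faithful, and the counit $\varepsilon_A\colon TH(A)\to A$ is an epimorphism (its image contains $\Im(g)$ for every $g\in\Hom_\mcA(G,A)$, hence equals $A$).

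I claim that the theorem follows as soon as one knows that (i) $T$ is exact and (ii) $\varepsilon$ is an isomorphism (equivalently, $H$ is fully faithful). Granting these, set $\mcX=\{M\in\Mod R\mid T(M)=0\}$. By (i) and the fact that the left adjoint $T$ preserves arbitrary direct sums, $\mcX$ is closed under subobjects, quotient objects, extensions, and arbitrary direct sums, so it is a localizing subcategory of $\Mod R$ by Proposition \ref{Prop:CharacterizationOfLocalizingSubcategory}. Since $T$ kills exactly the objects of $\mcX$, it factors as $T=\bar T\circ q$ for an exact functor $\bar T\colon(\Mod R)/\mcX\to\mcA$, $q$ being the canonical functor. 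On Hom-sets one has $\Hom_{(\Mod R)/\mcX}(M,N)=\varinjlim\Hom_R(M',N/N')$, the colimit over submodules $M'\subset M$ and $N'\subset N$ with $M/M',N'\in\mcX$; applying $T$ and using $T(N')=0=T(M/M')$ (whence $T(M')\xrightarrow{\sim}T(M)$ and $T(N)\xrightarrow{\sim}T(N/N')$ by (i)) identifies this colimit with $\Hom_\mcA(T(M),T(N))$, so $\bar T$ is fully faithful — fullness using (ii) through the fact that $\eta_M\colon M\to HT(M)$ has kernel and cokernel in $\mcX$, hence becomes invertible after $q$. Finally $\bar T$ is essentially surjective since $A\cong TH(A)=\bar T(qH(A))$ for every $A$ by (ii). Therefore $\bar T\colon(\Mod R)/\mcX\to\mcA$ is an equivalence, which is the assertion of the theorem.

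It remains to prove (i) and (ii), and this is the substantive part of the argument, and the only place where the exactness of direct limits in $\mcA$ (Remark \ref{rem:ExactDirectLimits}) is used in an essential way. As a left adjoint, $T$ is right exact and preserves all colimits, so for (i) one only has to show that $T$ preserves monomorphisms. Given a monomorphism $N\hookrightarrow M$ in $\Mod R$, write $M$ as the directed union of its finitely generated submodules $M_\lambda$ and $N$ as the directed union of the submodules $N\cap M_\lambda$; since $T$ commutes with directed colimits and, by Remark \ref{rem:ExactDirectLimits}, directed colimits of monomorphisms are monomorphisms in $\mcA$, this reduces the assertion to the case in which $M$ is finitely generated. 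The technical heart is then a lemma asserting that $T$ is compatible with finite intersections and directed unions of submodules of a finitely generated free module — in particular, carries an inclusion of two such submodules to a monomorphism — proved by an explicit computation with elements of free modules and their images under $T$ that uses the distributivity identity of Remark \ref{rem:ExactDirectLimits}. I expect this lemma to be the main obstacle. For (ii), once $T$ is exact one argues as follows: let $K=\Ker\varepsilon_A$; applying $H$ to $0\to K\to TH(A)\to A\to 0$ and invoking the triangle identity $H(\varepsilon_A)\circ\eta_{H(A)}=\id$ gives a split short exact sequence $0\to H(K)\to HTH(A)\to H(A)\to 0$, and a diagram chase using the exactness of $T$ and the other triangle identity $\varepsilon_{T(M)}\circ T(\eta_M)=\id$ forces $H(K)=0$; since $H$ is faithful, $K=0$, so $\varepsilon_A$ is an isomorphism. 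Complete proofs of (i) and (ii) are given in \cite{Popescu}; see also \cite{Gabriel} and \cite{PopescoGabriel}.
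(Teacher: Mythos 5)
The paper does not prove this theorem; it is the classical Gabriel--Popescu result, cited directly from \cite{PopescoGabriel}, so there is no internal proof to compare against. Your outline is the standard adjunction argument: form the pair $T\dashv H$ with $T(R)=G$, observe that $H$ is faithful and $\varepsilon$ is epi because $G$ generates, set $\mcX=\Ker T$, and descend $T$ to an equivalence $\bar T\colon(\Mod R)/\mcX\to\mcA$. The reduction of the theorem to the two facts (i) ``$T$ is exact'' and (ii) ``$\varepsilon$ is an isomorphism'' is carried out correctly and with the right division of labor: exactness gives that $\mcX$ is localizing and that $\bar T$ is faithful, while (ii) gives fullness (via $q(\eta_M)$ invertible) and essential surjectivity. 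That part of the proposal I have no complaints about.

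Where I would push back is the parenthetical claim that (ii) follows from (i) by ``a diagram chase.'' From the split exact sequence $0\to H(K)\to HTH(A)\to H(A)\to 0$ and the triangle identities together with exactness of $T$, what one actually obtains is that $T(\eta_{H(A)})$ is a split mono whose cokernel is isomorphic to $\Ker\varepsilon_{TH(A)}$; concluding $H(K)=0$ from this requires already knowing that some $\varepsilon$ is a monomorphism, so the chase as sketched is circular. More to the point, in the references you yourself cite (\cite{Popescu}, and also Mitchell's and Stenstr\"om's treatments), the isomorphy of the counit is \emph{not} deduced from the exactness of $T$: both facts are obtained from a separate key lemma, proved directly from the AB5 condition, about enlarging a submodule $N\subset M$ relative to a map $M\to H(A)$ so that $M/N$ becomes torsion. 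Since you explicitly defer the proofs of (i) and (ii) to the literature, this does not invalidate your overall argument, but the sentence ``once $T$ is exact one argues as follows'' oversells what the subsequent chase can deliver, and a reader following your text would be left without a proof of (ii) even after granting (i).
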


Therefore we deduce the following result on the atom spectra of Grothendieck categories.

\begin{Cor}\label{Cor:AtomSpectrumOfGrothendieckCategoryIsClosedSubsetOfAtomSpectrumOfModuleCategory}
	For any Grothendieck category $\mcA$, there exists a ring $R$ such that $\ASpec\mcA$ is homeomorphic to some closed subset of $\ASpec(\Mod R)$.
\end{Cor}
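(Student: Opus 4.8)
The plan is to combine the Gabriel--Popescu embedding with the description of atom spectra of quotient categories that has just been established. Concretely, given a Grothendieck category $\mcA$, I would first invoke Theorem \ref{Thm:GabrielPopescuEmbedding}: choose any generator $G$ of $\mcA$, set $R=\End_{\mcA}(G)$, and obtain a localizing subcategory $\mcX$ of $\Mod R$ together with an equivalence $\mcA\simeq(\Mod R)/\mcX$.

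Next, since the atom spectrum is defined purely in terms of monoform objects and the atom equivalence, an equivalence of abelian categories induces a homeomorphism of atom spectra with respect to the topology of Definition \ref{Def:TopologyOnAtomSpectrum} (this is immediate from the definitions, as equivalences preserve subobjects, quotient objects, and hence monoform objects and atom supports). Therefore $\ASpec\mcA$ is homeomorphic to $\ASpec((\Mod R)/\mcX)$.

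Finally, I would apply Theorem \ref{Thm:AtomSpectraOfQuotientCategory} with the Grothendieck category $\Mod R$ and its localizing subcategory $\mcX$: this gives that $\ASpec((\Mod R)/\mcX)$ is homeomorphic to the closed subset $\ASpec(\Mod R)\setminus\ASupp\mcX$ of $\ASpec(\Mod R)$ equipped with the induced topology. Chaining the two homeomorphisms yields that $\ASpec\mcA$ is homeomorphic to a closed subset of $\ASpec(\Mod R)$, as desired.

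There is essentially no obstacle here: the corollary is a formal consequence of two theorems already proved in the excerpt. The only point requiring a sentence of justification is that a category equivalence induces a homeomorphism on atom spectra, which is routine. So the write-up will be short, mostly a matter of citing Theorem \ref{Thm:GabrielPopescuEmbedding} and Theorem \ref{Thm:AtomSpectraOfQuotientCategory} in sequence and noting the invariance of $\ASpec$ under equivalence.
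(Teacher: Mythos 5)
Your argument is exactly the paper's: apply the Gabriel--Popescu embedding to write $\mcA\simeq(\Mod R)/\mcX$, then apply Theorem \ref{Thm:AtomSpectraOfQuotientCategory} to identify $\ASpec((\Mod R)/\mcX)$ with a closed subset of $\ASpec(\Mod R)$. The extra remark that an equivalence of abelian categories induces a homeomorphism of atom spectra is a correct and harmless point that the paper leaves implicit.
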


\begin{proof}
	This follows from Theorem \ref{Thm:GabrielPopescuEmbedding} and Theorem \ref{Thm:AtomSpectraOfQuotientCategory}.
\end{proof}

In section \ref{sec:ConstructionOfGrothendieckCategories} and section \ref{sec:ExamplesOfGrothendieckCategories}, we construct several Grothendieck categories which have significant structures. Then by using Theorem \ref{Thm:GabrielPopescuEmbedding} and Corollary \ref{Cor:AtomSpectrumOfGrothendieckCategoryIsClosedSubsetOfAtomSpectrumOfModuleCategory}, we obtain some rings with some structures from the viewpoint of atoms.

\section{Localization}
\label{sec:Localization}

In this section, we investigate the localization of a Grothendieck category $\mcA$ at an atom $\alpha$ in $\mcA$. It is defined as the quotient category of $\mcA$ by a certain localizing subcategory. This contains a reformulation of localization by a prime localizing subcategory written in \cite[section 4.20]{Popescu}. We show that the partial order on the atom spectrum defined in section \ref{sec:PartialOrdersOnAtomSpectra} coincides with a partial order naturally defined between prime localizing subcategories. We show directly that the localization at an atom is a generalization of the localization of a commutative ring $R$ at a prime ideal of $\mfp$.

\begin{Def}\label{Def:LocalizationAtAtom}
	Let $\mcA$ be a Grothendieck category and $\alpha$ an atom in $\mcA$.
	\begin{enumerate}
		\item\label{item:DefLocalizingSubcategoryCorrespondingToAtom} Define an open subset $\Phi_{\alpha}$ of $\ASpec\mcA$ by $\Phi_{\alpha}=\ASpec\mcA\setminus\overline{\{\alpha\}}$. Define a localizing subcategory $\mcX_{\alpha}$ of $\mcA$ by $\mcX_{\alpha}=\ASupp^{-1}\Phi_{\alpha}$.
		\item\label{item:DefLocalizationAtAtom} Denote by $\mcA_{\alpha}$ the quotient category of $\mcA$ by $\mcX_{\alpha}$. The canonical functor $\mcA\to\mcA_{\alpha}$ is denoted by $(-)_{\alpha}$. We call the operation of obtaining $\mcA_{\alpha}$ from $\mcA$ the \emph{localization} at $\alpha$.
	\end{enumerate}
\end{Def}

Note that $\mcX_{\alpha}$ defined in Definition \ref{Def:LocalizationAtAtom} (\ref{item:DefLocalizingSubcategoryCorrespondingToAtom}) is a localizing subcategory of $\mcA$ by Proposition \ref{Prop:AtomSupportAndAssociatedAtomsAndShortExactSequence} (\ref{item:PropAtomSupportAndShortExactSequence}) and Proposition \ref{Prop:AtomSupportAndAssociatedAtomsAndDirectSum} (\ref{item:PropAtomSupportAndDirectSum}).

By using localization, we obtain a description of atom supports, which is known as the standard definition of supports in commutative ring theory.

\begin{Prop}\label{Prop:AtomSupportAndLocalization}
	Let $\mcA$ be a Grothendieck category. For any object $M$ in $\mcA$, we have
	\begin{equation*}
		\ASupp M=\{\alpha\in\ASpec\mcA\mid M_{\alpha}\neq 0\}.
	\end{equation*}
\end{Prop}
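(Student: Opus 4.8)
The plan is to prove the two inclusions separately, using the characterization of the zero object in a quotient category from Theorem \ref{Thm:PropertyOfQuotientCategory} (\ref{item:ThmCharacterizationOfZeroObjectInQuotientCategory}) together with the atom-support computation for quotient categories in Lemma \ref{Lem:AtomSupportAndAssociatedAtomsForQuotientCategory} (\ref{item:LemAtomSupportAndQuotientCategory}). The key observation is that $M_\alpha = 0$ in $\mcA_\alpha = \mcA/\mcX_\alpha$ is equivalent, by Theorem \ref{Thm:PropertyOfQuotientCategory} (\ref{item:ThmCharacterizationOfZeroObjectInQuotientCategory}), to $M$ belonging to $\mcX_\alpha = \ASupp^{-1}\Phi_\alpha$, i.e.\ to $\ASupp M \subset \Phi_\alpha = \ASpec\mcA \setminus \overline{\{\alpha\}}$.

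First I would unwind this equivalence to see that $M_\alpha \neq 0$ holds precisely when $\ASupp M \not\subset \ASpec\mcA \setminus \overline{\{\alpha\}}$, that is, when $\ASupp M \cap \overline{\{\alpha\}} \neq \emptyset$. So it remains to show that $\alpha \in \ASupp M$ if and only if $\ASupp M \cap \overline{\{\alpha\}} \neq \emptyset$. The forward direction is trivial since $\alpha \in \overline{\{\alpha\}}$ always. For the converse, suppose $\beta \in \ASupp M \cap \overline{\{\alpha\}}$. Then $\beta \leq \alpha$ in the specialization order, which by Proposition \ref{Prop:TopologicalCharacterizationOfPartialOrder} means that every object $N$ in $\mcA$ with $\beta \in \ASupp N$ also satisfies $\alpha \in \ASupp N$; applying this with $N = M$ gives $\alpha \in \ASupp M$, as desired.

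Alternatively, and perhaps more cleanly, one can argue directly from Lemma \ref{Lem:AtomSupportAndAssociatedAtomsForQuotientCategory} (\ref{item:LemAtomSupportAndQuotientCategory}), which gives $\ASupp M_\alpha = \ASupp F(M) = \ASupp M \setminus \ASupp\mcX_\alpha = \ASupp M \setminus \Phi_\alpha = \ASupp M \cap \overline{\{\alpha\}}$, where the last equality uses $\ASupp\mcX_\alpha = \Phi_\alpha$ (this identity, $\ASupp(\ASupp^{-1}\Phi) = \Phi$ for open $\Phi$, is Lemma \ref{Lem:OpenSubclassOfAtomSpectrumAndAtomSupport}). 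Since $M_\alpha$ is a nonzero object iff its atom support is nonempty (any nonzero object has a monoform subquotient, hence a nonempty atom support, while the zero object has empty atom support), we get $M_\alpha \neq 0 \iff \ASupp M \cap \overline{\{\alpha\}} \neq \emptyset$, and then the specialization-order argument above finishes the proof.

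The main obstacle, such as it is, lies in making sure the specialization-order step is watertight: one must check that $\alpha$ itself, not merely some atom in $\overline{\{\alpha\}}$, lands in $\ASupp M$, and this is exactly where Proposition \ref{Prop:TopologicalCharacterizationOfPartialOrder}'s equivalence between $\beta \leq \alpha$ and ``$\ASupp$ monotonicity'' is used. Everything else is a routine chain of identities, so I would keep the write-up short, citing Theorem \ref{Thm:PropertyOfQuotientCategory}, Lemma \ref{Lem:AtomSupportAndAssociatedAtomsForQuotientCategory}, and Proposition \ref{Prop:TopologicalCharacterizationOfPartialOrder} as the three substantive inputs.
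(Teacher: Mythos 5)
Your primary argument is correct and follows essentially the paper's own route: use Theorem \ref{Thm:PropertyOfQuotientCategory} (\ref{item:ThmCharacterizationOfZeroObjectInQuotientCategory}) to translate $M_\alpha=0$ into $\ASupp M\subset\Phi_\alpha$, i.e.\ $\ASupp M\cap\overline{\{\alpha\}}=\emptyset$, and then use the openness of $\ASupp M$ to convert $\ASupp M\cap\overline{\{\alpha\}}\neq\emptyset$ into $\alpha\in\ASupp M$. The paper does this last step in one line (a point of a closed set has its closure inside it), whereas you route it through Proposition \ref{Prop:TopologicalCharacterizationOfPartialOrder}; those are the same observation, since that proposition is itself just a restatement of the fact that the $\ASupp M$ form the open sets.

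One caution on your ``alternatively'' paragraph: the parenthetical claim that \emph{any nonzero object has a monoform subquotient, hence a nonempty atom support} is false in a general Grothendieck category. Theorem \ref{Thm:GrothendieckCategoryWithNoAtom} exhibits a nonzero Grothendieck category with $\ASpec\mcA=\emptyset$, where every object, nonzero or not, has empty atom support. The equivalence $M_\alpha\neq 0\iff\ASupp M_\alpha\neq\emptyset$ that you want does hold here, but it comes from combining Theorem \ref{Thm:PropertyOfQuotientCategory} (\ref{item:ThmCharacterizationOfZeroObjectInQuotientCategory}) with the computed identity $\ASupp M_\alpha=\ASupp M\cap\overline{\{\alpha\}}$, not from any general ``nonzero implies supported'' principle. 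So the alternative is not really an independent proof: it still needs Theorem \ref{Thm:PropertyOfQuotientCategory} (\ref{item:ThmCharacterizationOfZeroObjectInQuotientCategory}) to bridge nonvanishing in the quotient with nonempty support, exactly as your main argument does.
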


\begin{proof}
	By Theorem \ref{Thm:PropertyOfQuotientCategory} (\ref{item:ThmCharacterizationOfZeroObjectInQuotientCategory}), we have $M_{\alpha}=0$ if and only if $\ASupp M\cap\overline{\{\alpha\}}=\emptyset$. Since $\ASupp M$ is an open subset of $\ASpec\mcA$, we have $\ASupp M\cap\overline{\{\alpha\}}=\emptyset$ if and only if $\alpha\notin\ASupp M$.
\end{proof}

In fact, the localized category $\mcA_{\alpha}$ in Definition \ref{Def:LocalizationAtAtom} (\ref{item:DefLocalizationAtAtom}) is ``local'' in the sense of next definition. This is shown in Proposition \ref{Prop:AtomSpectrumOfLocalizedCategory} (\ref{item:PropAtomSpectrumOfLocalizedCategory}).

\begin{Def}\label{Def:LocalCategoryAndPrimeLocalizingSubcategory}\leavevmode
	\begin{enumerate}
		\item\label{item:DefLocalCategory} A Grothendieck category $\mcA$ is called \emph{local} if there exists a simple object $S$ in $\mcA$ such that $E(S)$ is a cogenerator of $\mcA$.
		\item\label{item:DefPrimeLocalizingSubcategory} A localizing subcategory $\mcX$ of a Grothendieck category $\mcA$ is called \emph{prime} if $\mcA/\mcX$ is a local Grothendieck category.
	\end{enumerate}
\end{Def}

Local Grothendieck categories have the following characterization in terms of atoms.

\begin{Prop}\label{Prop:CharacterizationOfLocalGrothendieckCategory}
	Let $\mcA$ be a Grothendieck category.
	\begin{enumerate}
		\item\label{item:PropCharacterizationOfLocalGrothendieckCategoryByUsingAtom} $\mcA$ is local if and only if there exists an atom $\alpha$ in $\mcA$ such that for any nonzero object $M$ in $\mcA$, we have $\alpha\in\ASupp M$.
		\item\label{item:PropSimpleObjectInLocalGrothendieckCategory} If $\mcA$ is local, then any two simple objects in $\mcA$ are isomorphic. In the case where $\mcA$ is a nonzero locally noetherian Grothendieck category, the converse also holds.
	\end{enumerate}
\end{Prop}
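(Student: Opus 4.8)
The plan is to take the distinguished atom to be $\overline{S}$, where $S$ is the simple object witnessing locality, and to exploit Proposition \ref{Prop:MonoformObjectAndAtomSupport} together with the essentiality of $S$ in its injective envelope $E(S)$ and the injectivity of injective envelopes.

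For the ``only if'' implication of (\ref{item:PropCharacterizationOfLocalGrothendieckCategoryByUsingAtom}), I would set $\alpha=\overline{S}$, which is an atom since $S$ is monoform. Given a nonzero object $M$, the cogenerator property of $E(S)$ supplies a nonzero morphism $f\colon M\to E(S)$; since $S$ is essential in $E(S)$ and simple, $\Im f\cap S=S$, so $S$ embeds into the quotient $\Im f\cong M/\Ker f$ of $M$, whence $\alpha\in\ASupp M$. For the ``if'' implication, suppose $\alpha\in\ASupp M$ for every nonzero $M$ and pick a monoform object $H$ with $\overline{H}=\alpha$. First I would observe that $H$ must be simple: if $H$ had a proper nonzero subobject $L$, then $H/L$ would be nonzero, so $\alpha=\overline{H}\in\ASupp(H/L)$ by hypothesis, contradicting Proposition \ref{Prop:MonoformObjectAndAtomSupport}. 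Writing $S=H$, it remains to prove that the injective object $E(S)$ is a cogenerator; since $E(S)$ is injective, it suffices to show $\Hom_{\mcA}(M,E(S))\neq 0$ for every nonzero $M$, because then any nonzero morphism out of a nonzero subobject extends and the embedding criterion for a cogenerator follows. For such $M$, the hypothesis gives a subquotient $M_{2}/M_{1}$ of $M$ with a nonzero subobject atom-equivalent to $S$; as $S$ is simple, this forces $S\hookrightarrow M_{2}/M_{1}\subseteq M/M_{1}$, and injectivity of $E(S)$ extends this to a nonzero map $M/M_{1}\to E(S)$, which composed with $M\twoheadrightarrow M/M_{1}$ is nonzero.

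For (\ref{item:PropSimpleObjectInLocalGrothendieckCategory}), if $\mcA$ is local I would apply (\ref{item:PropCharacterizationOfLocalGrothendieckCategoryByUsingAtom}) to obtain the atom $\alpha$ lying in $\ASupp M$ for all nonzero $M$; since any simple object $S'$ has $\ASupp S'=\{\overline{S'}\}$, we get $\alpha=\overline{S'}$, so all simple objects represent $\alpha$ and hence, being simple and atom-equivalent, are mutually isomorphic. For the converse, assuming $\mcA$ nonzero and locally noetherian with all simple objects isomorphic, I would first note that a simple object $S$ exists (a nonzero object contains a nonzero noetherian subobject, e.g.\ the image of a nonzero morphism from a noetherian generator, and a nonzero noetherian object has a simple quotient) and is unique up to isomorphism by assumption. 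Then, for any nonzero $M$, choosing a nonzero noetherian subobject $N\subseteq M$ with a simple quotient $N\twoheadrightarrow S'\cong S$ and extending $N\to S\hookrightarrow E(S)$ along $N\hookrightarrow M$ via injectivity of $E(S)$ produces a nonzero morphism $M\to E(S)$; as in part (\ref{item:PropCharacterizationOfLocalGrothendieckCategoryByUsingAtom}), this makes the injective object $E(S)$ a cogenerator, so $\mcA$ is local.

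The step I expect to be the main obstacle is the ``if'' direction of (\ref{item:PropCharacterizationOfLocalGrothendieckCategoryByUsingAtom}): recognizing that the hypothesis forces the representing monoform object to be simple via Proposition \ref{Prop:MonoformObjectAndAtomSupport}, and then carefully upgrading the condition ``$\Hom_{\mcA}(M,E(S))\neq 0$ for every nonzero $M$'' to the genuine cogenerator property of the injective object $E(S)$. The remaining steps are routine manipulations with atom supports, essential subobjects, and injective envelopes.
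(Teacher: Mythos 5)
Your argument is correct and, in part (1) and in the converse direction of (2), follows essentially the same route as the paper. In the forward implication of (1) you use the cogenerator $E(S)$ and the essentiality of $S$ in $E(S)$ exactly as the paper does; in the reverse implication you first force the representing monoform object to be simple via Proposition~\ref{Prop:MonoformObjectAndAtomSupport} and then obtain the cogenerator property from the injectivity of $E(S)$. The paper assembles the factorization $hg\neq 0$ by hand for each nonzero morphism $g$, while you isolate the intermediate criterion that $\Hom_{\mcA}(M,E(S))\neq 0$ for every nonzero $M$ and observe this suffices for an injective object; the underlying diagram chase is identical, so this is a reorganization rather than a different argument. The one genuine departure is the forward half of (2): the paper simply defers to Popescu's book, whereas you give a short self-contained proof --- any simple object $S'$ satisfies $\ASupp S'=\{\overline{S'}\}$, so by (1) it represents the distinguished atom $\alpha$, and any two simple objects representing $\alpha$ are atom-equivalent, hence share a common nonzero subobject which, by simplicity, must coincide with each of them. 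This replaces an external citation by a direct argument entirely inside the paper's atom-theoretic framework, a modest but real improvement in self-containedness.
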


\begin{proof}
	(\ref{item:PropCharacterizationOfLocalGrothendieckCategoryByUsingAtom}) Assume that $\mcA$ is local. Then there exists a simple object $S$ in $\mcA$ such that $E(S)$ is a cogenerator of $\mcA$. For any object $M$ in $\mcA$, there exists a nonzero morphism $f\colon M\to E(S)$. Since $\Im f$ is a nonzero subobject of $E(S)$, we have $M\twoheadrightarrow\Im f\hookleftarrow S$. This shows that $\overline{S}\in\ASupp M$.
	
	Conversely, assume that there exists an atom $\alpha$ in $\mcA$ such that $\alpha\in\ASupp M$ for any nonzero object $M$ in $\mcA$. Let $H$ be a monoform object in $\mcA$ such that $\overline{H}=\alpha$. If $H$ is not simple, then there exists a nonzero proper subobject $L$ of $H$, and hence we have $\alpha\in\ASupp(H/L)$. This contradicts Proposition \ref{Prop:MonoformObjectAndAtomSupport}. Therefore $H$ is a simple object in $\mcA$. Denote it by $S$.
	
	 Let $N$ and $N'$ be objects in $\mcA$ and $g\colon N\to N'$ a nonzero morphism. Since $\Im g$ is nonzero, there exists a subobject $B$ of $\Im g$ which has a quotient object isomorphic to $S$. By the injectivity of $E(S)$, we obtain a nonzero morphism $\Im g\to E(S)$ such that the diagram
	 \begin{equation*}
	 	\newdir^{ (}{!/-5pt/@^{(}}
		\xymatrix{
			\Im g\ar[r] & E(S) \\
			B\ar@{^{ (}->}[u]\ar@{>>}[r] & S\ar@{^{ (}->}[u]
		}
	\end{equation*}
	commutes. We also obtain a morphism $h\colon N'\to E(S)$ such that the diagram
	\begin{equation*}
	 	\newdir^{ (}{!/-5pt/@^{(}}
		\xymatrix{
			N'\ar^-{h}[dr] & \\
			\Im g\ar@{^{ (}->}[u]\ar[r] & E(S)
		}
	\end{equation*}
	commutes. Then we have $hg\neq 0$. This shows that $E(S)$ is a cogenerator of $\mcA$.
	
	(\ref{item:PropSimpleObjectInLocalGrothendieckCategory}) The former part is shown in \cite[Proposition 4.20.1 (1)]{Popescu}.
	
	Assume that $\mcA$ is a nonzero locally noetherian Grothendieck category such that any two simple objects in $\mcA$ are isomorphic. Any nonzero object $M$ in $\mcA$ has a nonzero noetherian subobject $L$, and $L$ has a simple quotient object. Hence there exists a simple object $S$ in $\mcA$, and we have $\overline{S}\in\ASupp M$ for any nonzero object $M$ in $\mcA$. By (\ref{item:PropCharacterizationOfLocalGrothendieckCategoryByUsingAtom}), it is shown that $\mcA$ is local.
\end{proof}

The localness of a locally noetherian Grothendieck category can be stated in terms of the topology on the atom spectrum. Recall that a point $x$ in a topological space $X$ is called a \emph{generic point} if the closure of $\{x\}$ coincides with $X$.

\begin{Cor}\label{Cor:CharacterizationOfLocalGrothendieckCategoryUsingTopology}
	Let $\mcA$ be a locally noetherian Grothendieck category. Then the following assertions are equivalent.
	\begin{enumerate}
		\item\label{item:CorCategoryIsLocal} $\mcA$ is local.
		\item\label{item:CorExistenceOfGenericPoint} $\ASpec\mcA$ has a generic point.
		\item\label{item:CorExistenceOfLargestElement} $\ASpec\mcA$ has a largest element.
	\end{enumerate}
\end{Cor}

\begin{proof}
	If $\mcA$ is local, then (\ref{item:CorExistenceOfGenericPoint}) and (\ref{item:CorExistenceOfLargestElement}) follow from Proposition \ref{Prop:CharacterizationOfLocalGrothendieckCategory} (\ref{item:PropCharacterizationOfLocalGrothendieckCategoryByUsingAtom}), Proposition \ref{Prop:OpenBasisOfAtomSpectrum}, and the definition of the specialization order.
	
	If (\ref{item:CorExistenceOfGenericPoint}) or (\ref{item:CorExistenceOfLargestElement}) holds, then the localness of $\mcA$ follows from Proposition \ref{Prop:CharacterizationOfLocalGrothendieckCategory} (\ref{item:PropSimpleObjectInLocalGrothendieckCategory}) and Proposition \ref{Prop:CharacterizationOfOpenAtomAndDiscreteAtomSpectrum} (\ref{item:PropCharacterizationOfOpenAtom}).
\end{proof}

For a Grothendieck category $\mcA$, the atom spectra of localized categories of $\mcA$ are described by using the partial order on the atom spectrum of $\mcA$.

\begin{Prop}\label{Prop:AtomSpectrumOfLocalizedCategory}
	Let $\mcA$ be a Grothendieck category.
	\begin{enumerate}
		\item\label{item:PropAtomSpectrumOfLocalizedCategory} For any atom $\alpha$ in $\mcA$, the topological space $\ASpec\mcA_{\alpha}$ is homeomorphic to the closed subset
		\begin{equation*}
			\{\beta\in\ASpec\mcA\mid\beta\leq\alpha\}
		\end{equation*}
		of $\ASpec\mcA$ with the induced topology. In particular, the category $\mcA_{\alpha}$ is local.
		\item\label{item:PropCharacterizationOfLocalCategory} $\mcA$ is local if and only if there exists an atom $\alpha$ in $\mcA$ such that the canonical functor $\mcA\to\mcA_{\alpha}$ is an equivalence.
	\end{enumerate}
\end{Prop}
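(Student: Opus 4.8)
The plan is to deduce both parts from the description of $\ASpec(\mcA/\mcX)$ in Theorem \ref{Thm:AtomSpectraOfQuotientCategory}, applied to $\mcX = \mcX_{\alpha}$, together with the identity $\ASupp\mcX_{\alpha} = \Phi_{\alpha}$.

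For (\ref{item:PropAtomSpectrumOfLocalizedCategory}): first I would record that $\Phi_{\alpha} = \ASpec\mcA \setminus \overline{\{\alpha\}}$ is an open subclass of $\ASpec\mcA$, being the complement of the closed set $\overline{\{\alpha\}}$, and that $\mcX_{\alpha} = \ASupp^{-1}\Phi_{\alpha}$ is a localizing subcategory as noted after Definition \ref{Def:LocalizationAtAtom}. Then Lemma \ref{Lem:OpenSubclassOfAtomSpectrumAndAtomSupport} gives $\ASupp\mcX_{\alpha} = \ASupp(\ASupp^{-1}\Phi_{\alpha}) = \Phi_{\alpha}$. Applying Theorem \ref{Thm:AtomSpectraOfQuotientCategory}, the space $\ASpec\mcA_{\alpha}$ is homeomorphic to the closed subset $\ASpec\mcA \setminus \ASupp\mcX_{\alpha} = \ASpec\mcA \setminus \Phi_{\alpha} = \overline{\{\alpha\}}$ with the induced topology, and $\overline{\{\alpha\}} = \{\beta \in \ASpec\mcA \mid \beta \leq \alpha\}$ by the definition of the specialization order. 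For the final sentence, I would observe that, since $\alpha \in \overline{\{\alpha\}} = \ASpec\mcA \setminus \ASupp\mcX_{\alpha}$, it corresponds under the bijection of Lemma \ref{Lem:AtomInQuotientCategory} to an atom of $\mcA_{\alpha}$, whose closure in the subspace $\overline{\{\alpha\}}$ is $\overline{\{\alpha\}}$ itself; thus $\ASpec\mcA_{\alpha}$ has a generic point, and $\mcA_{\alpha}$ is local by Corollary \ref{Cor:CharacterizationOfLocalGrothendieckCategoryUsingTopology}.

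For (\ref{item:PropCharacterizationOfLocalCategory}): if the canonical functor $\mcA \to \mcA_{\alpha}$ is an equivalence for some atom $\alpha$, then $\mcA$ is equivalent to the local category $\mcA_{\alpha}$ by part (\ref{item:PropAtomSpectrumOfLocalizedCategory}), hence local. Conversely, if $\mcA$ is local, then by Corollary \ref{Cor:CharacterizationOfLocalGrothendieckCategoryUsingTopology} there is a largest element $\alpha$ of $\ASpec\mcA$, so $\overline{\{\alpha\}} = \ASpec\mcA$ and hence $\Phi_{\alpha} = \emptyset$, giving $\mcX_{\alpha} = \ASupp^{-1}(\emptyset)$. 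I would then check $\mcX_{\alpha} = 0$: for any nonzero object $M$ one has $\alpha \in \ASupp M$ by Proposition \ref{Prop:CharacterizationOfLocalGrothendieckCategory} (\ref{item:PropCharacterizationOfLocalGrothendieckCategoryByUsingAtom}), so $\ASupp M \neq \emptyset$ and $M \notin \mcX_{\alpha}$. With $\mcX_{\alpha} = 0$, the quotient category $\mcA_{\alpha} = \mcA/\mcX_{\alpha}$ coincides with $\mcA$ and the canonical functor is the identity, hence an equivalence.

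The proof is mainly an assembly of earlier results, so there is no single hard step; the points requiring care are the equality $\ASupp\mcX_{\alpha} = \Phi_{\alpha}$, which depends on $\Phi_{\alpha}$ being open so that Lemma \ref{Lem:OpenSubclassOfAtomSpectrumAndAtomSupport} applies, and, in the converse direction of (\ref{item:PropCharacterizationOfLocalCategory}), the use of Proposition \ref{Prop:CharacterizationOfLocalGrothendieckCategory} (\ref{item:PropCharacterizationOfLocalGrothendieckCategoryByUsingAtom}) to see that $\ASupp^{-1}(\emptyset)$ is the zero subcategory in a local Grothendieck category.
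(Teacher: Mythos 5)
Your proof is correct and takes essentially the same route as the paper: apply Theorem \ref{Thm:AtomSpectraOfQuotientCategory} to $\mcX_{\alpha}$ (via Lemma \ref{Lem:OpenSubclassOfAtomSpectrumAndAtomSupport} to identify $\ASupp\mcX_{\alpha}$ with $\Phi_{\alpha}$) for part (\ref{item:PropAtomSpectrumOfLocalizedCategory}), and Corollary \ref{Cor:CharacterizationOfLocalGrothendieckCategoryUsingTopology} for both the localness claim and the converse in (\ref{item:PropCharacterizationOfLocalCategory}). You spell out a few steps that the paper leaves implicit, notably the verification that $\mcX_{\alpha}=\ASupp^{-1}(\emptyset)$ consists only of zero objects, which the paper asserts without appealing explicitly to Proposition \ref{Prop:CharacterizationOfLocalGrothendieckCategory}.
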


\begin{proof}
	(\ref{item:PropAtomSpectrumOfLocalizedCategory}) We obtain the description of $\ASpec\mcA_{\alpha}$ by Theorem \ref{Thm:AtomSpectraOfQuotientCategory} and the definition of the specialization order.
	
	For any nonzero object $M'$ in $\mcA_{\alpha}$, by Lemma \ref{Lem:AtomSupportAndAssociatedAtomsForQuotientCategory} (\ref{item:LemAtomSupportAndAssociatedAtomsForQuotientCategory}), we have
	\begin{equation*}
		\ASupp M'=\ASupp G(M')\cap\overline{\{\alpha\}}.
	\end{equation*}
	Hence $\alpha\in\ASupp M'$ by Theorem \ref{Thm:PropertyOfQuotientCategory} (\ref{item:ThmPropertyOfCanonicalFunctorForQuotientCategory}) and Proposition \ref{Prop:AtomSupportAndLocalization}. The localness of $\mcA_{\alpha}$ follows from Proposition \ref{Prop:CharacterizationOfLocalGrothendieckCategory} (\ref{item:PropCharacterizationOfLocalGrothendieckCategoryByUsingAtom}).
	
	(\ref{item:PropCharacterizationOfLocalCategory}) If $\mcA$ is equivalent to $\mcA_{\alpha}$ for some atom $\alpha$ in $\mcA$, then $\mcA$ is local by (\ref{item:PropAtomSpectrumOfLocalizedCategory}).
		
	Assume that $\mcA$ is local. Then by Proposition \ref{Prop:CharacterizationOfLocalGrothendieckCategory} (\ref{item:PropCharacterizationOfLocalGrothendieckCategoryByUsingAtom}), there exists an atom $\alpha$ in $\mcA$ such that $\alpha\in\ASupp M$ for any nonzero object $M$ in $\mcA$. This shows that $\mcX_{\alpha}$ consists of zero objects in $\mcA$. Hence the canonical functor $\mcA\to\mcA_{\alpha}$ is an equivalence.
\end{proof}

The minimality of atoms in locally noetherian Grothendieck categories is characterized as follows by using localization.

\begin{Prop}\label{Prop:CharacterizationOfMinimalAtomByLocalization}
	Let $\mcA$ be a locally noetherian Grothendieck category and $\alpha$ an atom in $\mcA$.
	\begin{enumerate}
		\item\label{item:PropCharacterizationOfMinimalAtomInAtomSupportByLocalization} Let $M$ be a noetherian object in $\mcA$ with $\alpha\in\ASupp M$. Then $\alpha$ is minimal in $\ASupp M$ if and only if $M_{\alpha}$ has finite length.
		\item\label{item:PropCharacterizationOfMinimalAtomByLocalization} $\alpha$ is minimal in $\ASpec\mcA$ if and only if any noetherian object in $\mcA_{\alpha}$ has finite length.
	\end{enumerate}
\end{Prop}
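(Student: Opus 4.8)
The plan is to reduce both parts to the characterization of discreteness of atom spectra in Proposition~\ref{Prop:CharacterizationOfOpenAtomAndDiscreteAtomSpectrum}, applied inside the localized category $\mcA_{\alpha}$, which is again locally noetherian by Theorem~\ref{Thm:PropertyOfQuotientCategory}~(\ref{item:ThmLocalizingSubcategoryAndQuotientSubcategoryAreGrothendieckCategory}). The key preliminary is to compute atom supports in $\mcA_{\alpha}$. By Lemma~\ref{Lem:OpenSubclassOfAtomSpectrumAndAtomSupport} we have $\ASupp\mcX_{\alpha}=\ASupp(\ASupp^{-1}\Phi_{\alpha})=\Phi_{\alpha}=\ASpec\mcA\setminus\overline{\{\alpha\}}$, so, identifying $\ASpec\mcA_{\alpha}$ with the closed subset $\ASpec\mcA\setminus\ASupp\mcX_{\alpha}=\{\beta\in\ASpec\mcA\mid\beta\leq\alpha\}$ of $\ASpec\mcA$ via Lemma~\ref{Lem:AtomInQuotientCategory} and Theorem~\ref{Thm:AtomSpectraOfQuotientCategory} (the subspace topology being the same whether computed in $\ASpec\mcA$ or in $\ASpec\mcA_{\alpha}$), Lemma~\ref{Lem:AtomSupportAndAssociatedAtomsForQuotientCategory}~(\ref{item:LemAtomSupportAndQuotientCategory}) gives, for a noetherian object $M$ in $\mcA$ with $\alpha\in\ASupp M$,
\[
	\ASupp M_{\alpha}=\ASupp M\setminus\ASupp\mcX_{\alpha}=\{\beta\in\ASupp M\mid\beta\leq\alpha\},
\]
which is nonempty (it contains $\alpha$) and has $\alpha$ as its largest element, since $\mcA_{\alpha}$ is local with generic point $\alpha$ by Proposition~\ref{Prop:AtomSpectrumOfLocalizedCategory}~(\ref{item:PropAtomSpectrumOfLocalizedCategory}).

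For~(\ref{item:PropCharacterizationOfMinimalAtomInAtomSupportByLocalization}), observe first that $M_{\alpha}$ is a noetherian object in $\mcA_{\alpha}$ (an ascending chain of subobjects of $M_{\alpha}$ can be lifted to an ascending chain of subobjects of $M$). Then Proposition~\ref{Prop:CharacterizationOfOpenAtomAndDiscreteAtomSpectrum}~(\ref{item:PropCharacterizationOfDiscreteAtomSupport}) says $M_{\alpha}$ has finite length if and only if $\ASupp M_{\alpha}$ is discrete. It remains to show that this discreteness is equivalent to $\alpha$ being minimal in $\ASupp M$: if $\alpha$ is minimal then $\ASupp M_{\alpha}=\{\alpha\}$ is trivially discrete; conversely, since every element of $\ASupp M_{\alpha}$ lies below $\alpha$, the closure of $\{\alpha\}$ inside the subspace $\ASupp M_{\alpha}$ is all of $\ASupp M_{\alpha}$, so discreteness of $\ASupp M_{\alpha}$ forces $\{\alpha\}$ to be closed there and hence $\ASupp M_{\alpha}=\{\alpha\}$, i.e.\ $\alpha$ is minimal in $\ASupp M$.

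For~(\ref{item:PropCharacterizationOfMinimalAtomByLocalization}), the argument is parallel but does not require a noetherian object cutting out the whole spectrum: by Proposition~\ref{Prop:AtomSpectrumOfLocalizedCategory}~(\ref{item:PropAtomSpectrumOfLocalizedCategory}), $\ASpec\mcA_{\alpha}$ is homeomorphic to $\{\beta\in\ASpec\mcA\mid\beta\leq\alpha\}=\overline{\{\alpha\}}$ with generic point $\alpha$; Proposition~\ref{Prop:CharacterizationOfOpenAtomAndDiscreteAtomSpectrum}~(\ref{item:PropCharacterizationOfDiscreteAtomSpectrum}) identifies the condition that every noetherian object in $\mcA_{\alpha}$ has finite length with discreteness of $\ASpec\mcA_{\alpha}$; and the same observation (a discrete space with a generic point is a single point) shows this is equivalent to $\overline{\{\alpha\}}=\{\alpha\}$ in $\ASpec\mcA$, that is, to $\alpha$ being minimal in $\ASpec\mcA$. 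The proof is essentially an assembly of earlier results; the only step needing care is the bookkeeping identification of $\ASupp M_{\alpha}$ above together with the verification that the relevant subspace topologies agree, and I do not expect any serious obstacle.
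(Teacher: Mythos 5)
Your proposal is correct and follows essentially the same route as the paper: identify $\ASupp M_{\alpha}$ via Lemma~\ref{Lem:AtomSupportAndAssociatedAtomsForQuotientCategory}~(\ref{item:LemAtomSupportAndQuotientCategory}) and Proposition~\ref{Prop:AtomSpectrumOfLocalizedCategory}~(\ref{item:PropAtomSpectrumOfLocalizedCategory}), then invoke the discreteness characterizations in Proposition~\ref{Prop:CharacterizationOfOpenAtomAndDiscreteAtomSpectrum}~(\ref{item:PropCharacterizationOfDiscreteAtomSupport}) and (\ref{item:PropCharacterizationOfDiscreteAtomSpectrum}), finishing with the observation that discreteness of a space having $\alpha$ as generic point forces it to be $\{\alpha\}$. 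Your intermediate computation of $\ASupp\mcX_{\alpha}$ via Lemma~\ref{Lem:OpenSubclassOfAtomSpectrumAndAtomSupport} and your spelled-out justification of the final ``discrete iff minimal'' step merely expand steps the paper compresses, and your lifting argument for noetherianness of $M_{\alpha}$ is the content of the paper's appeal to Theorem~\ref{Thm:PropertyOfQuotientCategory}~(\ref{item:ThmPropertyOfCanonicalFunctorForQuotientCategory}).
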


\begin{proof}
	(\ref{item:PropCharacterizationOfMinimalAtomInAtomSupportByLocalization}) By Theorem \ref{Thm:PropertyOfQuotientCategory} (\ref{item:ThmPropertyOfCanonicalFunctorForQuotientCategory}), it is straightforward to show that $M_{\alpha}$ is a noetherian object in $\mcA_{\alpha}$. Hence by Proposition \ref{Prop:CharacterizationOfOpenAtomAndDiscreteAtomSpectrum} (\ref{item:PropCharacterizationOfDiscreteAtomSupport}), $M_{\alpha}$ has finite length if and only if $\ASupp M_{\alpha}$ is discrete. By Lemma \ref{Lem:AtomSupportAndAssociatedAtomsForQuotientCategory} (\ref{item:LemAtomSupportAndQuotientCategory}) and Proposition \ref{Prop:AtomSpectrumOfLocalizedCategory} (\ref{item:PropAtomSpectrumOfLocalizedCategory}), we have
	\begin{equation*}
		\ASupp M_{\alpha}=\ASupp M\cap\{\beta\in\ASpec\mcA\mid\beta\leq\alpha\}.
	\end{equation*}
	Therefore by the definition of the specialization order, $\ASupp M_{\alpha}$ is discrete if and only if $\alpha$ is minimal in $\ASupp M$.
	
	(\ref{item:PropCharacterizationOfMinimalAtomByLocalization}) By Proposition \ref{Prop:CharacterizationOfOpenAtomAndDiscreteAtomSpectrum} (\ref{item:PropCharacterizationOfDiscreteAtomSpectrum}), any noetherian object in $\mcA_{\alpha}$ has finite length if and only if $\ASpec\mcA_{\alpha}$ is discrete. By Proposition \ref{Prop:AtomSpectrumOfLocalizedCategory} (\ref{item:PropAtomSpectrumOfLocalizedCategory}), it is equivalent to that $\alpha$ is minimal in $\ASpec\mcA$.
\end{proof}

For any Grothendieck category $\mcA$ and any atom $\alpha$ in $\mcA$, Proposition \ref{Prop:AtomSpectrumOfLocalizedCategory} (\ref{item:PropAtomSpectrumOfLocalizedCategory}) shows that $\mcX_{\alpha}$ is a prime localizing subcategory of $\mcA$. In fact, this correspondence gives a bijection between atoms in $\mcA$ and prime localizing subcategories of $\mcA$. The partial order on $\ASpec\mcA$ is induced by the opposite relation of the inclusion relation between prime localizing subcategories.

\begin{Thm}\label{Thm:BijectionBetweenAtomsAndPrimeLocalizingSubcategory}
	Let $\mcA$ be a Grothendieck category. Then the map $\alpha\mapsto\mcX_{\alpha}$ is a bijection between atoms in $\mcA$ and prime localizing subcategories of $\mcA$. For any atoms $\alpha$ and $\beta$ in $\mcA$, we have $\alpha\leq\beta$ if and only if $\mcX_{\alpha}\supset\mcX_{\beta}$.
\end{Thm}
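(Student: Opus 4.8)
\textit{Plan.} The strategy is to exploit the map $\ASupp$ of Definition \ref{Def:AtomSupportOfSubcategory} and its behaviour on quotient categories. First I would note that $\alpha\mapsto\mcX_{\alpha}$ really takes values among prime localizing subcategories: this is Proposition \ref{Prop:AtomSpectrumOfLocalizedCategory} (\ref{item:PropAtomSpectrumOfLocalizedCategory}), which gives that $\mcA_{\alpha}$ is local. The identity on which the whole argument rests is $\ASupp\mcX_{\alpha}=\Phi_{\alpha}$. Since $\Phi_{\alpha}=\ASpec\mcA\setminus\overline{\{\alpha\}}$ is open and $\mcX_{\alpha}=\ASupp^{-1}\Phi_{\alpha}$, this follows at once from Lemma \ref{Lem:OpenSubclassOfAtomSpectrumAndAtomSupport}.

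Granting this, the order statement and injectivity are formal. Applying $\ASupp$ gives $\mcX_{\alpha}\supset\mcX_{\beta}\Rightarrow\Phi_{\alpha}\supset\Phi_{\beta}$, and applying $\ASupp^{-1}$ gives the converse; since $\Phi_{\alpha}\supset\Phi_{\beta}$ is the same as $\overline{\{\alpha\}}\subset\overline{\{\beta\}}$, i.e.\ $\alpha\leq\beta$ by the definition of the specialization order, we obtain $\mcX_{\alpha}\supset\mcX_{\beta}\iff\alpha\leq\beta$. Putting $\mcX_{\alpha}=\mcX_{\beta}$ yields $\overline{\{\alpha\}}=\overline{\{\beta\}}$, hence $\alpha=\beta$ since $\ASpec\mcA$ is a Kolmogorov space (Proposition \ref{Prop:AtomSpectrumIsKolmogorovSpace}), so that $\leq$ is antisymmetric.

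The substantive step is surjectivity. Given a prime localizing subcategory $\mcX$, the category $\mcA/\mcX$ is local, so by Proposition \ref{Prop:CharacterizationOfLocalGrothendieckCategory} (\ref{item:PropCharacterizationOfLocalGrothendieckCategoryByUsingAtom}) there is an atom $\alpha_{0}$ of $\mcA/\mcX$ lying in $\ASupp N$ for every nonzero object $N$; by Proposition \ref{Prop:TopologicalCharacterizationOfPartialOrder} this $\alpha_{0}$ is the largest element of $\ASpec(\mcA/\mcX)$, so its closure there is the whole space. Transporting along the homeomorphism $\ASpec(\mcA/\mcX)\cong\ASpec\mcA\setminus\ASupp\mcX$ onto a closed subset of $\ASpec\mcA$ (Lemma \ref{Lem:AtomInQuotientCategory} and Theorem \ref{Thm:AtomSpectraOfQuotientCategory}), let $\alpha$ be the atom of $\mcA$ corresponding to $\alpha_{0}$; since the subset is closed, the closure of $\{\alpha\}$ taken in $\ASpec\mcA$ equals $\ASpec\mcA\setminus\ASupp\mcX$. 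Therefore $\ASupp\mcX=\ASpec\mcA\setminus\overline{\{\alpha\}}=\Phi_{\alpha}$, giving $\mcX\subset\ASupp^{-1}\Phi_{\alpha}=\mcX_{\alpha}$. For the reverse inclusion I would take $M\in\mcX_{\alpha}$, so $\ASupp M\subset\Phi_{\alpha}=\ASupp\mcX$, and use Lemma \ref{Lem:AtomSupportAndAssociatedAtomsForQuotientCategory} (\ref{item:LemAtomSupportAndQuotientCategory}) to get $\ASupp F(M)=\ASupp M\setminus\ASupp\mcX=\emptyset$, where $F\colon\mcA\to\mcA/\mcX$ is the canonical functor; if $F(M)$ were nonzero, the localness of $\mcA/\mcX$ would force $\alpha_{0}\in\ASupp F(M)$, a contradiction, so $F(M)=0$ and $M\in\mcX$ by Theorem \ref{Thm:PropertyOfQuotientCategory} (\ref{item:ThmCharacterizationOfZeroObjectInQuotientCategory}). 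Hence $\mcX=\mcX_{\alpha}$.

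I expect the main obstacle to be this last point, namely upgrading the equality of atom supports $\ASupp\mcX=\ASupp\mcX_{\alpha}$ to an equality of subcategories. The map $\mcX\mapsto\ASupp\mcX$ is not injective on all localizing subcategories (Theorem \ref{Thm:LocalizingSubcategoryAndOpenSubset} fails beyond the locally noetherian case), and it is precisely the local structure of $\mcA/\mcX$ — that every nonzero object has $\alpha_{0}$ in its atom support — that makes the recovery of $\mcX$ possible.
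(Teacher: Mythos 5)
Your proposal is correct and follows essentially the same route as the paper: both proofs hinge on the atom corresponding to the unique simple object of the local quotient $\mcA/\mcX$ and on the identity $\ASupp F(M)=\ASupp M\setminus\ASupp\mcX$ to recover $\mcX$ from $\mcX_{\alpha}$. The only noticeable variation is that you deduce the order-statement and injectivity abstractly from $\ASupp\mcX_{\alpha}=\Phi_{\alpha}$ (via Lemma \ref{Lem:OpenSubclassOfAtomSpectrumAndAtomSupport}) and the monotonicity of $\ASupp$, $\ASupp^{-1}$, whereas the paper argues directly with monoform representatives via Proposition \ref{Prop:TopologicalCharacterizationOfPartialOrder}; this is a cosmetic reorganization, not a different method.
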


\begin{proof}
	Let $\mcX$ be a prime localizing subcategory of $\mcA$ and $F\colon\mcA\to\mcA/\mcX$ the canonical functor. Let $\alpha$ be the atom in $\mcA$ corresponding to the unique simple object $S$ in $\mcA/\mcX$. Then we have $\alpha\notin\ASupp\mcX$. We show that $\mcX=\mcX_{\alpha}$. Since $\ASupp\mcX$ is an open subset of $\ASpec\mcA$, we have $\ASupp\mcX\subset\Phi_{\alpha}$. Hence we have
	\begin{equation*}
		\mcX\subset\ASupp^{-1}(\ASupp\mcX)\subset\ASupp^{-1}\Phi_{\alpha}=\mcX_{\alpha}.
	\end{equation*}
	For any object $M$ in $\mcA$ belonging to $\mcX_{\alpha}$, we have $\alpha\notin\ASupp M$. By Lemma \ref{Lem:AtomSupportAndAssociatedAtomsForQuotientCategory} (\ref{item:LemAtomSupportAndQuotientCategory}),
	\begin{equation*}
		\ASupp F(M)=\ASupp M\setminus\ASupp\mcX,
	\end{equation*}
	and hence $\overline{S}\notin\ASupp F(M)$. By the proof of Proposition \ref{Prop:CharacterizationOfLocalGrothendieckCategory} (\ref{item:PropCharacterizationOfLocalGrothendieckCategoryByUsingAtom}), it holds that $F(M)=0$, and hence $M$ belongs to $\mcX$. Therefore we have $\mcX=\mcX_{\alpha}$. This shows that the map $\alpha\mapsto\mcX_{\alpha}$ is surjective.
	
	Let $\alpha$ and $\beta$ be atoms in $\mcA$. Assume that $\alpha\leq\beta$. For any object $M$ in $\mcA$ belonging to $\mcX_{\beta}$, we have $\beta\notin\ASupp M$. By Proposition \ref{Prop:TopologicalCharacterizationOfPartialOrder}, we have $\alpha\notin\ASupp M$, and hence $M\in\mcX_{\alpha}$. This shows that $\mcX_{\alpha}\supset\mcX_{\beta}$.
	
	Conversely, assume that $\alpha\nleq\beta$. Then there exists a monoform object $H$ in $\mcA$ satisfying $\overline{H}=\alpha$ and $\beta\notin\ASupp H$. This shows that $H$ does not belong to $\mcX_{\alpha}$ and that $H$ belongs to $\mcX_{\beta}$. Hence we have $\mcX_{\alpha}\not\supset\mcX_{\beta}$. This also shows the map $\alpha\mapsto\mcX_{\alpha}$ is injective.
\end{proof}

The next proposition shows that the localization at an atom is a generalization of the localization of a commutative ring at a prime ideal. Although this is well known, we give a direct proof from the viewpoint of atoms.

\begin{Prop}\label{Prop:LocalizationAtAtomForCommutativeNoetherianRing}
	Let $R$ be a commutative ring, $\mfp$ a prime ideal of $R$, and $\alpha$ the corresponding atom $\overline{R/\mfp}$ in $\Mod R$. Then $(\Mod R)_{\alpha}$ is equivalent to $\Mod R_{\mfp}$.
\end{Prop}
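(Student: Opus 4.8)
The plan is to compute the localizing subcategory $\mcX_{\alpha}$ explicitly in ring-theoretic terms and then recognize $(\Mod R)_{\alpha}=(\Mod R)/\mcX_{\alpha}$ as $\Mod R_{\mfp}$ via the localization functor $T:=-\otimes_{R}R_{\mfp}$.

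First I would determine $\mcX_{\alpha}$. Identifying $\ASpec(\Mod R)$ with $\Spec R$ (Proposition \ref{Prop:AtomSpectrumForCommutativeRing}), Proposition \ref{Prop:PartialOrderBetweenAtomOfCommutativeRing} gives $\overline{\{\alpha\}}=\{\mfq\in\Spec R\mid\mfq\subseteq\mfp\}$, so $\Phi_{\alpha}=\{\mfq\in\Spec R\mid\mfq\not\subseteq\mfp\}$. Using $\ASupp M=\Supp M$ (Proposition \ref{Prop:AtomSupportAndAssociatedAtomsForCommutativeRing}), an $R$-module $M$ lies in $\mcX_{\alpha}=\ASupp^{-1}\Phi_{\alpha}$ if and only if no prime contained in $\mfp$ lies in $\Supp M$; since $\mfp$ is itself such a prime and $M_{\mfp}=0$ forces $M_{\mfq}=(M_{\mfp})_{\mfq R_{\mfp}}=0$ for every $\mfq\subseteq\mfp$, this condition is equivalent to $M_{\mfp}=0$. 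Hence $\mcX_{\alpha}=\ker T$.

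Next I would show that $T$ induces an equivalence $(\Mod R)/\mcX_{\alpha}\simeq\Mod R_{\mfp}$. The functor $T$ is exact and dense, and its right adjoint, the restriction-of-scalars functor $\iota\colon\Mod R_{\mfp}\to\Mod R$, is fully faithful because $R\to R_{\mfp}$ is an epimorphism of rings (equivalently $R_{\mfp}\otimes_{R}R_{\mfp}\cong R_{\mfp}$). Therefore $T$ vanishes on $\mcX_{\alpha}$ and induces an exact dense functor $\overline{T}\colon(\Mod R)/\mcX_{\alpha}\to\Mod R_{\mfp}$. To get full faithfulness, I would identify the objects $\iota(N)$ with the $\mcX_{\alpha}$-closed objects, i.e.\ the essential image of the right adjoint $G$ of the canonical functor $F$ in Theorem \ref{Thm:PropertyOfQuotientCategory}: flatness of $R_{\mfp}$ yields $\Hom_{R}(X,\iota N)\cong\Hom_{R_{\mfp}}(X_{\mfp},N)$ and $\Ext^{1}_{R}(X,\iota N)\cong\Ext^{1}_{R_{\mfp}}(X_{\mfp},N)$, both vanishing when $X\in\mcX_{\alpha}$; and for any $M$ the localization map $M\to\iota(M_{\mfp})$ has kernel and cokernel in $\mcX_{\alpha}$. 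Applying $F$ (exact, sending $\mcX_{\alpha}$ to $0$, with $FG\cong\id$) then gives $GF(M)\cong\iota(M_{\mfp})$, whence
\[
\Hom_{(\Mod R)/\mcX_{\alpha}}(F(M),F(N))\cong\Hom_{R}(M,\iota(N_{\mfp}))\cong\Hom_{R_{\mfp}}(M_{\mfp},N_{\mfp}),
\]
and this composite is exactly $\overline{T}$ on morphisms. Hence $\overline{T}$ is fully faithful, and together with density it is the desired equivalence.

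I expect the main obstacle to be the middle step: correctly pinning down the $\mcX_{\alpha}$-closed objects as the $R_{\mfp}$-modules and establishing $GF(M)\cong\iota(M_{\mfp})$, which is where the flatness and epimorphism properties of $R\to R_{\mfp}$ do the real work. Once $\mcX_{\alpha}=\ker T$ is established in the first step, the remainder is the standard compatibility between the quotient-category formalism of Section \ref{sec:AtomSpectraOfQuotientCategories} and classical module localization; alternatively one could shortcut the argument by quoting the general fact that an exact functor of Grothendieck categories with fully faithful right adjoint realizes its target as a Serre quotient, but the route above keeps the proof self-contained and atom-theoretic.
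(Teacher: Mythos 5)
Your proof is correct, but it takes a genuinely different route from the paper's. You agree with the paper through the first step (both arrive at $\mcX_{\alpha}=\{M\mid M_{\mfp}=0\}=\Ker(-\otimes_{R}R_{\mfp})$), but diverge afterward. The paper works directly with the explicit description of $GF(M)$ from Theorem \ref{Thm:PropertyOfQuotientCategory}~(\ref{item:ThmDescriptionOfCompositeFunctorForQuotientCategory}): it characterizes the essential image of $G$ as those $M$ for which $M$ and $E(M)/M$ have no nonzero $\mcX_{\alpha}$-torsion submodule, and then proves by an elementary element-level argument (chasing $x\in E(M)$, $t\in R\setminus\mfp$, using the essentiality of $M$ in $E(M)$ and the injectivity of $E(M)$) that this holds exactly when every $s\in R\setminus\mfp$ acts invertibly on $M$, i.e.\ $M\in\Mod R_{\mfp}$. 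Your route instead recasts ``in the essential image of $G$'' as the homological condition $\Hom_{R}(X,M)=\Ext^{1}_{R}(X,M)=0$ for all $X\in\mcX_{\alpha}$ (a standard reformulation of the condition in Theorem \ref{Thm:PropertyOfQuotientCategory}~(\ref{item:ThmDescriptionOfCompositeFunctorForQuotientCategory}), which you should at least cite or prove), deduces these vanishings from flatness of $R_{\mfp}$ and the adjunction $-\otimes_{R}R_{\mfp}\dashv\iota$, and uses the ring-epimorphism property of $R\to R_{\mfp}$ for full faithfulness of $\iota$. Your approach is more homological and less self-contained, trading the paper's explicit verification for general nonsense (flatness, ring epimorphisms, $\Ext$-comparison), but it is somewhat more conceptual and makes the isomorphism $GF(M)\cong\iota(M_{\mfp})$ transparent at once rather than as a biconditional checked in two directions. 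The paper's version has the virtue, stated explicitly in the text, of being a direct proof ``from the viewpoint of atoms,'' avoiding the homological input; both arguments are valid for arbitrary commutative $R$.
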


\begin{proof}
	Let $F\colon\Mod R\to (\Mod R)_{\alpha}$ be the canonical functor and $G\colon(\Mod R)_{\alpha}\to\Mod R$ its right adjoint functor. By Theorem \ref{Thm:PropertyOfQuotientCategory} (\ref{item:ThmPropertyOfCanonicalFunctorForQuotientCategory}), $G$ is fully faithful. Since $\Mod R_{\mfp}$ can also be regarded as a full subcategory of $\Mod R$, it suffices to show that an $R$-module $M$ belongs to $\Mod R_{\mfp}$ if and only if $M$ is isomorphic to the image of some object in $(\Mod R)_{\alpha}$ by $G$. $M$ belongs to $\Mod R_{\mfp}$ if and only if any element of $R\setminus\mfp$ acts on $M$ as an isomorphism. Since $FG$ is isomorphic to the identity functor on $(\Mod R)_{\alpha}$, the $R$-module $M$ is isomorphic to $G(M')$ for some object $M'$ in $(\Mod R)_{\alpha}$ if and only if $M$ is isomorphic to $GF(M)$, and this is also equivalent to that $M$ and $E(M)/M$ have no nonzero submodule belonging to $\mcX_{\alpha}$ by Theorem \ref{Thm:PropertyOfQuotientCategory} (\ref{item:ThmDescriptionOfCompositeFunctorForQuotientCategory}). By Proposition \ref{Prop:AtomSupportAndAssociatedAtomsForCommutativeRing} (\ref{item:PropAtomSupportForCommutativeRing}), we have
	\begin{equation*}
		\mcX_{\alpha}=\{M\in\Mod R\mid\mfp\notin\Supp M\}=\{M\in\Mod R\mid M_{\mfp}=0\}.
	\end{equation*}
	Hence an $R$-module $N$ has a nonzero submodule belonging to $\mcX_{\alpha}$ if and only if there exist a nonzero element $z$ of $N$ and an element $s$ of $R\setminus\mfp$ such that $zs=0$.
	
	Assume that $M$ belongs to $\Mod R_{\mfp}$. Then $M$ has no nonzero submodule belonging to $\mcX_{\alpha}$. If $E(M)/M$ has a nonzero submodule belonging to $\mcX_{\alpha}$, then there exist $x\in E(M)\setminus M$ and $t\in R\setminus\mfp$ such that $xt\in M$. Since $t$ acts on $M$ as an isomorphism, there exists $y\in M$ such that $yt=xt$. We have $(x-y)t=0$ and $x-y\notin M$. Since $M$ is an essential submodule of $E(M)$, there exists $a\in R$ such that $0\neq (x-y)a\in M$. However, $(x-y)at=0$ implies $(x-y)a=0$. This is a contradiction. Therefore $E(M)/M$ has no nonzero submodule belonging to $\mcX_{\alpha}$.
	
	Conversely, assume that $M$ and $E(M)/M$ have no nonzero submodule belonging to $\mcX_{\alpha}$. Let $r\in R\setminus\mfp$, and let $f\colon M\to M$ and $\widetilde{f}\colon E(M)\to E(M)$ be the maps defined by the multiplication of $r$. Then $f$ is a monomorphism. Since we have $\Ker\widetilde{f}\cap M=\Ker f=0$, the map $\widetilde{f}$ is also a monomorphism. By the injectivity of $E(M)$, there exists a morphism $\widetilde{g}\colon E(M)\to E(M)$ such that $\widetilde{g}\widetilde{f}$ is the identity morphism on $E(M)$. Since we have $\widetilde{f}\widetilde{g}=\widetilde{g}\widetilde{f}$, the map $\widetilde{f}$ is an isomorphism. For any $w\in M$, we have $w=\widetilde{g}\widetilde{f}(w)=\widetilde{g}(w)r$. Since $E(M)/M$ has no nonzero submodule belonging to $\mcX_{\alpha}$, we have $\widetilde{g}(w)\in M$. This shows that $f$ is an isomorphism. Therefore $M$ belongs to $\Mod R_{\mfp}$.
\end{proof}

\section{Construction of Grothendieck categories}
\label{sec:ConstructionOfGrothendieckCategories}

In this section, we introduce systematic methods to construct Grothendieck categories from colored quivers. As an application, we show that any partially ordered set appears as the atom spectrum of some Grothendieck category.

\subsection{Known results in commutative ring theory}
\label{subsec:KnownResultsInCommutativeRingTheory}

In this subsection, we recall motivating facts in commutative ring theory. We do not use these facts in later discussion.

A nonempty topological space $X$ is called \emph{irreducible} if for any closed subsets $Y_{1}$ and $Y_{2}$ of $X$, we have $Y_{1}\cup Y_{2}\neq X$ unless $Y_{1}=X$ or $Y_{2}=X$.

\begin{Def}\label{Def:SpectralSpace}
	A topological space $X$ is called a \emph{spectral space} if it satisfies the following conditions.
	\begin{enumerate}
		\item $X$ is a Kolmogorov compact space.
		\item The intersection of any finite family of open compact subsets of $X$ is also compact.
		\item The family of all the open compact subsets of $X$ is an open basis of $X$.
		\item Any irreducible closed subset of $X$ has a generic point.
	\end{enumerate}
\end{Def}

Hochster \cite{Hochster} showed that the notion of spectral spaces characterizes the topological spaces appearing as the prime spectra of commutative rings. For a commutative ring $R$, we regard $\Spec R$ as a topological space with the Zariski topology.

\begin{Thm}[{Hochster \cite[Theorem 6 and Proposition 10]{Hochster}}]\label{Thm:HochsterTheorem}
	Let $X$ be a topological space. Then $X$ is homeomorphic to $\Spec R$ for some commutative ring $R$ if and only if $X$ is a spectral space. This is also equivalent to that $X$ is an inverse limit of finite Kolmogorov spaces in the category of topological spaces.
\end{Thm}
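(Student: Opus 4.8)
The statement packages three descriptions; since $\Spec R$ is well known to be spectral, there are really only two implications to prove, and I would prove them in a cycle: (i) every spectral space is an inverse limit of finite Kolmogorov spaces, (ii) such an inverse limit is homeomorphic to $\Spec R$ for some commutative ring $R$, and (iii) $\Spec R$ is spectral (which closes the loop and is routine: $\Spec R$ is Kolmogorov since distinct primes are separated by an element; it is compact because an open cover by basic opens $D(f_\lambda)$ forces the $f_\lambda$ to generate the unit ideal; the $D(f)$ are compact, form a basis, and satisfy $D(f)\cap D(g)=D(fg)$; and the irreducible closed subsets are the $V(\mfp)$, with generic point $\mfp$). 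For (i), given a spectral space $X$ I would index by the finite families $\mathcal U$ of compact open subsets of $X$, declare $x\sim_{\mathcal U}y$ when $x$ and $y$ lie in exactly the same members of $\mathcal U$, and observe that each quotient $X_{\mathcal U}$ is a finite Kolmogorov space and that these fit into an inverse system; using compactness, the fact that compact opens form a basis, and the soberness axiom, the canonical map $X\to\varprojlim_{\mathcal U}X_{\mathcal U}$ is a homeomorphism. Composing this cycle gives all the asserted equivalences at once.

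The work is in (ii). I would reduce it to three inputs: (a) every \emph{finite} Kolmogorov space is $\Spec R$ for some commutative ring; (b) for a filtered direct system of rings $(R_i)$ with colimit $R=\varinjlim_iR_i$, the space $\Spec R$ is homeomorphic to $\varprojlim_i\Spec R_i$ — a compatible family of primes $\mfp_i\subset R_i$ corresponds to $\varinjlim_i\mfp_i\subset R$, and this bijection is bicontinuous since every element of $R$ comes from some $R_i$; and (c) any inverse system of finite Kolmogorov spaces can be lifted to a filtered direct system of rings inducing it. Combining (a)--(c) with (i) yields (ii): write $X=\varprojlim_{\mathcal U}X_{\mathcal U}$, realize the system $(X_{\mathcal U})$ by rings via (c) and (a), and apply (b) to the colimit. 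For (a) it is convenient to pass through the lattice of compact opens: $X\mapsto K(X)$ is an anti-equivalence between spectral spaces with spectral maps and bounded distributive lattices, so (a) becomes the problem of constructing, for each bounded distributive lattice $L$, a ring $R$ with $K(\Spec R)\cong L$ — one can try a quotient $k[x_a:a\in L]/I$ over a field $k$, choosing the relations $I$ so that the basic opens $D(x_a)$ realize the join-semilattice structure of $L$ and so that the primes of the quotient are exactly the prime filters of $L$.

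The main obstacle is controlling these explicit models so that the spectrum comes out \emph{exactly} right, with no phantom primes: showing that the ring manufactured from a finite poset (equivalently, a finite distributive lattice) has spectrum precisely the prescribed space is the delicate heart of the argument, typically handled by patching rings attached to sub-pieces — for instance valuation rings realizing chains — and verifying that the gluings are compatible. A second delicate point is the functoriality demanded by (c): it is not enough to realize each finite Kolmogorov space in isolation, one must realize a whole inverse diagram coherently, and this is best organized by working within the category of spectral spaces and spectral maps, establishing that it has small limits and that the finite Kolmogorov spaces suffice to present every object, and then transporting the conclusion back through the ring construction. By contrast, the axiom-checking for $\Spec R$, the behaviour of $\Spec$ under filtered colimits, and the elementary form of Stone duality are all straightforward; it is the model-building in (a) and its coherence in (c) that carry the weight of Hochster's theorem.
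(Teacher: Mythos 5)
The paper states this theorem as a citation to Hochster's 1969 paper and gives no proof of its own, so there is no internal argument to compare against; I can only measure your sketch against Hochster's original. Your decomposition is sound: the cycle spectral $\Rightarrow$ inverse limit of finite $T_0$ spaces $\Rightarrow$ $\Spec R$ $\Rightarrow$ spectral is a valid way to package the equivalences, step (b) on $\Spec$ turning filtered colimits of rings into cofiltered limits of spaces is a correct classical fact, and your construction of the quotient system $X_{\mathcal U}$ in step (i) is the standard argument for Proposition 10. Where your route genuinely differs from Hochster's is the order of attack: Hochster proves Theorem 6 (spectral iff $\Spec R$) first by a direct construction starting from the spectral space $X$ itself, working with its patch (constructible) topology and building a ring by patching together pieces such as valuation rings, and he then derives the inverse-limit characterization (Proposition 10) afterwards; you instead prove the inverse-limit characterization first and reduce Theorem 6 to the finite case plus the colimit lemma. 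Your route is closer to later, lattice-theoretic treatments (via Stone duality for spectral spaces) and is arguably more modular, at the cost of the coherence burden you correctly flag in (c): to feed an entire cofiltered diagram of finite $T_0$ spaces through a ring construction you must have a genuine contravariant functor from finite distributive lattices to rings, not just a construction object-by-object. Your proposal accurately identifies this functoriality and the absence of phantom primes in the finite models as the two places where the real work lies; as a sketch it is correct, and those two points are indeed where any full proof along this route must put its effort.
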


Speed \cite{Speed} pointed out that a similar characterization for partially ordered sets follows from Theorem \ref{Thm:HochsterTheorem}. Note that there exists a bijection between homeomorphism classes of finite Kolmogorov spaces and isomorphism classes of finite partially ordered sets (Proposition \ref{Prop:BijectionBetweenKolmogorovAlexandroffSpacesAndPartiallyOrderedSets} (\ref{item:PropBijectionBetweenFiniteKolmogorovSpacesAndFinitePartiallyOrderedSets})). For a commutative ring $R$, we regard $\Spec R$ also as a partially ordered set with the inclusion relation.

\begin{Cor}[{Hochster \cite[Proposition 10]{Hochster}} and {Speed \cite[Corollary 1]{Speed}}]\label{Cor:SpectralPosetIsInverseLimitOfFinitePoset}
	Let $P$ be a partially ordered set. Then $P$ is isomorphic to the partially ordered set $\Spec R$ for some commutative ring $R$ if and only if $P$ is an inverse limit of finite partially ordered sets in the category of partially ordered sets.
\end{Cor}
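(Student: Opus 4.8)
The plan is to deduce the corollary from Hochster's topological characterization (Theorem~\ref{Thm:HochsterTheorem}) by translating ``inverse limit of finite Kolmogorov spaces'' into ``inverse limit of finite partially ordered sets'' via the specialization-order correspondence. Write $\mathrm{Top}_{0}$ for the category of Kolmogorov spaces and $\mathrm{Poset}$ for the category of partially ordered sets, and consider the two functors $S\colon\mathrm{Top}_{0}\to\mathrm{Poset}$, $X\mapsto(X,\preceq)$ (well defined on morphisms because a continuous map $f$ between Kolmogorov spaces sends $\overline{\{y\}}$ into $\overline{\{f(y)\}}$, hence preserves specialization), and $A\colon\mathrm{Poset}\to\mathrm{Top}_{0}$ sending $P$ to the Alexandroff space on $P$ defined before Proposition~\ref{Prop:BijectionBetweenKolmogorovAlexandroffSpacesAndPartiallyOrderedSets} (its opens are the up-sets; $A(P)$ is indeed $T_{0}$). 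I would first record: (i) $S\circ A=\mathrm{id}_{\mathrm{Poset}}$ and $A$ is left adjoint to $S$, so $S$ preserves all inverse limits; (ii) $\mathrm{Top}_{0}$ is stable under limits formed in $\mathrm{Top}$ (subspaces and products of $T_{0}$-spaces are $T_{0}$), so an inverse limit of Kolmogorov spaces is the same whether computed in $\mathrm{Top}$ or in $\mathrm{Top}_{0}$; (iii) for the Zariski topology on $\Spec R$ the specialization order is the reverse of inclusion, i.e.\ $S(\Spec R)=(\Spec R,\subseteq)^{\mathrm{op}}$, while $(-)^{\mathrm{op}}$ is an automorphism of $\mathrm{Poset}$ (hence preserves inverse limits) and in particular the class of inverse limits of finite posets is self-opposite. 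By Proposition~\ref{Prop:BijectionBetweenKolmogorovAlexandroffSpacesAndPartiallyOrderedSets}, $S$ also restricts to a bijection between finite Kolmogorov spaces and finite posets.

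For the ``only if'' direction, assume $P\cong(\Spec R,\subseteq)$, so $P^{\mathrm{op}}\cong S(\Spec R)$. By Theorem~\ref{Thm:HochsterTheorem}, $\Spec R$ is an inverse limit $\varprojlim_{i}X_{i}$ of finite Kolmogorov spaces in $\mathrm{Top}$, hence also in $\mathrm{Top}_{0}$ by (ii). Applying $S$ and using (i),
\[
P^{\mathrm{op}}\cong S\left(\varprojlim_{i}X_{i}\right)=\varprojlim_{i}S(X_{i})
\]
is an inverse limit of finite partially ordered sets, so by (iii) $P\cong\varprojlim_{i}S(X_{i})^{\mathrm{op}}$ is one too.

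For the ``if'' direction, assume $P=\varprojlim_{i}P_{i}$ with each $P_{i}$ finite, so $P^{\mathrm{op}}=\varprojlim_{i}P_{i}^{\mathrm{op}}$. Each $A(P_{i}^{\mathrm{op}})$ is a finite Kolmogorov space, so $Y:=\varprojlim_{i}A(P_{i}^{\mathrm{op}})$ formed in $\mathrm{Top}$ is, by Theorem~\ref{Thm:HochsterTheorem}, homeomorphic to $\Spec R$ for some commutative ring $R$. Computing $S(Y)$ in two ways: $S(Y)\cong S(\Spec R)=(\Spec R,\subseteq)^{\mathrm{op}}$, while by (ii) $Y$ is also the inverse limit in $\mathrm{Top}_{0}$, so by (i)
\[
S(Y)=\varprojlim_{i}S(A(P_{i}^{\mathrm{op}}))=\varprojlim_{i}P_{i}^{\mathrm{op}}=P^{\mathrm{op}}.
\]
Hence $P^{\mathrm{op}}\cong(\Spec R,\subseteq)^{\mathrm{op}}$ and therefore $P\cong(\Spec R,\subseteq)$.

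The one step I expect to require genuine care is (i), the adjunction $A\dashv S$ (equivalently, that $S$ preserves inverse limits): concretely, one checks that for a poset $Q$ and a Kolmogorov space $X$ a map $Q\to S(X)$ is monotone if and only if the same underlying map $A(Q)\to X$ is continuous, which comes down to the facts that the opens of $A(Q)$ are exactly the up-sets and that $x\in\overline{\{y\}}$ in $X$ says precisely that every open neighbourhood of $x$ contains $y$. The remaining ingredients — the order-reversal between Zariski specialization and inclusion, the self-opposite character of the class of inverse limits of finite posets, and the preservation of finiteness under $S$ (which is Proposition~\ref{Prop:BijectionBetweenKolmogorovAlexandroffSpacesAndPartiallyOrderedSets}) — are routine.
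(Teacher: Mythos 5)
Your proposal is correct. The paper does not give a proof of this corollary: it is stated as a citation to Hochster and Speed, so there is no ``paper's own proof'' to compare against; you have supplied the routine derivation from Theorem~\ref{Thm:HochsterTheorem}. All the delicate points are handled properly: the adjunction $A\dashv S$ (hence $S$ preserves inverse limits), the fact that $\mathrm{Top}_{0}$ is closed under limits computed in $\mathrm{Top}$, the self-opposite character of the class of inverse limits of finite posets under the order-reversing automorphism of $\mathrm{Poset}$, and in particular the sign flip: with the paper's convention $x\preceq y\Leftrightarrow x\in\overline{\{y\}}$, the Zariski specialization order on $\Spec R$ is $\supseteq$, so $S(\Spec R)=(\Spec R,\subseteq)^{\mathrm{op}}$, which is exactly why the $(-)^{\mathrm{op}}$ bookkeeping is needed. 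One small remark: since the class of inverse limits of finite posets is self-opposite, one could also phrase the argument without ever introducing $(-)^{\mathrm{op}}$ explicitly, by proving the equivalent statement ``$(\Spec R,\supseteq)$ is an inverse limit of finite posets iff $(\Spec R,\subseteq)$ is,'' but that is purely cosmetic; your version is clean and complete.
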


If we assume that the commutative ring $R$ is noetherian, then the topological spaces and the partially ordered sets appearing as the prime spectra of $R$ are strongly restricted. In the case of finite partially ordered sets, the following result is known.

\begin{Thm}[{de Souza Doering and Lequain \cite[Theorem B]{deSouzaDoeringLequain}}]\label{Thm:FinitePosetOfCommutativeNoetherianRing}
	Let $P$ be a finite partially ordered set. Then $P$ is isomorphic to the partially ordered set $\Spec R$ for some commutative noetherian ring $R$ if and only if there does not exist a chain of the form $x<y<z$ in $P$.
\end{Thm}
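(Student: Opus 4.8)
This theorem is quoted from de Souza Doering and Lequain; I indicate the argument I would give.

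For \emph{necessity}, suppose $R$ is a commutative noetherian ring with $\Spec R$ finite, and I claim $R$ has Krull dimension at most $1$, which is precisely the absence of a chain $x<y<z$. Assume there were a chain $\mfp_{0}\subsetneq\mfp_{1}\subsetneq\mfp_{2}$ in $\Spec R$. Passing to the localization $(R/\mfp_{0})_{\mfp_{2}}$, which is again noetherian with finite spectrum, I may assume $R=(S,\mfm)$ is a noetherian local \emph{domain} with $\dim S\geq 2$, and I then produce infinitely many primes for a contradiction. Choose $0\neq a\in\mfm$; by Krull's principal ideal theorem each minimal prime over $(a)$ has height exactly $1$, and every nonzero element of $\mfm$ lies in one of these height-one primes. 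If $\Spec S$ were finite, listing all height-one primes as $\mfq_{1},\dots,\mfq_{r}$ would give $\mfm\subseteq\bigcup_{l}\mfq_{l}$, hence $\mfm\subseteq\mfq_{l}$ for some $l$ by prime avoidance, contradicting $\operatorname{ht}\mfm\geq 2$.

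For \emph{sufficiency}, let $P$ be a finite poset with no chain $x<y<z$. Then every element of $P$ is minimal or maximal, so each connected component of $P$ is either a single point or a poset $P'$ whose minimal elements $p_{1},\dots,p_{m}$ lie strictly below its maximal elements $q_{1},\dots,q_{n}$, each $p_{i}$ below at least one $q_{j}$ and each $q_{j}$ above at least one $p_{i}$. Since $\Spec(R_{1}\times R_{2})\cong\Spec R_{1}\sqcup\Spec R_{2}$, it suffices to realize each component separately and take a finite product: an isolated point is realized by a field $k$, and for a component $P'$ as above I would proceed as follows. Fix an infinite field $k$, choose pairwise distinct points $P_{1},\dots,P_{n}\in\mathbb{A}^{2}_{k}$, and choose pairwise distinct \emph{irreducible} plane curves $C_{1},\dots,C_{m}$ over $k$ such that $P_{j}\in C_{i}$ if and only if $p_{i}<q_{j}$ --- a generic curve of sufficiently high degree through the prescribed points is irreducible (Bertini) and avoids the remaining finitely many marked points. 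Set $X=C_{1}\cup\dots\cup C_{m}$, $A=k[x,y]/I(X)$, and $R_{P'}=S^{-1}A$ with $S=A\setminus\bigcup_{j}\mfm_{P_{j}}$; then $R_{P'}$ is noetherian, being a localization of a finitely generated $k$-algebra.

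It remains to compute $\Spec R_{P'}$. The primes of $A$ are the $m$ minimal primes $\mfp_{C_{i}}$ together with the closed points of $X$; such a prime survives in $S^{-1}A$ precisely when it is contained in some $\mfm_{P_{j}}$, i.e.\ (by prime avoidance) when the corresponding irreducible closed subset of $X$ contains some $P_{j}$. Every $C_{i}$ passes through at least one $P_{j}$, so all $\mfp_{C_{i}}$ survive, and a closed point survives if and only if it is one of $P_{1},\dots,P_{n}$, since any other closed point $z$ satisfies $\mfm_{z}\neq\mfm_{P_{j}}$ for all $j$ and maximal ideals are incomparable. Hence $\Spec R_{P'}=\{\mfp_{C_{i}}\}\cup\{\mfm_{P_{j}}\}$ with $\mfp_{C_{i}}\subsetneq\mfm_{P_{j}}$ exactly when $P_{j}\in C_{i}$, i.e.\ when $p_{i}<q_{j}$, and with the $\mfp_{C_{i}}$ mutually incomparable and the $\mfm_{P_{j}}$ mutually incomparable; thus $\Spec R_{P'}\cong P'$, and the product over all components gives a noetherian ring $R$ with $\Spec R\cong P$. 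The one genuinely delicate point is the simultaneous geometric interpolation producing the $C_{i}$ with the prescribed incidence by distinct irreducible curves; this relies on the base field being infinite and on a Bertini-type irreducibility statement, while any unintended intersection point of two of the curves is harmless because it is removed by the localization. Everything else is routine bookkeeping with prime avoidance and with the behaviour of $\Spec$ under localization and finite products.
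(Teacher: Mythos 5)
Your proof is correct, but it takes a genuinely different route from the paper, which gives no argument of its own and simply cites Kaplansky's Theorem~144 for necessity and Theorem~B of de~Souza Doering and Lequain for sufficiency. Your necessity argument --- pass to a local noetherian domain $(S,\mfm)$ of dimension $\geq 2$, note that every nonzero element of $\mfm$ lies in a height-one prime via Krull's principal ideal theorem, and apply prime avoidance --- is essentially the standard proof behind Kaplansky's result; one wording quibble is that the finite list $\mfq_1,\dots,\mfq_r$ must be \emph{all} height-one primes of $S$, not merely the minimal primes over the single chosen element $a$, which your second sentence says but the first reads as if the primes over $(a)$ alone would suffice. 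Your sufficiency argument is geometric and self-contained, whereas de~Souza Doering and Lequain treat a far more general class of spectra by an algebraic gluing of maximal ideals; for the finite poset at hand your construction is arguably more transparent, with the structure (components of a reducible affine curve give the minimal primes, marked closed points give the maximal ideals, localization trims away everything else) making the order isomorphism evident. The one step that merits tightening is the Bertini-type claim, which over a general infinite field is more delicate than the parenthetical suggests; it can be sidestepped entirely by placing the points on a line, say $P_j=(a_j,0)$ with distinct $a_j\in k$, setting $T_i=\{j\mid p_i<q_j\}$, and taking $C_i$ to be the graph $y=(x-b_i)\prod_{j\in T_i}(x-a_j)$ for pairwise distinct $b_i\in k\setminus\{a_1,\dots,a_n\}$: the graph of a polynomial $f$ is automatically irreducible since $k[x,y]/(y-f(x))\cong k[x]$ is a domain, it contains $P_j$ exactly when $f(a_j)=0$, and the factor $(x-b_i)$ keeps the curves pairwise distinct even when $T_i=T_{i'}$. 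The rest of the argument (reduction to connected components via $\Spec(R_1\times R_2)\cong\Spec R_1\sqcup\Spec R_2$, the computation of $\Spec S^{-1}A$ via prime avoidance, and the incomparability of the minimal primes and of the maximal ideals) is sound.
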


\begin{proof}
	It is well known that for any commutative noetherian ring $R$ whose prime spectrum is a finite set, there does not exist a chain of the form $x<y<z$ in $\Spec R$, that is, the Krull dimension of $R$ is at most one (\cite[Theorem 144]{Kaplansky}). The converse follows from \cite[Theorem B]{deSouzaDoeringLequain}.
\end{proof}

\subsection{Our results}
\label{subsec:OurResults}

From the viewpoint of study of Grothendieck categories, it is natural to ask which partially ordered sets appear as the atom spectra of Grothendieck categories. We answer this question by constructing Grothendieck categories from colored quivers and by considering the quotient categories by some localizing subcategories.

We fix a field $K$ until the end of this paper.

\begin{Def}\label{Def:ValuedQuiverAndColoredQuiver}
	Let $K$ be a field.
	\begin{enumerate}
		\item\label{item:DefQuiver} We call a quintuple $Q=(Q_{0},Q_{1},s,t,m)$ a $K$\emph{-valued quiver} if $Q_{0}$ and $Q_{1}$ are sets, $s$ and $t$ are maps $Q_{1}\to Q_{0}$, and $m$ is a map $Q_{1}\to K$. Elements of $Q_{0}$ (resp.\ $Q_{1}$) are called \emph{vertices} (resp.\ \emph{arrows}) in $Q$. For an arrow $r$ in $Q$, we call $s(r)$ (resp.\ $t(r)$, $m(r)$) the \emph{source} (resp.\ \emph{target}, \emph{value}) of $r$.
		\item\label{item:DefColoredQuiver} We call a triple $\Gamma=(Q,C,u)$ a $K$\emph{-valued colored quiver} if $Q$ is a $K$-valued quiver, $C$ is a set, and $u$ is a map $Q_{1}\to C$. For an arrow $r$ in $Q$, we call $u(r)$ the \emph{color} of $r$.
		\item\label{item:DefFiniteArrowsCondition} We say that a $K$-valued colored quiver $\Gamma=(Q,C,u)$ satisfies the \emph{finite arrow condition} if for any vertex $v$ in $Q$ and $c\in C$, there exist only finitely many arrows in $Q$ with the source $v$, the color $c$, and the nonzero value.
	\end{enumerate}
\end{Def}

We construct a Grothendieck category from a $K$-valued colored quiver satisfying the finite arrow condition as follows.

\begin{Def}\label{Def:GrothendieckCategoryAssociatedWithColoredQuiver}
	Let $K$ be a field, and let $\Gamma=(Q,C,u)$ be a $K$-valued colored quiver satisfying the finite arrow condition, where $Q=(Q_{0},Q_{1},s,t,m)$. Denote the free $K$-algebra on $C$ by $S_{C}=K\left<s_{c}\mid c\in C\right>$. Define a $K$-vector space $M_{\Gamma}$ by $M_{\Gamma}=\bigoplus_{v\in Q_{0}}F_{v}$, where $F_{v}=x_{v}K$ is a one-dimensional $K$-vector space generated by an element $x_{v}$. Regard $M_{\Gamma}$ as a right $S_{C}$-module by defining the action of $s_{c}\in S_{C}$ as follows: for each vertex $v$ in $Q$,
	\begin{equation*}
		x_{v}\cdot s_{c}=\sum_{r}x_{t(r)}m(r),
	\end{equation*}
	where $r$ runs over all the arrows in $Q$ with the source $v$ and the color $c$. The number of such arrows $r$ is finite since $\Gamma$ satisfies the finite arrow condition. Denote by $\mcA_{\Gamma}$ the smallest prelocalizing subcategory (Definition \ref{Def:PrelocalizingSubcategory}) of $\Mod S_{C}$ containing $M_{\Gamma}$.
\end{Def}

\begin{ex}\label{ex:GrothendieckCategoryAssociatedWithFiniteColoredQuiver}
	Let $\Gamma=(Q,C,u)$ be the colored quiver defined as follows: let $Q=(Q_{0},Q_{1},s,t,m)$, where $Q_{0}=\{v,w\}$, $Q_{1}=\{r\}$, $s(r)=v$, $t(r)=w$, and $m(r)=q\in K$. Let $C=\{c\}$ and $u(r)=c$. Then we have $S_{C}=K\left<s_{c}\right>=K[s_{c}]$. As a $K$-vector space, $M_{\Gamma}=x_{v}K\oplus x_{w}K$. The action of $S_{C}$ on $M_{\Gamma}$ is described as $x_{v}s_{c}=x_{w}q$ and $x_{w}s_{c}=0$. Hence we have $M_{\Gamma}\cong K[s_{c}]/(s_{c}^{2})$. The full subcategory $\mcA_{\Gamma}$ of $\Mod S_{C}$ is equivalent to $\Mod K[x]/(x^{2})$.
\end{ex}

\begin{rem}\label{rem:ConnectionWithPathAlgebra}
	The module $M_{\Gamma}$ in Definition \ref{Def:GrothendieckCategoryAssociatedWithColoredQuiver} can be described by using path algebra if the sets $Q_{0}$ and $Q_{1}$ are finite. The reader may refer to \cite{AssemSimsonSkowronski} for the terms of path algebras and representations of quivers.
	
	For the $K$-valued colored quiver $\Gamma$ given in Definition \ref{Def:GrothendieckCategoryAssociatedWithColoredQuiver} such that $Q_{0}$ and $Q_{1}$ are finite, we consider the path algebra $KQ'$ of the quiver $Q'=(Q_{0},Q_{1},s,t)$. Define a $K$-linear representation $M$ of $Q'$ by letting $M_{v}=K$ for each $v\in Q_{0}$ and the $K$-linear map $M_{s(r)}\to M_{t(r)}$ be the identity map on $K$ for each $r\in Q_{1}$. We regard $M$ as a $KQ'$-module. Define a $K$-algebra homomorphism $S_{C}\to KQ'$ by $s_{c}\mapsto\sum_{r}r\cdot m(r)$ for each $c\in C$, where $r$ runs over all $r\in Q_{1}$ satisfying $u(r)=c$. Then $M$ can be regarded as an $S_{C}$-module through this $K$-algebra homomorphism. The $S_{C}$-module $M$ is isomorphic to $M_{\Gamma}$.
\end{rem}

In Definition \ref{Def:GrothendieckCategoryAssociatedWithColoredQuiver}, the full subcategory $\mcA_{\Gamma}$ is a Grothendieck category. By Proposition \ref{Prop:AtomSpectraOfLocalizingSubcategory}, Proposition \ref{Prop:AtomSupportAndAssociatedAtomsAndShortExactSequence} (\ref{item:PropAtomSupportAndShortExactSequence}), and Proposition \ref{Prop:AtomSupportAndAssociatedAtomsAndDirectSum} (\ref{item:PropAtomSupportAndDirectSum}), it holds that $\ASpec\mcA_{\Gamma}$ is homeomorphic to the open subset $\ASupp M_{\Gamma}$ of $\ASpec(\Mod S_{C})$.

If $M_{\Gamma}$ is the sum of its noetherian submodules, then $\mcA_{\Gamma}$ is contained by the locally noetherian Grothendieck category $\Noeth\mcA$ by Proposition \ref{Prop:LocallyNoetherianGrothendieckCategoryFromGrothendieckCategory}. Hence $\mcA_{\Gamma}$ is also a locally noetherian Grothendieck category.

\begin{rem}\label{rem:ValuedColoredQuiverWithMultipleArrows}
	\emph{Multiple arrows} in a $K$-valued quiver are arrows with a common source and a common target. For the purpose of constructing Grothendieck categories as above, we only have to consider $K$-valued colored quivers which have no multiple arrows with a common color by the following argument.
	
	For a $K$-valued colored quiver $\Gamma=(Q,C,u)$ satisfying the finite arrow condition, we can define another $K$-valued colored quiver $\Gamma'=(Q',C',u')$ satisfying the finite arrow condition as follows: let $Q=(Q_{0},Q_{1},s,t,m)$ and $Q'=(Q'_{0},Q'_{1},s',t',m')$.
	\begin{enumerate}
		\item Let $Q'_{0}=Q_{0}$ and $C'=C$.
		\item Let $Q'_{1}=\{r_{v,w,c}\}_{v,w\in Q_{0},\,c\in C}$.
		\item For each $v,w\in Q_{0}$ and $c\in C$, let $s(r_{v,w,c})=v$, $t(r_{v,w,c})=w$, $u(r_{v,w,c})=c$, and
		\begin{equation*}
			m(r_{v,w,c})=\sum_{r}m(r),
		\end{equation*}
		where $r$ runs over all the arrows in $Q$ with the source $v$, the target $w$, and the color $c$.
	\end{enumerate}
	Then we have $\mcA_{\Gamma'}=\mcA_{\Gamma}$, and $\Gamma'$ has no multiple arrows with a common color.
	
	We also note that $\mcA_{\Gamma}$ does not change if we replace $C$ with another set containing $C$.
\end{rem}

The construction in Definition \ref{Def:GrothendieckCategoryAssociatedWithColoredQuiver} yields various kinds of Grothendieck categories. For example, we can obtain the categories of modules over arbitrary algebras over fields.

\begin{Prop}\label{Prop:AnyAlgebraIsRealizedByValuedColoredQuiver}
	Let $K$ be a field and $R$ a $K$-algebra. Then there exists a $K$-valued colored quiver $\Gamma$ such that $\mcA_{\Gamma}$ is equivalent to $\Mod R$.
\end{Prop}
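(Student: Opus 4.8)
The plan is to realize $\Mod R$ as a prelocalizing subcategory of the module category over a suitable free $K$-algebra, and to recognize it as $\mcA_{\Gamma}$ for the colored quiver encoding the multiplication table of $R$ in a chosen basis.

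First I would fix the ground ring. Take $C$ to be the underlying set of $R$ (any $K$-algebra generating set would do, but this choice is simplest), so that $S_{C}=K\left<s_{c}\mid c\in C\right>$ admits the $K$-algebra surjection $\pi\colon S_{C}\to R$ with $\pi(s_{c})=c$, and put $I=\Ker\pi$. Restriction along $\pi$ identifies $\Mod R$ with the full subcategory $\mcX$ of $\Mod S_{C}$ consisting of the modules annihilated by $I$. This embedding is full, since $\pi$ is surjective and hence every $S_{C}$-linear map between $R$-modules is automatically $R$-linear; and $\mcX$ is closed under subobjects, quotient objects, and arbitrary direct sums, so it is a prelocalizing subcategory of $\Mod S_{C}$ in the sense of Definition \ref{Def:PrelocalizingSubcategory}.

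Next I would build the quiver from a $K$-basis $\{x_{v}\}_{v\in Q_{0}}$ of $R$: for $v\in Q_{0}$ and $c\in C$ write $x_{v}c=\sum_{w\in Q_{0}}x_{w}m_{v,w,c}$ with $m_{v,w,c}\in K$, all but finitely many equal to $0$, and set $Q_{1}=\{r_{v,w,c}\mid m_{v,w,c}\neq 0\}$ with $s(r_{v,w,c})=v$, $t(r_{v,w,c})=w$, $m(r_{v,w,c})=m_{v,w,c}$, $u(r_{v,w,c})=c$. For a fixed vertex $v$ and color $c$ only finitely many such arrows occur, so $\Gamma=(Q,C,u)$ satisfies the finite arrow condition, and the recipe of Definition \ref{Def:GrothendieckCategoryAssociatedWithColoredQuiver} gives $x_{v}\cdot s_{c}=\sum_{w}x_{w}m_{v,w,c}$ on $M_{\Gamma}=\bigoplus_{v\in Q_{0}}x_{v}K$. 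Comparing this with the displayed structure constants, the $K$-linear bijection $M_{\Gamma}\to R$, $x_{v}\mapsto x_{v}$, is $S_{C}$-linear, so $M_{\Gamma}$ is isomorphic as an $S_{C}$-module to $R$ pulled back along $\pi$; in particular $M_{\Gamma}$ lies in $\mcX$.

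Finally I would identify $\mcA_{\Gamma}$. Since $\mcX$ is a prelocalizing subcategory containing $M_{\Gamma}$, minimality gives $\mcA_{\Gamma}\subset\mcX$. Conversely, every object $N$ of $\mcX$, being an $R$-module, is a quotient of a free $R$-module $R^{(\Lambda)}$, and $R^{(\Lambda)}$ regarded in $\Mod S_{C}$ is a direct sum of copies of $M_{\Gamma}$; hence $N$ is a subquotient of a direct sum of objects isomorphic to $M_{\Gamma}$, so $N\in\mcA_{\Gamma}$ by Proposition \ref{Prop:PrelocalizingSubcategoryGeneratedBySubcategory}. Thus $\mcA_{\Gamma}=\mcX$, which is equivalent to $\Mod R$. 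I do not expect a genuine obstacle: the only points requiring care are the verification of the finite arrow condition (which is exactly finiteness of the support of $x_{v}c$ in the chosen basis) and the check that the $S_{C}$-module produced by Definition \ref{Def:GrothendieckCategoryAssociatedWithColoredQuiver} really is the regular $R$-module restricted along $\pi$, both of which are immediate from the construction.
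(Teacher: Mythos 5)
Your proof is correct and follows essentially the same route as the paper's: encode the multiplication table of $R$ in a chosen $K$-basis as a $K$-valued colored quiver, observe that $M_{\Gamma}$ is $R$ restricted along the surjection $S_{C}\twoheadrightarrow R$, and identify $\mcA_{\Gamma}$ with the (prelocalizing) image of $\Mod R$ in $\Mod S_{C}$. The only differences are cosmetic — you take the color set $C$ to be the underlying set of $R$ rather than a $K$-basis and omit the zero-valued arrows, and you spell out the reverse inclusion $\mcX\subset\mcA_{\Gamma}$ via Proposition \ref{Prop:PrelocalizingSubcategoryGeneratedBySubcategory} where the paper appeals to $F(R)$ being a generator — neither of which changes the substance.
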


\begin{proof}
	Let $B\subset R$ be a $K$-basis of $R$. Define a $K$-valued colored quiver $\Gamma=(Q,C,u)$, where $Q=(Q_{0},Q_{1},s,t,m)$, as follows.
	\begin{enumerate}
		\item Let $Q_{0}=\{v_{b}\}_{b\in B}$ and $C=\{c_{b}\}_{b\in B}$.
		\item Let $Q_{1}=\{r_{b,b',b''}\}_{b,b',b''\in B}$.
		\item For each $b,b',b''\in B$, let $s(r_{b,b',b''})=v_{b}$, $t(r_{b,b',b''})=v_{b'}$, and $u(r_{b,b',b''})=c_{b''}$.
		\item Define a family $\{q_{b,b',b''}\}_{b,b',b''\in B}$ of elements of $K$ by
		\begin{equation*}
			bb''=\sum_{b'\in B}b'q_{b,b',b''}
		\end{equation*}
		for each $b,b''\in B$. For each $b,b',b''\in B$, let $m(r_{b,b',b''})=q_{b,b',b''}$.
	\end{enumerate}
	Then we have the surjective $K$-algebra homomorphism $S_{C}\to R$ defined by $c_{b}\mapsto b$ for each $b\in B$. It induces a fully faithful functor $F\colon\Mod R\to\Mod S_{C}$. The image of $\Mod R$ by $F$ is closed under subobjects, quotient objects, and direct sums up to isomorphism, and $F(R)$ is a generator of it. By the construction of $\Gamma$, it holds that $M_{\Gamma}$ is isomorphic to $F(R)$. Therefore $\Mod R$ is isomorphic to $\mcA_{\Gamma}$.
\end{proof}

From now on, we only consider $K$-valued colored quivers $\Gamma=(Q,C,u)$, where $Q=(Q_{0},Q_{1},s,t,m)$, satisfying the following conditions.
\begin{enumerate}
	\item For any arrow $r$ in $Q$, we have $m(r)=1\in K$.
	\item $\Gamma$ has no multiple arrows with a common color.
	\item $\Gamma$ satisfies the finite arrow condition.
\end{enumerate}
We call such $K$-valued colored quivers simply \emph{colored quivers}. The colored quiver $\Gamma$ is also denoted by $(Q_{0},Q_{1},C,s,t,u)$ or simply by $(Q_{0},Q_{1},C)$.

\begin{Def}\label{Def:IsomorphicColoredQuiversAndColoredFullSubquiver}\leavevmode
	\begin{enumerate}
		\item\label{item:DefIsomorphicColoredQuivers} Let $\Gamma^{i}=(Q^{i}_{0},Q^{i}_{1},C^{i},s^{i},t^{i},u^{i})$ be a colored quiver for $i=1,2$. We say that $\Gamma^{1}$ is \emph{isomorphic} to $\Gamma^{2}$ if we have
		\begin{equation*}
			C^{1}\supset\{u^{1}(r)\mid r\in Q^{1}_{1}\}=\{u^{2}(r')\mid r'\in Q^{2}_{1}\}\subset C^{2},
		\end{equation*}
		and there exist bijections $\varphi_{0}\colon Q^{1}_{0}\to Q^{2}_{0}$ and $\varphi_{1}\colon Q^{1}_{1}\to Q^{2}_{1}$ such that for any $r\in Q^{1}_{1}$, we have $\varphi_{0}(s^{1}(r))=s^{2}(\varphi_{1}(r))$, $\varphi_{0}(t^{1}(r))=t^{2}(\varphi_{1}(r))$, and $u^{1}(r)=u^{2}(\varphi_{1}(r))$.
		\item\label{item:DefColoredFullSubquiver} Let $\Gamma=(Q_{0},Q_{1},C,s,t,u)$ be a colored quiver. A colored quiver $\Gamma'=(Q'_{0},Q'_{1},C',s',t',u')$ is called a \emph{colored full subquiver} of $\Gamma$ if it satisfies the following conditions.
		\begin{enumerate}
			\item $Q'_{0}$ is a subset of $Q_{0}$.
			\item $Q'_{1}=\{r\in Q_{1}\mid s(r),t(r)\in Q'_{0}\}$.
			\item $\{u(r)\mid r\in Q'_{1}\}\subset C'$.
			\item $s'(r)=s(r)$, $t'(r)=t(r)$, and $u'(r)=u(r)$ for each $r\in Q'_{1}$.
		\end{enumerate}
	\end{enumerate}
\end{Def}

For a colored quiver $\Gamma=(Q_{0},Q_{1},C)$, certain subsets of $Q_{0}$ induce submodules and quotient modules of $M_{\Gamma}$ as follows.

\begin{rem}\label{rem:SubmoduleAssociatedToColoredFullSubquiver}
	Let $\Gamma=(Q_{0},Q_{1},C,s,t,u)$ be a colored quiver and $V$ a subset of $Q_{0}$ such that for any $r\in Q_{1}$ with $s(r)\in V$, we have $t(r)\in V$. Define colored full subquivers $\Gamma'=(Q'_{0},Q'_{1},C')$ and $\Gamma''=(Q''_{0},Q''_{1},C'')$ as follows.
	\begin{enumerate}
		\item $Q'_{0}=V$ and $Q''_{0}=Q_{0}\setminus V$.
		\item $Q'_{1}=\{r\in Q_{1}\mid s(r),t(r)\in Q'_{0}\}$, $Q''_{1}=\{r\in Q_{1}\mid s(r),t(r)\in Q''_{0}\}$.
		\item $C'=C''=C$.
	\end{enumerate}
	Then the exact sequence
	\begin{equation*}
		0\to\bigoplus_{v\in V}x_{v}K\to\bigoplus_{v\in Q_{0}}x_{v}K\to\bigoplus_{v\in Q_{0}\setminus V}x_{v}K\to 0
	\end{equation*}
	of $K$-vector spaces can be regarded as an exact sequence
	\begin{equation*}
		0\to M_{\Gamma'}\to M_{\Gamma}\to M_{\Gamma''}\to 0
	\end{equation*}
	in $\Mod S_{C}$, and this gives an identification of $M_{\Gamma'}$ (resp.\ $M_{\Gamma''}$) with the submodule (resp.\ quotient module) of $M_{\Gamma}$. By Proposition \ref{Prop:AtomSupportAndAssociatedAtomsAndShortExactSequence} (\ref{item:PropAtomSupportAndShortExactSequence}), we have
	\begin{equation*}
		\ASupp M_{\Gamma}=\ASupp M_{\Gamma'}\cup\ASupp M_{\Gamma''}.
	\end{equation*}
\end{rem}

\begin{Not}\label{Not:ColoredQuiver}
	We describe a colored quiver $\Gamma=(Q_{0},Q_{1},C,s,t,u)$ by a diagram as follows: each $v\in Q_{0}$ is represented by the symbol $v$, and each $r\in Q_{1}$ is represented by an arrow from $s(r)$ to $t(r)$ labeled by $u(r)$. For example, the diagram
	\begin{equation*}
		\xymatrix{
			v\ar[r]^-{c} & w
		}
	\end{equation*}
	is the colored quiver $\Gamma'=(Q'_{0},Q'_{1},C',s',t',u')$ defined by $Q'_{0}=\{v,w\}$, $Q'_{1}=\{r\}$, $C'=\{c\}$, $s'(r)=v$, $t'(r)=w$, and $u'(r)=c$.
\end{Not}

\begin{ex}\label{ex:AtomSpectrumOfGrothendieckCategoryAssociatedWithFiniteColoredQuiver}
	Let $\Gamma$ be the colored quiver
	\begin{equation*}
		\xymatrix{
			v_{1}\ar[r]^-{c_{1,2}} & v_{2}\ar[r]^-{c_{2,3}} & v_{3}.
		}
	\end{equation*}
	Then we have $S_{C}=K\left<s_{c_{1,2}},s_{c_{2,3}}\right>$, $M_{\Gamma}=x_{v_{1}}K\oplus x_{v_{2}}K\oplus x_{v_{3}}K$ as a $K$-vector space, and $x_{v_{i}}s_{c_{j,j+1}}=x_{v_{j+1}}\delta_{i,j}$ for each $i\in\{1,2,3\}$ and $j\in\{1,2\}$, where $\delta_{i,j}$ is the Kronecker delta. Hence $M_{\Gamma}$ is described as
	\begin{equation*}
		M_{\Gamma}=x_{v_{1}}S_{C}\cong\frac{S_{C}}{\Ann_{S_{C}}(x_{v_{1}})}=\frac{S_{C}}{s_{c_{2,3}}S_{C}+s_{c_{1,2}}s_{c_{1,2}}S_{C}+s_{c_{1,2}}s_{c_{2,3}}s_{c_{1,2}}S_{C}+s_{c_{1,2}}s_{c_{2,3}}s_{c_{2,3}}S_{C}}.
	\end{equation*}
	By definition, $M_{\Gamma}$ is an $S_{C}$-module belonging to the full subcategory $\mcA_{\Gamma}$ of $\Mod S_{C}$. We also define colored quivers $\Gamma_{1,2}$, $\Gamma_{2,3}$, $\Delta_{1}$, $\Delta_{2}$, and $\Delta_{3}$ as follows.
	\begin{gather*}
		\Gamma_{1,2}\colon
		\xymatrix{
			v_{1}\ar[r]^-{c_{1,2}} & v_{2},
		}\hspace{8mm}
		\Gamma_{2,3}\colon
		\xymatrix{
			v_{2}\ar[r]^-{c_{2,3}} & v_{3},
		}\\
		\Delta_{1}\colon
		\xymatrix{
			v_{1},
		}\hspace{8mm}
		\Delta_{2}\colon
		\xymatrix{
			v_{2},
		}\hspace{8mm}
		\Delta_{3}\colon
		\xymatrix{
			v_{3}.
		}
	\end{gather*}
	Then the $S_{C}$-modules $M_{\Gamma_{1,2}}$, $M_{\Gamma_{2,3}}$, $M_{\Delta_{1}}$, $M_{\Delta_{2}}$, and $M_{\Delta_{3}}$ also belong to $\mcA_{\Gamma}$ since they are subquotients of $M_{\Gamma}$ by Remark \ref{rem:SubmoduleAssociatedToColoredFullSubquiver}. For example, we have exact sequences
	\begin{equation*}
		0\to M_{\Delta_{3}}\to M\to M_{\Gamma_{1,2}}\to 0
	\end{equation*}
	and
	\begin{equation*}
		0\to M_{\Delta_{2}}\to M_{\Gamma_{1,2}}\to M_{\Delta_{1}}\to 0.
	\end{equation*}
	Since $M_{\Delta_{1}}$, $M_{\Delta_{2}}$, and $M_{\Delta_{3}}$ are isomorphic simple $S_{C}$-modules, we have
	\begin{align*}
		\ASpec\mcA_{\Gamma}
		&=\ASupp M_{\Gamma}\\
		&=\ASupp M_{\Delta_{1}}\cup\ASupp M_{\Delta_{2}}\cup\ASupp M_{\Delta_{3}}\\
		&=\{\overline{M_{\Delta_{1}}}=\overline{M_{\Delta_{2}}}=\overline{M_{\Delta_{3}}}\}
	\end{align*}
	by Proposition \ref{Prop:AtomSupportAndAssociatedAtomsAndShortExactSequence} (\ref{item:PropAtomSupportAndShortExactSequence}).
\end{ex}

The next example shows the way to distinguish simple modules corresponding to different vertices.

\begin{ex}\label{ex:AtomSpectrumOfGrothendieckCategoryAssociatedWithFiniteColoredQuiverWithLoop}
	Define colored quivers $\Gamma,\Gamma_{1,2}$, $\Gamma_{2,3}$, $\Delta_{1}$, $\Delta_{2}$, and $\Delta_{3}$ as follows.
	\begin{gather*}
		\Gamma\colon
		\xymatrix{
			v_{1}\ar@(dl,dr)_-{c_{1}}\ar[r]^-{c_{1,2}} & v_{2}\ar@(dl,dr)_-{c_{2}}\ar[r]^-{c_{2,3}} & v_{3}\ar@(dl,dr)_-{c_{3}}
		},\\
		\Gamma_{1,2}\colon
		\xymatrix{
			v_{1}\ar@(dl,dr)_-{c_{1}}\ar[r]^-{c_{1,2}} & v_{2}\ar@(dl,dr)_-{c_{2}}
		},\hspace{8mm}
		\Gamma_{2,3}\colon
		\xymatrix{
			v_{2}\ar@(dl,dr)_-{c_{2}}\ar[r]^-{c_{2,3}} & v_{3}\ar@(dl,dr)_-{c_{3}}
		},\\
		\Delta_{1}\colon
		\xymatrix{
			v_{1}\ar@(dl,dr)_-{c_{1}}
		},\hspace{8mm}
		\Delta_{2}\colon
		\xymatrix{
			v_{2}\ar@(dl,dr)_-{c_{2}}
		},\hspace{8mm}
		\Delta_{3}\colon
		\xymatrix{
			v_{3}\ar@(dl,dr)_-{c_{3}}
		}.
	\end{gather*}
	Then $M_{\Gamma_{1,2}}$, $M_{\Gamma_{2,3}}$, $M_{\Delta_{1}}$, $M_{\Delta_{2}}$, and $M_{\Delta_{3}}$ are subquotients of $M_{\Gamma}$. By a similar argument to that in Example \ref{ex:AtomSpectrumOfGrothendieckCategoryAssociatedWithFiniteColoredQuiver}, we have
	\begin{equation*}
		\ASpec\mcA_{\Gamma}=\{\overline{M_{\Delta_{1}}},\overline{M_{\Delta_{2}}},\overline{M_{\Delta_{3}}}\},
	\end{equation*}
	where $\overline{M_{\Delta_{1}}}$, $\overline{M_{\Delta_{2}}}$, and $\overline{M_{\Delta_{3}}}$ are distinct atoms in $\mcA_{\Gamma}$. The topology on $\ASpec\mcA_{\Gamma}$ is discrete since we have $\ASupp M_{\Delta_{i}}=\{\overline{M_{\Delta_{i}}}\}$ for each $i=1,2,3$.
\end{ex}

By using an infinite colored quiver, we can construct a Grothendieck category whose atom spectrum is not discrete.

\begin{ex}\label{ex:AtomSpectrumOfGrothendieckCategoryAssociatedWithInfiniteColoredQuiver}
	Let $\Gamma$ be the colored quiver
	\begin{equation*}
		\Gamma\colon
		\xymatrix{
			v_{0}\ar[r]^-{c_{0,1}} & v_{1}\ar[r]^-{c_{1,2}} & \cdots.
		}
	\end{equation*}
	Then we have $S_{C}=K\left<s_{c_{i,i+1}}\mid i\in\mbZ_{\geq 0}\right>$, $M_{\Gamma}=\bigoplus_{i=0}^{\infty}x_{v_{i}}K$ as a $K$-vector space, and $x_{v_{i}}s_{c_{j,j+1}}=x_{v_{j+1}}\delta_{i,j}$ for each $i,j\in\mbZ_{\geq 0}$.
	
	Let $L$ be a nonzero $S_{C}$-submodule of $M_{\Gamma}$ and $x$ a nonzero element of $L$. Then we have $x=\sum_{i\in I}x_{v_{i}}q_{i}$, where $I$ is a nonempty finite subset of $\mbZ_{\geq 0}$ and $q_{i}\in K\setminus\{0\}$ for each $i\in I$. Let $j$ be the smallest element of $I$. For each $i\in I\setminus\{j\}$, we have
	\begin{equation*}
		xs_{c_{j,j+1}}s_{c_{j+1,j+2}}\cdots s_{c_{i-1,i}}\frac{1}{q_{j}}=x_{v_{i}},
	\end{equation*}
	and hence $x_{v_{i}}\in xS_{C}$. This implies
	\begin{equation*}
		x_{v_{j}}=\left(x-\sum_{i\in I\setminus\{j\}}x_{v_{i}}q_{i}\right)\frac{1}{q_{j}}\in xS_{C},
	\end{equation*}
	and $xS_{C}\subset x_{v_{j}}S_{C}$. Since we have
	\begin{equation*}
		x_{v_{j}}s_{c_{j,j+1}}s_{c_{j+1,j+2}}\cdots s_{c_{i-1,i}}=x_{v_{i}},
	\end{equation*}
	it holds that $xS_{C}=x_{v_{j}}S_{C}$. Therefore $L$ is a sum of submodules of the form $L_{\geq j}=x_{v_{j}}S_{C}$ for some $j\in\mbZ_{\geq 0}$. Note that $L_{\geq j}=\bigoplus_{i=j}^{\infty}x_{v_{i}}K$ as a $K$-vector space. Since we have $L_{\geq 0}\supset L_{\geq 1}\supset\cdots$, there exists $j\in\mbZ_{\geq 0}$ such that $L=L_{\geq j}$.
	
	Therefore $M_{\Gamma}$ is a noetherian $S_{C}$-module, and hence $\mcA_{\Gamma}$ is a locally noetherian Grothendieck category. For any nonzero submodule $L$ of $M_{\Gamma}$, the length of $L$ is infinite, and $M_{\Gamma}/L$ has finite length. This shows that $M_{\Gamma}$ is monoform. Let $\Delta$ be the colored quiver consisting of a single vertex and no arrow. We regard $M_{\Delta}$ as an $S_{C}$-module with $M_{\Delta}s_{c_{i,i+1}}=0$ for each $i\in\mbZ_{\geq 0}$. For each $j\in\mbZ_{\geq 0}$, since $M_{\Gamma}/L_{\geq j}=\bigoplus_{i=0}^{j-1}x_{v_{i}}K$ as a $K$-vector space, we have
	\begin{equation*}
		\ASupp\frac{M_{\Gamma}}{L_{\geq j}}=\{\overline{M_{\Delta}}\},
	\end{equation*}
	by a similar argument to that in Example \ref{ex:AtomSpectrumOfGrothendieckCategoryAssociatedWithFiniteColoredQuiver}.
	
	Let $H$ be a monoform subquotient of $M_{\Gamma}$ which is not atom-equivalent to $M_{\Gamma}$. Then $H$ is a submodule of $M_{\Gamma}/L_{\geq j}$ for some $j\in\mbZ_{\geq 0}$. Since we have
	\begin{equation*}
		\overline{H}\in\ASupp\frac{M_{\Gamma}}{L_{\geq j}}=\{\overline{M_{\Delta}}\},
	\end{equation*}
	it follows that $\ASpec\mcA_{\Gamma}=\ASupp M_{\Gamma}=\{\overline{M_{\Gamma}},\overline{M_{\Delta}}\}$. Since any nonzero submodule of $M_{\Gamma}$ has a quotient module isomorphic to $M_{\Delta}$, we have $\overline{M_{\Gamma}}<\overline{M_{\Delta}}$ by Proposition \ref{Prop:TopologicalCharacterizationOfPartialOrder}.
\end{ex}

In the next definition, we introduce an operation to combine colored quivers. The new colored quiver is determined by an underlying colored quiver $\Omega$ and a family of colored quivers to be combined.

\begin{Def}\label{Def:SubstitutionOfColoredQuivers}
	Let $\Omega=(\Omega_{0},\Omega_{1},\Xi,\sigma,\tau,\mu)$ be a colored quiver, and let $\{\Gamma^{\omega}=(Q^{\omega}_{0},Q^{\omega}_{1},C^{\omega},s^{\omega},t^{\omega},u^{\omega})\}_{\omega\in\Omega_{0}}$ be a family of colored quivers indexed by the set $\Omega_{0}$ of vertices in $\Omega$. Define a colored quiver $\Omega(\{\Gamma^{\omega}\}_{\omega\in\Omega_{0}})=(Q_{0},Q_{1},C,s,t,u)$ as follows.
	\begin{enumerate}
		\item Let $Q_{0}=\coprod_{\omega\in\Omega_{0}}Q^{\omega}_{0}$.
		\item Let
		\begin{align*}
			Q_{1}&=\left(\coprod_{\omega\in\Omega_{0}}Q^{\omega}_{1}\right)\amalg\left(\coprod_{\rho\in\Omega_{1}}\{r^{\rho}_{v,v'}\mid v\in Q^{\sigma(\rho)}_{0},\ v'\in Q^{\tau(\rho)}_{0}\}\right),\\
			C&=\left(\bigcup_{\omega\in\Omega_{0}}C^{\omega}\right)\amalg\left(\bigcup_{\rho\in\Omega_{1}}\{c^{\mu(\rho)}_{v,v'}\mid v\in Q^{\sigma(\rho)}_{0},\ v'\in Q^{\tau(\rho)}_{0}\}\right).
		\end{align*}
		\item
		\begin{enumerate}
			\item For each $r\in Q^{\omega}_{1}\subset Q_{1}$, let $s(r)=s^{\omega}(r)$, $t(r)=t^{\omega}(r)$, and $u(r)=u^{\omega}(r)$.
			\item For each $r=r^{\rho}_{v,v'}\in Q_{1}$, let $s(r)=v$, $t(r)=v'$, and $u(r)=c^{\mu(\rho)}_{v,v'}$.
		\end{enumerate}
	\end{enumerate}
	
	In the case where $\Omega$ has no arrow, the colored quiver $\Omega(\{\Gamma^{\omega}\}_{\omega\in\Omega_{0}})$ is called the \emph{disjoint union} of $\{\Gamma^{\omega}\}_{\omega\in\Omega_{0}}$ and is denoted by $\coprod_{\omega\in\Omega_{0}}\Gamma^{\omega}$.
\end{Def}

\begin{ex}\label{ex:SubstitutionOfColoredQuivers}
	Let $\Omega$ be the colored quiver
	\begin{equation*}
		\xymatrix{
			\omega_{1}\ar[r]^-{(a)} & \omega_{2}\ar[r]^-{(a)} & \omega_{3}\ar[r]^-{(b)} & \omega_{4}
		}.
	\end{equation*}
	Let $\Gamma$ be the colored quiver
	\begin{equation*}
		\xymatrix{
			v\ar[d]_-{c} \\
			w
		}
	\end{equation*}
	and $\Gamma_{\omega_{1}}=\Gamma_{\omega_{2}}=\Gamma_{\omega_{3}}=\Gamma_{\omega_{4}}=\Gamma$. Then $\Omega(\{\Gamma^{\omega}\}_{\omega\in\Omega_{0}})$ is the colored quiver
	\begin{equation*}
		\xymatrix @R=12mm @C=12mm {
			v_{1}\ar[d]_-{c}\ar[r]^-{c^{(a)}_{v,v}}\ar[dr]|(0.32){c^{(a)}_{v,w}} & 
			v_{2}\ar[d]_-{c}\ar[r]^-{c^{(a)}_{v,v}}\ar[dr]|(0.32){c^{(a)}_{v,w}} & 
			v_{3}\ar[d]_-{c}\ar[r]^-{c^{(b)}_{v,v}}\ar[dr]|(0.32){c^{(b)}_{v,w}} & 
			v_{4}\ar[d]_-{c} \\
			w_{1}\ar[r]_-{c^{(a)}_{w,w}}\ar[ur]|(0.32){c^{(a)}_{w,v}} & 
			w_{2}\ar[r]_-{c^{(a)}_{w,w}}\ar[ur]|(0.32){c^{(a)}_{w,v}} & 
			w_{3}\ar[r]_-{c^{(b)}_{w,w}}\ar[ur]|(0.32){c^{(b)}_{w,v}} & 
			w_{4}\rlap{\,.}
		}
	\end{equation*}
\end{ex}

For a disjoint union $\Gamma$ of colored quivers, the atom spectrum of the Grothendieck category $\mcA_{\Gamma}$ is described as follows.

\begin{Prop}\label{Prop:AtomSpectrumOfGrothendieckCategoryAssociatedToDisjointUnion}
	Let $\{\Gamma^{\lambda}\}_{\lambda\in\Lambda}$ be a family of colored quivers, and let $\Gamma=(Q_{0},Q_{1},C)$ be the disjoint union $\coprod_{\lambda\in\Lambda}\Gamma^{\lambda}$. Then we have
	\begin{equation*}
		\ASpec\mcA_{\Gamma}=\bigcup_{\lambda\in\Lambda}\ASpec\mcA_{\Gamma^{\lambda}}
	\end{equation*}
	as a subset of $\ASpec(\Mod S_{C})$. For any $\lambda\in\Lambda$, the subset $\mcA_{\Gamma^{\lambda}}$ of $\ASpec\mcA_{\Gamma}$ is open.
\end{Prop}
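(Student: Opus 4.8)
The plan is to reduce both assertions to the behaviour of atom supports under direct sums. The first step is purely module-theoretic. Since the underlying colored quiver $\Omega$ of a disjoint union has no arrows, $\Gamma=\coprod_{\lambda\in\Lambda}\Gamma^{\lambda}$ has no arrow whose source and target lie in different components; hence, for $v\in Q_{0}^{\lambda}$ and $c\in C$, the element $x_{v}\cdot s_{c}$ lies in $\bigoplus_{w\in Q_{0}^{\lambda}}x_{w}K$ and is computed by exactly the rule defining the $S_{C^{\lambda}}$-action on $M_{\Gamma^{\lambda}}$ (with $s_{c}$ acting as $0$ when $c\notin C^{\lambda}$, which is harmless by Remark \ref{rem:ValuedColoredQuiverWithMultipleArrows}). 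So the $S_{C}$-submodule $\bigoplus_{v\in Q_{0}^{\lambda}}x_{v}K$ of $M_{\Gamma}$ is $M_{\Gamma^{\lambda}}$ regarded as an $S_{C}$-module, and
\begin{align*}
	M_{\Gamma}=\bigoplus_{\lambda\in\Lambda}M_{\Gamma^{\lambda}}
\end{align*}
in $\Mod S_{C}$.

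Next I would record the identifications that are already available. Since $\mcA_{\Gamma}$ is the smallest prelocalizing subcategory of $\Mod S_{C}$ containing $M_{\Gamma}$, every object of $\mcA_{\Gamma}$ is a subquotient of a direct sum of copies of $M_{\Gamma}$ by Proposition \ref{Prop:PrelocalizingSubcategoryGeneratedBySubcategory}, so $\ASupp\mcA_{\Gamma}=\ASupp M_{\Gamma}$ by Proposition \ref{Prop:AtomSupportAndAssociatedAtomsAndShortExactSequence} (\ref{item:PropAtomSupportAndShortExactSequence}) and Proposition \ref{Prop:AtomSupportAndAssociatedAtomsAndDirectSum} (\ref{item:PropAtomSupportAndDirectSum}); moreover Proposition \ref{Prop:AtomSpectraOfLocalizingSubcategory} identifies $\ASpec\mcA_{\Gamma}$ homeomorphically with the open subset $\ASupp M_{\Gamma}$ of $\ASpec(\Mod S_{C})$. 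The same discussion, applied to each $\Gamma^{\lambda}$ with color set enlarged to $C$ (again Remark \ref{rem:ValuedColoredQuiverWithMultipleArrows}), identifies $\ASpec\mcA_{\Gamma^{\lambda}}$ with $\ASupp M_{\Gamma^{\lambda}}$ as a subset of $\ASpec(\Mod S_{C})$.

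Granting these identifications, the first assertion is immediate from Proposition \ref{Prop:AtomSupportAndAssociatedAtomsAndDirectSum} (\ref{item:PropAtomSupportAndDirectSum}):
\begin{align*}
	\ASpec\mcA_{\Gamma}=\ASupp M_{\Gamma}=\ASupp\bigoplus_{\lambda\in\Lambda}M_{\Gamma^{\lambda}}=\bigcup_{\lambda\in\Lambda}\ASupp M_{\Gamma^{\lambda}}=\bigcup_{\lambda\in\Lambda}\ASpec\mcA_{\Gamma^{\lambda}}.
\end{align*}
For the second assertion, $M_{\Gamma^{\lambda}}$ is a subobject of $M_{\Gamma}$, hence an object of $\mcA_{\Gamma}$, so $\ASupp M_{\Gamma^{\lambda}}$ is an open subset of $\ASpec\mcA_{\Gamma}$ by Proposition \ref{Prop:OpenBasisOfAtomSpectrum}; under the identification above this is precisely $\ASpec\mcA_{\Gamma^{\lambda}}$.

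There is no serious obstacle here; the only point requiring care is the bookkeeping between the ambient categories $\Mod S_{C^{\lambda}}$, $\Mod S_{C}$, $\mcA_{\Gamma^{\lambda}}$ and $\mcA_{\Gamma}$, together with the verification that the submodule of $M_{\Gamma}$ spanned by the vertices of a single component $\Gamma^{\lambda}$ really coincides, as an $S_{C}$-module, with $M_{\Gamma^{\lambda}}$ — which is exactly where the disjoint-union hypothesis (no cross-component arrows) and the harmlessness of enlarging the color set enter.
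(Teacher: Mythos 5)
Your argument is correct and follows the same route as the paper: identify $M_{\Gamma}=\bigoplus_{\lambda}M_{\Gamma^{\lambda}}$ in $\Mod S_{C}$, use Proposition \ref{Prop:AtomSupportAndAssociatedAtomsAndDirectSum} to split the atom support over the direct sum, and invoke Proposition \ref{Prop:OpenBasisOfAtomSpectrum} for openness. You simply spell out more of the bookkeeping (color-set enlargement and the identifications $\ASpec\mcA_{\Gamma}\cong\ASupp M_{\Gamma}$ via Proposition \ref{Prop:AtomSpectraOfLocalizingSubcategory}) that the paper leaves implicit from the discussion preceding the proposition.
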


\begin{proof}
	Since we have $M_{\Gamma}=\bigoplus_{\lambda\in\Lambda}M_{\Gamma^{\lambda}}$, it holds that
	\begin{equation*}
		\ASpec\mcA_{\Gamma}=\ASupp M_{\Gamma}=\bigcup_{\lambda\in\Lambda}\ASupp M_{\Gamma^{\lambda}}=\bigcup_{\lambda\in\Lambda}\ASpec\mcA_{\Gamma^{\lambda}}
	\end{equation*}
	by Proposition \ref{Prop:AtomSupportAndAssociatedAtomsAndDirectSum} (\ref{item:PropAtomSupportAndDirectSum}). By Proposition \ref{Prop:OpenBasisOfAtomSpectrum}, the subset $\ASupp M_{\Gamma^{\lambda}}$ of $\ASpec\mcA_{\Gamma}$ is open for any $\lambda\in\Lambda$.
\end{proof}

\begin{Not}\label{Not:SubstitutionOfColoredQuivers}
	We describe the colored quiver $\Omega(\{\Gamma^{\omega}\}_{\omega\in\Omega_{0}})$ in Definition \ref{Def:SubstitutionOfColoredQuivers} as the diagram of $\Omega$ by replacing each vertex $\omega$ by $\Gamma^{\omega}$ and each arrow to a \emph{bold arrow}. For example, the colored quiver $\Omega(\{\Gamma^{\omega}\}_{\omega\in\Omega_{0}})$ in Example \ref{ex:SubstitutionOfColoredQuivers} is described as
	\begin{equation*}
		\xymatrix{
			\Gamma\ar@3[r]^-{(a)} & \Gamma\ar@3[r]^-{(a)} & \Gamma\ar@3[r]^-{(b)} & \Gamma
		}.
	\end{equation*}
	
	In the case where the colors of arrows in $\Omega$ are omitted in a diagram, we assume that distinct arrows in $\Omega$ have distinct colors. For example,
	\begin{equation*}
		\xymatrix{
			\Gamma\ar@3[r] & \Gamma\ar@3[r] & \Gamma\ar@3[r] & \Gamma
		}
	\end{equation*}
	means the colored quiver
	\begin{equation*}
		\xymatrix{
			\Gamma\ar@3[r]^-{(a)} & \Gamma\ar@3[r]^-{(b)} & \Gamma\ar@3[r]^-{(c)} & \Gamma
		}.
	\end{equation*}
\end{Not}

The following results contain the key idea behind the construction of a Grothendieck category whose atom spectrum is isomorphic to a given partially ordered set.

\begin{Lem}\label{Lem:AddingMinimalAtom}
	Let $\{\Gamma^{i}=(Q^{i}_{0},Q^{i}_{1},C^{i})\}_{i=0}^{\infty}$ be a family of colored quivers with $Q^{i}_{0}\neq\emptyset$ for each $i\in\mbZ_{\geq 0}$, and let $\Gamma=(Q_{0},Q_{1},C)$ be the colored quiver
	\begin{equation*}
		\xymatrix{
			\Gamma^{0}\ar@3[r] & \Gamma^{1}\ar@3[r] & \cdots
		}.
	\end{equation*}
	\begin{enumerate}
		\item\label{item:LemDescriptionOfElementOfModuleOfCountableColoredQuiver} Let $x$ be a nonzero element of $M_{\Gamma}$ of the form
		\begin{equation*}
			x=\sum_{i=0}^{\infty}\sum_{v\in V_{i}}x_{v}q_{v},
		\end{equation*}
		where $V_{i}$ is a finite subset of $Q^{i}_{0}$ for each $i\in\mbZ_{\geq 0}$, and $q_{v}\in K\setminus\{0\}$ for each $v\in V_{i}$. Take the smallest $j\in\mbZ_{\geq 0}$ satisfying $V_{j}\neq\emptyset$, and let
		\begin{equation*}
			x'=\sum_{v\in V_{j}}x_{v}q_{v}.
		\end{equation*}
		Then we have $x'S_{C}=xS_{C}$.
		\item\label{item:LemDescriptionOfSubmoduleOfModuleOfCountableColoredQuiver} For any nonzero proper submodule $L$ of $M_{\Gamma}$, there exist $j\in\mbZ_{\geq 0}$ and a proper $S_{C^{j}}$-submodule $\widehat{L}$ of $M_{\Gamma^{j}}$ such that
		\begin{equation*}
			L=\widehat{L}\oplus\left(\bigoplus_{i=j+1}^{\infty}M_{\Gamma^{i}}\right)
		\end{equation*}
		as a $K$-vector space.
		\item\label{item:LemSerialityOfModuleOfCountableColoredQuiver} If $M_{\Gamma^{i}}$ is noetherian for any $i\in\mbZ_{\geq 0}$, then $M_{\Gamma}$ is also noetherian.
	\end{enumerate}
\end{Lem}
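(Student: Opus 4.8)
The plan is to exploit the \emph{transfer arrows} $r^{\rho_i}_{v,v'}$ joining consecutive blocks. Since the arrows $\rho_0,\rho_1,\dots$ of the underlying quiver carry pairwise distinct colors, distinct pairs $(v,v')$ produce distinct colors, and all these transfer colors lie outside $\bigcup_{\omega}C^{\omega}$, the generator $s_{c^{\mu(\rho_i)}_{v,v'}}$ of $S_C$ acts on $M_\Gamma$ as the rank-one operator $x_w\mapsto\delta_{w,v}x_{v'}$. Because every $Q^{i}_{0}$ is nonempty, this means that from any basis vector $x_v$ with $v$ in block $i$ one can reach, inside $M_\Gamma$, a nonzero scalar multiple of every basis vector $x_u$ with $u$ in any block of index $>i$, by composing such transfers one step at a time. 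This single observation drives all three parts.

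For part (\ref{item:LemDescriptionOfElementOfModuleOfCountableColoredQuiver}), write $x=x'+y$ where $y$ is supported on the blocks of index $>j$; as $x$ lies in a direct sum, $y$ is a finite $K$-linear combination of basis vectors $x_u$ with $u$ above block $j$. First I would show $x_u\in x'S_C$ for each such $u$: picking $v_0\in V_j$, the generator $s_{c^{\mu(\rho_j)}_{v_0,w}}$ kills every basis vector in the support of $x'$ except $x_{v_0}$, so $x's_{c^{\mu(\rho_j)}_{v_0,w}}=q_{v_0}x_w$ for any $w$ in block $j+1$, and then the transfers above yield a nonzero multiple of $x_u$. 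Hence $y\in x'S_C$ and $x\in x'S_C$. Running the identical computation with $x$ in place of $x'$ — note $v_0$ is still the unique vertex of block $j$ in the support of $x$ — gives $x_u\in xS_C$ for every $u$ above block $j$, so $y\in xS_C$ and $x'=x-y\in xS_C$. Together these give $x'S_C=xS_C$.

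For part (\ref{item:LemDescriptionOfSubmoduleOfModuleOfCountableColoredQuiver}), let $j_0$ be the least index occurring as the lowest nonzero block of some nonzero element of $L$; then $L\subseteq\bigoplus_{i\geq j_0}M_{\Gamma^{i}}$, and applying part (\ref{item:LemDescriptionOfElementOfModuleOfCountableColoredQuiver}) to a witnessing element shows $L\supseteq\bigoplus_{i>j_0}M_{\Gamma^{i}}$. Next, $L\cap M_{\Gamma^{j_0}}$ is an $S_{C^{j_0}}$-submodule of $M_{\Gamma^{j_0}}$: for an internal color $c\in C^{j_0}$ the only arrows of $\Gamma$ of color $c$ with source in $Q^{j_0}_0$ are the internal arrows of $\Gamma^{j_0}$ (transfer colors lie outside $\bigcup_{\omega}C^{\omega}$, and internal arrows of other blocks have source in a different vertex set), so $M_{\Gamma^{j_0}}$ is $C^{j_0}$-stable inside $M_\Gamma$ with its intrinsic action. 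From $\bigoplus_{i>j_0}M_{\Gamma^{i}}\subseteq L\subseteq M_{\Gamma^{j_0}}\oplus\bigoplus_{i>j_0}M_{\Gamma^{i}}$ one gets $L=(L\cap M_{\Gamma^{j_0}})\oplus\bigoplus_{i>j_0}M_{\Gamma^{i}}$. If $L\cap M_{\Gamma^{j_0}}\subsetneq M_{\Gamma^{j_0}}$, take $j=j_0$ and $\widehat L=L\cap M_{\Gamma^{j_0}}$. If instead $M_{\Gamma^{j_0}}\subseteq L$, then $L=\bigoplus_{i\geq j_0}M_{\Gamma^{i}}$, which forces $j_0\geq1$ since $L$ is proper in $M_\Gamma$, and we take $j=j_0-1$ and $\widehat L=0\subsetneq M_{\Gamma^{j_0-1}}$. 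I expect this case distinction to be the subtle point: the index $j$ in the statement is generally \emph{not} the lowest block occurring in $L$, and the degenerate case in which $L$ already contains an entire block must be absorbed by dropping $j$ by one. For part (\ref{item:LemSerialityOfModuleOfCountableColoredQuiver}), given an ascending chain $L_1\subseteq L_2\subseteq\cdots$ of submodules one may assume each $L_k\neq0$; the least-block index $j_0(L_k)$ is non-increasing in $k$, hence eventually equal to some constant $j^{*}$. For $k$ large, the analysis in part (\ref{item:LemDescriptionOfSubmoduleOfModuleOfCountableColoredQuiver}) gives $L_k=(L_k\cap M_{\Gamma^{j^{*}}})\oplus\bigoplus_{i>j^{*}}M_{\Gamma^{i}}$ with $L_k\cap M_{\Gamma^{j^{*}}}$ an $S_{C^{j^{*}}}$-submodule of the noetherian module $M_{\Gamma^{j^{*}}}$; that chain stabilizes, hence so does $\{L_k\}$, and $M_\Gamma$ is noetherian.
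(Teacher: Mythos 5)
Your proof is correct and follows essentially the same route as the paper's (the transfer colors act as rank-one operators letting one reach every higher block from the lowest nonempty one, and the resulting $K$-vector-space decomposition of each cyclic $xS_C$ drives all three parts), with one welcome bit of extra care: your explicit handling in part (\ref{item:LemDescriptionOfSubmoduleOfModuleOfCountableColoredQuiver}) of the degenerate case $L=\bigoplus_{i\geq j_0}M_{\Gamma^{i}}$, where $j$ must be lowered to $j_0-1$ so that $\widehat{L}=0$ is a \emph{proper} submodule of $M_{\Gamma^{j_0-1}}$, is genuinely needed and the paper's terse ``the assertion follows'' leaves it to the reader. One harmless slip in part (\ref{item:LemDescriptionOfElementOfModuleOfCountableColoredQuiver}): when re-running the computation with $x$ in place of $x'$, $v_0$ need not be the \emph{unique} vertex of block $j$ in the support of $x$ (all of $V_j$ lies there); what you actually use, and what suffices, is that the transfer color $c^{\mu(\rho_j)}_{v_0,w}$ labels exactly one arrow, so $s_{c^{\mu(\rho_j)}_{v_0,w}}$ annihilates every basis vector of $M_{\Gamma}$ other than $x_{v_0}$.
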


\begin{proof}
	For each $i\in\mbZ_{\geq 0}$, $v\in Q^{i}_{0}$, and $w\in Q^{i+1}_{0}$, denote by $c(v,w)$ the color of the arrow from $v$ to $w$ in $\Gamma$.
	
	(\ref{item:LemDescriptionOfElementOfModuleOfCountableColoredQuiver}) Let $v\in Q^{l}_{0}$, where $l\in\mbZ_{\geq 0}$ with $j<l$. Take $v'\in V_{j}$ and $w_{i}\in Q^{i}_{0}$ for each $i\in\{j+1,j+2,\ldots,l-1\}$. Then we have
	\begin{align*}
		x_{v}
		&=xs_{c(v',w_{j+1})}s_{c(w_{j+1},w_{j+2})}\cdots s_{c(w_{l-1},v)}\frac{1}{q_{v'}}\\
		&=x's_{c(v',w_{j+1})}s_{c(w_{j+1},w_{j+2})}\cdots s_{c(w_{l-1},v)}\frac{1}{q_{v'}}.
	\end{align*}
	By a similar argument to that in Example \ref{ex:AtomSpectrumOfGrothendieckCategoryAssociatedWithInfiniteColoredQuiver}, we have $x'S_{C}=xS_{C}$.
	
	(\ref{item:LemDescriptionOfSubmoduleOfModuleOfCountableColoredQuiver}) In the setting of (\ref{item:LemDescriptionOfElementOfModuleOfCountableColoredQuiver}), we have
	\begin{equation*}
		xS_{C}=x'S_{C}=x'S_{C^{j}}\oplus\left(\bigoplus_{i=j+1}^{\infty}M_{\Gamma^{i}}\right)
	\end{equation*}
	as $K$-vector spaces. Since $L$ is a sum of submodules of $M_{\Gamma}$ of the form $xS_{C}$, the assertion follows.
	
	(\ref{item:LemSerialityOfModuleOfCountableColoredQuiver}) Let $L_{0}\subset L_{1}\subset\cdots$ be an ascending chain of submodules of $M_{\Gamma}$. By (\ref{item:LemDescriptionOfSubmoduleOfModuleOfCountableColoredQuiver}), there exist $j,l\in\mbZ_{\geq 0}$ such that for any $l'\in\mbZ_{\geq 0}$ with $l\leq l'$, we have
	\begin{equation*}
		L_{l'}=\widehat{L_{l'}}\oplus\left(\bigoplus_{i=j+1}^{\infty}M_{\Gamma^{i}}\right)
	\end{equation*}
	as a $K$-vector space, where $\widehat{L_{l'}}$ is an $S_{C^{j}}$-submodule of $M_{\Gamma^{j}}$. Since we have $\widehat{L_{l}}\subset\widehat{L_{l+1}}\subset\cdots$, there exists $m\in\mbZ_{\geq 0}$ with $l\leq m$ such that $\widehat{L_{m}}=\widehat{L_{m+1}}=\cdots$ by the noetherianness of $M_{\Gamma^{j}}$. Therefore we have $L_{m}=L_{m+1}=\cdots$.
\end{proof}

\begin{Prop}\label{Prop:AddingMinimalAtom}
	Let $\{\Gamma^{i}=(Q^{i}_{0},Q^{i}_{1},C^{i})\}_{i=0}^{\infty}$ be a family of colored quivers with $Q^{i}_{0}\neq\emptyset$ for each $i\in\mbZ_{\geq 0}$, and let $\Gamma=(Q_{0},Q_{1},C)$ be the colored quiver
	\begin{equation*}
		\xymatrix{
			\Gamma^{0}\ar@3[r] & \Gamma^{1}\ar@3[r] & \cdots
		}.
	\end{equation*}
	\begin{enumerate}
		\item\label{item:PropMonoformnessOfModuleOfCountableColoredQuiver} $M_{\Gamma}$ is a monoform $S_{C}$-module, and we have
		\begin{equation*}
			\ASpec\mcA_{\Gamma}=\{\overline{M_{\Gamma}}\}\amalg\left(\bigcup_{i=0}^{\infty}\ASpec\mcA_{\Gamma^{i}}\right)
		\end{equation*}
		as a subset of $\ASpec(\Mod S_{C})$.
		\item\label{item:PropTopologyOnAtomSpectrumOfGrothendieckCategoryOfCountableColoredQuiver} For any $i\in\mbZ_{\geq 0}$, the subset $\ASpec\mcA_{\Gamma^{i}}$ of $\ASpec\mcA_{\Gamma}$ is open. The set
		\begin{equation*}
			\Biggl\{\{\overline{M_{\Gamma}}\}\amalg\Biggl(\bigcup_{j=i}^{\infty}\ASpec\mcA_{\Gamma^{j}}\Biggr)\Biggm| i\in\mbZ_{\geq 0}\Biggr\}
		\end{equation*}
		is a basis of open neighborhoods of $\overline{M_{\Gamma}}$ in $\ASpec\mcA_{\Gamma}$.
		\item\label{item:PropPartialOrderOnAtomSpectrumOfGrothendieckCategoryOfCountableColoredQuiver} Let $\alpha$ and $\beta$ be atoms in $\mcA_{\Gamma}$. Then we have $\alpha\leq\beta$ if and only if one of the following conditions holds.
		\begin{enumerate}
			\item There exists $i\in\mbZ_{\geq 0}$ such that $\alpha,\beta\in\ASpec\mcA_{\Gamma^{i}}$, and $\alpha\leq\beta$ in $\ASpec\mcA_{\Gamma^{i}}$.
			\item $\alpha=\beta=\overline{M_{\Gamma}}$.
			\item $\alpha=\overline{M_{\Gamma}}$, and $\beta\in\ASpec\mcA_{\Gamma^{i}}$ for infinitely many $i\in\mbZ_{\geq 0}$.
		\end{enumerate}
		In particular, $\overline{M_{\Gamma}}$ is minimal in $\ASpec\mcA_{\Gamma}$.
	\end{enumerate}
\end{Prop}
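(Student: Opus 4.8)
The plan is to establish part~(\ref{item:PropMonoformnessOfModuleOfCountableColoredQuiver}) first, the crux being that $M_{\Gamma}$ is monoform. Throughout write $M_{\ge k}=\bigoplus_{i\ge k}M_{\Gamma^{i}}$; since $\bigcup_{i\ge k}Q^{i}_{0}$ is closed under the arrows of $\Gamma$, this is a submodule of $M_{\Gamma}$, and $M_{\Gamma}/M_{\ge k}\cong M_{\Gamma^{\le k-1}}$ for the colored full subquiver $\Gamma^{\le k-1}$ of $\Gamma$ on $\bigcup_{i<k}Q^{i}_{0}$ (Remark~\ref{rem:SubmoduleAssociatedToColoredFullSubquiver}). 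To see $M_{\Gamma}$ is monoform, let $L$ be a nonzero submodule; we may assume $L$ is proper, so by Lemma~\ref{Lem:AddingMinimalAtom}(\ref{item:LemDescriptionOfSubmoduleOfModuleOfCountableColoredQuiver}) there is $j$ with $M_{\ge j+1}\subseteq L$, whence $M_{\Gamma}/L$ is a quotient of $M_{\Gamma^{\le j}}$. If $M_{\Gamma}$ and $M_{\Gamma}/L$ had a common nonzero subobject there would be a nonzero submodule $N\subseteq M_{\Gamma}$ and a monomorphism $\phi\colon N\to M_{\Gamma}/L$; by Lemma~\ref{Lem:AddingMinimalAtom}(\ref{item:LemDescriptionOfElementOfModuleOfCountableColoredQuiver})--(\ref{item:LemDescriptionOfSubmoduleOfModuleOfCountableColoredQuiver}), $N$ contains $M_{\ge k}$ for some $k$, which we may take $\ge j$. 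Fix $v\in Q^{k}_{0}$ and $v'\in Q^{k+1}_{0}$ (both nonempty by hypothesis): the arrow $v\to v'$ arising from the bold arrow $\Gamma^{k}\Rightarrow\Gamma^{k+1}$ carries a color $c$ that labels no other arrow of $\Gamma$ and does not occur in $\Gamma^{\le j}$ (its target lies in layer $k+1>j$). Hence $x_{v}s_{c}=x_{v'}$ in $M_{\Gamma}$, while $s_{c}$ annihilates $M_{\Gamma^{\le j}}$ and therefore $M_{\Gamma}/L$; applying $\phi$ gives $0=\phi(x_{v})s_{c}=\phi(x_{v}s_{c})=\phi(x_{v'})$, so $x_{v'}=0$, a contradiction. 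Thus $M_{\Gamma}$ is monoform.

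Now $\overline{M_{\Gamma}}$ is an atom and $\ASpec\mcA_{\Gamma}=\ASupp M_{\Gamma}$ (the discussion after Definition~\ref{Def:GrothendieckCategoryAssociatedWithColoredQuiver}). The inclusion $\{\overline{M_{\Gamma}}\}\cup\bigcup_{i}\ASpec\mcA_{\Gamma^{i}}\subseteq\ASupp M_{\Gamma}$ is clear since each $M_{\Gamma^{i}}\cong M_{\ge i}/M_{\ge i+1}$ is a subquotient of $M_{\Gamma}$. For the reverse, take $\alpha=\overline{H}\in\ASupp M_{\Gamma}$ with $H$ a monoform submodule of $M_{\Gamma}/L$: if $L=0$ then $H\subseteq M_{\Gamma}$ gives $\alpha=\overline{M_{\Gamma}}$, and if $L\ne0$ then $\ASupp(M_{\Gamma}/L)\subseteq\ASupp M_{\Gamma^{\le j}}=\bigcup_{i=0}^{j}\ASupp M_{\Gamma^{i}}$ by Proposition~\ref{Prop:AtomSupportAndAssociatedAtomsAndShortExactSequence}(\ref{item:PropAtomSupportAndShortExactSequence}) applied to the finite filtration of $M_{\Gamma^{\le j}}$ with subquotients $M_{\Gamma^{0}},\dots,M_{\Gamma^{j}}$, so $\alpha\in\bigcup_{i}\ASpec\mcA_{\Gamma^{i}}$. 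For disjointness, $M_{\ge i}\subseteq M_{\Gamma}$ is a monoform submodule atom-equivalent to $M_{\Gamma}$ (Proposition~\ref{Prop:PropertyOfMonoformObject}(\ref{item:PropSubobjectOfMonoformObjectIsMonoform})), so $\overline{M_{\Gamma}}=\overline{M_{\ge i}}\notin\ASupp(M_{\ge i}/M_{\ge i+1})=\ASupp M_{\Gamma^{i}}$ by Proposition~\ref{Prop:MonoformObjectAndAtomSupport}, using $M_{\ge i+1}\ne0$ ($Q^{i+1}_{0}\ne\emptyset$). This proves~(\ref{item:PropMonoformnessOfModuleOfCountableColoredQuiver}).

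For~(\ref{item:PropTopologyOnAtomSpectrumOfGrothendieckCategoryOfCountableColoredQuiver}): $M_{\Gamma^{i}}\in\mcA_{\Gamma}$, so $\ASpec\mcA_{\Gamma^{i}}=\ASupp M_{\Gamma^{i}}$ is open in $\ASpec\mcA_{\Gamma}$ by Proposition~\ref{Prop:OpenBasisOfAtomSpectrum}. Applying~(\ref{item:PropMonoformnessOfModuleOfCountableColoredQuiver}) to the shifted family $\{\Gamma^{l}\}_{l\ge i}$, whose substitution quiver $\Gamma_{\ge i}$ has $M_{\Gamma_{\ge i}}\cong M_{\ge i}$, yields $\ASupp M_{\ge i}=\{\overline{M_{\Gamma}}\}\amalg\bigcup_{l\ge i}\ASpec\mcA_{\Gamma^{l}}$, which is the set $U_{i}$ of the statement; since $M_{\ge i}\in\mcA_{\Gamma}$, each $U_{i}$ is an open neighborhood of $\overline{M_{\Gamma}}$. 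These form a basis of open neighborhoods: if $\Phi$ is open with $\overline{M_{\Gamma}}\in\Phi$, the definition of the topology provides a monoform $H\in\mcA_{\Gamma}$ with $\overline{H}=\overline{M_{\Gamma}}$ and $\ASupp H\subseteq\Phi$, and a common nonzero subobject of $H$ and $M_{\Gamma}$, which contains a copy of some $M_{\ge k}$ by Lemma~\ref{Lem:AddingMinimalAtom}(\ref{item:LemDescriptionOfElementOfModuleOfCountableColoredQuiver})--(\ref{item:LemDescriptionOfSubmoduleOfModuleOfCountableColoredQuiver}); hence $M_{\ge k}$ embeds into $H$ and $U_{k}=\ASupp M_{\ge k}\subseteq\ASupp H\subseteq\Phi$.

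For~(\ref{item:PropPartialOrderOnAtomSpectrumOfGrothendieckCategoryOfCountableColoredQuiver}), recall $\alpha\le\beta$ iff every open set containing $\alpha$ contains $\beta$ (Proposition~\ref{Prop:TopologicalCharacterizationOfPartialOrder}). If $\alpha\in\ASpec\mcA_{\Gamma^{i_{0}}}$, then $\ASpec\mcA_{\Gamma^{i_{0}}}$ is an open set containing $\alpha$ but not $\overline{M_{\Gamma}}$, so $\alpha\le\beta$ forces $\beta\in\ASpec\mcA_{\Gamma^{i_{0}}}$; since the specialization order restricts to subspaces and $\ASpec\mcA_{\Gamma^{i_{0}}}$ sits inside $\ASpec\mcA_{\Gamma}$ as the homeomorphic image of its own atom spectrum (Proposition~\ref{Prop:AtomSpectraOfLocalizingSubcategory}), this is precisely condition~(a). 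If $\alpha=\overline{M_{\Gamma}}$, then by~(\ref{item:PropTopologyOnAtomSpectrumOfGrothendieckCategoryOfCountableColoredQuiver}) an open set containing $\alpha$ is the same as a set containing some $U_{i}$, so $\alpha\le\beta$ iff $\beta\in\bigcap_{i}U_{i}=\{\overline{M_{\Gamma}}\}\cup\{\gamma\mid\gamma\in\ASpec\mcA_{\Gamma^{l}}\text{ for infinitely many }l\}$, i.e.\ condition~(b) or~(c). Minimality of $\overline{M_{\Gamma}}$ follows at once: any $\gamma\le\overline{M_{\Gamma}}$ is an instance of~(a)--(c) with $\beta=\overline{M_{\Gamma}}$, all of which force $\gamma=\overline{M_{\Gamma}}$. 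The only genuinely nonroutine step is the color-annihilation argument for monoformness in~(\ref{item:PropMonoformnessOfModuleOfCountableColoredQuiver}); the remainder just combines Lemma~\ref{Lem:AddingMinimalAtom} with the standard behaviour of atom supports and of the specialization topology.
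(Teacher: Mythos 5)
Your proof is correct and follows essentially the same approach as the paper: all three parts rest on Lemma~\ref{Lem:AddingMinimalAtom} exactly as the paper's do, with part~(1) established first and parts~(2)--(3) derived from it plus Propositions~\ref{Prop:OpenBasisOfAtomSpectrum} and \ref{Prop:TopologicalCharacterizationOfPartialOrder}. The only noticeable packaging difference is in the monoformness step, where the paper invokes the isomorphism-invariant property ``there is an infinite sequence of bold-arrow colors under which the module stays nonzero'' to distinguish submodules of $M_{\Gamma}$ from submodules of $M_{\Gamma}/L$, whereas you pick a single bold-arrow color $c$ between layers $k$ and $k+1$ and let $S_{C}$-linearity of a hypothetical monomorphism produce the contradiction; this is a more concrete rendering of the same dichotomy and is a valid alternative, and you also make the disjointness $\overline{M_{\Gamma}}\notin\ASupp M_{\Gamma^{i}}$ explicit via Proposition~\ref{Prop:MonoformObjectAndAtomSupport}, which the paper leaves implicit.
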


\begin{proof}
	(\ref{item:PropMonoformnessOfModuleOfCountableColoredQuiver}) Let $L$ be a nonzero submodule of $M_{\Gamma}$. Then by Lemma \ref{Lem:AddingMinimalAtom} (\ref{item:LemDescriptionOfSubmoduleOfModuleOfCountableColoredQuiver}), there exists an infinite sequence $c_{0},c_{1},\ldots$ of colors appearing in bold arrows such that
	\begin{equation*}
		Ls_{c_{0}}s_{c_{1}}\cdots s_{c_{i}}\neq 0
	\end{equation*}
	for each $i\in\mbZ_{\geq 0}$. On the other hand, any submodule of $M_{\Gamma}/L$ does not satisfy this property. This shows that $M_{\Gamma}$ is a monoform $S_{C}$-module.
	
	For any $i\in\mbZ_{\geq 0}$, we have $\ASpec\mcA_{\Gamma^{i}}=\ASupp M_{\Gamma^{i}}\subset\ASupp M_{\Gamma}=\ASpec\mcA_{\Gamma}$ since $M_{\Gamma^{i}}$ is a subquotient of $M_{\Gamma}$ by Remark \ref{rem:SubmoduleAssociatedToColoredFullSubquiver}. Let $H$ be a monoform subquotient of $M_{\Gamma}$ which is not atom-equivalent to $M_{\Gamma}$. Then there exists a nonzero submodule $L$ of $M_{\Gamma}$ such that $H$ is a submodule of $M_{\Gamma}/L$. There exists $i\in\mbZ_{\geq 0}$ such that $M_{\Gamma}/L$ is a quotient module of the quotient module $M_{\Gamma'}$ of $M_{\Gamma}$, where $\Gamma'$ is the colored full subquiver
	\begin{equation*}
		\xymatrix{
			\Gamma^{0}\ar@3[r] & \cdots\ar@3[r] & \Gamma^{i}
		}
	\end{equation*}
	of $\Gamma$. By Proposition \ref{Prop:AtomSupportAndAssociatedAtomsAndShortExactSequence} (\ref{item:PropAtomSupportAndShortExactSequence}), we have
	\begin{equation*}
		\overline{H}\in\ASupp(M_{\Gamma}/L)\subset\ASupp M_{\Gamma'}=\ASupp M_{\Gamma^{0}}\cup\cdots\cup\ASupp M_{\Gamma^{i}}.
	\end{equation*}
	Hence we deduce the conclusion.
	
	(\ref{item:PropTopologyOnAtomSpectrumOfGrothendieckCategoryOfCountableColoredQuiver}) By Proposition \ref{Prop:OpenBasisOfAtomSpectrum}, the subset $\ASpec\mcA_{\Gamma^{i}}=\ASupp M_{\Gamma^{i}}\subset\ASpec\mcA$ is open for each $i\in\mbZ_{\geq 0}$. By the definition of the topology on $\ASpec\mcA_{\Gamma}$, the set
	\begin{equation*}
		\{\ASupp L\mid L\text{ is a nonzero submodule of }M_{\Gamma}\}
	\end{equation*}
	is a basis of open neighborhoods of $\overline{M_{\Gamma}}$ in $\ASpec\mcA_{\Gamma}$. Hence the claim follows from Lemma \ref{Lem:AddingMinimalAtom} (\ref{item:LemDescriptionOfSubmoduleOfModuleOfCountableColoredQuiver}).
	
	(\ref{item:PropPartialOrderOnAtomSpectrumOfGrothendieckCategoryOfCountableColoredQuiver}) 	Assume that $\alpha\in\ASpec\mcA_{\Gamma^{i}}$ for some $i\in\mbZ_{\geq 0}$. Since $\ASpec\mcA_{\Gamma^{i}}$ is an open subset of $\ASpec\mcA_{\Gamma}$ by (\ref{item:PropTopologyOnAtomSpectrumOfGrothendieckCategoryOfCountableColoredQuiver}), we have $\alpha\leq\beta$ in $\ASpec\mcA_{\Gamma}$ if and only if $\beta\in\ASpec\mcA_{\Gamma^{i}}$, and $\alpha\leq\beta$ in $\ASpec\mcA_{\Gamma^{i}}$ by Proposition \ref{Prop:TopologicalCharacterizationOfPartialOrder}.
	
	Assume that $\alpha=\overline{M_{\Gamma}}$. Then $\alpha\leq\beta$ if and only if $\beta$ belongs to all the open subsets of $\ASpec\mcA_{\Gamma}$ containing $\alpha$ by Proposition \ref{Prop:TopologicalCharacterizationOfPartialOrder}. By (\ref{item:PropTopologyOnAtomSpectrumOfGrothendieckCategoryOfCountableColoredQuiver}), this is equivalent to that
	\begin{equation*}
		\beta\in\{\overline{M_{\Gamma}}\}\amalg\left(\bigcap_{i=0}^{\infty}\,\bigcup_{j=i}^{\infty}\ASpec\mcA_{\Gamma^{j}}\right).
	\end{equation*}
	Therefore the assertion holds.
\end{proof}

In Proposition \ref{Prop:ExistenceOfMaximalAtom}, it is shown that the atom spectrum of any locally noetherian Grothendieck category satisfies the ascending chain condition. The next theorem is a partial answer to the question which partially ordered sets appear as the atom spectra of locally noetherian Grothendieck categories. For an element $p$ of a partially ordered set $P$, denote by $V(p)$ the subset $\{p'\in P\mid p\leq p'\}$ of $P$. Recall that an Alexandroff space is a topological space $X$ such that the intersection of any family of open subsets of $X$ is also open.

\begin{Thm}\label{Thm:PosetOfLocallyNoetherianGrothendieckCategory}
	Let $P$ be a partially ordered set satisfying the ascending chain condition. Assume that $V(p)$ is a countable set for each $p\in P$. Then there exists a \emph{locally noetherian} Grothendieck category $\mcA$ such that $\ASpec\mcA$ is an Alexandroff space and is isomorphic to $P$ as a partially ordered set.
\end{Thm}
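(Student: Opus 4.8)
The plan is to construct, by recursion over $P$, a colored quiver $\Gamma^{p}$ for each $p\in P$ so that $\mcA_{\Gamma^{p}}$ ``realizes'' the up-set $V(p)$, and then to set $\mcA:=\mcA_{\Gamma}$ for the disjoint union $\Gamma=\coprod_{p\in P}\Gamma^{p}$. Since $\Gamma^{p}$ will depend only on the quivers $\Gamma^{q}$ with $q>p$, the recursion is well founded exactly because $P$ has no infinite ascending chain. Fix in advance pairwise distinct colors $c_{p}$ for the maximal elements $p$ of $P$, together with pairwise disjoint nonempty reservoirs $B_{0}(p)$ of further colors for the non-maximal $p$, all disjoint from the $c_{p}$. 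For maximal $p$, let $\Gamma^{p}$ be the one-vertex quiver carrying a single loop colored $c_{p}$, as in Example~\ref{ex:AtomSpectrumOfGrothendieckCategoryAssociatedWithFiniteColoredQuiverWithLoop}; then $M_{\Gamma^{p}}$ is a simple $S_{C}$-module on which $s_{c_{p}}$ acts as the identity and every other $s_{c}$ acts as $0$, so $\ASpec\mcA_{\Gamma^{p}}=\{\overline{M_{\Gamma^{p}}}\}$. For non-maximal $p$, use the hypothesis to pick a countable $J(p)\subseteq V(p)\setminus\{p\}$ with $V(p)\setminus\{p\}=\bigcup_{q\in J(p)}V(q)$, list $J(p)$ as a sequence $(q_{i})_{i\geq 0}$ in which \emph{every} element of $J(p)$ occurs infinitely often, and let $\Gamma^{p}$ be the chain built from $\Gamma^{q_{0}},\Gamma^{q_{1}},\dots$ as in Proposition~\ref{Prop:AddingMinimalAtom}, where the bold arrows introduced at this stage take colors from $B_{0}(p)$ and each piece is a verbatim copy of the previously constructed $\Gamma^{q_{i}}$, reusing its internal colors unchanged.

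Writing $*_{r}:=\overline{M_{\Gamma^{r}}}$, I would establish by the same recursion the following invariants for every $p\in P$: (i) $M_{\Gamma^{p}}$ is a noetherian monoform $S_{C}$-module, by Lemma~\ref{Lem:AddingMinimalAtom}(\ref{item:LemSerialityOfModuleOfCountableColoredQuiver}) and Proposition~\ref{Prop:AddingMinimalAtom}(\ref{item:PropMonoformnessOfModuleOfCountableColoredQuiver}) in the recursive step and trivially in the base case; (ii) $\ASpec\mcA_{\Gamma^{p}}=\{\,*_{r}\mid r\in V(p)\,\}$, which follows from Proposition~\ref{Prop:AddingMinimalAtom}(\ref{item:PropMonoformnessOfModuleOfCountableColoredQuiver}), the inductive value of $\ASpec\mcA_{\Gamma^{q}}$ for $q\in J(p)$, and $V(p)\setminus\{p\}=\bigcup_{q\in J(p)}V(q)$; and (iii) $\ASpec\mcA_{\Gamma^{p}}$ is the smallest open neighbourhood of $*_{p}$, with $*_{p}$ its least element. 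Invariant (iii) is where the ``infinitely often'' choice is used: in the basis of open neighbourhoods of $*_{p}$ supplied by Proposition~\ref{Prop:AddingMinimalAtom}(\ref{item:PropTopologyOnAtomSpectrumOfGrothendieckCategoryOfCountableColoredQuiver}), the tail union $\bigcup_{j\geq i}\ASpec\mcA_{\Gamma^{q_{j}}}$ equals $\bigcup_{q\in J(p)}\ASpec\mcA_{\Gamma^{q}}$ for every $i$, so the basis collapses to the single set $\ASpec\mcA_{\Gamma^{p}}$.

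It remains to see that $r\mapsto *_{r}$ is injective, equivalently that $M_{\Gamma^{r}}\cong M_{\Gamma^{r'}}$ forces $r=r'$. The point is that a color $c$ acts nonzero on $M_{\Gamma^{p}}$ precisely when $c$ occurs on some arrow of $\Gamma^{p}$, and that by the construction the set of such colors is $\{\,c_{s}\mid s\in V(p)\ \text{maximal}\,\}\cup\bigcup_{s\in V(p)\ \text{non-maximal}}B_{0}(s)$. Since the $c_{s}$ are distinct and the $B_{0}(s)$ are nonempty and pairwise disjoint, this set of colors recovers $V(p)$, and $V(p)=V(p')$ implies $p=p'$. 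The same observation makes the pieces glue along $P$ rather than along a tree unfolding of it: if $q,q'\in J(p)$ and $r\in V(q)\cap V(q')$, the copies of $\Gamma^{r}$ buried inside $\Gamma^{q}$ and inside $\Gamma^{q'}$ present isomorphic $S_{C}$-modules and therefore contribute the \emph{same} atom $*_{r}$.

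Finally, take $\mcA:=\mcA_{\Gamma}$ with $\Gamma=\coprod_{p\in P}\Gamma^{p}$. Since $M_{\Gamma}=\bigoplus_{p}M_{\Gamma^{p}}$ is a sum of noetherian submodules, $\mcA$ is a locally noetherian Grothendieck category (Proposition~\ref{Prop:LocallyNoetherianGrothendieckCategoryFromGrothendieckCategory}); by Proposition~\ref{Prop:AtomSpectrumOfGrothendieckCategoryAssociatedToDisjointUnion} and (ii), $\ASpec\mcA=\bigcup_{p}\ASpec\mcA_{\Gamma^{p}}=\{\,*_{r}\mid r\in P\,\}$ with each $\ASpec\mcA_{\Gamma^{p}}$ open in $\ASpec\mcA$, and by (iii) (applied inside the open subspace $\ASpec\mcA_{\Gamma^{p}}$) together with injectivity, $\ASpec\mcA_{\Gamma^{p}}$ is the smallest open neighbourhood of $*_{p}$ in $\ASpec\mcA$. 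Thus every point of $\ASpec\mcA$ has a smallest open neighbourhood, so $\ASpec\mcA$ is an Alexandroff space, and the specialization order reads $*_{p}\leq *_{r}\iff *_{r}\in\ASpec\mcA_{\Gamma^{p}}\iff r\in V(p)\iff p\leq r$ (using Proposition~\ref{Prop:AtomSpectrumIsKolmogorovSpace} and Proposition~\ref{Prop:TopologicalCharacterizationOfPartialOrder}), so $r\mapsto *_{r}$ is an isomorphism of partially ordered sets. The main obstacle is exactly the color bookkeeping of the previous paragraph: one must simultaneously force repeated copies of $\Gamma^{q}$ to define literally the same $S_{C}$-module (so overlapping up-sets contribute identical atoms) and keep the global color pattern of $\Gamma^{p}$ faithful to $V(p)$ (so distinct elements of $P$ give distinct atoms); the monoformness, noetherianness, atom support, and neighbourhood basis at each newly adjoined minimal atom are all furnished by Proposition~\ref{Prop:AddingMinimalAtom} and Lemma~\ref{Lem:AddingMinimalAtom}.
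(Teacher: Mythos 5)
Your construction is the paper's: recursively build $\Gamma^{p}$ for maximal $p$ as a loop, for non-maximal $p$ as the infinite chain of Proposition~\ref{Prop:AddingMinimalAtom} over a listing of $J(p)$ in which every element repeats infinitely often, then take $\Gamma=\coprod_{p}\Gamma^{p}$. Your color reservoirs $B_{0}(p)$ play exactly the role of the paper's $(p;i,j)$ labels, and invariants (i)--(iii) match what the paper extracts from Lemma~\ref{Lem:AddingMinimalAtom} and Proposition~\ref{Prop:AddingMinimalAtom}. So this is essentially the same proof.

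There is, however, a genuine error in the injectivity step. You write ``$r\mapsto *_{r}$ is injective, equivalently that $M_{\Gamma^{r}}\cong M_{\Gamma^{r'}}$ forces $r=r'$.'' This equivalence is false: by Definition~\ref{Def:MonoformObject}(\ref{item:DefAtomEquivalence}), $\overline{M_{\Gamma^{r}}}=\overline{M_{\Gamma^{r'}}}$ means only that $M_{\Gamma^{r}}$ and $M_{\Gamma^{r'}}$ share a common nonzero subobject, a much weaker condition than isomorphism (e.g.\ $\mbZ$ and $\mbQ$ in $\Mod\mbZ$ are atom-equivalent but not isomorphic). Your color-fingerprint computation, as written, shows only that the \emph{full} modules $M_{\Gamma^{r}}$ are pairwise non-isomorphic, which does not rule out atom-equivalence.

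The argument is salvageable, and the fix uses the very structure you already invoked in invariant (i). Suppose $N$ is a common nonzero subobject of $M_{\Gamma^{r}}$ and $M_{\Gamma^{r'}}$. Regarding $N$ as a submodule of $M_{\Gamma^{r}}$: if $r$ is maximal then $N=M_{\Gamma^{r}}$ is the simple module with fingerprint $\{c_{r}\}$; if $r$ is non-maximal then by Lemma~\ref{Lem:AddingMinimalAtom}(\ref{item:LemDescriptionOfSubmoduleOfModuleOfCountableColoredQuiver}) $N$ contains a tail $\bigoplus_{i>j}M_{\Gamma^{q_{i}}}$, on which the bold-arrow colors from $B_{0}(r)$ act nonzero. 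Since $N$ also embeds into $M_{\Gamma^{r'}}$, its color set is contained in the fingerprint of $M_{\Gamma^{r'}}$, namely $\{c_{s}\mid s\in V(r')\ \text{maximal}\}\cup\bigcup_{s\in V(r')\ \text{non-maximal}}B_{0}(s)$. Disjointness of the reservoirs then forces $r\in V(r')$, i.e.\ $r'\leq r$, and symmetry gives $r\leq r'$, hence $r=r'$. You should state injectivity in terms of a common nonzero subobject and run the fingerprint argument through $N$, not through the top modules.
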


\begin{proof}
	Since $P$ satisfies the ascending chain condition, by recursion, we can construct a family $\{\Gamma^{p}=(Q^{p}_{0},Q^{p}_{1},C^{p})\}_{p\in P}$ of colored quivers satisfying the following conditions:
	\begin{enumerate}
		\item For any maximal element $p$ of $P$, $\Gamma^{p}$ is the colored quiver
		\begin{equation*}
			\xymatrix{
				v_{p}\ar@(dl,dr)_-{c_{p}}
			}.
		\end{equation*}
		\item For any nonmaximal element $p$ of $P$, $\Gamma^{p}$ is the colored quiver
		\begin{equation*}
			\xymatrix{
				\Gamma^{p_{0}}\ar@3[r]^-{(p;0,0)} &
				\Gamma^{p_{0}}\ar@3[r]^-{(p;1,0)} &
				\Gamma^{p_{1}}\ar@3[r]^-{(p;1,1)} &
				\Gamma^{p_{0}}\ar@3[r]^-{(p;2,0)} &
				\Gamma^{p_{1}}\ar@3[r]^-{(p;2,1)} &
				\Gamma^{p_{2}}\ar@3[r]^-{(p;2,2)} &
				\Gamma^{p_{0}}\ar@3[r]^-{(p;3,0)} &
				\cdots,
			}
		\end{equation*}
		where $\{p_{0},p_{1},\ldots\}=V(p)\setminus\{p\}$. The label on each bold arrow is of the form $(p;i,j)$, where $i,j\in\mbZ_{\geq 0}$ with $i\leq j$.
	\end{enumerate}
	For any $p\in P$, by Proposition \ref{Prop:AddingMinimalAtom} (\ref{item:PropMonoformnessOfModuleOfCountableColoredQuiver}), the $S_{C^{p}}$-module $M_{\Gamma^{p}}$ is monoform, and we have
	\begin{equation*}
		\ASpec\mcA_{\Gamma^{p}}=\{\overline{M_{\Gamma^{p}}}\}\amalg\bigcup_{p'\in V(p)\setminus\{p\}}\ASpec\mcA_{\Gamma^{p'}}.
	\end{equation*}
	By Proposition \ref{Prop:AddingMinimalAtom} (\ref{item:PropTopologyOnAtomSpectrumOfGrothendieckCategoryOfCountableColoredQuiver}), the set $\ASpec\mcA_{\Gamma^{p}}$ is the only open subset of $\ASpec\mcA_{\Gamma^{p}}$ containing $\overline{M_{\Gamma^{p}}}$. By Proposition \ref{Prop:AddingMinimalAtom} (\ref{item:PropPartialOrderOnAtomSpectrumOfGrothendieckCategoryOfCountableColoredQuiver}) and by recursion, $\ASpec \mcA_{\Gamma^{p}}$ is isomorphic to $V(p)$ as a partially ordered set.
	
	Let $\Gamma=\coprod_{p\in P}\Gamma^{p}$. Then $\mcA_{\Gamma}$ is a locally noetherian Grothendieck category since we have $M_{\Gamma}=\bigoplus_{p\in P}M_{\Gamma^{p}}$, where $M_{\Gamma^{p}}$ is a noetherian submodule of $M_{\Gamma}$. By Proposition \ref{Prop:AtomSpectrumOfGrothendieckCategoryAssociatedToDisjointUnion}, we have
	\begin{equation*}
		\ASpec\mcA_{\Gamma}=\bigcup_{p\in P}\ASpec\mcA_{\Gamma^{p}}.
	\end{equation*}
	For any $p\in P$, the subset $\ASpec\mcA_{\Gamma^{p}}$ of $\ASpec\mcA_{\Gamma}$ is smallest among open subsets of $\ASpec\mcA_{\Gamma}$ containing $p$. Therefore $\ASpec \mcA_{\Gamma}$ is an Alexandroff space and is isomorphic to $P$ as a partially ordered set.
\end{proof}

In particular, any finite partially ordered set appears as the atom spectrum of some locally noetherian Grothendieck category.

\begin{Cor}\label{Cor:FinitePosetOfLocallyNoetherianGrothendieckCategory}\leavevmode
	\begin{enumerate}
		\item\label{item:CorFiniteTopologicalSpaceOfLocallyNoetherianGrothendieckCategory} Let $X$ be a finite Kolmogorov space. Then there exists a locally noetherian Grothendieck category $\mcA$ such that $\ASpec\mcA$ is homeomorphic to $X$.
		\item\label{item:CorFinitePosetOfLocallyNoetherianGrothendieckCategory} Let $P$ be a finite partially ordered set. Then there exists a locally noetherian Grothendieck category $\mcA$ such that $\ASpec\mcA$ is isomorphic to $P$ as a partially ordered set.
	\end{enumerate}
\end{Cor}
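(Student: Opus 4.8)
The plan is to deduce both parts from Theorem~\ref{Thm:PosetOfLocallyNoetherianGrothendieckCategory}, the point being that finiteness makes its two hypotheses automatic, together with the bijection between finite Kolmogorov spaces and finite partially ordered sets in Proposition~\ref{Prop:BijectionBetweenKolmogorovAlexandroffSpacesAndPartiallyOrderedSets}~(\ref{item:PropBijectionBetweenFiniteKolmogorovSpacesAndFinitePartiallyOrderedSets}).

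First I would establish (\ref{item:CorFinitePosetOfLocallyNoetherianGrothendieckCategory}). Let $P$ be a finite partially ordered set. Since $P$ is finite it trivially satisfies the ascending chain condition. For each $p\in P$ put $J(p)=V(p)\setminus\{p\}$; this is a finite, hence countable, subset of $V(p)\setminus\{p\}$, and I would check the covering identity $V(p)\setminus\{p\}=\bigcup_{p'\in J(p)}V(p')$: the inclusion $\subseteq$ holds because $p'\in V(p')$ for each $p'\in J(p)$, and for $\supseteq$, if $p'\in J(p)$ and $q\in V(p')$ then $p\le p'\le q$ gives $q\in V(p)$, while $q=p$ would force $p=p'=q$, contradicting $p'\ne p$. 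Thus the hypotheses of Theorem~\ref{Thm:PosetOfLocallyNoetherianGrothendieckCategory} are satisfied, and it yields a locally noetherian Grothendieck category $\mcA$ with $\ASpec\mcA$ isomorphic to $P$ as a partially ordered set.

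Then (\ref{item:CorFiniteTopologicalSpaceOfLocallyNoetherianGrothendieckCategory}) follows formally. Given a finite Kolmogorov space $X$, regard it as a finite partially ordered set $(X,\preceq)$ via its specialization order and apply the case just proved to obtain a locally noetherian Grothendieck category $\mcA$ with $\ASpec\mcA$ isomorphic to $(X,\preceq)$ as a partially ordered set and, by the last sentence of Theorem~\ref{Thm:PosetOfLocallyNoetherianGrothendieckCategory}, with $\ASpec\mcA$ an Alexandroff space. Because $\ASpec\mcA$ is finite, Proposition~\ref{Prop:BijectionBetweenKolmogorovAlexandroffSpacesAndPartiallyOrderedSets}~(\ref{item:PropBijectionBetweenKolmogorovAlexandroffSpacesAndPartiallyOrderedSets}) shows that the topology of a finite Kolmogorov space is recovered from its specialization order, so the poset isomorphism $\ASpec\mcA\cong(X,\preceq)$ upgrades to a homeomorphism $\ASpec\mcA\cong X$.

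I do not expect a real obstacle here: this is a corollary whose content is essentially bookkeeping. The only two points deserving a word are the verification of the covering condition on the sets $V(p)$, which is immediate once one notes $V(p)\setminus\{p\}$ is finite, and the observation that to recover the \emph{topology} of $X$ (and not merely its partial order) one uses the Alexandroff conclusion of Theorem~\ref{Thm:PosetOfLocallyNoetherianGrothendieckCategory} — which is harmless since every finite space is Alexandroff.
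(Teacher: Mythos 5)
Your proposal is correct and follows essentially the same route as the paper: both parts are reduced to Theorem~\ref{Thm:PosetOfLocallyNoetherianGrothendieckCategory} together with the correspondence of Proposition~\ref{Prop:BijectionBetweenKolmogorovAlexandroffSpacesAndPartiallyOrderedSets}~(\ref{item:PropBijectionBetweenFiniteKolmogorovSpacesAndFinitePartiallyOrderedSets}). The only difference is cosmetic (you prove the poset statement first and the topological one second, while the paper does the reverse), and you helpfully spell out the verification that a finite poset satisfies the theorem's countable-covering hypothesis with $J(p)=V(p)\setminus\{p\}$, a check the paper leaves implicit.
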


\begin{proof}
	(\ref{item:CorFiniteTopologicalSpaceOfLocallyNoetherianGrothendieckCategory}) Let $P$ be the corresponding finite partially ordered set to $X$ by Proposition \ref{Prop:BijectionBetweenKolmogorovAlexandroffSpacesAndPartiallyOrderedSets} (\ref{item:PropBijectionBetweenFiniteKolmogorovSpacesAndFinitePartiallyOrderedSets}). By Theorem \ref{Thm:PosetOfLocallyNoetherianGrothendieckCategory}, there exists a locally noetherian Grothendieck category $\mcA$ such that $\ASpec\mcA$ is an Alexandroff space and is isomorphic to $P$ as a partially ordered set. Since both finite Kolmogorov spaces $\ASpec\mcA$ and $X$ correspond to the finite partially ordered set $P$, they are homeomorphic.
	
	(\ref{item:CorFinitePosetOfLocallyNoetherianGrothendieckCategory}) This follows directly from Theorem \ref{Thm:PosetOfLocallyNoetherianGrothendieckCategory}.
\end{proof}

\begin{rem}\label{rem:PosetOfPolynomialRing}
	There exists a locally noetherian Grothendieck category whose atom spectrum does not satisfy the additional condition in Theorem \ref{Thm:PosetOfLocallyNoetherianGrothendieckCategory}. Consider the locally noetherian Grothendieck category $\Mod\mbC[x]$. Then we have
	\begin{equation*}
		\ASpec(\Mod\mbC[x])=\{\overline{\mbC[x]}\}\cup\{\overline{S_{a}}\mid a\in\mbC\},
	\end{equation*}
	where $S_{a}=\mbC[x]/(x-a)$. Therefore $V(\overline{\mbC[x]})=\ASpec(\Mod\mbC[x])$ is uncountable.
\end{rem}

In the proof of Theorem \ref{Thm:PosetOfLocallyNoetherianGrothendieckCategory}, we constructed a locally noetherian Grothendieck category by recursion since the partially ordered set satisfies the ascending chain condition. For a general partially ordered set, we need different ideas in order to construct a Grothendieck category. The following lemma provides an important idea.

\begin{Lem}\label{Lem:GrothendieckCategoryOfCountableFamilyOfColoredQuiversWithSkippingArrows}
	Let $\Gamma'=(Q'_{0},Q'_{1},C',s',t',u')$ be a colored quiver. Define a colored quiver $\Gamma=(Q_{0},Q_{1},C,s,t,u)$ as follows.
	\begin{enumerate}
		\item Let $Q_{0}=\mbZ\times Q'_{0}$.
		\item Let $C={^{0}}C\amalg{^{1}}C\amalg{^{2}}C$ and $Q_{1}={^{0}}Q_{1}\amalg{^{1}}Q_{1}\amalg{^{2}}Q_{1}$, where
		\begin{align*}
			{^{0}}C&=C',\\
			{^{1}}C&=\{{^{1}}c_{v,w}\mid v,w\in Q'_{0}\},\\
			{^{2}}C&=\{{^{2}}c^{i}_{v,w}\mid i\in\mbZ,\ v,w\in Q'_{0}\}
		\end{align*}
		and
		\begin{align*}
			{^{0}}Q_{1}&=\mbZ\times Q'_{1},\\
			{^{1}}Q_{1}&=\{{^{1}}r^{i}_{v,w}\mid i\in\mbZ,\ v,w\in Q'_{0}\},\\
			{^{2}}Q_{1}&=\{{^{2}}r^{i}_{v,w}\mid i\in\mbZ,\ v,w\in Q'_{0}\}.
		\end{align*}
		\item
		\begin{enumerate}
			\item For $r=(i,r')\in{^{0}}Q_{1}$, let $s(r)=(i,s'(r'))$, $t(r)=(i,t'(r'))$, and $u(r)=u'(r')$.
			\item For $r={^{1}}r^{i}_{v,w}\in{^{1}}Q_{1}$, let $s(r)=(i,v)$, $t(r)=(i-1,w)$, and $u(r)={^{1}}c_{v,w}$.
			\item For $r={^{2}}r^{i}_{v,w}\in{^{2}}Q_{1}$, let $s(r)=(i,v)$, $t(r)=(i-2,w)$, and $u(r)={^{2}}c^{i}_{v,w}$.
		\end{enumerate}
	\end{enumerate}
	Let $x$ be a nonzero element of $M_{\Gamma}$ of the form
	\begin{equation*}
		x=\sum_{i\in\mbZ}\sum_{v\in V_{i}}x_{(i,v)}q_{(i,v)},
	\end{equation*}
	where $V_{i}$ is a finite subset of $Q'_{0}$ for each $i\in\mbZ_{\geq 0}$, and $q_{(i,v)}\in K\setminus\{0\}$ for each $(i,v)\in Q_{0}$. Take the largest $j\in\mbZ$ satisfying $V_{j}\neq\emptyset$, and let
	\begin{equation*}
		x'=\sum_{v\in V_{j}}x_{(j,v)}q_{j,v}.
	\end{equation*}
	Then we have $x'S_{C}=xS_{C}$.
\end{Lem}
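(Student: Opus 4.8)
The plan is to establish $x'S_C\subseteq xS_C$ and $xS_C\subseteq x'S_C$ separately, in each case by reducing to the assertion that every basis vector $x_{(i,w)}$ of $M_\Gamma$ with $i$ below the top level $j$ lies in the cyclic submodule in question. Write $x=x'+y$, where $y$ is the sum of the terms of $x$ at levels strictly below $j$; since $x$ has finite support such a largest level $j$ exists, and $V_j\neq\emptyset$ because $x\neq 0$, so fix $v\in V_j$.

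First I would record how the generators act (this incidentally verifies that $\Gamma$ satisfies the finite arrow condition, so that $M_\Gamma$ is defined): a type-$0$ color $c\in{^{0}}C=C'$ is carried exactly by the arrows $(i,r')$ with $u'(r')=c$, so $s_c$ preserves levels and acts within each copy of $M_{\Gamma'}$; a type-$1$ color ${^{1}}c_{v_1,w}$ is carried by the arrows ${^{1}}r^{k}_{v_1,w}$ for all $k\in\mbZ$, so $x_{(k,v')}\cdot s_{{^{1}}c_{v_1,w}}=\delta_{v',v_1}x_{(k-1,w)}$; and a type-$2$ color ${^{2}}c^{k}_{v_1,w}$ is carried by the single arrow ${^{2}}r^{k}_{v_1,w}$, so $x_{(k',v')}\cdot s_{{^{2}}c^{k}_{v_1,w}}=\delta_{(k',v'),(k,v_1)}x_{(k-2,w)}$. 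Two consequences I would extract: applying $s_{{^{1}}c_{w,w'}}$ to a single basis vector $x_{(i,w)}$ gives $x_{(i-1,w')}$, so $x_{(i,w)}S_C$ contains every basis vector at levels $<i$ (iterate down through $Q'_0$ at each level); and, for the fixed $v\in V_j$, $x\cdot s_{{^{2}}c^{j}_{v,w}}=q_{(j,v)}x_{(j-2,w)}$ for every $w\in Q'_0$.

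For $xS_C\subseteq x'S_C$: since $x'$ is supported only on level $j$, $x'\cdot s_{{^{1}}c_{v,w}}=q_{(j,v)}x_{(j-1,w)}$, so $x_{(j-1,w)}\in x'S_C$ for all $w\in Q'_0$, and then the descent consequence puts every basis vector at levels $\leq j-1$ into $x'S_C$; in particular $y\in x'S_C$, hence $x=x'+y\in x'S_C$. For $x'S_C\subseteq xS_C$: from $x\cdot s_{{^{2}}c^{j}_{v,w}}=q_{(j,v)}x_{(j-2,w)}$ together with descent, every basis vector at levels $\leq j-2$ lies in $xS_C$; to also reach level $j-1$, compute $x\cdot s_{{^{1}}c_{v,w}}=\sum_{i:\,v\in V_i}q_{(i,v)}x_{(i-1,w)}$, whose top summand is $q_{(j,v)}x_{(j-1,w)}$ and whose remaining summands sit at levels $\leq j-2$ and hence already belong to $xS_C$; subtracting them yields $x_{(j-1,w)}\in xS_C$ for every $w$. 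Thus every basis vector at levels $\leq j-1$ lies in $xS_C$, so $y\in xS_C$ and $x'=x-y\in xS_C$. The two inclusions give $x'S_C=xS_C$.

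I expect the only subtle point to be the recovery of level $j-1$ in the second inclusion: the type-$1$ generators attached to $v$ act simultaneously on all levels at which $v$ occurs, so one cannot isolate the level-$j$ contribution by a single operation and must instead first clear the lower-level debris, which is legitimate precisely because those lower levels have already been shown to lie in $xS_C$. (One could also organize the argument around the observation that a word in the $s_c$ can never raise a level, so the level-$j$ component of $xS_C$ depends only on $x'$; but the explicit two-inclusion bookkeeping above is the same as that used in Lemma \ref{Lem:AddingMinimalAtom} and Example \ref{ex:AtomSpectrumOfGrothendieckCategoryAssociatedWithInfiniteColoredQuiver}.)
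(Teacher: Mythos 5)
Your proof is correct and rests on the same two ideas as the paper's: the level-specific type-$2$ colors ${^{2}}c^{j}_{v,w}$ isolate the top-level contribution to drop two levels cleanly, and the type-$1$ colors supply the descent from level $j-1$ downward. The paper organizes this slightly differently, first proving $x'S_{C}+L=xS_{C}+L$ modulo the submodule $L$ spanned by basis vectors at levels $\leq j-2$ and then absorbing $L$ into each side via a reference to Lemma \ref{Lem:AddingMinimalAtom}; your direct double inclusion carries out the same computations in a self-contained way, and your handling of the level-$(j-1)$ subtraction is exactly the delicate step the paper's mod-$L$ reformulation is designed to sidestep.
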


Note that $\Gamma$ is illustrated as
\begin{equation*}
	\xymatrix{
		\cdots\ar@3[r]^-{(1)}\ar@3@/_5mm/[rr]_-{(2;2)} & \Gamma'\ar@3[r]^-{(1)}\ar@3@/_5mm/[rr]_-{(2;1)} & \Gamma'\ar@3[r]^-{(1)}\ar@3@/_5mm/[rr]_-{(2;0)} & \Gamma'\ar@3[r]^-{(1)} & \cdots.
	}
\end{equation*}

\begin{proof}
	Let
	\begin{equation*}
		L=\sum_{\substack{i\in\mbZ\\i\leq j-2}}\sum_{v\in Q'_{0}}x_{(i,v)}S_{C}
	\end{equation*}
	and take $v'\in V_{j}$. For any $w\in V_{j-1}$, by letting $c={^{1}}c_{v',w}$, we have
	\begin{equation*}
		xs_{c}\frac{1}{q_{(j,v')}}=x_{(j-1,w)}+\sum_{\substack{i\in\mbZ\\i\leq j-1}}\sum_{v\in V_{i}}x_{(i-1,w)}\frac{q_{(i,v)}}{q_{(j,v')}}\delta_{v,v'}.
	\end{equation*}
	It follows that $x_{(j-1,w)}\in xS_{C}+L$, and hence $x'\in xS_{C}+L$. Since we have
	\begin{equation*}
		x's_{c}\frac{1}{q_{(j,v')}}=x_{(j-1,w)},
	\end{equation*}
	we also have $x\in x'S_{C}+L$. Therefore we have $x'S_{C}+L=xS_{C}+L$. Since $\Gamma$ has the set ${^{2}}Q_{1}$ of arrows with distinct colors, we can deduce the conclusion by a similar argument to that in the proof of Lemma \ref{Lem:AddingMinimalAtom} (\ref{item:LemDescriptionOfElementOfModuleOfCountableColoredQuiver}).
\end{proof}

We show the main construction theorem of Grothendieck categories.

\begin{Thm}\label{Thm:PosetOfGrothendieckCategory}\leavevmode
	\begin{enumerate}
		\item\label{item:ThmTopologicalSpaceOfGrothendieckCategory} Let $X$ be a Kolmogorov Alexandroff space. Then there exists a Grothendieck category $\mcA$ such that $\ASpec\mcA$ is homeomorphic to $X$.
		\item\label{item:ThmPosetOfGrothendieckCategory} Let $P$ be a partially ordered set. Then there exists a Grothendieck category $\mcA$ such that $\ASpec\mcA$ is isomorphic to $P$ as a partially ordered set.
	\end{enumerate}
\end{Thm}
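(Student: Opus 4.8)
The plan is to deduce both statements from a single construction: given a partially ordered set $P$, I will produce a colored quiver $\Gamma$ such that $\ASpec\mcA_{\Gamma}$ is an Alexandroff space which is isomorphic to $P$ as a partially ordered set. Statement \ref{item:ThmPosetOfGrothendieckCategory} is then immediate, and statement \ref{item:ThmTopologicalSpaceOfGrothendieckCategory} follows by applying this to $P=(X,\preceq)$ and invoking the bijection between Kolmogorov Alexandroff spaces and partially ordered sets (Proposition \ref{Prop:BijectionBetweenKolmogorovAlexandroffSpacesAndPartiallyOrderedSets} (\ref{item:PropBijectionBetweenKolmogorovAlexandroffSpacesAndPartiallyOrderedSets})). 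This parallels Theorem \ref{Thm:PosetOfLocallyNoetherianGrothendieckCategory}, and the whole point is to remove its two hypotheses on $P$: the ascending chain condition, and the countability of the sets $J(p)$.

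The countability hypothesis is the easy one to dispose of. For each $p\in P$ take $J(p)=V(p)\setminus\{p\}$, for which $V(p)\setminus\{p\}=\bigcup_{p'\in J(p)}V(p')$ holds automatically by transitivity, with no cardinality restriction. In the construction of $\Gamma^{p}$ I first \emph{bundle}: set $\Delta_{p}=\coprod_{p'\in J(p)}\Gamma^{p'}$, a disjoint union (Definition \ref{Def:SubstitutionOfColoredQuivers}) over a possibly uncountable index set, and then let $\Gamma^{p}$ be the substitution of $\Delta_{p}$ into every vertex of the infinite chain $\omega_{0}\to\omega_{1}\to\cdots$, i.e.\ the colored quiver treated in Proposition \ref{Prop:AddingMinimalAtom}. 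By Proposition \ref{Prop:AtomSpectrumOfGrothendieckCategoryAssociatedToDisjointUnion} and Proposition \ref{Prop:AddingMinimalAtom}, the new atom $\overline{M_{\Gamma^{p}}}$ then lies below exactly the atoms of $\ASupp M_{\Delta_{p}}=\bigcup_{p'\in J(p)}\ASpec\mcA_{\Gamma^{p'}}$, which is the cone of $p$ with $p$ removed; and since every entry of the chain equals the \emph{same} quiver $\Delta_{p}$, the neighbourhood basis of Proposition \ref{Prop:AddingMinimalAtom} (\ref{item:PropTopologyOnAtomSpectrumOfGrothendieckCategoryOfCountableColoredQuiver}) is constant, so $\overline{M_{\Gamma^{p}}}$ acquires a smallest open neighbourhood. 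That is what keeps the atom spectrum Alexandroff.

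Removing the ascending chain condition is the heart of the matter, and the main obstacle. Without it $P$ need not have maximal elements, so the family $\{\Gamma^{p}\}_{p\in P}$ cannot be produced by recursion ``from the top'', and the equation $\Gamma^{p}=\Delta_{p}\to\Delta_{p}\to\cdots$ is genuinely circular along any infinite ascending chain. The resolution is to build \emph{one} colored quiver $\Gamma$ directly, carrying a level structure, and to replace Proposition \ref{Prop:AddingMinimalAtom} alone by Lemma \ref{Lem:GrothendieckCategoryOfCountableFamilyOfColoredQuiversWithSkippingArrows}. Concretely I would index the vertices of $\Gamma$ by pairs consisting of a finite strictly ascending chain in $P$ together with a level in $\mbZ_{\geq 0}$, and equip $\Gamma$ with three families of arrows as in Lemma \ref{Lem:GrothendieckCategoryOfCountableFamilyOfColoredQuiversWithSkippingArrows}: ``local'' arrows (loops distinguishing the vertices of a fixed chain), ``descending'' arrows lowering the level by one and passing into the disjoint union $\coprod_{q>p}$, and ``skipping'' arrows lowering the level by two and carrying pairwise distinct colors. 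The distinct colors on the skipping arrows are precisely what Lemma \ref{Lem:GrothendieckCategoryOfCountableFamilyOfColoredQuiversWithSkippingArrows} uses to force that every element of $M_{\Gamma}$ generates the same submodule as its top-level component, with no collapsing, so that the submodule $M_{\Gamma^{p}}$ attached (via Remark \ref{rem:SubmoduleAssociatedToColoredFullSubquiver}) to the full subquiver over chains starting at or above $p$ is monoform and $\overline{M_{\Gamma^{p}}}$ is a genuinely new atom.

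Finally I would check, using Proposition \ref{Prop:AtomSupportAndAssociatedAtomsAndShortExactSequence} (\ref{item:PropAtomSupportAndShortExactSequence}), Proposition \ref{Prop:AddingMinimalAtom}, and Proposition \ref{Prop:TopologicalCharacterizationOfPartialOrder}, that the atoms of $\mcA_{\Gamma}$ are exactly $\{\overline{M_{\Gamma^{p}}}\}_{p\in P}$ (so $p\mapsto\overline{M_{\Gamma^{p}}}$ is a bijection onto $\ASpec\mcA_{\Gamma}$), that $\ASupp M_{\Gamma^{p}}=\{\overline{M_{\Gamma^{q}}}\mid q\geq p\}$ is the smallest open neighbourhood of $\overline{M_{\Gamma^{p}}}$, and hence that $\overline{M_{\Gamma^{p}}}\leq\overline{M_{\Gamma^{q}}}$ if and only if $p\leq q$; combining this over all $p$ with Proposition \ref{Prop:AtomSpectrumOfGrothendieckCategoryAssociatedToDisjointUnion} gives the desired homeomorphism. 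The delicate points are verifying that the level-and-chain bookkeeping creates no spurious atoms and that the minimal-neighbourhood property survives the passage to the disjoint union; no local noetherianness is claimed (and by Remark \ref{rem:PosetOfPolynomialRing} none can be expected in general).
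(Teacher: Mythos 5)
Your plan gets the broad shape right: you reduce to partially ordered sets via Proposition \ref{Prop:BijectionBetweenKolmogorovAlexandroffSpacesAndPartiallyOrderedSets}, you recognize that the ascending chain condition is the real obstacle and that the ``from-the-top'' recursion of Theorem \ref{Thm:PosetOfLocallyNoetherianGrothendieckCategory} cannot be salvaged, and you correctly identify Lemma \ref{Lem:GrothendieckCategoryOfCountableFamilyOfColoredQuiversWithSkippingArrows} (the $\mbZ$-indexed chain with skipping arrows of distinct colors) as the device that replaces the induction. Your treatment of the countability issue by bundling $J(p)=V(p)\setminus\{p\}$ into a single disjoint union is sound. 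All of this matches the paper's strategy in spirit.

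However, there is a concrete gap, and it sits exactly where you flag ``the delicate points.'' You assert that the atoms of $\mcA_{\Gamma}$ are precisely $\{\overline{M_{\Gamma^{p}}}\}_{p\in P}$ and hence that $p\mapsto\overline{M_{\Gamma^{p}}}$ is a bijection onto $\ASpec\mcA_{\Gamma}$, so that no quotient category is needed. This is false for the construction you sketch. Once you put loops at the distinguished vertices (your ``local arrows''), the corresponding one-vertex colored quivers give simple $S_{C}$-modules $M_{\Delta^{\theta}}$, and each simple module is monoform and contributes its own atom. For $p_{\theta}$ maximal in $P$ one gets $\overline{M_{\Gamma^{\theta}}}=\overline{M_{\Delta^{\theta}}}$ and nothing extra; but for $p_{\theta}$ nonmaximal, $\overline{M_{\Delta^{\theta}}}$ is a genuine new atom with $V(\overline{M_{\Delta^{\theta}}})=\{\overline{M_{\Delta^{\theta}}}\}$, i.e.\ a spurious maximal point not present in $P$. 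These cannot be argued away: the paper's proof verifies explicitly that
\begin{align*}
\ASpec\mcA_{\Gamma}=\{\overline{M_{\Gamma^{\theta}}}\mid\theta\in\Theta\}\cup\{\overline{M_{\Delta^{\theta}}}\mid\theta\in\Theta\},
\end{align*}
and then passes to the quotient $\mcA=\mcA_{\Gamma}/\ASupp^{-1}\Phi$ with $\Phi=\{\overline{M_{\Delta^{\theta}}}\mid p_{\theta}\text{ not maximal in }P\}$, using Lemma \ref{Lem:OpenSubclassOfAtomSpectrumAndAtomSupport} and Theorem \ref{Thm:AtomSpectraOfQuotientCategory} to excise the surplus while preserving the Alexandroff topology and the partial order. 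Your proposal has no analogue of this final step, and without it the statement you are trying to prove is simply not what you get.

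Two smaller points. First, the vertex indexing you describe (``a finite strictly ascending chain in $P$ together with a level in $\mbZ_{\geq 0}$'') undercounts: the paper interleaves a level in $\mbZ$ at each step of the chain, i.e.\ elements of $\Theta\times\prod_{j\geq 1}(\widehat{\Theta}\times\widehat{\mbZ})$, and this interleaved structure is what allows the repeated application of Lemma \ref{Lem:GrothendieckCategoryOfCountableFamilyOfColoredQuiversWithSkippingArrows} to reduce every element of $M_{\Gamma^{\theta}}$ to a single generator. Second, the paper fixes a well-ordering $\Theta$ of $P$ so that the index set $E$ is totally ordered under the lexicographic order, and it is this total order that lets one classify all nonzero proper submodules of $M_{\Gamma^{\theta}}$ and in particular rule out spurious monoform subquotients in the ``limit'' case. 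Your sketch never invokes a well-ordering of $P$, and without a total order on the vertex index it is unclear how you would carry out the submodule analysis that underlies both the monoformness of $M_{\Gamma^{\theta}}$ and the exclusion of extra atoms in the limit.
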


\begin{proof}
	By Proposition \ref{Prop:BijectionBetweenKolmogorovAlexandroffSpacesAndPartiallyOrderedSets} (\ref{item:PropBijectionBetweenKolmogorovAlexandroffSpacesAndPartiallyOrderedSets}), it suffices to show that for any partially ordered set $P$, there exists a Grothendieck category $\mcA$ such that $\ASpec\mcA$ is an Alexandroff space and is isomorphic to $P$ as a partially ordered set.
	
	Index $P=\{p_{\theta}\}_{\theta\in\Theta}$ by a well-ordered set $\Theta$. For a totally ordered set $T$, define another totally ordered set $\widehat{T}$ by $\widehat{T}=T\cup\{\pm\infty\}$, where $\infty$ (resp.\ $-\infty$) is supposed to be larger (resp.\ smaller) than any element of $T$. Regard
	\begin{equation*}
		\widetilde{E}=\Theta\times\prod_{j=1}^{\infty}(\widehat{\mbZ}\times\widehat{\Theta})=\Theta\times\widehat{\mbZ}\times\widehat{\Theta}\times\widehat{\mbZ}\times\widehat{\Theta}\times\cdots
	\end{equation*}
	as a totally ordered set with respect to the lexicographic order. Define a totally ordered set $E$ by
	\begin{equation*}
		E=\Biggl\{(\theta_{0},i_{1},\theta_{1},\ldots,i_{l},\theta_{l},\infty,\infty,\ldots)\Biggm|
		\begin{matrix}
			l\in\mbZ_{\geq 0},\ \theta_{0},\ldots,\theta_{l}\in\Theta,\ i_{1},\ldots,i_{l}\in\mbZ,\\
			p_{\theta_{0}}<\cdots<p_{\theta_{l}}
		\end{matrix}
		\Biggr\}
	\end{equation*}
	as a subset of $\widetilde{E}$. For each $\theta\in\Theta$, denote by $E^{\theta}$ the subset of $E$ consisting of elements $e=(\theta_{0},i_{1},\theta_{1},\ldots,i_{l},\theta_{l},\infty,\infty,\ldots)$ of $E$ with $\theta_{0}=\theta$. Let
	\begin{equation*}
		\mcF^{\theta}=\Biggl\{(\theta_{0},i_{1},\theta_{1},\ldots,i_{l-1},\theta_{l-1},i_{l})\Biggm|
		\begin{matrix}
			l\in\mbZ_{\geq 0},\ \theta_{0},\ldots,\theta_{l-1}\in\Theta,\ i_{1},\ldots,i_{l}\in\mbZ,\\
			p_{\theta_{0}}<\cdots<p_{\theta_{l-1}}<p_{\theta}
		\end{matrix}
		\Biggr\}.
	\end{equation*}
	
	Define a colored quiver $\Gamma=(Q_{0},Q_{1},C,s,t,u)$ as follows.
	\begin{enumerate}
		\item Let $Q_{0}=\{v_{e}\}_{e\in E}$.
		\item Let $C={^{0}}C\amalg{^{1}}C\amalg{^{2}}C\amalg{^{\infty}}C$ and $Q_{1}={^{0}}Q_{1}\amalg{^{1}}Q_{1}\amalg{^{2}}Q_{1}\amalg{^{\infty}}Q_{1}$, where
		\begin{align*}
			{^{0}}C^{\theta}&=\{{^{0}}c^{\theta}_{e,e'}\mid\theta',\theta''\in\Theta,\ e\in E^{\theta'},\ e'\in E^{\theta''},\ \theta''<\theta',\ p_{\theta}<p_{\theta'},\ p_{\theta}<p_{\theta''}\},\\
			{^{1}}C^{\theta}&=\{{^{1}}c^{\theta}_{e,e'}\mid\theta',\theta''\in\Theta,\ e\in E^{\theta'},\ e'\in E^{\theta''},\ p_{\theta}<p_{\theta'},\ p_{\theta}<p_{\theta''}\},\\
			{^{2}}C^{\theta}&=\{{^{2}}c^{\theta,i}_{e,e'}\mid\theta'\in\Theta,\ i\in\mbZ,\ e\in E^{\theta'},\ e'\in E^{\theta''},\ p_{\theta}<p_{\theta'},\ p_{\theta}<p_{\theta''}\},\\
			{^{\infty}}C^{\theta}&=\{{^{\infty}}c^{\theta,i}_{e}\mid\theta'\in\Theta,\ i\in\mbZ,\ e\in E^{\theta'},\ p_{\theta}<p_{\theta'}\}\cup\{{^{\infty}}c^{\theta,\infty}\}
		\end{align*}
		for each $\theta\in\Theta$,
		\begin{align*}
			{^{0}}C&=\coprod_{\theta\in\Theta}{^{0}}C^{\theta},\\
			{^{1}}C&=\coprod_{\theta\in\Theta}{^{1}}C^{\theta},\\
			{^{2}}C&=\coprod_{\theta\in\Theta}{^{2}}C^{\theta},\\
			{^{\infty}}C&=\coprod_{\theta\in\Theta}{^{\infty}}C^{\theta},
		\end{align*}
		and
		\begin{align*}
			{^{0}}Q_{1}&=\coprod_{\theta\in\Theta}\{{^{0}}r^{c}_{f,i}\mid c\in{^{0}}C^{\theta},\ f\in\mcF^{\theta},\ i\in\mbZ\},\\
			{^{1}}Q_{1}&=\coprod_{\theta\in\Theta}\{{^{1}}r^{c}_{f,i}\mid c\in{^{1}}C^{\theta},\ f\in\mcF^{\theta},\ i\in\mbZ\},\\
			{^{2}}Q_{1}&=\coprod_{\theta\in\Theta}\{{^{2}}r^{c}_{f}\mid c\in{^{2}}C^{\theta},\ f\in\mcF^{\theta}\},\\
			{^{\infty}}Q_{1}&=\coprod_{\theta\in\Theta}\{{^{\infty}}r^{c}_{f}\mid c\in{^{\infty}}C^{\theta},\ f\in\mcF^{\theta}\}.
		\end{align*}
		\item
		\begin{enumerate}
			\item For $r={^{0}}r^{c}_{f,i}\in {^{0}}Q_{1}$ with $c={^{0}}c^{\theta}_{e,e'}$, let $s(r)=v_{(f,\theta,i,e)}$, $t(r)=v_{(f,\theta,i,e')}$, and $u(r)=c$.
			\item For $r={^{1}}r^{c}_{f,i}\in {^{1}}Q_{1}$ with $c={^{1}}c^{\theta}_{e,e'}$, let $s(r)=v_{(f,\theta,i,e)}$, $t(r)=v_{(f,\theta,i-1,e')}$, and $u(r)=c$.
			\item For $r={^{2}}r^{c}_{f}\in {^{2}}Q_{1}$ with $c={^{2}}c^{\theta,i}_{e,e'}$, let $s(r)=v_{(f,\theta,i,e)}$, $t(r)=v_{(f,\theta,i-2,e')}$, and $u(r)=c$.
			\item
			\begin{enumerate}
				\item For $r={^{\infty}}r^{c}_{f}\in {^{\infty}}Q_{1}$ with $c={^{\infty}}c^{\theta,i}_{e}$, let $s(r)=v_{(f,\theta,\infty,\infty,\ldots)}$, $t(r)=v_{(f,\theta,i,e)}$, and $u(r)=c$.
				\item For $r={^{\infty}}r^{c}_{f}\in {^{\infty}}Q_{1}$ with $c={^{\infty}}c^{\theta,\infty}$, let $s(r)=t(r)=v_{(f,\theta,\infty,\infty,\ldots)}$, and $u(r)=c$.
			\end{enumerate}
		\end{enumerate}
	\end{enumerate}
	For any $\theta\in\Theta$, denote by $\Gamma^{\theta}$ the colored full subquiver of $\Gamma$ corresponding to the subset $\{v_{e}\}_{e\in E^{\theta}}$ of $Q_{0}$. Since we have
	\begin{equation*}
		E^{\theta}=\{(\theta,\infty,\infty,\ldots)\}\amalg\coprod_{i\in\mbZ}\coprod_{\substack{\theta'\in\Theta\\p_{\theta}<p_{\theta'}}}E^{\theta'},
	\end{equation*}
	$\Gamma^{\theta}$ is illustrated as follows.
	\begin{equation*}
		\Gamma^{\theta}\colon\xymatrix @R=3mm @W=13mm @L=2mm {
			& & \Delta^{\theta}\ar@3@<-2mm>[dddll]_-{(\infty;2)}\ar@3@<-1mm>[dddl]|-{(\infty;1)}\ar@3[ddd]|-{(\infty;0)}\ar@3@<1mm>[dddr]|-{(\infty;-1)}\ar@3@<2mm>[dddrr]^-{(\infty;-2)} & & \\
			& & & & \\
			& & & & \\
			\save[].[dd].[dddd].[dddddd]!C="g0"\frm{}\restore\phantom{\vdots} &
			\save[].[dd].[dddd].[dddddd]!C="g1"*[F]\frm{}\restore\vdots\ar@3[dd]|-{(0;*,\theta')} &
			\save[].[dd].[dddd].[dddddd]!C="g2"*[F]\frm{}\restore\vdots\ar@3[dd]|-{(0;*,\theta')} &
			\save[].[dd].[dddd].[dddddd]!C="g3"*[F]\frm{}\restore\vdots\ar@3[dd]|-{(0;*,\theta')} &
			\save[].[dd].[dddd].[dddddd]!C="g4"\frm{}\restore\phantom{\vdots} \\
			& & & & \\
			&
			\Gamma^{\theta'}\ar@3[dd]|-{(0;\theta',\theta'')} &
			\Gamma^{\theta'}\ar@3[dd]|-{(0;\theta',\theta'')} &
			\Gamma^{\theta'}\ar@3[dd]|-{(0;\theta',\theta'')} &
			\\
			\cdots & & & & \cdots
			\\
			&
			\Gamma^{\theta''}\ar@3[dd]|-{(0;\theta'',*)} &
			\Gamma^{\theta''}\ar@3[dd]|-{(0;\theta'',*)} &
			\Gamma^{\theta''}\ar@3[dd]|-{(0;\theta'',*)} &
			\\
			& & & & \\
			\ar@3@<-1mm>@/_10mm/[rr]_-{(2;2)} &
			\vdots\ar@3@<-1mm>@/_10mm/[rr]_-{(2;1)} &
			\vdots\ar@3@<-1mm>@/_10mm/[rr]_-{(2;0)} &
			\vdots &
			\ar@3@<-2mm>^{(1)}"g0";"g1"
			\ar@3@<-2mm>^{(1)}"g1";"g2"
			\ar@3@<-2mm>^{(1)}"g2";"g3"
			\ar@3@<-2mm>^{(1)}"g3";"g4"
		}
	\end{equation*}
	where $\Delta^{\theta}$ is the colored quiver
	\begin{equation*}
		\xymatrix{
			v_{(\theta,\infty,\infty,\ldots)}\ar@(dl,dr)_-{(\infty;\infty)}
		}.
	\end{equation*}
	Note that for any $e,e'\in E^{\theta}$, we have $x_{v_{e}}S_{C}\subset x_{v_{e'}}S_{C}$ if and only if $e\leq e'$.
	
	Let $x$ be a nonzero element of $M_{\Gamma^{\theta}}$. Then there exist $e(0),\ldots,e(n)\in E^{\theta}$ satisfying $e(0)>\cdots>e(n)$ and
	\begin{equation*}
		x=\sum_{j=0}^{n}x_{v_{e(j)}}q_{j},
	\end{equation*}
	where $q_{0},\ldots,q_{n}\in K\setminus\{0\}$. We show that $xS_{C}=x_{v_{e(0)}}S_{C}$ by induction on $n$. Assume that $n\geq 1$. For each $j=0,\ldots,n$, let
	\begin{equation*}
		e(j)=(\theta,i^{(j)}_{1},\theta^{(j)}_{1},i^{(j)}_{2},\theta^{(j)}_{2},\ldots).
	\end{equation*}
	Take the smallest $m\in\mbZ_{\geq 1}$ satisfying $(i^{(0)}_{m},\theta^{(0)}_{m})\neq (i^{(n)}_{m},\theta^{(n)}_{m})$. Then by Lemma \ref{Lem:GrothendieckCategoryOfCountableFamilyOfColoredQuiversWithSkippingArrows} and Lemma \ref{Lem:AddingMinimalAtom} (\ref{item:LemDescriptionOfElementOfModuleOfCountableColoredQuiver}), there exists $n'\in\{0,\ldots,n\}$ such that $(i^{(0)}_{m},\theta^{(0)}_{m})=\cdots=(i^{(n')}_{m},\theta^{(n')}_{m})$, and
	\begin{equation*}
		xS_{C}=\left(\sum_{j=0}^{n'}x_{v_{e(j)}}q_{j}\right)S_{C}.
	\end{equation*}
	Let $\theta'=\theta^{(0)}_{m}=\cdots=\theta^{(n')}_{m}$. Then $\sum_{j=0}^{n'}x_{v_{e(j)}}q_{j}$ can be regarded as an element of $M_{\Gamma^{\theta'}}$, and hence by the induction hypothesis, we obtain $xS_{C}=x_{v_{e(0)}}q_{0}S_{C}=x_{v_{e(0)}}S_{C}$.
	
	By a similar argument to that in the proof of Proposition \ref{Prop:AddingMinimalAtom} (\ref{item:PropMonoformnessOfModuleOfCountableColoredQuiver}), it follows that $M_{\Gamma^{\theta}}$ is a monoform $S_{C}$-module. The $S_{C}$-module $M_{\Delta^{\theta}}$ is simple. We show that
	\begin{equation*}
		\ASupp M_{\Gamma^{\theta}}=\{\overline{M_{\Gamma^{\theta'}}}\mid\theta'\in\Theta,\ p_{\theta}<p_{\theta'}\}\cup\{\overline{M_{\Delta^{\theta'}}}\mid\theta'\in\Theta,\ p_{\theta}<p_{\theta'}\}.
	\end{equation*}
	
	For each $\widetilde{e}=(\theta_{0},i_{1},\theta_{1},\ldots)\in\widetilde{E}$ $(=\Theta\times\prod_{j=1}^{\infty}(\widehat{\mbZ}\times\widehat{\Theta}))$ with $\theta_{0}=\theta$, define a submodule $L_{<\widetilde{e}}$ of $M_{\Gamma^{\theta}}$ by
	\begin{equation*}
		L_{<\widetilde{e}}=\sum_{\substack{e\in E^{\theta}\\e<\widetilde{e}}}x_{v_{e}}S_{C}.
	\end{equation*}
	Since $E$ is a totally ordered set, and any subset of $E^{\theta}$ has a supremum in $\widetilde{E}$, any nonzero proper submodule $L$ of $M_{\Gamma^{\theta}}$ is of one of the following forms.
	\begin{enumerate}
		\item $L=x_{v_{e}}S_{C}$ for some $e\in E^{\theta}\setminus\{(\theta,\infty,\infty,\ldots)\}$.
		\item $L=L_{<e}$ for some $e\in E^{\theta}$.
		\item $L=L_{<\widetilde{e}}$ for some $\widetilde{e}=(\theta,i_{1},\theta_{1},\ldots,i_{l},\theta_{l},i_{l+1},\infty,-\infty,-\infty,\ldots)\in\widetilde{E}$, where $l\in\mbZ_{\geq 0}$, $\theta_{1},\ldots,\theta_{l}\in\Theta$, $i_{1},\ldots,i_{l+1}\in\mbZ$, and $p_{\theta}<p_{\theta_{1}}<\cdots<p_{\theta_{l}}$.
		\item $L=L_{<\widetilde{e}}$ for some $\widetilde{e}=(\theta,i_{1},\theta_{1},\ldots,i_{l},\theta_{l},i_{l+1},\theta_{l+1},-\infty,-\infty,\ldots)\in\widetilde{E}$, where $l\in\mbZ_{\geq 0}$, $\theta_{1},\ldots,\theta_{l+1}\in\Theta$, $i_{1},\ldots,i_{l+1}\in\mbZ$, and $p_{\theta}<p_{\theta_{1}}<\cdots<p_{\theta_{l+1}}$.
		\item $L=L_{<\widetilde{e}}$ for some $\widetilde{e}=(\theta,i_{1},\theta_{1},i_{2},\theta_{2},\ldots)\in\widetilde{E}$, where $\theta_{1},\theta_{2},\ldots\in\Theta$, $i_{1},i_{2},\ldots\in\mbZ$, and $p_{\theta}<p_{\theta_{1}}<\cdots$.
	\end{enumerate}
	In the first four cases, $M_{\Gamma^{\theta}}/L$ has an essential submodule isomorphic to $M_{\Gamma^{\theta'}}$ or $M_{\Delta^{\theta'}}$ for some $\theta'\in\Theta$ with $p_{\theta}<p_{\theta'}$. Hence by Proposition \ref{Prop:AssociatedAtomsOfEssentialSubobject} and Proposition \ref{Prop:NumberOfAssociatedAtoms} (\ref{item:PropNumberOfAssociatedAtomsOfUniformObject}), we have $\AAss(M_{\Gamma^{\theta}}/L)=\AAss M_{\Gamma^{\theta'}}=\{\overline{M_{\Gamma^{\theta'}}}\}$ or $\AAss(M_{\Gamma^{\theta}}/L)=\AAss M_{\Delta^{\theta'}}=\{\overline{M_{\Delta^{\theta'}}}\}$.
	
	In the fifth case, we show that any nonzero submodule $L'/L$ of $M_{\Gamma^{\theta}}/L$ is not monoform. By Proposition \ref{Prop:PropertyOfMonoformObject} (\ref{item:PropSubobjectOfMonoformObjectIsMonoform}), we can assume that $L'=x_{v_{e}}S_{C}$, where $e=(\theta,i'_{1},\theta'_{1},\ldots,i'_{l},\theta'_{l},\infty,\infty,\ldots)\in E^{\theta}$ with $\widetilde{e}<e$. Define $\widetilde{e}',\widetilde{e}''\in\widetilde{E}$ by
	\begin{equation*}
		\widetilde{e}'=(\theta,i_{1},\theta_{1},\ldots,i_{l},\theta_{l},i_{l+1}+1,\theta_{l+1},i_{l+2},\theta_{l+2},\ldots)
	\end{equation*}
	and
	\begin{equation*}
		\widetilde{e}''=(\theta,i_{1},\theta_{1},\ldots,i_{l},\theta_{l},i_{l+1}+2,\theta_{l+1},i_{l+2},\theta_{l+2},\ldots).
	\end{equation*}
	Then $L_{<\widetilde{e}'}/L_{<\widetilde{e}}$ is isomorphic to $L_{<\widetilde{e}''}/L_{<\widetilde{e}'}$ since these $S_{C}$-modules are annihilated by $s_{c}$ for any $c={^{2}c^{\theta_{l},i}_{e',e''}}\in{^{2}C}$, where $i$, $e'$, and $e''$ are arbitrary. This shows that $L'/L$ is not monoform.
	
	Hence we have
	\begin{equation*}
		\ASpec\mcA_{\Gamma^{\theta}}=\ASupp M_{\Gamma^{\theta}}=\{\overline{M_{\Gamma^{\theta'}}}\mid\theta'\in\Theta,\ p_{\theta}\leq p_{\theta'}\}\cup\{\overline{M_{\Delta^{\theta'}}}\mid\theta'\in\Theta,\ p_{\theta}\leq p_{\theta'}\},
	\end{equation*}
	and $\ASupp L=\ASupp M_{\Gamma^{\theta}}$ for any nonzero submodule $L$ of $M_{\Gamma}$. Since we have $\Gamma=\coprod_{\theta\in\Theta}\Gamma^{\theta}$, it holds that
	\begin{equation*}
		\ASpec\mcA_{\Gamma}=\bigcup_{\theta\in\Theta}\ASpec\mcA_{\Gamma^{\theta}}=\{\overline{M_{\Gamma^{\theta}}}\mid\theta\in\Theta\}\cup\{\overline{M_{\Delta^{\theta}}}\mid\theta\in\Theta\}
	\end{equation*}
	by Proposition \ref{Prop:AtomSpectrumOfGrothendieckCategoryAssociatedToDisjointUnion}. By a similar argument to that in the proof of Theorem \ref{Thm:PosetOfLocallyNoetherianGrothendieckCategory}, the topological space $\ASpec\mcA$ is Alexandroff. It is easy to verify the following properties.
	\begin{enumerate}
		\item For any distinct $\theta,\theta'\in\Theta$, we have $\overline{M_{\Gamma^{\theta}}}\neq\overline{M_{\Gamma^{\theta'}}}$ and $\overline{M_{\Delta^{\theta}}}\neq\overline{M_{\Delta^{\theta'}}}$.
		\item For any $\theta,\theta'\in\Theta$, we have $\overline{M_{\Gamma^{\theta}}}=\overline{M_{\Delta^{\theta'}}}$ if and only if $\theta=\theta'$, and $p_{\theta}=p_{\theta'}$ is maximal in $P$.
		\item For any $\theta\in\Theta$, we have
		\begin{equation*}
			V(\overline{M_{\Gamma^{\theta}}})=\{\overline{M_{\Gamma^{\theta'}}}\mid\theta'\in\Theta,\ p_{\theta}\leq p_{\theta'}\}\cup\{\overline{M_{\Delta^{\theta'}}}\mid\theta'\in\Theta,\ p_{\theta}<p_{\theta'}\}
		\end{equation*}
		and
		\begin{equation*}
			V(\overline{M_{\Delta^{\theta}}})=\{\overline{M_{\Delta^{\theta}}}\}.
		\end{equation*}
	\end{enumerate}
	
	Let $\Phi=\{\overline{M_{\Delta^{\theta}}}\mid p_{\theta}\text{ is not maximal in }P\}$. Since $M_{\Delta^{\theta}}$ is a simple $S_{C}$-module for any $\theta\in\Theta$, the subset $\Phi$ of $\ASpec\mcA_{\Gamma}$ is open. Hence we have $\ASupp(\ASupp^{-1}\Phi)=\Phi$ by Lemma \ref{Lem:OpenSubclassOfAtomSpectrumAndAtomSupport}. Let $\mcA=\mcA_{\Gamma}/\ASupp^{-1}\Phi$. Then by Theorem \ref{Thm:AtomSpectraOfQuotientCategory}, the topological space $\ASpec\mcA$ is Alexandroff, and is isomorphic to $P$ as a partially ordered set.
\end{proof}

\section{Examples of Grothendieck categories}
\label{sec:ExamplesOfGrothendieckCategories}

In this section, by using methods in section \ref{sec:ConstructionOfGrothendieckCategories}, we introduce several Grothendieck categories which have remarkable structures.

For a commutative noetherian ring $R$ and an $R$-module $M$, we have
\begin{equation*}
	\Supp M=\{\mfp\in\Spec R\mid\mfq\subset\mfp\text{ for some }\mfq\in\Ass M\}.
\end{equation*}
In particular, the set $\Supp M$ is determined by $\Ass M$ and the partial order on $\Spec R$. The following proposition shows strongly that we cannot show this kind of result for a general locally noetherian Grothendieck categories.

\begin{Prop}\label{Prop:NoConnectionBetweenAAssAndASupp}
	There exist a locally noetherian Grothendieck category $\mcA$ and an object $M$ in $\mcA$ such that $\ASpec\mcA=\{\alpha,\beta\}$, $\alpha<\beta$, $\AAss M=\{\beta\}$, and $\ASupp M=\{\alpha,\beta\}$.
\end{Prop}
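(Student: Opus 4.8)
The plan is to realize $\mcA$ as the Grothendieck category $\mcA_{\Gamma}$ attached by Definition~\ref{Def:GrothendieckCategoryAssociatedWithColoredQuiver} to an explicit colored quiver $\Gamma$, and to take $M=M_{\Gamma}$ itself. Let $\Gamma$ have vertex set $\{w_{i}\mid i\in\mbZ_{\geq 0}\}\cup\{y\}$ and, for each $i\in\mbZ_{\geq 0}$, two arrows $w_{i}\to w_{i+1}$ and $w_{i+1}\to y$, both with the color $c_{i}$, where $c_{0},c_{1},\dots$ are pairwise distinct colors. Since there is at most one arrow with a prescribed source and color, this is a colored quiver in the restricted sense fixed just before Definition~\ref{Def:IsomorphicColoredQuiversAndColoredFullSubquiver}. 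Writing $z_{i}=x_{w_{i}}$ and $y=x_{y}$, the module $M_{\Gamma}$ has $K$-basis $\{z_{i}\mid i\geq 0\}\cup\{y\}$, with $z_{i}s_{c_{i}}=z_{i+1}$, $z_{i+1}s_{c_{i}}=y$, and $ys_{c}=0$ for every color $c$, all other products of basis vectors with the $s_{c}$ being zero.

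The first step is to describe the $S_{C}$-submodules of $M_{\Gamma}$. For $k\geq 0$ put $N_{k}=\bigoplus_{i\geq k}Kz_{i}\oplus Ky$; these are submodules forming a strictly descending chain $M_{\Gamma}=N_{0}\supsetneq N_{1}\supsetneq\cdots$ with $\bigcap_{k}N_{k}=Ky$. I claim that $0$, $Ky$, and the $N_{k}$ are all the submodules: for a nonzero $v$ in a submodule $L$, if $j$ is the largest index of a $z_{i}$ occurring in $v$ then $vs_{c_{j}}\in Kz_{j+1}\setminus\{0\}$ and $vs_{c_{j}}s_{c_{j}}\in Ky\setminus\{0\}$, so $y\in L$, and if $L\neq Ky$ then $L\supseteq N_{j+1}$ for some $j$; since $M_{\Gamma}/N_{j+1}$ is finite uniserial, $L=N_{k}$ for some $k$ (cf.\ Example~\ref{ex:AtomSpectrumOfGrothendieckCategoryAssociatedWithInfiniteColoredQuiver} and Lemma~\ref{Lem:AddingMinimalAtom}\,(\ref{item:LemDescriptionOfElementOfModuleOfCountableColoredQuiver})). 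In particular $Ky$ is a simple essential submodule of $M_{\Gamma}$, and every ascending chain of submodules is finite, so $M_{\Gamma}$ is noetherian. Hence $\mcA_{\Gamma}$ is a locally noetherian Grothendieck category by the remark after Definition~\ref{Def:GrothendieckCategoryAssociatedWithColoredQuiver}, and $\ASpec\mcA_{\Gamma}=\ASupp M_{\Gamma}$ by Proposition~\ref{Prop:AtomSpectraOfLocalizingSubcategory}.

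The second step is the atom computation. From the submodule lattice, every subquotient of $M_{\Gamma}$ is isomorphic to $Ky$, to a finite uniserial module whose composition factors are all $\cong Ky$, or to $N_{k}/Ky$. Now $N_{k}/Ky\cong N_{0}/Ky$, and this module is monoform: its nonzero proper submodules are the $N_{m}/Ky$, all of infinite length, whereas its nonzero proper quotients are of finite length, so it shares no nonzero subobject with a proper quotient of itself. By contrast $N_{k}$ is not monoform, since $Ky$ is a common nonzero subobject of $N_{k}$ and of $N_{k}/N_{k+1}\cong Ky$. Therefore the monoform subquotients of $M_{\Gamma}$ are exactly those atom-equivalent to $Ky$ or to $N_{0}/Ky$; with $\beta=\overline{Ky}$ and $\alpha=\overline{N_{0}/Ky}$ this gives $\ASupp M_{\Gamma}=\{\alpha,\beta\}$, and $\alpha\neq\beta$ because $N_{0}/Ky$ has no simple submodule. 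Moreover the only monoform submodule of $M_{\Gamma}$ is $Ky$, so $\AAss M_{\Gamma}=\{\beta\}$; alternatively this follows from Proposition~\ref{Prop:AssociatedAtomsOfEssentialSubobject} and Proposition~\ref{Prop:NumberOfAssociatedAtoms}\,(\ref{item:PropNumberOfAssociatedAtomsOfUniformObject}) because $Ky$ is essential in $M_{\Gamma}$.

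It remains to check $\alpha<\beta$. Since $N_{0}/Ky$ has the finite-length quotient $N_{0}/N_{1}\cong Ky$, we have $\beta\in\ASupp(N_{0}/Ky)$; every monoform $H$ with $\overline{H}=\alpha$ shares a nonzero submodule with $N_{0}/Ky$, so $\beta\in\ASupp H$ as well, whence $\alpha\leq\beta$ by Proposition~\ref{Prop:TopologicalCharacterizationOfPartialOrder}, while $\ASupp(Ky)=\{\beta\}$ shows $\beta\not\leq\alpha$. Thus $\mcA=\mcA_{\Gamma}$ and $M=M_{\Gamma}$ satisfy all the requirements of the statement. The main obstacle is pinning down the submodule lattice of $M_{\Gamma}$ and, with it, the monoformness of $N_{k}/Ky$ versus the failure of monoformness of $N_{k}$ (and of $M_{\Gamma}$); the rest is bookkeeping with Propositions~\ref{Prop:AtomSupportAndAssociatedAtomsAndShortExactSequence}, \ref{Prop:AssociatedAtomsOfEssentialSubobject}, and \ref{Prop:TopologicalCharacterizationOfPartialOrder}.
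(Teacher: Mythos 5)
Your proof is correct in substance, and it produces the required example, though by a somewhat more hands-on route than the paper. The paper uses the bold-arrow construction $\Gamma'\Rightarrow\Delta$, where $\Gamma'$ is the infinite chain of Example~\ref{ex:AtomSpectrumOfGrothendieckCategoryAssociatedWithInfiniteColoredQuiver} and $\Delta$ a loopless vertex, so the arrows into the extra vertex get fresh colors; then everything follows by citing Remark~\ref{rem:SubmoduleAssociatedToColoredFullSubquiver} for the exact sequence $0\to M_{\Delta}\to M_{\Gamma}\to M_{\Gamma'}\to 0$, Proposition~\ref{Prop:AddingMinimalAtom} for $M_{\Gamma'}$ monoform with $\ASupp M_{\Gamma'}=\{\overline{M_{\Gamma'}},\overline{M_{\Delta}}\}$ and $\overline{M_{\Gamma'}}<\overline{M_{\Delta}}$, Proposition~\ref{Prop:AtomSupportAndAssociatedAtomsAndShortExactSequence}\,(\ref{item:PropAtomSupportAndShortExactSequence}) to compute $\ASupp M_{\Gamma}$, and Proposition~\ref{Prop:AssociatedAtomsOfEssentialSubobject} for $\AAss M_{\Gamma}$. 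You instead build an explicit quiver with a thriftier (reused) color scheme and redo the submodule-lattice computation and the monoformness arguments from scratch; the resulting module has the same lattice of submodules ($0\subsetneq Ky\subsetneq\cdots\subsetneq N_{1}\subsetneq N_{0}$) and the same atom-theoretic behavior, so the endpoint coincides. Your version is more self-contained; the paper's is shorter because the heavy lifting is already packaged in Proposition~\ref{Prop:AddingMinimalAtom}.

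One small inaccuracy: the assertion $N_{k}/Ky\cong N_{0}/Ky$ is false for $k\geq 1$ as $S_{C}$-modules, since $s_{c_{0}},\dots,s_{c_{k-1}}$ act nontrivially on $N_{0}/Ky$ but annihilate $N_{k}/Ky$ (any isomorphism would force $\phi(\bar{z}_{1})=\phi(\bar{z}_{0})s_{c_{0}}=0$). This does not harm the proof: what you actually use is (i) that each $N_{k}/Ky$ is monoform, which your length argument gives for every $k$ directly, and (ii) that they are all atom-equivalent, which holds because $N_{m}/Ky$ is a common nonzero subobject for $m\geq\max(k,k')$. It would be cleaner to say that rather than claim an isomorphism.
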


\begin{proof}
	Let $\Delta$ be a colored quiver consisting of a vertex with no arrow, $\Gamma'$ the colored quiver
	\begin{equation*}
		\xymatrix{
			\Delta\ar@3[r] & \Delta\ar@3[r] & \cdots,
		}
	\end{equation*}
	and $\Gamma=(Q_{0},Q_{1},C)$ the colored quiver
	\begin{equation*}
		\xymatrix{
			\Gamma'\ar@3[r] & \Delta
		}.
	\end{equation*}
	Note that $\Gamma'$ is the colored quiver denoted by $\Gamma$ in Example \ref{ex:AtomSpectrumOfGrothendieckCategoryAssociatedWithInfiniteColoredQuiver}. By Remark \ref{rem:SubmoduleAssociatedToColoredFullSubquiver}, we have an exact sequence
	\begin{equation*}
		0\to M_{\Delta}\to M_{\Gamma}\to M_{\Gamma'}\to 0.
	\end{equation*}
	By Proposition \ref{Prop:AddingMinimalAtom} (or by the argument in Example \ref{ex:AtomSpectrumOfGrothendieckCategoryAssociatedWithInfiniteColoredQuiver}), the $S_{C}$-module $M_{\Gamma'}$ is monoform, and $\ASupp M_{\Gamma'}=\{\overline{M_{\Gamma'}},\overline{M_{\Delta}}\}$ with $\overline{M_{\Gamma'}}<\overline{M_{\Delta}}$. By Proposition \ref{Prop:AtomSupportAndAssociatedAtomsAndShortExactSequence} (\ref{item:PropAtomSupportAndShortExactSequence}), we have
	\begin{equation*}
		\ASpec\mcA_{\Gamma}=\ASupp M_{\Gamma}=\ASupp M_{\Gamma'}\cup\ASupp M_{\Delta}=\{\overline{M_{\Gamma'}},\overline{M_{\Delta}}\}.
	\end{equation*}
	Since $M_{\Gamma}$ has an essential submodule isomorphic to $M_{\Delta}$, we have $\AAss M_{\Gamma}=\AAss M_{\Delta}=\{\overline{M_{\Delta}}\}$ by Proposition \ref{Prop:AssociatedAtomsOfEssentialSubobject}.
\end{proof}

For a locally noetherian Grothendieck category $\mcA$ satisfying $\ASpec\mcA=\ASupp M$ for some noetherian object $M$ in $\mcA$ (that is, $\ASpec\mcA$ is compact by Proposition \ref{Prop:CharacterizationOfOpenCompactSubset}), there exists a minimal atom in $\mcA$ by Proposition \ref{Prop:ExistenceOfMinimalAtomInOpenCompactSubset}. This does not necessarily hold for a general locally noetherian Grothendieck category.

\begin{Prop}\label{Prop:LocallyNoetherianGrothendieckCategoryWithNoMinimalAtom}
	There exists a nonzero locally noetherian Grothendieck category with no minimal atom.
\end{Prop}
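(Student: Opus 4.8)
The plan is to obtain the desired category as an instance of Theorem~\ref{Thm:PosetOfLocallyNoetherianGrothendieckCategory}, applied to a partially ordered set that has an infinite descending chain but still satisfies the ascending chain condition. Concretely, I would take $P=\mbZ_{\leq 0}$, the set of non-positive integers with its usual total order, so that $P$ is the chain $\cdots<-2<-1<0$.

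First I would verify the two hypotheses of Theorem~\ref{Thm:PosetOfLocallyNoetherianGrothendieckCategory} for this $P$. The ascending chain condition holds because an ascending chain in $P$ is a non-decreasing sequence of integers bounded above by $0$, hence eventually constant. For the countability condition, note that for $p=-n\in P$ we have $V(p)=\{-n,-n+1,\dots,0\}$, so $V(p)\setminus\{p\}$ is the \emph{finite} set $\{-n+1,\dots,0\}=V(-n+1)$ (and is empty when $p=0$); thus one may simply take $J(p)=\{p+1\}$ when $p\neq 0$ and $J(0)=\emptyset$, and the required equality $V(p)\setminus\{p\}=\bigcup_{p'\in J(p)}V(p')$ is immediate. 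Theorem~\ref{Thm:PosetOfLocallyNoetherianGrothendieckCategory} then yields a locally noetherian Grothendieck category $\mcA$ whose atom spectrum is isomorphic to $P$ as a partially ordered set, and $\mcA$ is nonzero since $P$ is nonempty.

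It then suffices to observe that $P$, and hence $\ASpec\mcA$, has no minimal element: for every $n\leq 0$ one has $n-1<n$, so there is no atom of $\mcA$ that is minimal with respect to the specialization order. This finishes the proof. I do not expect a genuine obstacle; the only real content is the (slightly counterintuitive) coexistence of ``no minimal element'' with the ascending chain condition---the latter being forced on atom spectra of locally noetherian categories by Proposition~\ref{Prop:ExistenceOfMaximalAtom}---and a single descending chain resolves this. If one prefers an explicit construction rather than an appeal to Theorem~\ref{Thm:PosetOfLocallyNoetherianGrothendieckCategory}, one can instead start from the colored quiver $\Delta$ with one vertex and no arrow, set $\Gamma^{(0)}=\Delta$ and inductively let $\Gamma^{(n+1)}$ be the colored quiver $\Gamma^{(n)}\Rightarrow\Gamma^{(n)}\Rightarrow\cdots$ as in Example~\ref{ex:AtomSpectrumOfGrothendieckCategoryAssociatedWithInfiniteColoredQuiver}, so that by Proposition~\ref{Prop:AddingMinimalAtom} the spaces $\ASpec\mcA_{\Gamma^{(n)}}$ form nested finite chains, and then pass to $\Gamma=\coprod_{n\geq 0}\Gamma^{(n)}$, whose module $M_{\Gamma}=\bigoplus_{n}M_{\Gamma^{(n)}}$ is noetherian (Lemma~\ref{Lem:AddingMinimalAtom}) and whose atom support is the union chain, giving the same conclusion.
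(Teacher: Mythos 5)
Your proof is correct and takes essentially the same route as the paper: the paper's proof also applies Theorem~\ref{Thm:PosetOfLocallyNoetherianGrothendieckCategory} to the chain $p_{0}>p_{1}>\cdots$ (which is just your $\mbZ_{\leq 0}$ under a relabeling), noting that this poset satisfies the ascending chain condition but has no minimal element. Your verification of the countability hypothesis and the optional sketch of an explicit colored-quiver construction are sound added detail but do not change the argument.
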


\begin{proof}
	By Theorem \ref{Thm:PosetOfLocallyNoetherianGrothendieckCategory}, there exists a locally noetherian Grothendieck category $\mcA$ such that $\ASpec\mcA$ is an Alexandroff space and is isomorphic to the partially ordered set $\{p_{i}\mid i\in\mbZ_{\geq 0}\}$ with $p_{0}>p_{1}>\cdots$.
\end{proof}

For a commutative noetherian ring $R$, the height of any prime ideal is finite by Krull's height theorem. For a locally noetherian Grothendieck category $\mcA$, the atom spectrum of $\mcA$ does not necessarily satisfy the descending chain condition even in the case where $\ASpec\mcA=\ASupp M$ for some noetherian object $M$ in $\mcA$.

\begin{Prop}\label{Prop:LocallyNoetherianGrothendieckCategoryWithNoDescendingChainCondition}
	There exists a locally noetherian Grothendieck category $\mcA$ and a noetherian object $M$ in $\mcA$ such that $\ASpec\mcA=\ASupp M$, but $\ASpec\mcA$ does not satisfy the descending chain condition as a partially ordered set.
\end{Prop}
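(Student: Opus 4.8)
The plan is to iterate the construction of Proposition \ref{Prop:AddingMinimalAtom}, which adjoins a new minimal atom below a given configuration, and then to glue the whole tower by one further application of it, so that a single monoform \emph{noetherian} module sees an infinite descending chain of atoms.

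\emph{Building the tower.} Let $\Gamma^{(0)}$ be the colored quiver with one vertex and no arrow, so that $M_{\Gamma^{(0)}}$ is a simple $S_{C}$-module; having defined $\Gamma^{(n)}$, let $\Gamma^{(n+1)}$ be the colored quiver
\[
\xymatrix{\Gamma^{(n)}\ar@3[r] & \Gamma^{(n)}\ar@3[r] & \cdots}
\]
formed from countably many copies of $\Gamma^{(n)}$ as in Definition \ref{Def:SubstitutionOfColoredQuivers}. By Lemma \ref{Lem:AddingMinimalAtom}(\ref{item:LemSerialityOfModuleOfCountableColoredQuiver}) and induction every $M_{\Gamma^{(n)}}$ is noetherian, and by Proposition \ref{Prop:AddingMinimalAtom}(\ref{item:PropMonoformnessOfModuleOfCountableColoredQuiver}) every $M_{\Gamma^{(n)}}$ is monoform. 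Since all the slots of $\Gamma^{(n+1)}$ coincide with $\Gamma^{(n)}$, Proposition \ref{Prop:AddingMinimalAtom}(\ref{item:PropMonoformnessOfModuleOfCountableColoredQuiver}) gives $\ASpec\mcA_{\Gamma^{(n+1)}}=\{\overline{M_{\Gamma^{(n+1)}}}\}\amalg\ASpec\mcA_{\Gamma^{(n)}}$, and Proposition \ref{Prop:AddingMinimalAtom}(\ref{item:PropPartialOrderOnAtomSpectrumOfGrothendieckCategoryOfCountableColoredQuiver}) shows that $\overline{M_{\Gamma^{(n+1)}}}$ lies strictly below every atom of $\ASpec\mcA_{\Gamma^{(n)}}$, since each such atom occurs in infinitely many of the identical slots. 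Inductively,
\[
\ASpec\mcA_{\Gamma^{(n)}}=\{\,\overline{M_{\Gamma^{(0)}}}>\overline{M_{\Gamma^{(1)}}}>\cdots>\overline{M_{\Gamma^{(n)}}}\,\}
\]
is a finite chain, and $\ASpec\mcA_{\Gamma^{(n)}}\subset\ASpec\mcA_{\Gamma^{(n+1)}}$.

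\emph{Gluing and conclusion.} Finally, let $\Gamma$ be the colored quiver
\[
\xymatrix{\Gamma^{(0)}\ar@3[r] & \Gamma^{(1)}\ar@3[r] & \Gamma^{(2)}\ar@3[r] & \cdots}
\]
and set $M=M_{\Gamma}$. By Lemma \ref{Lem:AddingMinimalAtom}(\ref{item:LemSerialityOfModuleOfCountableColoredQuiver}), $M$ is noetherian, hence equal to the sum of its noetherian submodules, so $\mcA_{\Gamma}$ is a locally noetherian Grothendieck category (Proposition \ref{Prop:LocallyNoetherianGrothendieckCategoryFromGrothendieckCategory}) and $\ASpec\mcA_{\Gamma}=\ASupp M$. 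Proposition \ref{Prop:AddingMinimalAtom}(\ref{item:PropMonoformnessOfModuleOfCountableColoredQuiver}) gives
\[
\ASpec\mcA_{\Gamma}=\{\overline{M_{\Gamma}}\}\amalg\bigcup_{i\ge0}\ASpec\mcA_{\Gamma^{(i)}}=\{\overline{M_{\Gamma}}\}\amalg\{\,\overline{M_{\Gamma^{(i)}}}\mid i\in\mbZ_{\geq 0}\,\},
\]
and Proposition \ref{Prop:AddingMinimalAtom}(\ref{item:PropPartialOrderOnAtomSpectrumOfGrothendieckCategoryOfCountableColoredQuiver})(a) transports the strict inequalities $\overline{M_{\Gamma^{(i+1)}}}<\overline{M_{\Gamma^{(i)}}}$ from $\ASpec\mcA_{\Gamma^{(i+1)}}$ into $\ASpec\mcA_{\Gamma}$. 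Thus $\overline{M_{\Gamma^{(0)}}}>\overline{M_{\Gamma^{(1)}}}>\overline{M_{\Gamma^{(2)}}}>\cdots$ is a strictly descending chain in $\ASpec\mcA_{\Gamma}=\ASupp M$, so $\ASpec\mcA_{\Gamma}$ does not satisfy the descending chain condition, while $M$ is a noetherian object in the locally noetherian Grothendieck category $\mcA_{\Gamma}$.

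The only delicate point is the bookkeeping needed to know that the atom $\overline{M_{\Gamma^{(i)}}}$ computed in $\mcA_{\Gamma^{(i)}}$, in $\mcA_{\Gamma^{(i+1)}}$, and in $\mcA_{\Gamma}$ is one and the same element of $\ASpec(\Mod S_{C})$, so that the chains $\ASpec\mcA_{\Gamma^{(i)}}$ really are nested and Proposition \ref{Prop:AddingMinimalAtom} applies verbatim even though the slot quivers here repeat and share colors. This goes exactly as in the proofs of Theorem \ref{Thm:PosetOfLocallyNoetherianGrothendieckCategory} and Theorem \ref{Thm:PosetOfGrothendieckCategory}: $M_{\Gamma^{(i)}}$ is a subquotient of $M_{\Gamma^{(i+1)}}$ and of $M_{\Gamma}$ by Remark \ref{rem:SubmoduleAssociatedToColoredFullSubquiver}, so $\ASupp M_{\Gamma^{(i)}}\subset\ASupp M_{\Gamma^{(i+1)}}$ and the generic atom of each atom support is preserved.
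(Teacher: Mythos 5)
Your proof is correct, and it is in spirit the same as the paper's: both go through the colored-quiver machinery of Section \ref{sec:ConstructionOfGrothendieckCategories}, in particular Lemma \ref{Lem:AddingMinimalAtom} and Proposition \ref{Prop:AddingMinimalAtom}. The difference is presentational. The paper simply cites Theorem \ref{Thm:PosetOfLocallyNoetherianGrothendieckCategory} applied to the poset $\{p_i\mid i\in\mbZ_{\geq 0}\}\cup\{p_\infty\}$ with $p_0>p_1>\cdots$ and $p_i>p_\infty$, and then picks a noetherian monoform object $H$ representing the bottom atom so that Proposition \ref{Prop:TopologicalCharacterizationOfPartialOrder} gives $\ASupp H=\ASpec\mcA$. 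You instead unwind that citation and carry out the colored-quiver construction for this particular poset from scratch, which has two mild benefits: (i) your slot pattern $\Gamma^{(n)},\Gamma^{(n)},\dots$ for $\Gamma^{(n+1)}$, and $\Gamma^{(0)},\Gamma^{(1)},\dots$ for the final $\Gamma$, is simpler than the interleaving pattern used in the proof of Theorem \ref{Thm:PosetOfLocallyNoetherianGrothendieckCategory} -- the interleaving is needed there because the various $\ASpec\mcA_{\Gamma^{p_j}}$ need not be nested, whereas in your tower $\ASpec\mcA_{\Gamma^{(0)}}\subset\ASpec\mcA_{\Gamma^{(1)}}\subset\cdots$, so each atom already occurs in cofinitely many slots without interleaving, and condition (c) of Proposition \ref{Prop:AddingMinimalAtom}(\ref{item:PropPartialOrderOnAtomSpectrumOfGrothendieckCategoryOfCountableColoredQuiver}) applies directly; (ii) the witness $M=M_\Gamma$ is produced explicitly rather than by invoking existence of a noetherian monoform representative. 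The bookkeeping point you flag at the end -- that $\overline{M_{\Gamma^{(i)}}}$ is the same atom whether computed in $\mcA_{\Gamma^{(i)}}$, $\mcA_{\Gamma^{(i+1)}}$, or $\mcA_\Gamma$ -- is indeed handled by Remark \ref{rem:SubmoduleAssociatedToColoredFullSubquiver}, Remark \ref{rem:ValuedColoredQuiverWithMultipleArrows}, and Proposition \ref{Prop:AtomSpectraOfLocalizingSubcategory}, as you say. So the argument closes.
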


\begin{proof}
	By Theorem \ref{Thm:PosetOfLocallyNoetherianGrothendieckCategory}, there exists a locally noetherian Grothendieck category $\mcA$ such that $\ASpec\mcA$ is an Alexandroff space and is isomorphic to the partially ordered set $\{p_{i}\mid i\in\mbZ_{\geq 0}\}\cup\{p_{\infty}\}$ with $p_{0}>p_{1}>\cdots$ and $p_{i}>p_{\infty}$ for each $i\in\mbZ_{\geq 0}$. Let $H$ be a noetherian monoform object in $\mcA$ such that $\overline{H}$ is the atom in $\mcA$ corresponding to $p_{\infty}$. Then by Proposition \ref{Prop:TopologicalCharacterizationOfPartialOrder}, we have $\ASpec\mcA=\ASupp H$.
\end{proof}

For a commutative ring $R$, the topological space $\ASpec(\Mod R)$ is Alexandroff by Proposition \ref{Prop:TopologyOnAtomSpectrumOfCommutativeRing}. Hence the set of maximal (resp.\ minimal) elements of $\ASpec(\Mod R)$ is an open (resp.\ closed) subset of $\ASpec(\Mod R)$ by Proposition \ref{Prop:TopologicalCharacterizationOfMaximalAtomAndMinimalAtom}. However, this does not necessarily hold for a Grothendieck category $\mcA$ even in the case where $\mcA$ is locally noetherian.

\begin{Prop}\label{Prop:MaximalAtomsDoNotFormOpenSubsetMinimalAtomsDoNotFormClosedSubset}\leavevmode
	\begin{enumerate}
		\item\label{item:PropMaximalAtomsDoNotFormOpenSubset} There exists a locally noetherian Grothendieck category $\mcA$ such that the set of maximal elements of $\ASpec\mcA$ is not an open subset of $\ASpec\mcA$.
		\item\label{item:PropMinimalAtomsDoNotFormClosedSubset} There exists a locally noetherian Grothendieck category $\mcA$ such that the set of minimal elements of $\ASpec\mcA$ is not a closed subset of $\ASpec\mcA$.
	\end{enumerate}
\end{Prop}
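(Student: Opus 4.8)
The key observation is that both statements force $\ASpec\mcA$ to be \emph{non-Alexandroff}: in a Kolmogorov Alexandroff space the set of maximal points is an up-set, hence open, and the set of minimal points is a down-set, hence closed (Proposition \ref{Prop:BijectionBetweenKolmogorovAlexandroffSpacesAndPartiallyOrderedSets}). So Theorem \ref{Thm:PosetOfLocallyNoetherianGrothendieckCategory} cannot be applied directly; instead the plan is to build the two examples from ``$\GrMod k[x]$-type'' colored quivers, i.e.\ infinite linear quivers, for which the finite-colength quotients of $M_{\Gamma}$ produce a genuinely shrinking basis of open neighborhoods, using Proposition \ref{Prop:AddingMinimalAtom} and the computation in Example \ref{ex:AtomSpectrumOfGrothendieckCategoryAssociatedWithInfiniteColoredQuiver}.

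For (\ref{item:PropMaximalAtomsDoNotFormOpenSubset}): for each $i\in\mbZ_{\geq0}$ let $\Gamma^{i}$ be the colored quiver $v^{i}_{0}\to v^{i}_{1}\to\cdots$ equipped with one loop at every vertex, all arrow-colors chosen to be pairwise distinct as $i$ varies. Exactly as in Example \ref{ex:AtomSpectrumOfGrothendieckCategoryAssociatedWithInfiniteColoredQuiver} — here the loops merely force every composition factor of each $M_{\Gamma^{i}}/L$ to be one fixed \emph{nontrivial} simple $S^{i}$ — one checks that $M_{\Gamma^{i}}$ is a noetherian monoform $S_{C}$-module with $\ASpec\mcA_{\Gamma^{i}}=\{\overline{M_{\Gamma^{i}}},\overline{S^{i}}\}$ and $\overline{M_{\Gamma^{i}}}<\overline{S^{i}}$; since the colors are disjoint, the sets $\ASpec\mcA_{\Gamma^{i}}$ are pairwise disjoint in $\ASpec(\Mod S_{C})$. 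Now take $\Gamma=\Gamma^{0}\Rrightarrow\Gamma^{1}\Rrightarrow\cdots$ in the sense of Definition \ref{Def:SubstitutionOfColoredQuivers}. By Proposition \ref{Prop:AddingMinimalAtom} the category $\mcA=\mcA_{\Gamma}$ is locally noetherian, $\ASpec\mcA=\{\overline{M_{\Gamma}}\}\amalg\bigsqcup_{i}\{\overline{M_{\Gamma^{i}}},\overline{S^{i}}\}$ with $\overline{M_{\Gamma}}$ incomparable to every other atom (so maximal), and the sets $\{\overline{M_{\Gamma}}\}\amalg\bigsqcup_{j\geq i}\{\overline{M_{\Gamma^{j}}},\overline{S^{j}}\}$ form a neighborhood basis of $\overline{M_{\Gamma}}$. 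Hence the maximal atoms are exactly $\{\overline{M_{\Gamma}}\}\cup\{\overline{S^{i}}\mid i\in\mbZ_{\geq0}\}$, yet every open set containing $\overline{M_{\Gamma}}$ contains some non-maximal atom $\overline{M_{\Gamma^{i}}}$; so this set is not open.

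For (\ref{item:PropMinimalAtomsDoNotFormClosedSubset}): take $\mcA=\GrMod k[x,y]$ with $\deg x=\deg y=1$, which is locally noetherian. Imitating the computation of Example \ref{ex:AtomSpectrumOfCategoryOfGradedModules}, one finds that $\ASpec\mcA$ consists of the generic atom $\overline{k[x,y]}$, the atoms $\alpha_{f}:=\overline{k[x,y]/(f)}$ for $f$ a homogeneous irreducible polynomial, and the simple atoms $\overline{(k[x,y]/(x,y))(j)}$ for $j\in\mbZ$; the only strict relations are $\overline{k[x,y]}<\alpha_{f}$, so the minimal atoms are $\overline{k[x,y]}$ together with the $\overline{(k[x,y]/(x,y))(j)}$, while $\alpha_{x}=\overline{k[y]}$ is not minimal. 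Any monoform module representing $\alpha_{x}$ has a submodule isomorphic to a shift of $k[y]=k[x,y]/(x)$, so (again by the argument of Example \ref{ex:AtomSpectrumOfCategoryOfGradedModules}) its atom support contains infinitely many of the $\overline{(k[x,y]/(x,y))(j)}$; by Proposition \ref{Prop:TopologicalCharacterizationOfPartialOrder} this means $\alpha_{x}$ lies in the closure of the set of minimal atoms, which is therefore not closed. One can also realize the same atom spectrum inside the colored-quiver framework, by using the quiver whose vertices are the monomials of $k[x,y]$, with arrows recording multiplication by $x$ and by $y$ and a distinguishing loop attached to each monomial; the verification is longer but the mechanism is identical.

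In both parts the essential difficulty is not the topological bookkeeping, which is handled by Propositions \ref{Prop:AddingMinimalAtom}, \ref{Prop:OpenBasisOfAtomSpectrum} and \ref{Prop:TopologicalCharacterizationOfPartialOrder}, but rather choosing a construction that is genuinely non-Alexandroff while keeping the partial order under control: for (\ref{item:PropMaximalAtomsDoNotFormOpenSubset}) one needs a maximal atom whose open neighborhoods shrink to it without ever isolating it, and for (\ref{item:PropMinimalAtomsDoNotFormClosedSubset}) a non-minimal atom every monoform representative of which ``sees'' infinitely many minimal atoms through a truncation phenomenon — which is precisely what the grading of $k[x,y]$, or the distinguishing loops in the quiver version, provides.
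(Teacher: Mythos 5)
Part (\ref{item:PropMaximalAtomsDoNotFormOpenSubset}): your construction is essentially the paper's. The paper writes each $\Gamma^{i}$ as $\Delta^{i}\Rrightarrow\Delta^{i}\Rrightarrow\cdots$ with $\Delta^{i}$ a single vertex carrying a loop of color $c_{i}$; unwound, this is exactly your infinite linear quiver with one loop of the fixed color $c_{i}$ at each vertex. Setting $\Gamma=\Gamma^{0}\Rrightarrow\Gamma^{1}\Rrightarrow\cdots$ and applying Proposition \ref{Prop:AddingMinimalAtom} gives the same atom spectrum, the same incomparable maximal atom $\overline{M_{\Gamma}}$, and the same reason why the set of maximal atoms fails to be open.

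Part (\ref{item:PropMinimalAtomsDoNotFormClosedSubset}) is where you take a genuinely different, and I think more robust, route. The paper sets $\Gamma'=\Delta^{0}\Rrightarrow\Delta^{1}\Rrightarrow\cdots$ (now with pairwise distinct loop colors) and $\Gamma=\Gamma'\Rrightarrow\Gamma'\Rrightarrow\cdots$, and claims the minimal atoms are $\{\overline{M_{\Gamma}}\}\cup\{\overline{M_{\Delta^{i}}}\mid i\}$. But every copy of $\Gamma'$ in the outer chain carries the \emph{same} inner color set, so the corresponding subquotients of $M_{\Gamma}$ are pairwise isomorphic $S_{C}$-modules with identical atom support $\ASupp M_{\Gamma'}$; condition (c) of Proposition \ref{Prop:AddingMinimalAtom} (\ref{item:PropPartialOrderOnAtomSpectrumOfGrothendieckCategoryOfCountableColoredQuiver}) then yields $\overline{M_{\Gamma}}<\overline{M_{\Delta^{i}}}$ for every $i$, so the set of minimal atoms collapses to $\{\overline{M_{\Gamma}}\}$, which is closed (its complement is the open set $\ASupp M_{\Gamma'}$). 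Your $\GrMod k[x,y]$ example avoids this difficulty entirely: the shifts $k[x,y](m)$, $m\in\mbZ$, form a strictly shrinking family of atom supports of monoform representatives of the generic atom, so by Proposition \ref{Prop:TopologicalCharacterizationOfPartialOrder} $\overline{k[x,y]}$ is \emph{not} below any simple atom $\overline{(k[x,y]/(x,y))(j)}$ (take $m<j$), whereas $\overline{k[x,y]}<\alpha_{f}$ for every irreducible homogeneous $f$; hence the minimal atoms are $\overline{k[x,y]}$ together with the simple atoms, and each $\alpha_{f}$ is a limit point of this set because any monoform representative of $\alpha_{f}$ contains a shift of $k[x,y]/(f)$ and therefore supports infinitely many simple atoms. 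Your computation of $\ASpec(\GrMod k[x,y])$ and of its specialization order is correct, and the example has the additional virtue of being a familiar module category rather than a bespoke quiver.
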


\begin{proof}
	(\ref{item:PropMaximalAtomsDoNotFormOpenSubset}) For each $i\in\mbZ_{\geq 0}$, let $\Delta^{i}$ be the colored quiver
	\begin{equation*}
		\xymatrix{
			v_{i}\ar@(dl,dr)_-{c_{i}}
		},
	\end{equation*}
	$\Gamma^{i}$ the colored quiver
	\begin{equation*}
		\xymatrix{
			\Delta^{i}\ar@3[r] & \Delta^{i}\ar@3[r] & \cdots,
		}
	\end{equation*}
	and $\Gamma$ the colored quiver
	\begin{equation*}
		\xymatrix{
			\Gamma^{0}\ar@3[r] & \Gamma^{1}\ar@3[r] & \cdots.
		}
	\end{equation*}
	Then by Proposition \ref{Prop:AddingMinimalAtom}, we have
	\begin{equation*}
		\ASpec\mcA_{\Gamma}=\{\overline{M_{\Gamma}}\}\cup\{\overline{M_{\Gamma^{i}}}\mid i\in\mbZ_{\geq 0}\}\cup\{\overline{M_{\Delta^{i}}}\mid i\in\mbZ_{\geq 0}\}
	\end{equation*}
	with $\overline{M_{\Gamma^{i}}}<\overline{M_{\Delta^{i}}}$ for each $i\in\mbZ_{\geq 0}$. The set of maximal elements of $\ASpec\mcA_{\Gamma}$ is
	\begin{equation*}
		\Psi=\{\overline{M_{\Gamma}}\}\cup\{\overline{M_{\Delta^{i}}}\mid i\in\mbZ_{\geq 0}\}.
	\end{equation*}
	Since any nonzero submodule of $M_{\Gamma}$ has a subquotient isomorphic to $M_{\Gamma^{i}}$ for some $i\in\mbZ_{\geq 0}$, the subset $\Psi$ of $\ASpec\mcA_{\Gamma}$ is not open.
	
	(\ref{item:PropMinimalAtomsDoNotFormClosedSubset}) For each $i\in\mbZ_{\geq 0}$, let $\Delta^{i}$ be the colored quiver
	\begin{equation*}
		\xymatrix{
			v_{i}\ar@(dl,dr)_-{c_{i}}
		},
	\end{equation*}
	$\Gamma^{i}$ the colored quiver
	\begin{equation*}
		\xymatrix{
			\Delta^{i}\ar@3[r] & \Delta^{i+1}\ar@3[r] & \cdots
		},
	\end{equation*}
	and $\Gamma$ the colored quiver
	\begin{equation*}
		\xymatrix{
			\Gamma^{0}\ar@3[r] & \Gamma^{1}\ar@3[r] & \cdots
		}.
	\end{equation*}
	Then by Proposition \ref{Prop:AddingMinimalAtom}, we have
	\begin{equation*}
		\ASpec\mcA_{\Gamma}=\{\overline{M_{\Gamma}},\overline{M_{\Gamma^{0}}}\}\cup\{\overline{M_{\Delta^{i}}}\mid i\in\mbZ_{\geq 0}\}
	\end{equation*}
	with $\overline{M_{\Gamma}}<\overline{M_{\Gamma^{0}}}$ since it holds that $\overline{M_{\Gamma^{0}}}=\overline{M_{\Gamma^{1}}}=\cdots$. The set of minimal elements of $\ASpec\mcA_{\Gamma}$ is
	\begin{equation*}
		\Psi=\{\overline{M_{\Gamma}}\}\cup\{\overline{M_{\Delta^{i}}}\mid i\in\mbZ_{\geq 0}\}.
	\end{equation*}
	Since any nonzero submodule of $M_{\Gamma^{0}}$ has a subquotient isomorphic to $M_{\Delta^{i}}$ for some $i\in\mbZ_{\geq 0}$, the the subset $\Psi$ of $\ASpec\mcA_{\Gamma}$ is not closed.
\end{proof}

The \emph{injective spectrum} of a Grothendieck category $\mcA$ is the set of isomorphism classes of indecomposable injective objects in $\mcA$. It was investigated by \cite{Gabriel}, \cite{Herzog}, \cite{Krause}, and \cite{Pappacena}, for example. It is known that there exists a canonical injection from the atom spectrum of $\mcA$ to the injective spectrum of $\mcA$ (see \cite[Lemma 5.8]{Kanda1}). In the rest of this section, we construct a Grothendieck category which has empty atom spectrum but has nonempty injective spectrum.

\begin{Thm}\label{Thm:GrothendieckCategoryWithNoAtom}
	There exists a Grothendieck category $\mcA$ such that $\mcA$ has no atom but has at least one indecomposable injective object.
\end{Thm}

\begin{proof}
	Define a totally ordered set $\widehat{\mbZ}$ by $\widehat{\mbZ}=\mbZ\cup\{-\infty\}$, where $-\infty$ is supposed to be smaller than any element of $\mbZ$. Define a totally ordered set $E$ by
	\begin{equation*}
		E=\{(i_{0},\ldots,i_{l},-\infty,-\infty,\ldots)\mid l\in\mbZ_{\geq 0},\ i_{0},\ldots,i_{l}\in\mbZ_{\geq 0},\ i_{0}<\cdots<i_{l}\}
	\end{equation*}
	as a subset of $\widetilde{E}=\prod_{j=0}^{\infty}\widehat{\mbZ}$ with the lexicographic order. For each $i\in\mbZ_{\geq 0}$, denote by $E^{i}$ the subset of $E$ consisting of elements $e=(i_{0},\ldots,i_{l},-\infty,-\infty,\ldots)$ of $E$ with $i_{0}=i$. Let
	\begin{equation*}
		\mcF^{i}=\{(i_{0},\ldots,i_{l})\mid l\in\mbZ_{\geq 0},\ i_{0},\ldots,i_{l}\in\mbZ_{\geq 0},\ i_{0}<\cdots<i_{l}<i\}.
	\end{equation*}
	
	Define a colored quiver $\Gamma=(Q_{0},Q_{1},C,s,t,u)$ as follows.
	\begin{enumerate}
		\item Let $Q_{0}=\{v_{e}\}_{e\in E}$.
		\item Let $C={^{1}}C\amalg{^{\infty}}C$ and $Q_{1}={^{1}}Q_{1}\amalg{^{\infty}}Q_{1}$, where
		\begin{align*}
			{^{1}}C^{i}&=\{{^{1}}c_{e,e'}\mid e\in E^{i},\ e'\in E^{i+1}\},\\
			{^{\infty}}C^{i}&=\{{^{\infty}}c^{i}_{e}\mid i'\in\mbZ_{\geq 0},\ e\in E^{i'},\ i<i'\}\cup\{{^{\infty}}c^{i}_{-\infty}\}
		\end{align*}
		for each $i\in\mbZ_{\geq 0}$,
		\begin{align*}
			{^{1}}C&=\coprod_{i=0}^{\infty}{^{1}}C^{i},\\
			{^{\infty}}C&=\coprod_{i=0}^{\infty}{^{\infty}}C^{i},
		\end{align*}
		and
		\begin{align*}
			{^{1}}Q_{1}&=\coprod_{i=0}^{\infty}\{{^{1}}r^{c}_{f}\mid c\in{^{1}}C^{i},\ f\in\mcF^{i}\},\\
			{^{\infty}}Q_{1}&=\coprod_{i=0}^{\infty}\{{^{\infty}}r^{c}_{f}\mid c\in{^{\infty}}C^{i},\ f\in\mcF^{i}\}.
		\end{align*}
		\item
		\begin{enumerate}
			\item For $r={^{1}}r^{c}_{f}\in {^{1}}Q_{1}$ with $c={^{1}}c_{e,e'}$, let $s(r)=v_{(f,e)}$, $t(r)=v_{(f,e')}$, and $u(r)=c$.
			\item
			\begin{enumerate}
				\item For $r={^{\infty}}r^{c}_{f}\in {^{\infty}}Q_{1}$ with $c={^{\infty}}c^{i}_{e}$, let $s(r)=v_{(f,i,-\infty,-\infty,\ldots)}$, $t(r)=v_{(f,i,e)}$, and $u(r)=c$.
				\item For $r={^{\infty}}r^{c}_{f}\in {^{\infty}}Q_{1}$ with $c={^{\infty}}c^{i}_{-\infty}$, let $s(r)=t(r)=v_{(f,i,-\infty,-\infty,\ldots)}$, and $u(r)=c$.
			\end{enumerate}
		\end{enumerate}
	\end{enumerate}
	For any $i\in\mbZ_{\geq 0}$, denote by $\Gamma^{i}$ the colored full subquiver of $\Gamma$ corresponding to the subset $\{v_{e}\}_{e\in E^{i}}$ of $Q_{0}$. Since we have
	\begin{equation*}
		E^{i}=\{(i,-\infty,-\infty,\ldots)\}\amalg\coprod_{i'=i}^{\infty}E^{i'},
	\end{equation*}
	$\Gamma^{i}$ is illustrated as follows.
	\begin{equation*}
		\Gamma^{i}\colon\xymatrix @R=12mm @L=2mm {
			& & v_{(i,-\infty,-\infty,\ldots)}\ar@(ul,ur)^-{(\infty;-\infty)}\ar@3@<-1mm>[dll]_-{(\infty;i+1)}\ar@3[dl]|-{(\infty;i+2)}\ar@3[d]|-{(\infty;i+3)}\ar@3[dr]|-{(\infty;i+4)}\ar@3@<1mm>[drr]^-{(\infty;i+5)} & & \\
			\Gamma^{i+1}\ar@3[r]_-{(1;i+1)} & \Gamma^{i+2}\ar@3[r]_-{(1;i+2)} & \Gamma^{i+3}\ar@3[r]_-{(1;i+3)} & \Gamma^{i+4}\ar@3[r]_-{(1;i+4)} & \cdots
		}
	\end{equation*}
	In this diagram, the colors of some arrows appearing in bold arrows are identified with the colors of some arrows in the colored full subquiver $\Gamma^{i+j}$ for some $j\in\mbZ_{\geq 1}$. For example, the colored full subquiver
	\begin{equation*}
		\xymatrix{
			\Gamma^{i+2}\ar@3[r]^-{(1;i+2)} & \Gamma^{i+3}\ar@3[r]^-{(1;i+3)} & \cdots
		}
	\end{equation*}
	of $\Gamma^{i}$ appearing in the above diagram is isomorphic to a colored full subquiver of $\Gamma^{i+1}$.
	
	For any $e,e'\in E^{i}$, we have $x_{v_{e}}S_{C}\subset x_{v_{e'}}S_{C}$ if and only if $e\geq e'$. For each $\widetilde{e}=(i_{0},i_{1},\ldots)\in\widetilde{E}$ with $i_{0}=i$, define a submodule $L_{>\widetilde{e}}$ of $M_{\Gamma^{i}}$ by
	\begin{equation*}
		L_{>\widetilde{e}}=\sum_{\substack{e\in E^{i}\\\widetilde{e}<e}}x_{v_{e}}S_{C}.
	\end{equation*}
	By a similar argument to that in the proof of Theorem \ref{Thm:PosetOfGrothendieckCategory}, for any nonzero element $x$ of $M_{\Gamma^{i}}$, there exists $e\in E^{i}$ such that $xS_{C}=x_{v_{e}}S_{C}$. Furthermore, any nonzero proper submodule $L$ of $M_{\Gamma^{i}}$ is of one of the following forms.
	\begin{enumerate}
		\item $L=x_{v_{e}}S_{C}$ for some $e=(i_{0},\ldots,i_{l},-\infty,-\infty,\ldots)\in E^{i}$, where $l\geq 1$.
		\item $L=L_{>\widetilde{e}}$ for some $\widetilde{e}=(i_{0},i_{1},\ldots)\in\widetilde{E}$, where $i_{0},i_{1},\ldots\in\mbZ_{\geq 0}$ and $i_{0}<i_{1}<\cdots$.
	\end{enumerate}
	In the first case, if $i_{l}=i_{l-1}+1$, then $M_{\Gamma^{i}}/L$ has an essential submodule isomorphic to the simple $S_{C}$-module defined by the colored quiver
	\begin{equation*}
		\xymatrix{
			v_{(i_{0},\ldots,i_{l-1},-\infty,\ldots)}\ar@(dl,dr)_-{{^{\infty}}c^{i_{l-1}}_{-\infty}}
		}.
	\end{equation*}
	
	If $i_{l}\neq i_{l-1}+1$, then $M_{\Gamma^{i}}/L$ has an essential submodule isomorphic to $M_{\Gamma^{i+j}}$ for some $j\in\mbZ_{\geq 1}$.
	
	In the second case, if there exists $l\in\mbZ_{\geq 1}$ such that $\widetilde{e}=(i_{0},\ldots,i_{l},i_{l}+1,i_{l}+2,\ldots)$, then the essential submodule $(x_{v_{e}}S_{C})/L$ of $M_{\Gamma^{i}}$/L, where $e=(i_{0},\ldots,i_{l},-\infty,-\infty,\ldots)$, is isomorphic to the noetherian $S_{C}$-module defined by the colored quiver
	\begin{equation*}
		\xymatrix{
			v_{(i_{0},\ldots,i_{l},-\infty,\ldots)}\ar@(dl,dr)_-{^{\infty}c^{i_{l}}_{-\infty}}\ar[r]^-{^{\infty}c^{i_{l}}_{(i_{l}+1,-\infty,\ldots)}} & v_{(i_{0},\ldots,i_{l},i_{l}+1,-\infty,\ldots)}\ar@(dl,dr)_-{^{\infty}c^{i_{l}+1}_{-\infty}}\ar[r]^-{^{\infty}c^{i_{l}+1}_{(i_{l}+2,-\infty,\ldots)}} & \cdots
		}.
	\end{equation*}
	Otherwise, we show that any nonzero submodule $L'/L$ of $M_{\Gamma^{i}}/L$ is not monoform. By Proposition \ref{Prop:PropertyOfMonoformObject} (\ref{item:PropSubobjectOfMonoformObjectIsMonoform}), we can assume that $L'=x_{v_{e}}S_{C}$ for some $e=(i'_{0},\ldots,i'_{l},-\infty,-\infty,\ldots)\in E^{i}$ with $e<\widetilde{e}$. By the assumption, there exists $l'\in\mbZ_{\geq 1}$ satisfying $i_{l'}\neq i_{l'-1}+1$ and $l<l'$. Define $\widetilde{e}'\in\widetilde{E}$ and $e',e''\in E$ by
	\begin{align*}
		\widetilde{e}'&=(i_{0},\ldots,i_{l'-1},i_{l'}-1,i_{l'},i_{l'+1},\ldots),\\
		e'&=(i_{0},\ldots,i_{l'},-\infty,-\infty,\ldots),\\
		e''&=(i_{0},\ldots,i_{l'-1},i_{l'}-1,i_{l'},-\infty,-\infty,\ldots).
	\end{align*}
	Then we have
	\begin{equation*}
		e<e''<\widetilde{e}'<e'<\widetilde{e},
	\end{equation*}
	and $(x_{v_{e'}}S_{C})/L_{>\widetilde{e}}$ is isomorphic to $(x_{v_{e''}}S_{C})/L_{>\widetilde{e}'}$. This shows that $L'/L$ is not monoform.
	
	Since $M_{\Gamma^{i}}$ has no monoform submodule, any atom in $\mcA_{\Gamma}$ is represented by a noetherian $S_{C}$-module. Denote by $\mcX$ the smallest localizing subcategory of $\mcA_{\Gamma}$ containing all the noetherian $S_{C}$-modules. Let $\mcA=\mcA_{\Gamma}/\mcX$. Then by Theorem \ref{Thm:AtomSpectraOfQuotientCategory}, there exists no atom in $\mcA$.
	
	Since $M_{\Gamma^{0}}$ has no nonzero noetherian submodule, any nonzero submodule of $M_{\Gamma^{0}}$ does not belong to $\mcX$ by Proposition \ref{Prop:LocalizingSubcategoryGeneratedBySubcategory}. Let $I=E(M_{\Gamma^{0}})$. Since $M_{\Gamma^{0}}$ is uniform, the $S_{C}$-module $I$ is indecomposable and injective. Any nonzero submodule of $I$ does not belong to $\mcX$. Let $F\colon\mcA_{\Gamma}\to\mcA_{\Gamma}/\mcX$ be the canonical functor and $G\colon\mcA_{\Gamma}/\mcX\to\mcA_{\Gamma}$ its right adjoint. By \cite[Lemma 4.5.1 (2)]{Popescu}, the object $F(I)$ in $\mcA_{\Gamma}/\mcX=\mcA$ is injective. By Theorem \ref{Thm:PropertyOfQuotientCategory} (\ref{item:ThmDescriptionOfCompositeFunctorForQuotientCategory}), $GF(I)$ is isomorphic to $I$, and hence $F(I)$ is indecomposable.
\end{proof}

\section{Acknowledgments}

The author would like to express his deep gratitude to his supervisor Osamu Iyama for his elaborated guidance. The author thanks Shiro Goto for his valuable comments.



\end{document}